\newtheorem{theorem}[equation]{Theorem}
\newtheorem{proposition}[equation]{Proposition}
\newtheorem{lemma}[equation]{Lemma}
\newtheorem{corollary}[equation]{Corollary}
\newtheorem{conjecture}[equation]{Conjecture}
\newtheorem{problem}[equation]{Problem}
\newtheorem{question}[equation]{Question}
\theoremstyle{definition}
\newtheorem{example}[equation]{Example}
\newtheorem{definition}[equation]{Definition}
\theoremstyle{remark}
\newtheorem{remark}[equation]{Remark}
\makeatletter\@addtoreset{equation}{section} \makeatother
  \newcommand{\miniscule}{\@setfontsize\miniscule{3}{7}}% \tiny: 5/6
    \newcommand{\stiny}{\@setfontsize\miniscule{5}{7}}% \tiny: 6/7
  \newcommand{\miniscule}{\@setfontsize\miniscule{3}{7}}% \tiny: 6/7
   \newcommand{\stiny}{\@setfontsize\miniscule{5}{7}}% \tiny: 6/7
  \newcommand{\miniscule}{\@setfontsize\miniscule{3}{7}}% \tiny: 6/7
    \newcommand{\stiny}{\@setfontsize\miniscule{5}{7}}% \tiny: 6/7
\newcommand{\PGL}{\operatorname{PGL}}
\newcommand{\Ga}{{\mathbb G}_{\mathrm{a}}}
\newcommand{\Gm}{{\mathbb G}_{\mathrm{m}}}
\newcommand{\Gr}{\operatorname{Gr}}
\newcommand{\Spec}{\operatorname{Spec}}
\newcommand{\Pic}{\operatorname{Pic}}
\newcommand{\Supp}{\operatorname{Supp}}
\newcommand{\Aut}{\operatorname{Aut}}
\newcommand{\GL}{\operatorname{GL}}
\newcommand{\SL}{\operatorname{SL}}
\newcommand{\PSL}{\operatorname{PSL}}
\newcommand{\chit}{\chi_{\mathrm{top}}}
\newcommand{\g}{\operatorname{g}}
\newcommand{\pr}{\operatorname{pr}}
\newcommand{\Cl}{\operatorname{\Cl}}
\newcommand{\Pf}{\operatorname{\Pf}}
\newcommand{\SAut}{\operatorname{SAut}}
\newcommand{\mumu}{{\boldsymbol{\mu}}}
\newcommand{\CC}{\mathbb{C}}
\newcommand{\ZZ}{\mathbb{Z}}
\newcommand{\PP}{\mathbb{P}}
\newcommand{\QQ}{\mathbb{Q}}
\newcommand{\FF}{\mathbb{F}}
\def\A{{\mathbb A}}
\newcommand{\OOO}{{\mathscr{O}}}
\newcommand{\NNN}{{\mathscr{N}}}
\renewcommand{\phi}{\varphi}
\renewcommand{\emptyset}{\varnothing}
\begin{document}
\title{Cylinders in Fano varieties}

\author {Ivan Cheltsov, Jihun Park, Yuri Prokhorov, and Mikhail Zaidenberg}

\address{\emph{Ivan Cheltsov}\newline
\textnormal{School of Mathematics, The University of Edinburgh,  Edinburgh, Scotland}
\newline
\textnormal{Laboratory of Mirror Symmetry, NRU HSE, 6 Usacheva street, Moscow, Russia}}
\email{I.Cheltsov@ed.ac.uk}

\address{ \emph{Jihun Park}\newline
\textnormal{Center for Geometry and Physics, Institute for Basic Science
\newline\medskip
77 Cheongam-ro, Nam-gu, Pohang, Gyeongbuk, 37673, Korea
\newline
Department of Mathematics, POSTECH
\newline
77 Cheongam-ro, Nam-gu, Pohang, Gyeongbuk,  37673, Korea}}
\email{wlog@postech.ac.kr}

\address{\emph{Yuri Prokhorov}\newline
\textnormal{Steklov Mathematical Institute, 8 Gubkina street, Moscow, Russia}
\newline
\textnormal{Laboratory of Algebraic Geometry, NRU HSE, 6 Usacheva street, Moscow, Russia}}
\email{prokhoro@mi-ras.ru}

\address{\emph{Mikhail Zaidenberg}\newline
\textnormal{Universit\'e Grenoble Alpes, CNRS, Institut Fourier, F-38000 Grenoble, France}}
\email{Mikhail.Zaidenberg@univ-grenoble-alpes.fr}

\begin{abstract}
This paper is a survey about cylinders in Fano varieties and related problems.
\end{abstract}

\subjclass[2010]{14J50, 14J45, 	14R20, 	14R25, 14E05, 14E08, 14E30}
\keywords{cylinder,  Fano variety, unipotent group action}
%\date{\today}
\maketitle

\tableofcontents

Throughout this paper except for Section~\ref{subsection:compactifications},
we always assume that all varieties are defined over an~algebraically closed field $\Bbbk$ of characteristic $0$.

\section{Introduction}
\label{section:Intro}
A \emph{cylinder} in a~projective variety $X$ is a~Zariski open subset $U\subset X$ such that
$$
U\cong\mathbb{A}^1\times Z
$$
for an affine variety $Z$.
If $X$ contains a~cylinder, we say that $X$ is \emph{cylindrical}.
Since cylindrical varieties have negative Kodaira dimension,
we will focus our attention on cylindrical Fano varieties,
because they are building blocks of projective varieties with negative Kodaira dimension.

\begin{example}
\label{example:Grassmannian} For positive integers $m, n$ with $m<n$,
let $X$ be the~Grassmannian $\mathrm{Gr}(m,n)$  of
$m$-dimensional subspaces of an $n$-dimensional vector space over $\Bbbk$.
Then $X$ is a smooth projective variety of dimension~$m(n-m)$,
and $-K_X\sim nH$, where $H$ is an ample generator of the~group $\mathrm{Pic}(X)$.
Since $X$ contains an open Schubert cell isomorphic to $\A^{m(n-m)}$, it is a cylindrical Fano variety.
\end{example}

However, not all Fano varieties are cylindrical,
e.g. smooth cubic threefolds and smooth quartic threefolds do not contain cylinders,
because they are irrational \cite{CG,IM}.
On the~other hand, every smooth rational projective surface contains a~cylinder (see, for example,
\cite[Proposition~3.13]{KPZ11}).
In~particular, all smooth del Pezzo surfaces (two-dimensional Fano varieties) are also cylindrical.
Therefore,~one can expect that all rational Fano varieties are cylindrical.
However, the~following example shows that this is not the~case:

\begin{example}
\label{example:dP1-1-D4}
Let $X$ be a~hypersurface of degree $6$ in $\PP(1,1,2,3)$ that is given by
$$
x_3^2=x_2(x_2+x_0x_1)(x_2+\lambda x_0x_1),
$$
for some $\lambda\in\Bbbk\setminus\{0,1\}$, where $x_0$, $x_1$, $x_2$ and $x_3$
are  coordinates of weights $1$, $1$, $2$ and $3$ respectively.
Then $X$ is a~del Pezzo surface that has exactly two Du Val singular points of type $\mathrm{D}_4$,
it is rational,  has Picard number $1$, and does not contain cylinders by \cite[Theorem~1.5]{CPW16b}, see also Theorem~\ref{theorem:dP-du-Val-cylinders} of the present survey and its proof.
\end{example}

The surface in Example~\ref{example:dP1-1-D4} is singular.
There are other examples of singular non-cylindrical rational surfaces
(see Examples~\ref{example:dP1-2A32A1-4A2}, \ref{example:Ueno-Campana}, \ref{example:Kollar} below).
What about \emph{smooth} rational varieties?

\begin{question}
\label{question:main}
Does every smooth rational Fano variety contain a~cylinder?
\end{question}

We do not know the~answer to this question even in dimension three despite the~fact
that smooth three-dimensional Fano varieties (Fano threefolds) are completely classified and well studied \cite{IP99}.
Nevertheless, we believe that the~answer to Question~\ref{question:main} is negative (see Conjectures~\ref{conjecture:g-9-10} and \ref{conjecture:g-7}).
In fact, we do not know the~answer to the~following generalization of Question~\ref{question:main}:

\begin{question}[\cite{CDP18}]
\label{question:cylinders}
Is it true that any smooth rational variety is cylindrical?
\end{question}

A cylindrical variety $X$ is birationally equivalent to a~product $\A^1\times Z$.
Thus, if $X$ is rationally connected, then $Z$ is also rationally connected.
In particular, if $X$ is a cylindrical Fano threefold with Kawamata log terminal singularities, then $X$ must be rational \cite{Zhang2006}.
Moreover, we have

\begin{proposition}
\label{prop:cyl-ruled}
Let $X$ be a~cylindrical smooth Fano variety with $\uprho(X)=1$.
Then $X$ is birational to the~product $Y\times \A^2$ for some rationally connected variety $Y$.
\end{proposition}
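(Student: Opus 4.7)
The plan is to use the cylinder $U\cong\A^1\times Z$ to exhibit $X$ birationally as a $\mathbb{P}^1$-fibration with a section over a projective model of $Z$, and then to leverage the hypothesis $\uprho(X)=1$ to extract a second $\A^1$-factor from the base.

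For the first step, $X$ is rationally connected (being a smooth Fano variety), so the open subset $\A^1\times Z\subset X$ is rationally connected, and therefore so is $Z$. I would fix a smooth projective compactification $\bar{Z}$ of $Z$, extend the second projection $U\to Z\hookrightarrow \bar{Z}$ to a rational map $\bar{\pi}\colon X\dashrightarrow \bar{Z}$, and resolve its indeterminacy to obtain a morphism $\tilde{\pi}\colon \tilde{X}\to \bar{Z}$ from a birational model $\tilde{X}$ of $X$, whose generic fiber is $\mathbb{P}^1$ (the completion of an $\A^1$-fiber of $\pi\colon U\to Z$). The subvariety $\{0\}\times Z\subset U$ provides a rational section of $\bar{\pi}$, so $\tilde{X}$, and hence $X$, is birational to $\bar{Z}\times\mathbb{P}^1$. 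By Graber--Harris--Starr, $\bar{Z}$ is rationally connected.

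The step I expect to be the main obstacle is to show that $\bar{Z}$ is in turn birational to a product $Y\times\A^1$ for some rationally connected variety $Y$ --- equivalently, that $\bar{Z}$ is itself cylindrical. Once this is in hand, the desired conclusion follows from the chain
\[
X\;\sim\;\bar{Z}\times\mathbb{P}^1\;\sim\;Y\times\A^1\times\mathbb{P}^1\;\sim\;Y\times\A^2.
\]
This is the step at which the hypothesis $\uprho(X)=1$ should enter. Since $U$ is an affine open in the smooth projective variety $X$, its complement $X\setminus U$ has pure codimension one, and every irreducible component of this boundary is an ample divisor on $X$ (on a smooth Fano variety of Picard number one, every nonzero effective divisor class is ample). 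The strong positivity of the boundary, combined with the fibration $\tilde{\pi}$, is meant to produce an $\A^1$-factor on $\bar{Z}$. In the base case $\dim X=3$ this is immediate: $\bar{Z}$ is then a smooth projective rationally connected surface, hence rational by Castelnuovo, and hence cylindrical by the standard result on cylinders in smooth rational surfaces recalled in the introduction. In higher dimension one must work harder, either by a more delicate intersection-theoretic analysis of the components of $X\setminus U$ against $\tilde{\pi}$, or by exploiting additional $\Ga$-actions that the ampleness of the boundary and the condition $\uprho(X)=1$ should force to exist on $X$.
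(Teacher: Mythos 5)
Your first step is fine and coincides with the paper's setup: since $Z\times\A^1$ is simultaneously an open subset of $X$ and of $\overline{Z}\times\PP^1$, one gets for free that $X$ is birational to $\overline{Z}\times\PP^1$ (your detour through resolving the fibration and Graber--Harris--Starr is harmless but unnecessary: $\overline{Z}$ is dominated by the rationally connected proper variety $X$, which already gives its rational connectedness). The problem is the step you yourself flag as the main obstacle: producing the extra $\A^1$-factor on $\overline{Z}$ in dimension $\geqslant 4$. As written, your argument only covers $\dim X=3$ (Castelnuovo), and the two suggested escape routes for higher dimensions (a finer intersection-theoretic analysis of the boundary, or forcing $\Ga$-actions on $X$ from ampleness of the boundary) are not developed and do not obviously work; in particular, the existence of cylinders does not in general force $\Ga$-actions or polar cylinders, which is precisely one of the subtleties this survey is about. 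So the proposal, as it stands, does not prove the proposition; it proves it only for threefolds. Note also that what is needed is only that $\overline{Z}$ be ruled (birational to $Y\times\A^1$), which is a priori weaker than your parenthetical reformulation that $\overline{Z}$ be cylindrical.

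The missing idea, which is exactly how the paper closes this gap uniformly in all dimensions, is to look at the divisor at infinity $D=(\overline{Z}\times\PP^1)\setminus(\overline{Z}\times\A^1)\cong\overline{Z}$ and the birational map $\psi\colon\overline{Z}\times\PP^1\dasharrow X$ induced by the open embedding $Z\times\A^1\subset X$. Using $\uprho(X)=1$ (so that every nonzero effective divisor on $X$ is ample), the paper shows that $D$ must be $\psi$-exceptional, i.e.\ contracted by $\psi$. Then one invokes the classical fact that a divisor contracted by a birational map onto a \emph{smooth} variety is ruled: hence $D\cong\overline{Z}$ is birational to $Y\times\A^1$ for some variety $Y$, and consequently
$$
X\;\sim\;\overline{Z}\times\PP^1\;\sim\;Y\times\A^1\times\PP^1\;\sim\;Y\times\A^2,
$$
with $Y$ rationally connected because $X$ is. This replaces the step you left open: no rationality or cylindricity of $\overline{Z}$ is needed, only the ruledness supplied by the contraction of $D$, and this is where the hypothesis $\uprho(X)=1$ actually enters the paper's proof.
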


\begin{proof}
Let $U$ be a cylinder in the~Fano variety $X$. Then $U\cong Z\times \A^1$ for some affine variety $Z$.
Let $\overline{Z}$ be a~projective completion of the~variety $Z$. Consider the~natural completion
$$
\overline Z\times \A^1\subset\overline Z \times \PP^1,
$$
let $D=(\overline Z \times \PP^1)\setminus (\overline Z\times \A^1)$,
and let $\psi\colon\overline Z \times \PP^1\dasharrow X$ be the~birational map induced by the~open embedding $Z\times \A^1\subset X$.
Since $\uprho(X)=1$ by assumption, the~divisor $D$ must be $\psi$-exceptional,
which implies that $D$ is birational to $Y\times \A^1$ for some variety $Y$.
Then $X$ is birational to $Y\times\A^{2}$.
Since $X$ is rationally connected (see \cite{Kollar-Miyaoka-Mori-1992b,Campana1992}), the~variety $Y$ is rationally connected as well.
\end{proof}

\begin{corollary}
\label{corollary:Fano-fourfolds}
Let $X$ be a~cylindrical smooth Fano fourfold with $\uprho(X)=1$. Then $X$ is rational.
\end{corollary}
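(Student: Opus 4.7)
The plan is to apply Proposition~\ref{prop:cyl-ruled} directly and then reduce the rationality question for $X$ to the rationality of a surface. By that proposition, $X$ is birational to $Y \times \A^2$ for some rationally connected variety $Y$. Since $\dim X = 4$ and $\dim \A^2 = 2$, the variety $Y$ must be a rationally connected surface.

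The key step is then to invoke the fact that every rationally connected smooth projective surface over an algebraically closed field of characteristic zero is rational. This follows from the classical Castelnuovo rationality criterion: a smooth projective surface $Y$ is rational if and only if $h^0(Y, 2K_Y) = h^0(Y, \Omega^1_Y) = 0$, and both vanishings are immediate consequences of rational connectedness (actually even of uniruledness combined with $h^0(\Omega^1) = 0$, which for rationally connected varieties is automatic). Strictly speaking $Y$ produced by the proposition might not be smooth projective, but one may replace it by any smooth projective model; rational connectedness is a birational invariant in characteristic zero, so the conclusion is unaffected.

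Once $Y$ is known to be rational, the product $Y \times \A^2$ is rational as well, and hence so is $X$, since $X$ is birational to $Y \times \A^2$. I do not anticipate any real obstacle: the statement is essentially a one-line consequence of the preceding proposition combined with the rationality of rationally connected surfaces, and the only thing to be slightly careful about is choosing a smooth projective model of $Y$ before invoking Castelnuovo's criterion.
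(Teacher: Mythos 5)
Your argument is correct and is exactly the (implicit) argument the paper intends: apply Proposition~\ref{prop:cyl-ruled} to write $X$ birationally as $Y\times\A^2$ with $Y$ a rationally connected surface, invoke Castelnuovo's criterion to see that $Y$ is rational, and conclude that $X$ is rational. Your remark about passing to a smooth projective model of $Y$ before applying Castelnuovo is a reasonable precaution but does not change the substance.
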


However, we do not know cylindricity of many rational smooth Fano fourfolds of Picard rank~$1$.
For instance, we do not know whether any smooth rational cubic fourfold in $\mathbb{P}^5$ is cylindrical or not (see Question~\ref{question:cubic-fourfolds} and Remark~\ref{remark:Fermat-cubics}).
Keeping in mind Corollary~\ref{corollary:Fano-fourfolds}, we ask

\begin{question}
\label{question:cylindrical-rational}
Is it true that all~cylindrical smooth Fano varieties of Picard rank one are rational?
\end{question}

In the~paper \cite{Gromov}, Gromov asked whether every smooth rational variety is uniformly rational?
Recall from  \cite{Bogomolov-Bohning,LiendoPetitjean,Petitjean} that a smooth rational variety is said to be \emph{uniformly rational} if its every point has a Zariski open neighborhood isomorphic to an open subset of the~space $\A^n$ (cf.~\cite{Bodnar2008}).
Similarly, a~smooth cylindrical projective variety is said to be \textit{uniformly cylindrical} if its every point is contained in a~(Zariski open) cylinder (see~Section~\ref{subsection:flexibility} for the~motivation and examples).
It is easy to see that all smooth rational surfaces are uniformly rational and uniformly cylindrical.
On~the~other hand, we do not know the~answer to Gromov's question for varieties of higher dimensions,
and we do not know the~answer to

\begin{question}
\label{question:Gromov-cylinders}
Is it true that any cylindrical smooth projective variety is uniformly cylindrical?
\end{question}

In  Section~\ref{section:Fanos-cylinders}, we will present several cylindrical smooth Fano threefolds whose Picard groups are generated by their anticanonical divisors.
We do not know such examples in any other dimension.
The~counter-examples to \cite[Conjecture~5.1]{Pukhlikov2004} found in~\cite{Castravet}
made us believe that such examples should exist in any dimension $\geqslant 4$.
Therefore, we pose

\begin{problem}
\label{problem:cylindrical-Fano-index-one}
Find a~cylindrical smooth Fano variety of~dimension $\geqslant 4$ whose Picard group is generated by its anticanonical divisor.
\end{problem}

One can also define cylindricity and uniform cylindricity for affine varieties in the~same way we did this for projective varieties.
Note that \cite[Definition~3.4]{KPZ11} asks that the~cylinder should be principal, that is, its complement should be a principal divisor, which is not automatic.

\begin{remark}[{cf.~Question~\ref{question:Gromov-cylinders}}]
\label{remark:Koras-Russell}
There are cylindrical smooth affine varieties that are not uniformly cylindrical.
Indeed, let $V$ be the~Koras-Russell cubic threefold in $\A^4$ that is given by
$$
x_1+x_1^2x_2+x_3^2+x_4^3=0,
$$
where $x_1$, $x_2$, $x_3$ and $x_4$ are coordinates on $\A^4$.
Then $V$ is a cylindrical smooth affine variety \cite{KorasRussell}. Since $\Pic(V)=0$,
every cylinder in $V$ is principal, that is, the complement of a principal divisor.
It follows from \cite[Corollary~4.5]{DMP2010}
that $(0,0,0,0)$ is fixed by any element of $\mathrm{Aut}(V)$,
which implies that this point is not contained in any cylinder in $V$.
Indeed, otherwise the origin would be moved by a suitable $\Ga$-action on $V$, cf. Theorem~\ref{thm:affine-cylindricity} below.
\end{remark}

Like in the~projective case, every cylindrical affine variety $X$ has negative log Kodaira dimension.
Moreover, a~smooth affine surface contains a~cylinder if and only if its log Kodaira dimension is negative \cite[Ch.~2, Theorem~2.1.1]{Miyanishi2000}, cf.~\cite{MS80}.
However, this is no longer true in higher dimensions:

\begin{example}
\label{example:DK15}
Let $X$ be a~smooth hypersurface in $\PP^n$ of degree $n\geqslant 3$.
 Then $\PP^n\setminus X$ is a~smooth affine $n$-fold of negative Kodaira dimension that does not contain cylinders \cite{CDP18, DK15}.
\end{example}

The problem of existence of cylinders in projective varieties is closely related to
unipotent actions on the~affine cones over them.
To illustrate this link, consider the~following

\begin{question}[{\cite[Question~2.22]{FZ03}}]
\label{question:FZ}
Let $V$ be the~affine cone in $\A^4$ over the~Fermat cubic surface, which is given by
$$
x_1^3+x_2^3+x_3^3+x_4^3=0,
$$
where $x_1$, $x_2$, $x_3$ and $x_4$ are coordinates on $\A^4$.
Does $V$ admit an~effective $\Ga$-action?
\end{question}

The answer to this question is negative \cite{CPW16a}, see also \cite[Theorem~7.1]{DFMJ17} for a~purely algebraic proof.
The geometric proof of this fact is based on the~following result:

\begin{theorem}[{\cite[Proposition~3.1.5]{KPZ11}}]
\label{thm:affine-cylindricity}
An affine variety $V$ admits an~effective $\Ga$-action if and only if $V$ contains
a principal effective divisor $D$ such that $V\setminus \Supp(D)$ is a~cylinder.
\end{theorem}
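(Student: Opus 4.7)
The plan is to pass to the standard equivalence between effective $\Ga$-actions on an affine variety $V=\Spec A$ and nonzero locally nilpotent derivations (LNDs) $\partial$ on $A=\Bbbk[V]$, and then produce the two implications by transporting this dictionary through a principal localization of $A$.

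For the forward direction, suppose $\partial$ is a nonzero LND on $A$. The approach is to use a local slice. I would pick any $f\in A$ with $\partial f\neq 0$ and $\partial^2 f=0$ (which exists by applying $\partial$ to any element on which it acts nontrivially until the result lies in $\Ker(\partial)\setminus\{0\}$ and then backing up one step). Set $h:=\partial f\in \Ker(\partial)$ and $s:=f/h\in A_h$, so that $\partial s=1$. Since $s$ is an honest slice on $A_h$, the Slice Theorem identifies $A_h$ with the polynomial ring $(\Ker\partial)_h[s]$ over the localized ring of invariants. Geometrically this means $V_h\cong Z\times\A^1$ with $Z=\Spec(\Ker\partial)_h$, and $D:=\div(h)$ is an effective principal divisor on $V$ whose support complement is exactly $V_h$.

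For the reverse direction, assume $D=\div(h)$ is effective and principal, and $V_h:=V\setminus\Supp(D)\cong Z\times\A^1$. On $A_h\cong \Bbbk[Z][t]$ the translation $\Ga$-action on the $\A^1$-factor corresponds to the LND $\partial_0:=\partial/\partial t$. A key observation is that $h$, being invertible on $V_h$, has image in $A_h$ equal to a unit of $\Bbbk[Z][t]$; since $V$ (hence $Z$) is reduced, such units lie in $\Bbbk[Z]\subset\Ker(\partial_0)$, so $\partial_0 h=0$. Choosing finitely many generators $a_1,\dots,a_r$ of $A$ and writing each $\partial_0 a_i\in A_h$ with bounded denominator in $h$, one can pick $N\gg 0$ so that $\partial:=h^N\partial_0$ carries every $a_i$ into $A$; hence $\partial$ restricts to a nonzero derivation of $A$. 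The identity $\partial_0 h=0$ yields, by induction, the clean formula $\partial^m=h^{mN}\partial_0^m$, so local nilpotency of $\partial_0$ propagates to $\partial$, producing the desired $\Ga$-action on $V$.

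The main obstacle is really this extension step in the reverse direction: the naive derivation $\partial/\partial t$ lives only on the cylinder $V_h$, and multiplying by a power of $h$ could in principle destroy local nilpotency. What saves the argument is that principality of $D$ forces $h$ to be a unit on the cylinder, hence to lie in $\Ker(\partial_0)$. This is precisely where the hypothesis ``principal'' is used, and it explains why a bare open cylinder embedding in an affine variety need not give rise to a global $\Ga$-action, as already illustrated by the Koras--Russell threefold in Remark~\ref{remark:Koras-Russell}.
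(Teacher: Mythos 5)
Your proof is correct: the forward implication via a local slice (take $f$ with $\partial f\neq 0$, $\partial^2f=0$, set $h=\partial f\in\Ker(\partial)$, $s=f/h$, and apply the Slice Theorem to get $A_h\cong B[s]$ with $B$ finitely generated since $B\cong A_h/(s)$), and the converse by observing that $h$ is a unit of $\Bbbk[Z][t]$, hence lies in $\Bbbk[Z]$ by reducedness, so that $\partial:=h^{N}\,\partial/\partial t$ preserves $A$ and satisfies $\partial^{m}=h^{mN}(\partial/\partial t)^{m}$, which gives local nilpotency. The survey does not prove this statement but cites \cite[Proposition~3.1.5]{KPZ11}, whose argument is exactly this localization/slice-theorem dictionary between $\Ga$-actions and locally nilpotent derivations, so your route is essentially the same as the source's.
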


Using this criterion, we can formulate the~corresponding criterion for projective varieties,
which requires the~following refined notion of cylindricity:

\begin{definition}
\label{definition:polar-cylinder}
Let $X$ be a~projective normal variety that contains a~Zariski open cylinder $U$,
and let $H$ be an~ample $\mathbb{Q}$-Cartier $\mathbb{Q}$-divisor on $X$.
The cylinder $U$ is said to be \emph{$H$-polar} if
$$
U=X\setminus \Supp(D)
$$
for some effective $\mathbb{Q}$-divisor $D$ on the~variety $X$ such that $D\sim_{\mathbb{Q}} H$.
\end{definition}

Now, we are in position to state the~following criterion discovered in \cite{KPZ13}.

\begin{theorem}
\label{theorem:criterion}
Let $X$ be a~projective normal variety, let $H$ be an~ample Cartier divisor on it, let
$$
V=\Spec\Bigg(\bigoplus_{n\geqslant 0} H^0\Big(\mathcal{O}_X\big(nH\big)\Big)\Bigg).
$$
Then $V$ admits an~effective $\Ga$-action $\iff$ $X$ contains an~$H$-polar cylinder.
\end{theorem}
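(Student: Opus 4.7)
The plan is to reduce both implications to Theorem~\ref{thm:affine-cylindricity} applied to $V$, using the natural $\Gm$-action on $V$ coming from the grading
$$
R=\bigoplus_{n\geqslant 0}R_n,\qquad R_n:=H^0\bigl(X,\mathcal{O}_X(nH)\bigr).
$$
The projection $\pi\colon V\setminus\{0\}\to X$ is a principal $\Gm$-bundle, so $\Gm$-stable principal affine opens $V_f\subset V$ (for $f\in R_m$ homogeneous of positive degree $m$) are precisely preimages of complements $X\setminus\Supp(\div(f))$ of effective divisors linearly equivalent to $mH$. This dictionary translates $H$-polarity on $X$ into principal $\Gm$-invariant cylindricity on $V$.

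For the implication $(\Leftarrow)$: given an $H$-polar cylinder $U=X\setminus\Supp(D)\cong\A^1\times Z$, I would choose $m\geqslant 1$ with $mD\sim mH$ Cartier and $f\in R_m$ with $\div(f)=mD$. Then $V_f=\pi^{-1}(U)$ and $(R_f)_0=\mathcal{O}(U)=\mathcal{O}(Z)[t]$. I would first extend $\partial/\partial t$ to a $\Gm$-weight-zero LND $\delta$ on $R_f$ with $\delta(f)=0$, using that multiplication by $f$ identifies $(R_f)_n\xrightarrow{\sim}(R_f)_{n+m}$ and commutes with $\delta$ by Leibniz. Then clearing denominators: for $N\gg 0$, $f^N\delta$ preserves $R\subset R_f$ (by finite generation of $R$), and the identity $(f^N\delta)^k=f^{Nk}\delta^k$ (valid since $\delta(f)=0$) shows $f^N\delta$ is an LND on $R$, yielding the required effective $\Ga$-action on $V$.

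For the implication $(\Rightarrow)$: given a nonzero LND $\partial$ on $R$, write $\partial=\sum_i\partial_i$ as its $\Gm$-homogeneous decomposition. The top-degree component $\partial_d$ is again a nonzero LND, because the top-degree part of $\partial^n(a)$ equals $\partial_d^n(a)$, which must vanish for large $n$. Replacing $\partial$ by $\partial_d$, assume $\partial$ is homogeneous and let $A:=\ker\partial$ (a graded subring). Choosing a homogeneous $a\in R$ with $s:=\partial(a)\in A\setminus\{0\}$ and $\partial^2(a)=0$ yields $R[s^{-1}]=A[s^{-1}][a]$ as a polynomial algebra, with the $\Ga$-action being $a\mapsto a+ts$. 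Writing $s\in R_m$, we have $\div(s)\sim mH$, so $U:=X\setminus\Supp(\div(s))$ is $H$-polar. A graded-ring analysis — rescaling the $\A^1_a$-factor by a homogeneous unit of $A[s^{-1}]$ of appropriate weight, arranged after a cyclic cover and descent if necessary — identifies
$$
U=V_s\sslash\Gm\cong\Spec\bigl((A[s^{-1}])_0\bigr)\times\A^1
$$
as a cylinder.

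The main obstacle is in direction $(\Leftarrow)$: extending $\delta$ from the principal open $V_f$ (where it is naturally defined by the cylinder structure on $U$) to a genuine LND on the whole cone $V$. The key technical point is obtaining a uniform bound on the $f$-pole orders of $\delta$ on a finite generating set of $R$, and the condition $\delta(f)=0$ is exactly what makes $f^N\delta$ simultaneously a derivation and locally nilpotent after denominators are cleared. The reductions in $(\Rightarrow)$ are more formal; the only delicate step is the concluding cylinder identification, where a cyclic cover may be needed when $A[s^{-1}]$ does not contain a homogeneous unit of the correct weight directly.
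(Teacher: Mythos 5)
First, note that the survey does not prove Theorem~\ref{theorem:criterion} at all: it is quoted from \cite{KPZ13} (with the homogeneous case going back to \cite[Theorem~3.1.9]{KPZ11}). So the comparison is with that reference, whose overall strategy — homogenize the locally nilpotent derivation via an extreme homogeneous component, analyse kernel and slice for one direction, and lift the translation along the $\A^1$-factor and clear denominators for the other — is indeed the one you outline. Your reduction of $(\Rightarrow)$ to a homogeneous $\partial$ via the top-degree component is correct and is exactly the key new point of \cite{KPZ13}.

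However, both of the places you flag as ``delicate'' are genuine gaps, and they are precisely where the content of the theorem sits. In $(\Leftarrow)$, the multiplication-by-$f$ observation only relates the graded pieces $(R_f)_n$ and $(R_f)_{n+m}$, so it defines nothing on degrees outside $m\ZZ$; a weight-zero derivation of $R_f$ extending $\partial/\partial t$ with $\delta(f)=0$, and which is moreover locally nilpotent on \emph{every} graded piece, does not come for free from Leibniz. The honest input is geometric: $\pi^{-1}(U)\to U$ is the $\Gm$-torsor of $\OOO_X(H)|_U$, and since $\operatorname{Pic}(Z\times\A^1)\cong\operatorname{Pic}(Z)$ this torsor is pulled back from $Z$, so the translation field on the $\A^1$-factor lifts to a $\Gm$-invariant locally nilpotent field on $V_f$ annihilating $f$; only then does your (correct) clearing-of-denominators step $f^N\delta$ apply. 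In $(\Rightarrow)$, the concluding identification of $U=X\setminus\Supp(\div(s))$ with a product $Z\times\A^1$ is the crux, and ``rescaling by a homogeneous unit, after a cyclic cover and descent if necessary'' is not an argument: $(R_s)_0=\bigoplus_{i\geqslant 0}(A_s)_{-ie}\,a^i$ is a priori only an $\A^1$-fibration over $\Spec\big((A_s)_0\big)$, trivialized only when $A_s$ contains a homogeneous unit of degree $-\deg(a)$, and producing such a unit (or a better choice of homogeneous kernel element than $s$) is exactly what \cite{KPZ13} has to work for. Finally, your recipe breaks in the case $\deg(s)=0$, which really occurs (e.g.\ $X=\PP^n$, $R=\Bbbk[x_0,\dots,x_n]$, $\partial=\partial/\partial x_n$ gives $s\in\Bbbk^*$): then $\div(s)=0$, $U=X$ is not affine and is not $H$-polar, so this degenerate case needs a separate treatment. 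As it stands the proposal is a correct roadmap of the known proof but does not close these steps.
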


\begin{corollary}
\label{corollary:rational-surfaces}
Let $X$ be a~smooth rational projective surface. Then there is an~embedding~\mbox{$X\hookrightarrow\PP^n$} such that the~affine cone in $\A^{n+1}$ over $X$ admits an~effective $\Ga$-action.
\end{corollary}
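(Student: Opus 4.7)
The plan is to apply Theorem~\ref{theorem:criterion}. Since $X$ is a smooth rational projective surface, by \cite[Proposition~3.13]{KPZ11} it contains a cylinder $U\cong \A^1\times Z$; in particular, $U$ is an affine open subset of $X$. To invoke the criterion, I must exhibit a very ample Cartier divisor $H$ on $X$ together with an effective divisor $D\sim_\QQ H$ whose support equals $X\setminus U$, so that $U$ becomes $H$-polar in the sense of Definition~\ref{definition:polar-cylinder}. Granted such an $H$, the linear system $|H|$ embeds $X\hookrightarrow \PP^n$, the resulting affine cone in $\A^{n+1}$ is precisely the cone $V$ of Theorem~\ref{theorem:criterion}, and the conclusion follows.

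The core step is the following classical fact (a version of Goodman's criterion for affineness): since $U$ is affine, its complement $X\setminus U$ has pure codimension one, say $X\setminus U=D_1\cup\dots\cup D_k$, and one can find positive integers $a_1,\dots,a_k$ such that $H_0:=\sum a_iD_i$ is an \emph{ample} effective Cartier divisor. On a smooth projective surface this is routine via the Nakai--Moishezon criterion: the affineness of $U$ translates, after some elementary intersection-theoretic manipulation, into the existence of a positive combination of the $D_i$ with positive self-intersection and positive intersection with every irreducible curve on $X$. Replacing $H_0$ by a sufficiently large multiple $H=mH_0$, I may assume that $H$ is very ample while retaining $\Supp(H)=X\setminus U$; thus $U$ is an $H$-polar cylinder.

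Embedding $X\hookrightarrow \PP^n$ via $|H|$ and invoking Theorem~\ref{theorem:criterion} then gives the required effective $\Ga$-action on the affine cone in $\A^{n+1}$. The only real subtlety lies in producing the ample divisor $H_0$ supported on the cylinder boundary, i.e.\ upgrading an arbitrary effective combination of the boundary components to an ample one. For surfaces this is settled by the Nakai--Moishezon criterion together with the affineness of $U$; the fact that the analogue of this step is far more delicate in higher dimensions is presumably the reason the corollary is formulated only for surfaces.
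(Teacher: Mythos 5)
Your strategy (arbitrary cylinder plus Theorem~\ref{theorem:criterion}) is viable and genuinely different from the argument the corollary implicitly relies on, but the step you dismiss as ``routine'' is the whole mathematical content, and as written it is a gap. What you must produce is, for the boundary curves $D_1,\dots,D_k$ of the affine open set $U$, coefficients $a_i>0$ with $\sum a_iD_i$ ample. Nakai--Moishezon does reduce this to $(\sum a_iD_i)\cdot D_j>0$ for every $j$: positivity against any curve $C$ not among the $D_i$ is automatic, since $C$ must meet the boundary ($U$ is affine) and meets each $D_i$ non-negatively, and then $(\sum a_iD_i)^2>0$ follows as well. But the existence of such $a_i$ --- positivity against the boundary curves themselves --- is exactly the delicate point and does not follow from ``elementary intersection-theoretic manipulation''. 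A correct proof needs at least: (i) the reduced boundary $D$ is big, because $\mathcal{O}(U)=\bigcup_m H^0(X,\mathcal{O}_X(mD))$ and $U$ is an affine surface, so sections of some $mD$ give a map birational onto its image; (ii) the boundary of an affine open subset is connected; (iii) a Zariski-lemma/Hodge-index (or cone-duality) argument: if no positive combination were ample, separating the cone $\{\sum a_iD_i,\ a_i>0\}$ from the ample cone in $N^1(X)_{\RR}$ yields a nonzero pseudo-effective class meeting every $D_i$ non-positively; its negative part is then forced (using (ii) and affineness) to be a positive combination of \emph{all} the $D_i$ with non-positive intersection against each of them, so the intersection matrix of the $D_i$ is negative semi-definite, contradicting (i) via the Zariski decomposition of $D$. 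Be careful, too, with the appeal to ``Goodman's criterion'': Goodman's general theorem only gives an ample boundary \emph{after blowing up points of the boundary}, which would change $X$; the no-blow-up statement you need is special to surfaces and requires precisely the argument sketched above (or an exact citation of that surface case). A minor further point: to identify the affine cone over the image of $X\hookrightarrow\PP^n$ with $\operatorname{Spec}\bigl(\bigoplus_m H^0(\mathcal{O}_X(mH))\bigr)$ in Theorem~\ref{theorem:criterion}, take the multiple large enough that the embedding is projectively normal, not merely very ample.

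For comparison, the standard argument behind the corollary (going back to \cite{KPZ11}) avoids this general boundary-ampleness statement by constructing the cylinder and the polarization simultaneously: choose a birational morphism $\pi\colon X\to S$ onto $\PP^2$ or $\FF_n$, take a cylinder on $S$ as in Lemma~\ref{lemma:Fn} (a section plus finitely many fibres, chosen so that all centres blown up by $\pi$ lie over the boundary); then $U=\pi^{-1}(\text{cylinder})$ is a cylinder in $X$ whose complement is the full preimage of the boundary, and $\pi^*A-\sum_j\epsilon_jE_j$, with $A$ ample and supported on that boundary and $0<\epsilon_j\ll 1$, is an ample $\QQ$-divisor supported exactly on $X\setminus U$, so $U$ is $H$-polar in the sense of Definition~\ref{definition:polar-cylinder} (compare the proof of Lemma~\ref{lemma:KPZ}). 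Your route, once the missing step is actually proved, gives the stronger statement that \emph{every} cylinder in a smooth projective surface is $H$-polar for some ample $H$; the constructive route is shorter and is what the corollary is really resting on.
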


\begin{corollary}
\label{corollary:criterion}
Let $X$ be a~projective normal variety in $\PP^n$ whose divisor class group is of rank~$1$.
Then the~affine cone in $\A^{n+1}$ over $X$ admits an~effective $\Ga$-action $\iff$ $X$ is cylindrical.
\end{corollary}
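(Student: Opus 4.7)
The plan is to deduce this corollary directly from Theorem~\ref{theorem:criterion} applied with $H$ taken to be the hyperplane section class $\OOO_X(1)$ coming from the given embedding $X\hookrightarrow\PP^n$. Under this choice, the graded ring $\bigoplus_{n\geqslant 0}H^0(\OOO_X(nH))$ is (at least up to normalization) the homogeneous coordinate ring of $X$, and its $\Spec$ is the affine cone in $\A^{n+1}$. So both sides of the equivalence in Theorem~\ref{theorem:criterion} translate cleanly into statements about the cone in $\A^{n+1}$, and the only thing to verify is that, under the rank-one hypothesis on $\Cl(X)$, an arbitrary cylinder in $X$ is automatically $H$-polar.

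The ``$\Leftarrow$'' direction is the free one: if the affine cone admits an effective $\Ga$-action, Theorem~\ref{theorem:criterion} directly supplies an $H$-polar cylinder in $X$, which in particular makes $X$ cylindrical; the rank-one hypothesis is not used here.

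For the ``$\Rightarrow$'' direction, suppose $U=X\setminus \Supp(D)$ is a cylinder, where $D$ is the reduced effective divisor supported on $X\setminus U$. Since $U$ is a proper Zariski open subset of $X$, the divisor $D$ is non-zero. Here is where the hypothesis enters: because $\Cl(X)$ has rank one and $H$ is ample, every non-zero effective divisor is $\QQ$-linearly equivalent to a positive multiple of $H$. Thus $D\sim_\QQ cH$ for some $c>0$. Rescaling, the effective $\QQ$-divisor $D':=\tfrac{1}{c}D$ satisfies $D'\sim_\QQ H$ and has the same support as $D$, so $U=X\setminus\Supp(D')$ is an $H$-polar cylinder in the sense of Definition~\ref{definition:polar-cylinder}. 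Theorem~\ref{theorem:criterion} then furnishes an effective $\Ga$-action on $V$, i.e.\ on the affine cone.

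The only potentially delicate point is the identification of the affine cone in $\A^{n+1}$ with the $V$ appearing in Theorem~\ref{theorem:criterion}: the former is $\Spec$ of the homogeneous coordinate ring of $X\subset\PP^n$, while the latter is $\Spec$ of the section ring $\bigoplus_n H^0(\OOO_X(nH))$. When the chosen embedding is projectively normal these coincide, and otherwise one passes to a sufficiently high Veronese re-embedding (or to the normalization of the cone) without affecting the existence of a $\Ga$-action or of an $H$-polar cylinder. Aside from this bookkeeping, the argument is just ``rank one promotes cylinders to $H$-polar cylinders, then apply Theorem~\ref{theorem:criterion}.''
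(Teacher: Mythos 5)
Your proof is correct and is essentially the derivation the paper intends: apply Theorem~\ref{theorem:criterion} with $H$ the hyperplane class of the (projectively normal) embedding, and use the rank-one hypothesis on $\operatorname{Cl}(X)$ to rescale the complement of any cylinder into an effective $\mathbb{Q}$-divisor $\mathbb{Q}$-linearly equivalent to $H$, the converse direction being immediate. The only step you leave implicit is that $X\setminus U$ has pure codimension one (so that it really is the support of a divisor), which holds because $U\cong\A^1\times Z$ is affine and $X$ is normal; with that remark the argument matches the intended one.
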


\begin{remark}
\label{remark:Gorenstein}
Let $X$, $H$ and $V$ be as in Theorem~\ref{theorem:criterion}.
If $V$ is $\QQ$-Gorenstein and admits an~effective action of the~additive group $\Ga$,
then $X$ is a Fano variety and $H\sim_{\QQ} -\lambda K_X$ for some $\lambda\in\mathbb{Q}_{>0}$ \cite[(3.18)]{KPZ11}.
This explains our primary interest in the~affine cones over Fano varieties.
\end{remark}

The problem of existence of an~effective $\Ga$-action on affine varieties is interesting on its own.
If~an affine variety $V$ admits a non-trivial $\Ga$-action and $\mathrm{dim}(V)\geqslant 2$,
then $\mathrm{Aut}(V)$ is infinite dimensional and non-algebraic \cite{Fr06}.
On the~other hand, if it does not admit non-trivial $\Ga$-actions,
then $\mathrm{Aut}(V)$ contains a unique maximal torus~$\mathbb{T}$,
and $\mathrm{Aut}(V)$ is an extension of its centralizer by a discrete subgroup in $\GL_r(\mathbb{Z})$ (see \cite{AG17} for details).

\begin{example}
\label{example:cones-in-A3}
Let $V$ be the~Pham--Brieskorn surface in $\A^3$, which is given by
$$
x_1^{a_1}+x_2^{a_2}+x_3^{a_3}=0,
$$
where $a_1$, $a_2$, $a_3$ are integers such that $2\leqslant a_1\leqslant a_2\leqslant a_3$, and $x_1$, $x_2$, $x_3$ are coordinates on~$\A^3$.
By~\cite[Lemma~4]{KZ00}, the~affine variety $V$ admits an~effective $\Ga$-action $\iff$ $a_1=a_2=2$.
\end{example}

Affine varieties that do not admit effective $\Ga$-actions are often called \emph{rigid} \cite{Ar18a,Ar18b,BorovikGaifullin,Ga19,Fr06,KZ00}.
Applying \cite[Corollary~2.1.4]{KPZ11} and \cite[Proposition~4.1]{AG17} to affine cones over projective varieties,
we~obtain the~following result:

\begin{theorem}
\label{theorem:Arzhantsev}
Let $V$ be the~affine cone in $\A^{n+1}$ over a~projectively normal subvariety $X\subset \PP^n$.
Suppose that $V$ is rigid and $\mathrm{Aut}(X)$ is finite.
Then there exists an~exact sequence of groups
$$
1\longrightarrow\Gm\longrightarrow\Aut(V)\longrightarrow\mathrm{Aut}(X),
$$
so that $\mathrm{Aut}(V)$ is a~finite extension of the~torus $\Gm$ by a~finite subgroup in $\Aut(X)$.
\end{theorem}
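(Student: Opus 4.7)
The plan is to identify the unique maximal torus of $\Aut(V)$ with the scaling $\Gm$ coming from the cone structure, and then to descend the whole of $\Aut(V)$ to $\Aut(X)$ through its action on the graded coordinate ring.

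First I would record the positive grading
$$
A = \OOO(V) = \bigoplus_{n\geqslant 0} H^0\bigl(X,\OOO_X(nH)\bigr),
$$
with its distinguished scaling $\Gm \hookrightarrow \Aut(V)$. Projective normality of $X \subseteq \PP^n$ yields $X = \Proj(A)$, so every graded ring automorphism of $A$ descends to an element of $\Aut(X)$. A graded automorphism acting trivially on $X$ must rescale each $A_n$ by a scalar $\lambda_n$ with $\lambda_{n+m} = \lambda_n \lambda_m$, which forces $\lambda_n = \lambda^n$ for a single $\lambda \in \Gm$; this is the content of \cite[Corollary~2.1.4]{KPZ11} and will be invoked twice below.

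Next I would apply \cite[Proposition~4.1]{AG17}: rigidity of $V$ provides a unique maximal torus $\mathbb{T} \subseteq \Aut(V)$, normalized by every element of $\Aut(V)$, and $\Gm \subseteq \mathbb{T}$ by maximality. Since $\mathbb{T}$ is abelian it commutes with $\Gm$, acts on $A$ in a graded way, and descends to a faithful action of $\mathbb{T}/\Gm$ on $X$ (faithfulness by the kernel computation of the previous paragraph). But a torus inside the finite group $\Aut(X)$ is trivial, forcing $\mathbb{T} = \Gm$. Consequently every $\phi \in \Aut(V)$ normalizes $\Gm$; an inverting conjugation would be forced to send $A_n$ into $A_{-n} = 0$ for $n > 0$, an impossibility, so $\phi$ centralizes $\Gm$, preserves the grading, and descends to an element of $\Aut(X)$. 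A final invocation of the kernel computation identifies the kernel of $\Aut(V) \to \Aut(X)$ with $\Gm$, while the image lies in the finite group $\Aut(X)$ and is therefore automatically finite.

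The main obstacle is sequencing the argument correctly: one must avoid using the descent map $\Aut(V) \to \Aut(X)$ on the full group before establishing $\mathbb{T} = \Gm$, since well-definedness of that descent requires centralization of $\Gm$, which in turn is deduced from $\mathbb{T} = \Gm$. The resolution is to first apply the kernel computation only to the abelian subgroup $\mathbb{T}$, where commutativity with $\Gm$ is automatic; this pins down $\mathbb{T}$ as the scaling torus, after which the positive-grading argument bootstraps centralization to all of $\Aut(V)$.
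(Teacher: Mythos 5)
Your argument is correct and follows exactly the route the paper indicates: the paper derives this theorem by applying \cite[Corollary~2.1.4]{KPZ11} (graded automorphisms descend to $\Aut(X)$ with kernel the scaling $\Gm$) together with \cite[Proposition~4.1]{AG17} (rigidity gives a unique maximal torus normalized by all of $\Aut(V)$), which are precisely the two ingredients you combine. Your care about establishing $\mathbb{T}=\Gm$ before descending the full automorphism group is a sound way of filling in the details the paper leaves implicit.
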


In particular, combining this result with the~negative answer to Question~\ref{question:FZ}, we obtain

\begin{corollary}
\label{corollary:Fermat-cubic}
If $V$ is the~affine hypersurface from Question~\ref{question:FZ}, then  $\Aut(V)=\Gm\times(\mumu_3^3\rtimes \mathfrak{S}_4)$.
\end{corollary}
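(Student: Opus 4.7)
The plan is to apply Theorem~\ref{theorem:Arzhantsev} to the Fermat cubic surface $X\subset\PP^3$, using the negative answer to Question~\ref{question:FZ} to supply rigidity of $V$, and then to identify $\Aut(V)$ with the asserted direct product by writing down enough explicit linear symmetries of~$V$.

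First I would verify the hypotheses of Theorem~\ref{theorem:Arzhantsev}. Being a smooth hypersurface in $\PP^3$, the Fermat cubic $X$ is projectively normal. The negative answer to Question~\ref{question:FZ} obtained in \cite{CPW16a} (see also \cite{DFMJ17}) states exactly that $V$ admits no effective $\Ga$-action, so $V$ is rigid. Finally, $\Aut(X)$ is finite: since $-K_X$ coincides with the restriction of a hyperplane to $X$, every automorphism of $X$ preserves $|-K_X|$ and is therefore induced by an element of $\PGL_4(\Bbbk)$, so $\Aut(X)$ equals the projective stabilizer of the cubic form $x_1^3+x_2^3+x_3^3+x_4^3$. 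A short direct computation (an element must send each monomial $x_i^3$ to another such monomial up to a cube root of unity) identifies this stabilizer with $\mumu_3^3\rtimes\mathfrak{S}_4$, where $\mathfrak{S}_4$ permutes the coordinates and $\mumu_3^3$ is the quotient of the diagonal group $\{(\zeta_1,\zeta_2,\zeta_3,\zeta_4):\zeta_i^3=1\}$ modulo the diagonal scalar subgroup. Theorem~\ref{theorem:Arzhantsev} then yields an exact sequence $1\to\Gm\to\Aut(V)\to\mumu_3^3\rtimes\mathfrak{S}_4$.

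Next I would exhibit an explicit splitting to show that the sequence above is a direct product. Every element of $\mumu_3^4\rtimes\mathfrak{S}_4\subset\GL_4(\Bbbk)$ preserves the cubic form $x_1^3+\cdots+x_4^3$, hence acts on $V$, and the scalar $\Gm$-action commutes with all of these linear symmetries. Since the diagonal copy of $\mumu_3\subset\mumu_3^4$ already sits inside $\Gm$, the subgroup of $\Aut(V)$ generated by the $\Gm$-action together with $\mumu_3^4\rtimes\mathfrak{S}_4$ equals
\[
\Gm\cdot(\mumu_3^4\rtimes\mathfrak{S}_4)=\Gm\times\bigl((\mumu_3^4/\mathrm{diag})\rtimes\mathfrak{S}_4\bigr)=\Gm\times(\mumu_3^3\rtimes\mathfrak{S}_4).
\]
This subgroup already surjects onto $\Aut(X)$, which by Theorem~\ref{theorem:Arzhantsev} is the maximal possible image of $\Aut(V)$, so both inclusions are equalities and $\Aut(V)=\Gm\times(\mumu_3^3\rtimes\mathfrak{S}_4)$.

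The conceptual input is entirely the rigidity of $V$ furnished by the negative answer to Question~\ref{question:FZ}; the only computational obstacle is the explicit determination of $\Aut(X)$, which is classical for the Fermat cubic surface but does require the observation that each automorphism preserves the hyperplane class, together with the short stabilizer calculation indicated above.
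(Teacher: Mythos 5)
Your proposal is correct and takes essentially the same route the paper intends: the corollary is obtained by feeding the rigidity of $V$ (the negative answer to Question~\ref{question:FZ}) and the classical fact $\Aut(X)\cong\mumu_3^3\rtimes\mathfrak{S}_4$ for the Fermat cubic surface into the exact sequence of Theorem~\ref{theorem:Arzhantsev}, and then observing that the explicit linear symmetries realize the full extension. The only step worth spelling out is your identification $\Gm\cdot(\mumu_3^4\rtimes\mathfrak{S}_4)=\Gm\times(\mumu_3^3\rtimes\mathfrak{S}_4)$, which is not a formal consequence of absorbing the diagonal $\mumu_3$ into $\Gm$ but does hold here because the $\mathfrak{S}_4$-stable subgroup $\big\{(\zeta_1,\zeta_2,\zeta_3,\zeta_4)\in\mumu_3^4 : \zeta_1\zeta_2\zeta_3\zeta_4=1\big\}$ is a complement to the diagonal, giving a finite subgroup of $\Aut(V)$ that commutes with the scalar torus and maps isomorphically onto $\Aut(X)$.
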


Both Question~\ref{question:FZ} and Example~\ref{example:cones-in-A3}
are very special cases of the~following old conjecture,
which has been confirmed in many cases (see \cite{ChitayatDaigle}~and~Remark~\ref{remark:Fermat-cubics}).

\begin{conjecture}[{\cite{KZ00,FZ03}}]
\label{conjecture:Pham--Brieskorn}
Let $V$ the~Pham--Brieskorn hypersurface in $\A^n$ with $n\geqslant 3$ given by
$$
x_1^{a_1}+x_2^{a_2}+\cdots+x_n^{a_n}=0,
$$
where $a_1,\ldots,a_n$ are integers such that $2\leqslant a_1\leqslant \cdots\leqslant a_n$, and $x_0,x_1,\ldots,x_n$ are coordinates on~$\A^n$.
Suppose that $a_2\geqslant 3$. Then the~affine hypersurface $V$ is rigid.
\end{conjecture}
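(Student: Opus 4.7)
The plan is to translate the problem into the language of locally nilpotent derivations (LNDs) and then exploit the canonical quasi-homogeneous structure on the coordinate algebra. Since an effective $\Ga$-action on an affine variety is equivalent to a non-zero LND on its coordinate ring, it suffices to prove that $\LND(A)=\{0\}$, where
\[
A=\Bbbk[x_1,\ldots,x_n]\big/(f),\qquad f=x_1^{a_1}+\cdots+x_n^{a_n}.
\]

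First, one equips $A$ with the positive grading $\deg(x_i)=w_i:=d/a_i$, with $d=\operatorname{lcm}(a_1,\ldots,a_n)$, so that $f$ is homogeneous of degree $d$. A standard reduction, namely replacing a hypothetical non-zero LND $\partial$ by its extremal (weight-minimal) homogeneous component, which is again an LND, allows one to assume that $\partial$ is homogeneous of some integer degree $e$. Each $\partial(x_i)$ is then zero or homogeneous of degree $w_i+e$, and local nilpotency translates into an arithmetic condition on the iterated weights along the $\partial$-orbits of the coordinates.

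Next, one analyses the consequence of $\partial(f)=0$. Lifting to the polynomial ring $R=\Bbbk[x_1,\ldots,x_n]$ yields $\sum_{i=1}^n a_i x_i^{a_i-1}\,\partial(x_i)\in (f)$ in $R$. Using the Koszul resolution of the regular sequence $(x_1^{a_1-1},\ldots,x_n^{a_n-1})$ together with the Euler-type identity $\sum_{i} (w_ix_i/d)\cdot a_ix_i^{a_i-1}=f$, one obtains a normal form
\[
\partial(x_i)\equiv \sum_{j\neq i}c_{ij}\,x_j^{a_j-1}+g\cdot w_i x_i/d\pmod{f},\qquad c_{ij}=-c_{ji},
\]
with $c_{ij}$ and $g$ homogeneous of prescribed degrees. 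The second summand corresponds to a multiple of the Euler field, which is not locally nilpotent, so iterating with the higher-order relations obtained from $\partial^k(f)=0$ should force $g$ into a proper graded subalgebra and restrict the structure of $\partial$ drastically.

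The main obstacle is closing the combinatorial argument: one must show that the system of degree equations $w_i+e=(a_j-1)w_j+\deg c_{ij}$, together with the constraints coming from higher Koszul-type relations, is incompatible with local nilpotency of $\partial$ as soon as $a_2\geq 3$. My proposed strategy is to peel off the coordinate of largest weight: under $a_1\leq a_2$ and $a_2\geq 3$, the weight $w_1$ strictly dominates the others, and the degree balance should force $c_{1j}=0$ for $j\geq 2$, so that $x_1\in\ker\partial$. Modding out by $x_1$ then recovers a Pham--Brieskorn hypersurface in $\A^{n-1}$ whose smallest two exponents are still $\geq 3$, and an induction on $n$ terminates at the surface case already settled in Example~\ref{example:cones-in-A3}. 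The genuinely delicate step is verifying that every admissible syzygy produces a coordinate whose $\partial$-orbit has unbounded weight, and hence contradicts nilpotency; it is precisely where the full arithmetic of the exponents intervenes, which is why the conjecture remains open in general (see \cite{ChitayatDaigle} for the cases where this program has been successfully carried out).
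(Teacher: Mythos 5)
This statement is not a theorem of the paper but an open conjecture: the survey records it (attributing it to \cite{KZ00,FZ03}) precisely because no proof is known beyond special cases, the surface case of Example~\ref{example:cones-in-A3} and the partial results of \cite{ChitayatDaigle}. Your text is therefore not being measured against a proof in the paper, and, as you yourself concede in the final sentence, it is not a proof at all: the step in which the "system of degree equations together with the constraints coming from higher Koszul-type relations" is shown to be incompatible with local nilpotency is exactly the open heart of the problem, and it is left as a hope rather than an argument. The preliminary reductions you describe (translating rigidity into the vanishing of $\LND(A)$, passing to a homogeneous component of a hypothetical locally nilpotent derivation with respect to the weights $w_i=d/a_i$, and writing $\partial(f)=0$ via the Koszul syzygies of the regular sequence $x_1^{a_1-1},\ldots,x_n^{a_n-1}$) are standard and correct, but they only set the stage; nothing in your outline actually forces $c_{1j}=0$, and "should force" is doing all the work.

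Beyond the acknowledged gap, the proposed induction is also flawed as stated. Even granting $x_1\in\ker\partial$, you cannot simply "mod out by $x_1$": the derivation descends to the fibers of the function $x_1$, and the only fiber that is again a Pham--Brieskorn hypersurface is $x_1=0$, where the restriction of $\partial$ may well vanish identically, so no nonzero locally nilpotent derivation on the smaller hypersurface is produced; the generic fibers, where the restriction is nonzero, are deformed hypersurfaces $x_2^{a_2}+\cdots+x_n^{a_n}=c$ with $c\neq 0$, to which the inductive hypothesis does not apply (and whose rigidity is itself a nontrivial question). Note also that the known evidence runs in the opposite logical direction from what a survey reader might expect: the paper uses the conjecture (via Theorem~\ref{theorem:criterion}) to predict non-cylindricity statements such as Remark~\ref{remark:Fermat-cubics}, so any purported proof would be a substantial new result and must in particular recover the delicate case analyses of \cite{ChitayatDaigle}, which your sketch does not.
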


In fact, using Theorem~\ref{theorem:criterion}, we can restate Question~\ref{question:FZ} as follows:

\begin{question}
\label{question:FZ-geometric}
Let $X$ be the~Fermat cubic surface. Does $X$ contain $(-K_X)$-polar cylinder?
\end{question}

As we already mentioned, this question has a~negative answer.
Moreover, we will see later that the~answer is also negative
for any smooth cubic surface  (cf. Theorem~\ref{theorem:dP-du-Val}).
This brings~us~to

\begin{problem}
\label{problem:polar-cylinders}
Describe Fano varieties that do not contain anticanonical polar cylinders.
\end{problem}

This problem has been solved for del Pezzo surfaces with Du Val singularities in \mbox{\cite{KPZ11,CPW16a,CPW16b}.}
However, it is still open for smooth Fano threefolds and singular del Pezzo surfaces with quotient singularities.
For Fano varieties whose divisor class groups is of rank $1$, Problem~\ref{problem:polar-cylinders} is~equivalent to the~cylindricity problem (the problem of existence of cylinders).

\begin{remark}
\label{remark:cylinders-any-field}
One can consider Problem~\ref{problem:polar-cylinders} for Fano varieties defined over an arbitrary possibly algebraically non-closed field.
In Section~\ref{subsection:MFS}, we will give a motivation for doing this.
\end{remark}

Let us present one obstruction for the~existence of anticanonical polar cylinders in Fano varieties.
Recall from \cite{Ti87,CheltsovShramovUMN} that the~$\alpha$-invariant of Tian of the~Fano variety $X$ is the~number
$$
\alpha(X)=\mathrm{sup}\left\{\lambda\in\mathbb{Q}\ \left|\ \aligned
&\text{the log pair}\ \left(X,\lambda D\right)\ \text{is log canonical}\\
&\text{for any effective $\mathbb{Q}$-divisor}\ D\sim_{\mathbb{Q}} -K_{X}\\
\endaligned\right.\right\}.
$$
This number plays an important role in K-stability of Fano varieties, since $X$ is K-stable if
$$
\alpha(X)>\frac{\mathrm{dim}(X)}{\mathrm{dim}(X)+1}.
$$
On the~other hand, we have the~following result.

\begin{theorem}
\label{theorem:alpha}
Let $X$ be a Fano variety that has at most Kawamata log terminal singularities.
If $\alpha(X)\geqslant 1$, then $X$ does not contain $(-K_X)$-polar cylinders.
\end{theorem}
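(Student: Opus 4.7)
The proof is by contradiction. Suppose that $X$ contains a $(-K_X)$-polar cylinder, so there exists an effective $\QQ$-divisor $D\sim_\QQ-K_X$ with $U:=X\setminus\Supp(D)\cong Z\times\AA^1$ for some affine variety $Z$. It suffices to produce an effective $\QQ$-divisor $D'\sim_\QQ-K_X$ for which $(X,\lambda D')$ fails to be log canonical at some point for some $\lambda<1$, since such a $D'$ contradicts $\alpha(X)\geqslant 1$.

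The key idea is to exploit the $\AA^1$-fibration structure on the cylinder in two complementary ways. First, the projection $U\cong Z\times\AA^1\to\AA^1$ extends to a rational map $\psi\colon X\dashrightarrow\PP^1$; for each $t\in\AA^1$ the Zariski closure $Z_t$ in $X$ of the slice $Z\times\{t\}$ is a prime divisor, and the $Z_t$ form a linear pencil whose only member supported inside $\Supp(D)$ is the fiber of $\psi$ over infinity. Second, the projection $U\to Z$ produces a covering family $\{\bar F_z\}_{z\in Z}$ of rational curves on $X$, each with normalization $\PP^1$ and meeting $\Supp(D)$ set-theoretically at a single point $p_z\in X\setminus U$; these curves satisfy $\bar F_z\cdot Z_t=1$ and $\bar F_z\cdot D=(-K_X)\cdot\bar F_z=:d$. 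Comparing these intersection numbers on the covering family shows that $d[Z_0]$ and $[-K_X]$ agree on every divisor class with nonzero pairing against $\bar F_z$; in particular, when $\uprho(X)=1$ this gives the exact $\QQ$-linear equivalence $dZ_0\sim_\QQ-K_X$, while in general $-K_X\sim_\QQ dZ_0+V$ for a ``vertical'' class $V$ that is a combination of classes supported in $\Supp(D)$ together with multiples of $Z_0$.

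The candidate bad divisor is $D':=dZ_0$ (or, in higher Picard rank, an effective combination of the form $\sum_ia_iZ_{t_i}+V$ with $\sum_ia_i=d$ that represents $-K_X$). When $d\geqslant 2$, the coefficient $d\geqslant 2$ of the prime divisor $Z_0$ in $D'$ immediately yields $\operatorname{lct}(X,D')=1/d<1$, which contradicts $\alpha(X)\geqslant 1$ and closes the argument. The main obstacle is the borderline case $d=1$ (which can arise for smooth Fano varieties of index one): here one cannot conclude from the coefficient of a single slice alone, and the argument must use either (i) the accumulation of several slices $Z_{t_i}$ at any indeterminacy point $p$ of $\psi$, forcing $\mult_p D'$ large when combined with the multiplicity structure of $\Supp(D)$ along the covering curves $\bar F_z$; or (ii) a translation to the affine cone $V=\Spec\bigoplus_nH^0(\OOO_X(n(-K_X)))$ via Theorem~\ref{theorem:criterion}, where the polar cylinder furnishes a $\Ga$-action whose associated locally nilpotent derivation on the section ring is used to produce the required non-log-canonical divisor $D'\sim_\QQ-K_X$ on $X$ directly.
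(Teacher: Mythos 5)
Your argument has a genuine gap, and you have in effect flagged it yourself: the borderline case $d=-K_X\cdot\bar F_z=1$ is not handled, and neither of the two remedies you sketch is carried out. Remedy (ii) is essentially circular: passing to the affine cone via Theorem~\ref{theorem:criterion} does give a $\Ga$-action, but extracting from that action an effective $\QQ$-divisor $D'\sim_{\QQ}-K_X$ such that $(X,\lambda D')$ is not log canonical for some $\lambda<1$ is precisely the statement to be proved, not an available tool. Remedy (i) remains a hope rather than an argument, and the case $d=1$ cannot be dismissed: when the rulings close up to curves of anticanonical degree one (for instance lines on a low-degree del Pezzo surface or on an index-one Fano threefold) your coefficient argument gives nothing, and this is exactly the regime in which the theorem has content. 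There are also unjustified steps earlier in the proposal: the equality $\bar F_z\cdot Z_t=1$ can fail because $\bar F_z$ may meet $Z_t$ again at the point at infinity $p_z\in\Supp(D)$, the slice $Z_t$ need not be $\QQ$-Cartier when $X$ is klt but not $\QQ$-factorial (so neither the intersection number nor the pair $(X,dZ_0)$ is a priori meaningful), and in higher Picard rank the effectivity of your decomposition $-K_X\sim_{\QQ}\sum_i a_iZ_{t_i}+V$ is asserted rather than proved.

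The paper's proof avoids the slices $Z_t$ altogether and shows that the boundary divisor $D$ of the polar cylinder is itself not log canonical, independently of $d$: arguing as in Remark~\ref{remark:obstruction} and Corollary~\ref{corollary:alpha-surfaces}, one resolves the projection of the cylinder onto (a completion of) $Z$, so that the proper transform $\widetilde F$ of a general ruling becomes a fiber of a $\PP^1$-fibration with $K_{\widetilde X}\cdot\widetilde F=-2$; every component of $D$ and every exceptional divisor is vertical for this fibration except the unique divisor at infinity $\widetilde\Gamma$, which satisfies $\widetilde\Gamma\cdot\widetilde F=1$. Intersecting the log pullback of $K_X+D\sim_{\QQ}0$ with $\widetilde F$ forces the coefficient of $\widetilde\Gamma$ (as a component of $D$ or as an exceptional divisor) to equal $2>1$, so $(X,D)$ is not log canonical and $\alpha(X)<1$. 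If you want to complete your approach, this fiberwise computation is what must replace the $d\geqslant 2$ coefficient argument in the missing case.
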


\begin{proof}
Suppose  $X$ contains a $(-K_X)$-polar cylinder. Then $U\cong Z\times \A^1$ for an~affine variety $Z$, and
$$
U=X\setminus \Supp(D)
$$
for some effective $\mathbb{Q}$-divisor $D$ on $X$ such that $D\sim_{\mathbb{Q}} -K_X$.
Arguing as in the~proof Corollary~\ref{corollary:alpha-surfaces},
we see that the~log pair $(X,D)$ is not log canonical in this case, so that $\alpha(X)<1$.
\end{proof}

Let us show how to use this obstruction.

\begin{example}
\label{example:dP1-2A32A1-4A2}
Let $X$ be a del Pezzo surface with Du Val singularities of degree $K_X^2=1$
such that one of the~following two conditions holds:
\begin{enumerate}
\item either $X$ has $2$ singular points of type $\mathrm{A}_3$ and $2$ singular points of type $\mathrm{A}_1$;
\item or the~surface $X$ has $4$ singular points of type $\mathrm{A}_2$.
\end{enumerate}
By \cite[Theorem~1.2]{Ye2002}, the~surface $X$ exists, and it is uniquely determined by its singularities.
Moreover, it follows from \cite[Table~4.1]{Ye2002} that the~pencil $|-K_X|$ contains exactly $4$ singular fibers.
They are singular fibers of types $\mathrm{I}_4$ and $\mathrm{I}_2$ (in the~first case) or of types $\mathrm{I}_2$ (in the~second~case).
This~gives $\alpha(X)=1$ by \cite[Theorem~1.25]{ChK14},
so that $X$  contains no anticanonical polar cylinders.
Since~the group $\mathrm{Cl}(X)$ is of rank $1$, the~surface $X$ contains no cylinders at all.
\end{example}

\begin{remark}
\label{remark:MiyanishiKeelMcKernan}
Implicitly, Theorem~\ref{theorem:alpha} has been already used by many people for quite some~time.
For~instance, Miyanishi conjectured in \cite{GZh94} that the~smooth locus of a~del Pezzo surface with quotient
singularities and Picard rank~$1$ admits a~finite unramified covering  that contains~a~cylinder.
It turned out to be wrong.
Namely, in \cite[Example 21.3.3]{KeelMcKernan}, Keel and McKernan have constructed
a~singular del Pezzo surface $X$ with quotient singularities such that $\uprho(X)=1$ and $\alpha(X)\geqslant 1$,
but its~smooth locus has trivial algebraic fundamental group.
Thus, its smooth locus does not admit non-trivial unramified coverings,
and $X$ does not contain cylinders by Theorem~\ref{theorem:alpha}.
\end{remark}

Using Theorem~\ref{theorem:alpha}, we can create many rational Fano varieties without anticanonical polar cylinders.
Indeed, if $X$ and $Y$ are Fano varieties that have Kawamata log terminal singularities,
then it follows from  \cite[Lemma~2.29]{CheltsovShramovUMN} and \cite[Proposition~8.11]{KovacPatakfalvi} that
$$
\alpha\big(X\times Y\big)=\mathrm{min}\big\{\alpha(X),\alpha(Y)\big\}.
$$
Thus, if $S$ is a general smooth del Pezzo surface with $K_S^2=1$, then $\alpha(S)=1$ by \cite[Theorem~1.7]{CheltsovGAFA},
which implies that we also have  $\alpha(X)=1$ for the~$2n$-dimensional smooth Fano variety
$$
X=\underbrace{S\times S\times \cdots\times S}_{n\ \text{times}},
$$
so that $X$ does not contain $(-K_X)$-polar cylinders, but $X$ is cylindrical, because $S$ is cylindrical.
We can construct many similar examples using \cite{CheltsovParkWon2014,CheltsovShramovPark2010,CheltsovShramovZoo,ChK14,Pukhlikov2005}.

\begin{example}
\label{example:dP1-double-cover}
Let $S$ be a general smooth del Pezzo surface with $K_S^2=1$,
and let $Y$ be a general smooth hypersurface in $\mathbb{P}(1^{n+1},n)$ of degree $2n$ for $n\geqslant 3$.
Then $\alpha(S)=1$ by \cite[Theorem~1.7]{CheltsovGAFA},
and $\alpha(Y)=1$ by \cite[Theorem~2]{Pukhlikov2005} (see also \cite{CheltsovParkWon2014}).
Let $X=S\times Y$. Then $\mathrm{dim}(X)=2+n\geqslant 5$ and
$$
\alpha\big(X\big)=\mathrm{min}\big\{\alpha(S),\alpha(Y)\big\}=1,
$$
so that $X$ contains no $(-K_X)$-polar cylinder by Theorem~\ref{theorem:alpha}. But $X$ is cylindrical.
\end{example}

Surprisingly, we do not know a single example of a cylindrical smooth Fano threefold that
contains no anticanonical polar cylinder (cf. Examples~\ref{example:blow-up-dP-n}, \ref{example:blow-up-P3-two-cubic}, \ref{example:blow-up-P3-g-3-d-6} and \ref{example:blow-up-Q-three-quadrics}).

\begin{problem}
\label{problem:anticanonical-polar-cylinders-threefolds}
Find a cylindrical smooth Fano threefold without anticanonical polar cylinder.
\end{problem}

Note that there are Fano varieties without cylinders whose $\alpha$-invariant of Tian is smaller than $1$.
For instance, if $X$ is the~del Pezzo surface from Example~\ref{example:dP1-1-D4}, then $\alpha(X)=\frac{1}{2}$ by \cite[Theorem~1.25]{ChK14}.
On the~other hand, this surface does not contain cylinders \cite{CPW16b}. Note that it is K-polystable \cite{OdakaSpottiSun}.
Surprisingly, all known K-unstable Fano varieties are also cylindrical.

\begin{example}[{\cite{Fujita1980,Fujita1981,Fujita1984,IP99}}]
\label{example:V5}
Let $X$ be a smooth Fano variety of dimension $n\geqslant 2$ such that
$$
-K_{X}\sim (n-1)H,
$$
where $H$ is an ample divisor such that $H^n=5$. Then $n\in\{2,3,4,5,6\}$, and $X$ is unique for each~$n$.
The divisor $H$ is very ample, and the~linear system $|H|$ gives an embedding $X\hookrightarrow\mathbb{P}^{n+3}$
such that the~image is a section of the~Grassmannian $\mathrm{Gr}(2,5)\subset\mathbb{P}^9$ by a linear subspace of dimension $3+n$.
Moreover, if $n\ne 2$, then $\mathrm{Pic}(X)=\mathbb{Z}[H]$. Furthermore, the~following assertions hold.
\begin{itemize}
\item The variety $X$ contains a~Zariski open subset isomorphic to $\A^n$, so that it is cylindrical.
If~$n\ne 5$, this follows from Example~\ref{example:Grassmannian} and Theorems~\ref{theorem:V5-A3} and \ref{theorem:V5-A4} (see also \cite{Fujita1981,  PZ16}).
If $n=5$, then $X$ contains a plane $\Pi$ such that there exists the~following Sarkisov link:
$$
\xymatrix{
&\widetilde{X}\ar[dl]_{\alpha}\ar[dr]^{\beta}\\
X&&\mathbb{P}^5}
$$
where $\alpha$ is the~blowup of the~plane $\Pi$, and $\beta$ is the~blowup of a smooth cubic scroll in~$\PP^5$.
This easily implies that $X$ contains a~Zariski open subset isomorphic to $\A^5$.

\item If $n\in\{2,3,6\}$, then $X$ is known to be K-polystable (see, for example, \cite{CheltsovGAFA,CheltsovShramovDP,ParkWon2018,Ti90,Zhuang}).
On the~other hand, if $n\in\{4,5\}$, then $X$ is K-unstable by \cite{FujitaV5}.
\end{itemize}
\end{example}

Keeping in mind Theorem~\ref{theorem:alpha} and  examples of K-stable Fano varieties without anticanonical polar cylinders (for example, smooth del Pezzo surfaces of degree $1$, $2$ and $3$),
we pose

\begin{conjecture}
\label{conjecture:K-stability}
Let $X$ be a Fano variety that has at most Kawamata log terminal singularities.
If $X$ does not contain $(-K_X)$-polar cylinders, then $X$ is K-polystable.
\end{conjecture}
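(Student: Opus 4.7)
The plan is to argue by contrapositive: assume $X$ is not K-polystable, and construct a $(-K_X)$-polar cylinder. By the valuative criterion of Fujita and Li, K-instability is witnessed by a prime divisor $E$ over $X$ with non-positive $\beta$-invariant
$$
\beta_X(E) = A_X(E) - \frac{1}{(-K_X)^{\dim X}}\int_0^\infty \mathrm{vol}\bigl(-K_X - tE\bigr)\, dt,
$$
and I may further arrange $\beta_X(E)<0$ by replacing $X$, if necessary, with the central fibre of a non-product special test configuration so as to rule out the torus-induced degenerations allowed by polystability. First I fix such a divisor $E$ minimizing the ratio $A_X(E)/S_X(E)$; in the Kawamata log terminal range the results of Blum--Jonsson and Liu--Xu--Zhuang guarantee that this $\delta$-minimizer is a quasi-monomial, and conjecturally divisorial, valuation with a well-defined centre $Z = c_X(E) \subset X$.

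Second, I would convert $E$ into an effective $\mathbb{Q}$-divisor $D\sim_{\mathbb{Q}} -K_X$ whose singularities dominate $Z$ and violate log canonicity. The definition of $S_X(E)$ together with $\beta_X(E)<0$ produces, for $m\gg0$, sections of $|-mK_X|$ vanishing along $E$ to order strictly greater than $mA_X(E)$; averaging them yields an effective $D\sim_{\mathbb{Q}}-K_X$ such that the log pair $(X,D)$ fails to be log canonical along $Z$. This is precisely the reverse of the situation exploited in the proof of Theorem~\ref{theorem:alpha}: there, the existence of a $(-K_X)$-polar cylinder forced non-log-canonicity of an anticanonical pair, whereas now one has a non-log-canonical anticanonical pair in hand and must recover the cylinder from it.

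The third step, which I consider the main obstacle, is this geometric reversal. Pulling $D$ back to the extraction $\pi\colon \widetilde{X}\to X$ of $E$ one writes
$$
\pi^*D = \widetilde{D} + aE \quad\text{with}\ a > A_X(E),
$$
and one would like to run a relative MMP over $X$ adapted to $(\widetilde{X},\widetilde{D})$ so as to produce a Mori fibre structure whose generic fibre is an $\mathbb{A}^1$ disjoint from $\Supp(\pi^*D)$, then push the complement back down to a $(-K_X)$-polar cylinder. The essential difficulty is that no general procedure is known for transforming a destabilizing divisor into an $\mathbb{A}^1$-fibration or a $\Ga$-action: every known cylinder construction in a Fano variety, including those underlying Example~\ref{example:V5} and the results of Section~\ref{section:Fanos-cylinders}, proceeds through explicit Sarkisov links and is essentially case-by-case. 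I therefore expect that any proof of the conjecture in full generality will require a new bridge between the language of K-instability (test configurations, $\beta$-invariants, Donaldson--Futaki weights) and the birational language of affine fibrations, and that a realistic intermediate target is the class of Fano threefolds, where the classification of \cite{IP99} should permit a case analysis dovetailing with Problem~\ref{problem:anticanonical-polar-cylinders-threefolds}.
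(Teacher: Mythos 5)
The statement you are addressing is stated in the paper as an open conjecture: the authors give no proof of Conjecture~\ref{conjecture:K-stability}, only supporting evidence (del Pezzo surfaces of small degree, the observation that all known K-unstable Fano varieties are cylindrical, and the K-polystable surface of Example~\ref{example:dP1-1-D4}). So your text cannot be measured against a proof in the paper; it is a research programme, and, as you yourself concede, it does not close the argument. The decisive gap is your third step, and it is worth being precise about why it fails as formulated. After step two you only have in hand an effective $\mathbb{Q}$-divisor $D\sim_{\mathbb{Q}}-K_X$ with $(X,D)$ not log canonical, i.e.\ the information $\alpha(X)<1$. But this datum alone can never produce a $(-K_X)$-polar cylinder: the del Pezzo surface of Example~\ref{example:dP1-1-D4} has $\alpha=\tfrac12$, hence admits plenty of non-log-canonical anticanonical pairs, yet contains no cylinder at all. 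Theorem~\ref{theorem:alpha} is a one-way implication, and the ``geometric reversal'' you propose is false valuation-by-valuation; any correct argument must exploit K-instability in a way that is strictly stronger than $\delta(X)<1$ or $\alpha(X)<1$, and no such bridge from destabilizing valuations to $\mathbb{A}^1$-fibrations is currently known -- which is exactly why the statement is a conjecture.

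There is also a problem earlier in the reduction. K-polystability is not detected by the sign of $\beta$ alone: if $X$ is K-semistable but not K-polystable, every divisorial valuation has $\beta\geqslant 0$, and the failure of polystability is witnessed by a non-product special test configuration with vanishing Donaldson--Futaki invariant. Your suggestion to ``replace $X$ by the central fibre'' of such a test configuration changes the variety: a cylinder constructed in the central fibre $X_0$ gives no cylinder in $X$ itself, so the contrapositive of the conjecture is not established for the strictly semistable case even granting the (missing) step three. Finally, note that in step two the divisor you extract need not be obtainable from sections of $|-mK_X|$ alone in the klt singular setting without some care (one needs $S_X(E)>A_X(E)$ together with an Izumi-type comparison to pass from asymptotic vanishing orders to an actual non-log-canonical pair centred at $c_X(E)$); this is repairable, but it should be said. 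In short: the proposal is a reasonable map of the difficulties, consistent with the paper's discussion around Theorem~\ref{theorem:alpha} and Problem~\ref{problem:anticanonical-polar-cylinders-threefolds}, but it is not a proof, and the conjecture remains open both in the paper and after your argument.
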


For a~projective variety $X$, consider the~following subset of the~cone of ample $\mathbb{Q}$-divisors on~$X$:
$$
\mathrm{Amp}^{cyl}\big(X\big)=\Big\{ H\in \mathrm{Amp}(X)\ \big\vert\ \text{there is an~$H$-polar cylinder on $X$}\Big\}.
$$
Let us call it the~\emph{cone of cylindrical ample divisors} of the~variety $X$.
We have seen in Examples~\ref{example:dP1-1-D4} that $\mathrm{Amp}^{cyl}(X)$ can be empty even if $X$ is a Fano variety.
Thus, we can enhance Problem~\ref{problem:polar-cylinders} by

\begin{problem}
\label{problem:cylindrical-cone}
For a~given Fano variety $X$, describe the~cone $\mathrm{Amp}^{cyl}(X)$.
\end{problem}

This problem is not yet solved even for smooth del Pezzo surfaces.
However, we know the~answer for many of them (see \cite{CPW17}).
Namely, if $X$ is a~smooth del Pezzo surface such that $K_X^2\geqslant 4$, then
$$
\mathrm{Amp}^{cyl}(X)=\mathrm{Amp}(X).
$$
On the~other hand, if $K_X^2\leqslant 3$, then $-K_X\not\in\mathrm{Amp}^{cyl}(X)$.
This gives an evidence for

\begin{conjecture}
\label{conjecture:anticanonical-cylinder}
If $X$ is a~Fano variety, then $-K_{X}\in\mathrm{Amp}^{cyl}(X)\iff\mathrm{Amp}^{cyl}(X)=\mathrm{Amp}(X)$.
\end{conjecture}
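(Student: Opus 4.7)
The reverse implication is immediate: since $X$ is Fano, $-K_X$ lies in $\mathrm{Amp}(X)$, so if $\mathrm{Amp}^{cyl}(X)=\mathrm{Amp}(X)$ then in particular $-K_X\in\mathrm{Amp}^{cyl}(X)$. The substantive direction is the forward one, so assume that a $(-K_X)$-polar cylinder $U=X\setminus\Supp(D)$ is given, with $D=\sum a_iD_i\sim_{\mathbb{Q}}-K_X$, where $a_i>0$ and the $D_i$ are prime, and let $H\in\mathrm{Amp}(X)$ be arbitrary; the plan is to construct an $H$-polar cylinder.

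My first step would be to try to reuse the same Zariski open $U$ by varying the effective representative supported on its boundary. Consider the convex cone $C(U)\subseteq N^1(X)_{\mathbb{Q}}$ spanned by $[D_1],\ldots,[D_k]$; by construction $[-K_X]\in C(U)$. Whenever $H$ lies in the relative interior of $C(U)$, one may write $H\sim_{\mathbb{Q}}\sum b_iD_i$ with $b_i>0$ and observe that $X\setminus\Supp(\sum b_iD_i)=U$, so that $U$ itself is an $H$-polar cylinder. The task thus reduces to showing that $\mathrm{Amp}(X)$ is covered by such relative interiors as $U$ ranges over all $(-K_X)$-polar cylinders on $X$.

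To enlarge the pool of usable cylinders, I would combine three sources. First, one translates $U$ by elements of $\Aut(X)$, which acts on the set of $(-K_X)$-polar cylinders and on the associated boundary cones. Second, by Theorem~\ref{theorem:criterion} the hypothesis is equivalent to a nontrivial $\Ga$-action on the anticanonical affine cone $V$ over $X$; a single such action already produces many cylinders via its flow, and when further $\Ga$-actions on $V$ are available their fixed loci yield boundary components distinct from $\Supp(D)$, enlarging $\bigcup_UC(U)$. Third, for arbitrary ample $H$ one may attempt to absorb the difference by writing $nH\sim_{\mathbb{Q}}D+E$ for sufficiently divisible $n$ with $E$ effective, and then arranging, via Bertini-type movement and the flexibility of the $\A^1$-fibration $U\cong Z\times\A^1$, that $\Supp(E)\cap U$ equals $\pi^{-1}(F)$ for a divisor $F\subset Z$; removing $\pi^{-1}(F)$ from $U$ preserves the cylinder structure, namely over the affine open $Z\setminus F$, yielding an $nH$-polar (hence $H$-polar) cylinder.

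The main obstacle is this last maneuver: guaranteeing that such a representative $E$ actually exists. When $H$ is far from $-K_X$ inside $\mathrm{Amp}(X)$, the divisor $E$ must involve classes invisible to the given cylinder, and without a controlled way to enlarge $\Supp(D)$ --- for instance, without knowing that the $D_i$ together with all prime divisors contracted by the rational projection $X\dashrightarrow\overline{Z}$ span a subcone of $N^1(X)_{\mathbb{Q}}$ meeting every ample ray --- the argument stalls, since a generic $E$ will have components dominating $Z$ rather than being vertical for $\pi$. The evidence for del Pezzo surfaces of degree $\geqslant 4$ in \cite{CPW17} suggests that the existence of a single $(-K_X)$-polar cylinder should already force the boundary combinatorics to be rich enough to generate the whole ample cone, and a complete proof will likely have to extract such structural information directly from the hypothesis $-K_X\in\mathrm{Amp}^{cyl}(X)$.
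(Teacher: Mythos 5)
This statement is Conjecture~\ref{conjecture:anticanonical-cylinder}: the paper does not prove it, it only offers evidence from smooth del Pezzo surfaces (Theorem~\ref{theorem:dP-du-Val} and Theorem~\ref{theorem:Amp-cyl-del-Pezzo}, following \cite{CPW17}), so any complete argument here would be genuinely new. Your proposal does not supply one, and you in effect concede this. The reverse implication is indeed trivial, and your first observation is fine: if $H$ lies in the relative interior of the cone spanned by the classes of the boundary divisors $D_1,\ldots,D_k$ of a given $(-K_X)$-polar cylinder $U$, then the same open set $U$ is $H$-polar. But this only handles ample classes inside that simplicial subcone, which in general is a proper subset of $\mathrm{Amp}(X)$ (nothing forces $k$ to be large --- only $k\geqslant\operatorname{rank}\mathrm{Cl}(X)$ as in \eqref{equation:KPZ-r} --- and even when the $[D_i]$ span $N^1(X)_{\QQ}$, their positive span need not contain the ample cone). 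Everything therefore hinges on your "absorption" step, and that is where the argument breaks: given $H$ outside the cone of the boundary classes, you need an effective $\QQ$-divisor $E$ with $nH\sim_{\QQ}D+E$ whose support meets $U\cong Z\times\A^1$ only in divisors vertical for the projection to $Z$. A general member of the relevant linear system is horizontal (it dominates $Z$), and no Bertini-type or "flexibility" argument lets you replace it by a vertical one: vertical divisors on $U$ are pullbacks from $Z$, and which classes of $X$ admit effective representatives built from closures of such divisors plus the $D_i$ is exactly the question of determining $\mathrm{Amp}^{cyl}(X)$ --- i.e.\ the content of the conjecture itself, not a tool you may invoke.

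Two of your proposed sources of additional cylinders are also unfounded. The $\Ga$-action on the anticanonical cone $V$ furnished by Theorem~\ref{theorem:criterion} does not descend to $X$ (it does not normalize the $\Gm$-action defining the cone), so its "flow" does not move the cylinder $U$ inside $X$ or produce boundary divisors in new classes; and $\Aut(X)$ acts trivially on $N^1(X)_{\QQ}$ up to a finite group, so translating $U$ by automorphisms cannot enlarge the cone of cylindrical classes either. What is really needed is structural control over \emph{all} cylinders forced by the existence of a single anticanonical one, and extracting that from the hypothesis is precisely the open problem; even for smooth del Pezzo surfaces of degree $1$ and $2$ the cone $\mathrm{Amp}^{cyl}$ is not fully understood, although there the forward implication is vacuous since $-K_S\notin\mathrm{Amp}^{cyl}(S)$ by Theorem~\ref{theorem:dP-du-Val}.
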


Let us describe the~structure of this survey.
In Section~\ref{section:del-Pezzo} we review results about polar cylinders in rational surfaces.
In Section~\ref{section:Fanos-cylinders}, we describe results about cylinders in smooth Fano threefolds, smooth Fano fourfolds, and del Pezzo fibrations.
In Section~\ref{section:related}, we survey results on three topics that are closely related to
the main topic of this survey: flexibility of affine varieties
with a special accent on the~flexibility of affine cones over Fano varieties,
cylinders in the~complements to hypersurfaces in weighted projective spaces, and compactifications of $\CC^n$.
Finally, in Appendix~\ref{section:log-pairs}, we present some results about singularities of two-dimensional log pairs,
which are used in Section~\ref{section:del-Pezzo} to prove the~absence of polar cylinders in some del Pezzo surfaces.

\subsection*{Notations.}
Throughout this paper, we will use the~following notation:
\begin{itemize}
\item $\mumu_n$ is a~cyclic subgroup of order $n$;
\item $\Ga$ is a~one-dimensional unipotent additive group;
\item $\Gm$ is a~one-dimensional algebraic torus;
\item $\FF_n$ is the~Hirzebruch surface;
\item $\PP^n$ is the~$n$-dimensional projective  space over $\Bbbk$;
\item $\A^n$ is the~$n$-dimensional affine space over $\Bbbk$;
\item $\PP(a_1,\dots, a_n)$ is the~weighted projective space;
\item for a~variety $X$, we denote by $\uprho(X)$ the~rank of its Picard group.
\end{itemize}

\subsection*{Acknowledgments.}
This work was supported by the~Royal Society grant No. IES\textbackslash R1\textbackslash 180205 and by the~Russian Academic Excellence Project 5-100.
The second author has been supported by IBS-R003-D1, Institute for Basic Science in Korea.

The authors would like to thank Adrien Dubouloz and Sasha Perepechko for useful comments.

\section{Cylinders in del Pezzo surfaces}
\label{section:del-Pezzo}
In this section,  we review results about cylinders in del Pezzo surfaces.
A~\emph{del Pezzo surface} means here a~two-dimensional Fano variety with at most quotient singularities.
Recall that a~smooth del Pezzo surface is either $\mathbb{P}^1\times\mathbb{P}^1$,
or a~blowup of $\mathbb{P}^2$ in at most $8$ points such that
\begin{itemize}
\item at most $2$ points are contained in a~line;

\item at most $5$ points are contained in a~conic;

\item there is no singular cubic in $\mathbb{P}^2$ that contains $8$ points and is singular in one of them.
\end{itemize}
A Gorenstein  del Pezzo surface is a~del Pezzo surface whose anticanonical divisor is Cartier,
equivalently a~del Pezzo surface with only Du Val singularities.
Such surface is either a~quadric, or its minimal resolution of singularities
can be obtained by blowing up $\mathbb{P}^2$ in at most $8$ points such that
at most $3$ of them are contained in a~line, and at most $6$ of them are contained in a~conic.

First, let us go over basic facts about cylinders in rational surfaces.

\subsection{Cylinders in rational surfaces}
\label{subsection:rational-surfaces}
Observe that every smooth rational surface is cylindrical.
This immediately follows from the~fact that $\mathbb{P}^2$ contains a cylinder and the~following

\begin{lemma}
\label{lemma:Fn}
Let $C$ be an irreducible curve in $\mathbb{F}_n$ that is a section of the~natural projection  $\mathbb{F}_n\to\mathbb{P}^1$,
and let $F_1,\ldots,F_r$ be fibers of this projection, where $r\geqslant 1$.
Then $\mathbb{F}_n\setminus (C\cup F_1\cup\cdots F_r)$ is a cylinder.
\end{lemma}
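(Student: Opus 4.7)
The plan is to view the open subset in question as a Zariski-locally trivial $\A^1$-bundle over an affine curve with trivial Picard group, and then observe that any such bundle is trivial. Let $\pi \colon \FF_n \to \PP^1$ denote the ruling and set $B := \PP^1 \setminus \{\pi(F_1),\ldots,\pi(F_r)\}$. Since $r \geq 1$, the curve $B$ is a non-empty open subset of $\A^1$, hence affine with $\Pic(B) = 0$.

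First I would trivialize the $\PP^1$-bundle $V := \pi^{-1}(B) \to B$. By construction $\pi$ is the projectivization of a rank-two vector bundle $\mathcal{E}$ on $\PP^1$, and its restriction $\mathcal{E}|_B$ is a rank-two bundle on the affine curve $B$ with trivial Picard group, hence free. Thus $V \cong B \times \PP^1$ compatibly with the projection to $B$. Under this identification, the section $C$ restricts on $V$ to the graph $\Gamma_\sigma$ of a morphism $\sigma \colon B \to \PP^1$, and the open subset of the lemma becomes $\Omega := (B \times \PP^1) \setminus \Gamma_\sigma$; the second projection makes $\Omega$ a Zariski-locally trivial $\A^1$-bundle over $B$.

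The main obstacle is to upgrade this to a global trivialization $\Omega \cong B \times \A^1$. The structure group is $\mathrm{Aff}(\A^1) \cong \Gm \ltimes \Ga$, so the exact sequence $1 \to \Ga \to \mathrm{Aff}(\A^1) \to \Gm \to 1$ combined with the vanishings $H^1(B,\mathcal{O}_B) = 0$ (since $B$ is affine) and $H^1(B,\Gm) = \Pic(B) = 0$ forces the non-abelian $H^1(B,\mathrm{Aff}(\A^1))$ to be trivial. A more hands-on alternative is to pick a coordinate $t$ on $B$, write $\sigma(t) = [\sigma_0(t):\sigma_1(t)]$ in homogeneous coordinates with coprime sections, and exhibit an explicit family of M\"obius transformations over $B$ that carries $\Gamma_\sigma$ onto the constant section at infinity; the feasibility of this construction reduces once more to the triviality of $\Pic(B)$. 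Either way one obtains $\Omega \cong B \times \A^1$, which is a cylinder since $B$ is affine.
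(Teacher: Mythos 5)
Your proof is correct, but it follows a genuinely different route from the one in the paper. The paper's argument is birational and very short: performing elementary transformations centered at points of the removed fibers $F_1,\ldots,F_r$ (such transformations are isomorphisms off the fibers, hence do not change the open set in question, while they shift $C^2$ by $\pm 1$) one reduces to the case $C^2=0$, i.e. $n=0$ and $C$ a fiber of the second ruling of $\PP^1\times\PP^1$, where the complement is visibly $\bigl(\A^1\setminus\{r-1\ \text{points}\}\bigr)\times\A^1$. You instead work entirely over the affine base $B=\PP^1\setminus\{\pi(F_1),\ldots,\pi(F_r)\}$: since $\OOO(B)$ is a principal ideal domain, the rank-two bundle defining $\FF_n$ trivializes over $B$, the section becomes the graph of a morphism $\sigma\colon B\to\PP^1$, and the complement of the graph is an $\A^1$-bundle with structure group $\mathrm{Aff}(\A^1)=\Gm\ltimes\Ga$, which is trivial because $\Pic(B)=0$ and $H^1(B,\OOO_B)=0$; your explicit alternative (writing $\sigma=[\sigma_0:\sigma_1]$ with $\sigma_0,\sigma_1\in\OOO(B)$ generating the unit ideal and acting by a matrix in $\SL_2(\OOO(B))$ sending the graph to the constant section at infinity) makes the same point constructively. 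What the paper's approach buys is brevity and a purely geometric picture inside the standard birational calculus of Hirzebruch surfaces; what your approach buys is that it avoids birational modifications altogether, identifies the cylinder explicitly as $B\times\A^1$ over the natural base, and generalizes verbatim to sections of $\PP^1$-bundles over any affine base with trivial Picard group. One cosmetic slip: the projection exhibiting $\Omega$ as an $\A^1$-bundle over $B$ is the projection onto $B$ (the first factor in your notation), not the second projection; this does not affect the argument.
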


\begin{proof}
Performing appropriate elementary birational transformations, we may assume that $C^2=0$, so that $n=0$.
In this case, the~required assertion is obvious.
\end{proof}

However, as we have seen already in Example~\ref{example:dP1-1-D4}, there are singular rational surfaces that contain no cylinders.
Let us explain how to find many such rational surfaces and provide an obstruction for the~existence of cylinders (see Remark~\ref{remark:obstruction} below),
which will be used in Section~\ref{subsection:del-Pezzo-no-cylinders} to show the~absence of anticanonical polar cylinders in smooth del Pezzo surfaces of degree $1$, $2$ and $3$.

Let $S$ be a~rational surface with quotient singularities
and  suppose that $S$ contains a~cylinder~$U$.
Then $U$ is a~Zariski open subset in $S$ such that $U\cong\mathbb{A}^1\times Z$ for some affine curve $Z$. We then have the~following commutative diagram
$$
\xymatrix{\mathbb{P}^1\times\mathbb{P}^1\ar[ddrr]^{\overline{p}_{2}}&\mathbb{A}^1\times\mathbb{P}^1\ar@{_{(}->}[l]\ar[ddr]^{p_{2}}&~\mathbb{A}^1\times Z\cong U\ar@{_{(}->}[l]\ar[d]^{p_Z}\ar@{^{(}->}[r] &S\ar@{-->}^{\psi}[ddl]& &\widetilde{S}\ar[ll]_{\pi}\ar[ddlll]^{\phi}& \\
&&Z\ar@{_{(}->}[d]&&&& \\
& &\mathbb{P}^1& & &&}
$$
where $p_Z$, $p_{2}$ and $\overline{p}_2$ are the~natural projections to the~second factors,
$\psi$ is the~ rational map induced by $p_Z$,
$\pi$ is a~birational morphism resolving the~indeterminacy of $\psi$ and $\phi$ is a~morphism.
By construction, a~general fiber of $\phi$ is $\mathbb{P}^1$.
Let $C_1,\ldots,C_n$ be the~irreducible curves in $S$ such that
$$
S\setminus U=\bigcup_{i=1}^{n}C_i.
$$
The~curves $C_1,\cdots,C_n$ generate the~divisor class group $\mathrm{Cl}(S)$ of the~surface $S$,
because $\mathrm{Cl}(U)=0$. In particular, one has
\begin{equation}\label{equation:KPZ-r}
\mathrm{rank}\,\mathrm{Cl}(S)\leqslant n.
\end{equation}

Let $E_1,\ldots, E_r$ be all exceptional curves of the~morphism $\pi$ (if any),
and let $\Gamma=\mathbb{P}^1\times\mathbb{P}^1\setminus\mathbb{A}^1\times\mathbb{P}^1$.
Denote by $\widetilde{C}_1,\ldots,\widetilde{C}_n$ and $\widetilde{\Gamma}$ the~proper transforms $\widetilde{S}$ of the~curves $C_1,\ldots,C_n$ and $\Gamma$, respectively.
Then $\widetilde{\Gamma}$ is a~section of the~conic bundle $\phi$, and $\widetilde{\Gamma}$ is one of the~curves $\widetilde{C}_1,\ldots,\widetilde{C}_n$ and $E_1,\ldots, E_r$.
Moreover, all other curves among $\widetilde{C}_1,\ldots,\widetilde{C}_n$ and $E_1,\ldots, E_r$ are components of some fibers of $\phi$.
Thus, we may assume that either $\widetilde{\Gamma}=\widetilde{C}_1$ or $\widetilde{\Gamma}=E_r$.
Then $\psi$ is a~morphism $\iff$ $\widetilde{\Gamma}=\widetilde{C}_1$.

Let $\lambda_1,\ldots,\lambda_n$ be arbitrary rational numbers, and let $D=\lambda_1C_1+\cdots+\lambda_nC_n$. Then
$$
K_{\widetilde{S}}+\sum_{i=1}^n\lambda_i\widetilde{C}_i+\sum_{i=1}^r\mu_iE_i\sim_{\mathbb{Q}}\pi^*\left(K_{S}+D\right)
$$
for some real numbers $\mu_1,\ldots,\mu_r$. Let $\widetilde{F}$ be a~general fiber of $\phi$.
Then $K_{\widetilde{S}}\cdot\widetilde{F}=-2$ by the~adjunction formula.
Put $F=\pi(\widetilde{F})$.
 If $\widetilde{\Gamma}=E_r$, then
$$
-2+\mu_r=\left(K_{\widetilde{S}}+\sum_{i=1}^n\lambda_i\widetilde{C}_i+\sum_{i=1}^r\mu_iE_i\right)\cdot\widetilde{F}=\pi^*\left(K_{S}+D\right)\cdot\widetilde{F}=\left(K_{S}+D\right)\cdot F
$$
Similarly, if $\widetilde{\Gamma}=C_1$, then
$$
-2+\lambda_1=\left(K_{\widetilde{S}}+\sum_{i=1}^n\lambda_i\widetilde{C}_i+\sum_{i=1}^r\mu_iE_i\right)\cdot\widetilde{F}=\pi^*\left(K_{S}+D\right)\cdot\widetilde{F}=\left(K_{S}+D\right)\cdot F.
$$
On the~other hand, if $K_S+D$ is pseudo-effective, then $\left(K_{S}+D\right)\cdot F\geqslant 0$.

\begin{remark}
\label{remark:obstruction}
We are therefore able to draw the~following conclusions:
\begin{itemize}
\item if $K_S+D$ is pseudo-effective, then $(S,D)$ is not log canonical;

\item if $K_S+D$ is pseudo-effective and $\lambda_i<2$ for each $i\in\{1,\ldots,n\}$, then $\psi$ is not a~morphism.
\end{itemize}
\end{remark}

\begin{corollary}
\label{corollary:K-nef}
A~rational surface with quotient singularities and pseudo-effective canonical divisor cannot contain any cylinder.
\end{corollary}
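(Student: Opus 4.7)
My plan is to obtain the corollary by a direct contradiction argument based on Remark \ref{remark:obstruction}, applied with the trivial boundary $D=0$.

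First I would assume, towards a contradiction, that our rational surface $S$ with quotient singularities does contain a cylinder $U$. This places us in exactly the setup preceding Remark \ref{remark:obstruction}: we get the irreducible components $C_1,\ldots,C_n$ of $S\setminus U$, a resolution $\pi\colon\widetilde{S}\to S$ with exceptional curves $E_1,\ldots,E_r$, a conic bundle $\phi\colon\widetilde{S}\to\mathbb{P}^1$ resolving the projection from the cylinder, and the distinguished section that is either some $\widetilde{C}_1$ or some $E_r$.

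Next I would invoke the freedom in the choice of coefficients: in the formula
\[
K_{\widetilde{S}}+\sum_{i=1}^n\lambda_i\widetilde{C}_i+\sum_{i=1}^r\mu_iE_i\sim_{\mathbb{Q}}\pi^*(K_S+D),
\]
the numbers $\lambda_1,\ldots,\lambda_n$ are arbitrary rationals, so I would set them all equal to $0$, yielding $D=0$. By hypothesis, $K_S+D=K_S$ is then pseudo-effective, so the first bullet of Remark \ref{remark:obstruction} applies and forces $(S,0)$ to fail to be log canonical. However, $S$ has at worst quotient singularities, so $(S,0)$ is in fact Kawamata log terminal, and in particular log canonical, giving the desired contradiction.

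The argument is essentially immediate once Remark \ref{remark:obstruction} is in hand, so there is no real obstacle. The only point worth flagging is the verification that the choice $\lambda_i=0$ is admissible in the setup of the remark — the derivation there used the~pseudo-effectivity of $K_S+D$ and the fact that a general fibre $F$ of $\phi$ satisfies $(K_S+D)\cdot F=-2+\mu_r$ (or $-2+\lambda_1$), so that pseudo-effectivity forces $\mu_r\geqslant 2$ or $\lambda_1\geqslant 2$; taking $\lambda_i=0$ is perfectly legitimate and simply pushes the violation of log-canonicity onto an exceptional divisor $E_r$ with log discrepancy $\leqslant -1$, contradicting the klt property of quotient singularities.
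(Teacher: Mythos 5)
Your proof is correct and is essentially the argument the paper intends: Corollary~\ref{corollary:K-nef} is exactly the specialization of Remark~\ref{remark:obstruction} to the boundary $D=0$, where pseudo-effectivity of $K_S$ forces $(S,0)$ to fail log canonicity, contradicting the fact that quotient singularities are Kawamata log terminal. Your closing remark correctly identifies that with all $\lambda_i=0$ the case $\widetilde{\Gamma}=\widetilde{C}_1$ is outright impossible and the failure of log canonicity is carried by the exceptional section $E_r$ with $\mu_r\geqslant 2$, which is precisely how the remark's dichotomy plays out.
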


Now we  present two examples of rational singular surfaces with nef canonical divisors,
which do not contain cylinders by Corollary~\ref{corollary:K-nef}.
For more examples, see \cite{HwangKeum,LN13,LP07,OZh96,OguisoZhang,PPSh09a,PPSh09b,Wang}.

\begin{example}[cf.~\cite{OguisoTruong}]
\label{example:Ueno-Campana}
Let $E$ be the~Fermat cubic curve in $\mathbb{P}^2$.
Take $\sigma\in\mathrm{Aut}(E)$ of order~$6$~that fixes a point in $E$.
Let $S=E\times E/\langle\sigma\rangle$, where $\sigma$ acts on $E\times E$ diagonally.
Then $S$ is rational. Moreover, it has quotient singularities and $6K_S\sim 0$.
Then $S$ contains no cylinder  by Corollary~\ref{corollary:K-nef}.
\end{example}

\begin{example}[{\cite{Kollar}}]
\label{example:Kollar}
Let $a_0$, $a_1$, $a_2$, $a_3$,  $w_0$, $w_1$, $w_2$, $w_3$  be positive integers such that
\begin{itemize}
\item $a_0\geqslant 4$, $a_1\geqslant 4$, $a_2\geqslant 4$, $a_3\geqslant 4$;
\item $a_0w_0+w_1=a_1w_1+w_2=a_2w_2+w_3=a_3w_3+w_0$;
\item  $\gcd ( w_0,w_2)=1$,  $\gcd ( w_1,w_3)=1$.
\end{itemize}
From the~first condition above we obtain
$$
\left\{\aligned%
&w_0=a_1a_2a_3-a_2a_3+a_3-1,\\
&w_1=a_0a_2a_3-a_0a_3+a_0-1,\\
&w_2=a_0a_1a_3-a_0a_1+a_1-1,\\
&w_3=a_0a_1a_2-a_1a_2+a_2-1.\\
\endaligned
\right.
$$
Let $S$ be the~hypersurface in $\mathbb{P}(w_0,w_1,w_2,w_3)$ defined by the~following equation:
$$
x_0^{a_0}x_1+x_1^{a_1}x_2+x_2^{a_2}x_3+x_3^{a_3}x_0=0,
$$
where $x_0$, $x_1$, $x_2$ and $x_3$ are coordinates of weights $w_0$, $w_1$, $w_2$, $w_3$, respectively.
Then
$$
K_{S}=\mathcal{O}_S\big(a_0a_1a_2a_3-w_0-w_1-w_2-w_3-1\big)
$$
and $a_0a_1a_2a_3-w_0-w_1-w_2-w_3-1>0$, so that $K_{S}$ is ample.
But $S$ is rational by \cite[Theorem~39]{Kollar}.
By Corollary~\ref{corollary:K-nef}, the~surface $S$ cannot contain any cylinder.
\end{example}

We are mostly interested in cylinders in del Pezzo surfaces.
Applying our Remark~\ref{remark:obstruction} to them, we obtain the~following special case of Theorem~\ref{theorem:alpha},
which we already applied in Example~\ref{example:dP1-2A32A1-4A2}.

\begin{corollary}
\label{corollary:alpha-surfaces}
Suppose that $-K_S$ is ample, and $U$ is a $(-K_S)$-polar cylinder. Then $\alpha(S)<1$.
\end{corollary}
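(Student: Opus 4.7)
The plan is to apply the first bullet of Remark~\ref{remark:obstruction} directly. By the definition of a $(-K_S)$-polar cylinder, there is an effective $\mathbb{Q}$-divisor $D$ on $S$ with $D\sim_{\mathbb{Q}}-K_S$ and
\[
U=S\setminus\Supp(D).
\]
Writing $S\setminus U=\bigcup_{i=1}^n C_i$ as in the setup preceding Remark~\ref{remark:obstruction}, we have $D=\sum_{i=1}^n\lambda_i C_i$ for some nonnegative rational numbers $\lambda_i$, and of course $\Supp(D)=S\setminus U$, so this $D$ fits the framework of that remark.

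Next I would note that $K_S+D\sim_{\mathbb{Q}}0$, hence is pseudo-effective (in fact numerically trivial). The first bullet of Remark~\ref{remark:obstruction}, applied to this divisor, then immediately yields that the log pair $(S,D)$ is not log canonical. This is the only nontrivial input — and it is available for free, since Remark~\ref{remark:obstruction} was derived from a general fibration argument valid for any rational surface with quotient singularities containing a cylinder, which covers the del Pezzo case since $-K_S$ ample forces $S$ to be rational.

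Finally, I would conclude by unwinding the definition of Tian's $\alpha$-invariant. If $\alpha(S)\geqslant 1$, then for every rational $\lambda<1$ and every effective $\mathbb{Q}$-divisor $D'\sim_{\mathbb{Q}}-K_S$ the pair $(S,\lambda D')$ would be log canonical; taking $\lambda\to 1^-$ and using the openness of the log canonical condition in the coefficient, it would follow that $(S,D')$ itself is log canonical for every such $D'$. Applied to our particular $D$ this contradicts the conclusion of the previous paragraph, so $\alpha(S)<1$.

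The only real subtlety — which I expect to be the main (minor) obstacle — is the limit step in the last paragraph: strictly speaking the definition of $\alpha(S)$ as a supremum over rationals $\lambda$ only controls log canonicity for $\lambda$ strictly below the supremum, so one has to invoke the standard fact that the log canonical threshold is a minimum (achieved), equivalently that being log canonical is a closed condition on the coefficient, to push $\lambda$ up to $1$. Once this point is dispatched, the corollary is an immediate translation of Remark~\ref{remark:obstruction} into the language of $\alpha$-invariants.
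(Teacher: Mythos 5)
Your proposal is correct and follows essentially the same route as the paper: take the effective $\mathbb{Q}$-divisor $D\sim_{\mathbb{Q}}-K_S$ with $\Supp(D)=S\setminus U$, observe $K_S+D\sim_{\mathbb{Q}}0$ is pseudo-effective, apply the first bullet of Remark~\ref{remark:obstruction} to get that $(S,D)$ is not log canonical, and conclude $\alpha(S)<1$. Your extra care about passing $\lambda\to 1$ is sound (log canonicity is a closed condition in the coefficient, i.e.\ the log canonical threshold is attained), a point the paper leaves implicit; just note that you mean closedness, not openness, as your final sentence correctly states.
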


\begin{proof}
There exists an effective $\mathbb{Q}$-divisor $D^\prime$ on the~surface $S$ such that $D^\prime\sim_{\mathbb{Q}} -K_S$ and
$$
D^\prime=\sum_{i=1}^na_iC_i,
$$
for some positive rational numbers $a_1,\ldots,a_n$.
Let $D=D^\prime$. Then $K_S+D\sim_{\mathbb{Q}} 0$ is pseudo-effective,
so~that $(S,D)$ is not log canonical by Remark~\ref{remark:obstruction}, which implies that $\alpha(S)<1$.
\end{proof}

Now, we state main result of this section, which implies negative answer to Question~\ref{question:FZ}.

\begin{theorem}[{\cite{KPZ11,KPZ14a,CPW16a,CPW16b}}]
\label{theorem:dP-du-Val}
Let $S$ be a del Pezzo surface that has at most Du Val singularities.
Then $S$ does not contain $(-K_{S})$-polar cylinders exactly when
\begin{itemize}
\item $K_S^2=1$ and $S$ allows at most singular points of types $\mathrm{A}_1$, $\mathrm{A}_2$, $\mathrm{A}_3$, $\mathrm{D}_4$ if any;
\item $K_S^2=2$ and $S$ allows at most singular points of type $\mathrm{A}_1$ if any;
\item $K_S^2=3$ and $S$ is smooth.
\end{itemize}
\end{theorem}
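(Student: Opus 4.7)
The plan is to prove both halves: non-existence of $(-K_S)$-polar cylinders on the listed surfaces, and explicit construction of such cylinders on every remaining Du Val del Pezzo.

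For non-existence, the engine is Remark \ref{remark:obstruction} applied with $D=D'$, where $D'\sim_{\mathbb{Q}}-K_S$ is the effective divisor satisfying $U=S\setminus\Supp(D')$ for a would-be $(-K_S)$-polar cylinder $U$. Since $K_S+D'\sim_{\mathbb{Q}}0$ is pseudo-effective, $(S,D')$ is not log canonical (Corollary \ref{corollary:alpha-surfaces}); moreover the analysis preceding Remark \ref{remark:obstruction} forces a precise fibration picture: on a resolution $\pi\colon\widetilde{S}\to S$ the rational map $\psi$ extends to a $\PP^1$-fibration $\phi\colon\widetilde{S}\to\PP^1$, there is a distinguished section $\widetilde{\Gamma}$, and some component $\widetilde{C}_i$ or exceptional curve $E_j$ must appear in the log pullback with coefficient $\geqslant 2$. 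I would then exploit the small value of $K_S^2$ together with the explicit geometry of $|-K_S|$ to exclude such a configuration. For smooth cubic surfaces in $\PP^3$, the components of $\Supp(D')$ must assemble from the $27$ lines and nodal hyperplane sections, a combinatorially bounded situation. For degree $2$ surfaces with at most $\mathrm{A}_1$ singularities, one pulls back through the anticanonical double cover $S\to\PP^2$ branched along a quartic, where the rigidity of plane quartic configurations rules out non-log-canonical pairs of the required type. For the listed degree $1$ cases, one uses the elliptic pencil $|-K_S|$ and its unique base point, checking that the singular fibers compatible with $\mathrm{A}_1,\mathrm{A}_2,\mathrm{A}_3,\mathrm{D}_4$ configurations cannot host the required non-log-canonical center.

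For existence, I would exhibit a $(-K_S)$-polar cylinder on every remaining Du Val del Pezzo. When $S$ is smooth with $K_S^2\geqslant 4$, classical constructions using a tangent line or a tangent conic together with Lemma \ref{lemma:Fn} (after a blowup to reach $\FF_n$) yield the desired polar cylinder. When $S$ has ``bad enough'' Du Val singularities, the minimal resolution $\pi\colon\widetilde{S}\to S$ carries a long chain of $(-2)$-curves; combining such a chain with a suitable $(-1)$-curve gives a conic bundle $\phi\colon\widetilde{S}\to\PP^1$ whose degenerate fiber collects all the $(-2)$-curves together with a section, and pushing forward the complement of this section plus one fiber to $S$ produces the required polar cylinder. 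The cutoff between ``good'' and ``bad'' singularities is exactly what the theorem records: at degree $3$ any Du Val point suffices, at degree $2$ one needs at least an $\mathrm{A}_2$ point, and at degree $1$ one needs $\mathrm{A}_n$ with $n\geqslant 4$, $\mathrm{D}_n$ with $n\geqslant 5$, or $\mathrm{E}_6,\mathrm{E}_7,\mathrm{E}_8$.

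The main difficulty is the non-existence direction in the smooth cubic and smooth degree $2$ cases, where $\alpha(S)<1$ so Theorem \ref{theorem:alpha} cannot be invoked. Here I expect the hard work to be a case analysis of how $\Supp(D')$ can decompose: each component must be an exceptional curve on $\widetilde{S}$, a fiber component, or the section $\widetilde{\Gamma}$ for $\phi$; the numerical constraint $D'\sim_{\mathbb{Q}}-K_S$ must hold; and the coefficient inequality from Remark \ref{remark:obstruction} must force a non-log-canonical singularity while keeping $D'$ effective. Ruling out every combinatorial possibility, and in particular handling the degree $1$ case with two $\mathrm{D}_4$ points (Example \ref{example:dP1-1-D4}) whose elliptic fibration has very particular reducible fibers, is the principal obstacle.
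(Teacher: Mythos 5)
Your overall skeleton (non-log-canonicity of $(S,D')$ via Remark~\ref{remark:obstruction} for non-existence, explicit constructions for existence) matches the paper, but both halves have genuine gaps at exactly the points where the real work lies. In the non-existence direction, your claim that for a smooth cubic the components of $\Supp(D')$ ``must assemble from the $27$ lines and nodal hyperplane sections'' is unjustified and in fact false: the boundary of a cylinder can consist of irreducible curves of arbitrarily large degree, so the proposed ``combinatorially bounded'' enumeration never gets off the ground, and the same objection applies to your degree $2$ and degree $1$ sketches. The missing idea is the local-to-global step that the paper isolates as Theorem~\ref{theorem:del-Pezzo-degree-1-2-3}: from the non-log-canonical point $P$ one produces, using Lemma~\ref{lemma:Skoda}, the convexity trick of Remark~\ref{remark:convexity}, inversion of adjunction and Theorem~\ref{theorem:Vanya}, an anticanonical curve $T\in|-K_S|$ singular at $P$ with $(S,T)$ not log canonical at $P$ and $\Supp(T)\subseteq\Supp(D')$. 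Only then does the contradiction come, by perturbing $D'$ to $(1+\mu)D'-\mu T$ (possible because the cylinder forces $n>3$ boundary components by \eqref{equation:KPZ-r}, while $T$ has at most $3$), which removes a component of $T$ from the support yet stays non-log-canonical --- contradicting the same theorem. Nothing in your plan replaces this mechanism, and without it the degree $2$ (``rigidity of quartic configurations'') and degree $1$ (``fibers cannot host the center'') steps are assertions rather than arguments; note also that the singular cases of degrees $1$ and $2$ in the list require substantially more than the smooth ones (this is the content of the cited papers).

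In the existence direction the gap is the polarity check. Removing a section and one degenerate fiber of a conic bundle on the minimal resolution generally does not even give a cylinder (the other reducible fibers leave extra complete curves in the open set), and, more importantly, producing \emph{some} cylinder is not the issue: smooth cubic surfaces are cylindrical yet have no $(-K_S)$-polar cylinder. What must be verified is that the boundary is exactly the support of an effective $\mathbb{Q}$-divisor $\mathbb{Q}$-linearly equivalent to $-K_S$, and this is where the actual constructions spend their effort --- either the explicit coefficient bookkeeping as in Example~\ref{example:dp2-A2}, or the projection from the singular point in the (weighted) anticanonical embedding as in Lemmas~\ref{lemma:cylinders-d-3}--\ref{lemma:cylinders-d-1}, where the boundary is a hyperplane section through $P$ together with the curves contracted by the projection, so that $H+D\sim -mK_S$ gives polarity for free. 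Your sketch never addresses why the chain-of-$(-2)$-curves boundary supports an anticanonical divisor, so the stated cutoffs (any singular point in degree $3$, worse than $\mathrm{A}_1$ in degree $2$, worse than $\mathrm{A}_3$, $\mathrm{D}_4$ in degree $1$), while correctly read off from the statement, are not actually derived by your construction.
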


\begin{corollary}
\label{corollary:dP-du-Val}
A smooth del Pezzo surface $S$ contains a $(-K_{S})$-polar cylinder $\iff$ $K_S^2\geqslant 4$.
\end{corollary}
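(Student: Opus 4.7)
The plan is to read the Corollary off as a direct specialization of Theorem~\ref{theorem:dP-du-Val} to the case of empty singular locus, in both directions. Both implications reduce to parsing which bullets of Theorem~\ref{theorem:dP-du-Val} apply when $S$ is smooth, with the convention that ``at most singular points of types $\ldots$ if any'' includes the case of no singularities at all.

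For the implication $(\Rightarrow)$, I argue by contraposition. Suppose $S$ is a smooth del Pezzo surface with $K_S^2 \leqslant 3$. If $K_S^2 = 1$, then $S$ vacuously satisfies the hypothesis of the first bullet of Theorem~\ref{theorem:dP-du-Val} (no singularities, hence in particular none worse than $\mathrm{D}_4$). Similarly, for $K_S^2 = 2$ the second bullet applies, and for $K_S^2 = 3$ the third bullet applies literally. In each case Theorem~\ref{theorem:dP-du-Val} yields the non-existence of a $(-K_S)$-polar cylinder. Conversely, if $K_S^2 \geqslant 4$, then every one of the three enumerated cases in Theorem~\ref{theorem:dP-du-Val} requires $K_S^2 \in \{1,2,3\}$, so none of them applies. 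The ``exactly when'' clause of Theorem~\ref{theorem:dP-du-Val} then guarantees the existence of a $(-K_S)$-polar cylinder on $S$.

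If desired, one can also verify $(\Leftarrow)$ by hand, independently of Theorem~\ref{theorem:dP-du-Val}, using the explicit classification of smooth del Pezzo surfaces with $K_S^2 \geqslant 4$. For $S = \mathbb{P}^1 \times \mathbb{P}^1$, Lemma~\ref{lemma:Fn} exhibits $S \setminus (F_1 \cup F_2)$ as a cylinder, where $F_1, F_2$ are fibers of the two rulings, and a multiple of $F_1 + F_2$ is $\mathbb{Q}$-linearly equivalent to $-K_S$. For $S$ a blowup of $\mathbb{P}^2$ in $r \leqslant 5$ general points, one chooses a suitable pencil of lines or conics, passes to a birational Hirzebruch-surface model, applies Lemma~\ref{lemma:Fn} to remove a section and finitely many fibers (containing all exceptional curves as components), and checks that the boundary can be arranged to be $\mathbb{Q}$-linearly equivalent to $-K_S$ with the appropriate coefficients.

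The only subtlety is the bookkeeping in the preceding paragraph: one must verify that ``smooth'' is really a subcase of each of the three singularity-type restrictions in Theorem~\ref{theorem:dP-du-Val}. Beyond this, the Corollary has no further geometric content — the hard work is already packaged inside Theorem~\ref{theorem:dP-du-Val} itself, so I do not anticipate any obstacle.
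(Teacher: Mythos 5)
Your main argument is exactly how the paper treats this statement: Corollary~\ref{corollary:dP-du-Val} is stated as an immediate specialization of Theorem~\ref{theorem:dP-du-Val} to the smooth case, with no separate proof, and your parsing of the three bullets (smoothness being a vacuous instance of each singularity restriction for $K_S^2\leqslant 3$, and none of the bullets applying for $K_S^2\geqslant 4$) is correct. Your optional hands-on check of the existence direction also matches the paper in spirit, where it is carried out as Lemma~\ref{lemma:KPZ} via the conic-plus-tangent-line construction on a blowup of $\mathbb{P}^2$ in at most five points.
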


In the~next two subsections, we will explain how to prove Theorem~\ref{theorem:dP-du-Val}.
Now let us use this result to find all del Pezzo surfaces with Du Val singularities that contain no cylinder.

\begin{theorem}[{\cite[Theorem~1.6]{Be17}}]
\label{theorem:dP-du-Val-cylinders}
Let $S$ be a del Pezzo surface that has Du Val singularities.
Then $S$ contains no cylinder $\iff$ it is one of the~surfaces described in Examples~\ref{example:dP1-1-D4} and \ref{example:dP1-2A32A1-4A2}.
\end{theorem}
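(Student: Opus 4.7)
The plan is to combine Theorem~\ref{theorem:dP-du-Val} with a dichotomy on the rank of the divisor class group $\mathrm{Cl}(S)$: when $\mathrm{Cl}(S)$ has rank one, every cylinder is automatically $(-K_S)$-polar, so non-existence of anticanonical polar cylinders excludes all cylinders; when the rank is at least two, one can exhibit cylinders by fibration methods.

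For the backward direction, I would first verify that each of the three families in Examples~\ref{example:dP1-1-D4} and~\ref{example:dP1-2A32A1-4A2} has $\mathrm{Cl}(S)$ of rank~$1$. The minimal resolution $\tilde{S}\to S$ is a weak del Pezzo of degree $1$, so $\uprho(\tilde{S})=9$, and the $(-2)$-exceptional locus spans a sublattice whose rank is the sum of the Dynkin ranks of the singularities, namely $4+4=8$ in the $2\mathrm{D}_4$ case, $3+3+1+1=8$ in the $2\mathrm{A}_3+2\mathrm{A}_1$ case, and $2+2+2+2=8$ in the $4\mathrm{A}_2$ case. Hence in all three families the complement of any cylinder is the support of an effective divisor $D$ that is ample (since $S\setminus\mathrm{Supp}(D)$ is affine) and therefore $\mathbb{Q}$-linearly equivalent to $\lambda(-K_S)$ for some $\lambda>0$, so the cylinder is $(-K_S)$-polar. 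But the examples, invoking Theorem~\ref{theorem:dP-du-Val}, exclude such cylinders, so no cylinder exists at all.

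For the forward direction, suppose $S$ admits no cylinder. A fortiori, $S$ has no $(-K_S)$-polar cylinder, and Theorem~\ref{theorem:dP-du-Val} narrows $S$ to one of (i) smooth with $K_S^2\in\{1,2,3\}$; or (ii) $K_S^2=2$ with only $\mathrm{A}_1$ singular points; or (iii) $K_S^2=1$ with only singular points of types $\mathrm{A}_1,\mathrm{A}_2,\mathrm{A}_3,\mathrm{D}_4$. In case (i) the surface is a smooth rational projective variety, hence cylindrical by \cite[Proposition~3.13]{KPZ11}, contradicting our assumption. Thus $S$ is singular, and we are reduced to (ii) and (iii).

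The remaining task is to show that, in (ii) and (iii), the only surfaces without cylinders are the three families of the examples. My plan is to invoke the classification of Du Val del Pezzo surfaces in \cite{Ye2002} and to split into sub-cases by the rank of $\mathrm{Cl}(S)$. Whenever $\mathrm{Cl}(S)$ has rank $\geq 2$ --- equivalently, the total Dynkin rank of the singularities is strictly less than $\uprho(\tilde{S})-1$, which is $7$ in degree $2$ and $8$ in degree $1$ --- the Mori cone of $S$ carries an extremal contraction distinct from the anticanonical one; following this contraction on $\tilde{S}$ to a $\mathbb{P}^1$-fibration and applying Lemma~\ref{lemma:Fn} after standard elementary transformations yields a cylinder on $\tilde{S}$ whose closed complement contains every exceptional $(-2)$-curve, and which therefore descends to a cylinder on $S$. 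The main obstacle, and the most delicate part of the argument, is the residual rank-$1$ subcase: one has to check, via \cite{Ye2002}, that no degree-$2$ Du Val del Pezzo with only $\mathrm{A}_1$ points has $\mathrm{Cl}(S)$ of rank~$1$ (morally because a plane quartic admits at most six ordinary nodes, so the total Dynkin rank is at most $6<7$), and that among degree-$1$ configurations with singularities in $\{\mathrm{A}_1,\mathrm{A}_2,\mathrm{A}_3,\mathrm{D}_4\}$ of total Dynkin rank $8$ the only ones actually realized as Du Val del Pezzo surfaces are the three families $2\mathrm{D}_4$, $2\mathrm{A}_3+2\mathrm{A}_1$ and $4\mathrm{A}_2$ --- competing configurations such as $\mathrm{D}_4+\mathrm{A}_3+\mathrm{A}_1$, $\mathrm{D}_4+2\mathrm{A}_2$, $\mathrm{D}_4+4\mathrm{A}_1$, $2\mathrm{A}_3+\mathrm{A}_2$, $3\mathrm{A}_2+2\mathrm{A}_1$ or $8\mathrm{A}_1$ must be ruled out on a case-by-case basis.
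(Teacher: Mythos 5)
Your backward direction and your reduction of the forward direction to the singular cases allowed by Theorem~\ref{theorem:dP-du-Val} match the paper, and your treatment of the residual rank-one case (checking via \cite{Ye2002} which rank-$8$ configurations with the allowed singularity types are actually realized, and killing the degree-$2$ case by the bound of six nodes on a reduced plane quartic) is essentially the paper's final step. The genuine gap is in your rank-$\geqslant 2$ case: you pass from ``$\mathrm{Cl}(S)$ has rank at least $2$'' to ``$S$ carries an extremal contraction which is a $\mathbb{P}^1$-fibration'', but an extremal contraction of a Du Val del Pezzo surface of Picard rank $\geqslant 2$ need not be of fiber type --- by \cite{Morrison-1985} it can be a birational (weighted) blowdown of a smooth point, and it can happen that \emph{all} extremal rays are of this kind. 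For instance, on the degree-$7$ Du Val del Pezzo surface with a single $\mathrm{A}_1$ point both extremal contractions are birational, and the only $\mathbb{P}^1$-fibration of its minimal resolution has the $(-2)$-curve as a \emph{section}, so it neither descends to $S$ nor leaves the exceptional curve inside the complement of the intended cylinder. Thus the assertion that class rank $\geqslant 2$ produces a fibration to which Lemma~\ref{lemma:Fn} applies with all $(-2)$-curves vertical is unjustified, and this is precisely where your forward direction would break down.

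The paper closes this gap by an induction on birational extremal contractions: take any extremal contraction $\pi\colon S\to Y$; if it is a conic bundle, Tsen's theorem provides a section (a point you also skip --- Lemma~\ref{lemma:Fn} needs one) and one obtains a cylinder, a contradiction; if it is birational, then either $Y$ contains a cylinder, which can be shrunk to avoid the center $\pi(E)$ and pulled back to $S$ (again a contradiction), or $Y$ contains no cylinder, in which case Theorem~\ref{theorem:dP-du-Val} forces $Y$ to be singular with $K_Y^2=2$ and only $\mathrm{A}_1$ points, and iterating the argument forces $\uprho(Y)=1$, i.e.\ a degree-$2$ Du Val del Pezzo surface with seven $\mathrm{A}_1$ points, which does not exist. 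You need either to supply this inductive mechanism or to prove directly that every surface in your cases (ii)--(iii) with class rank $\geqslant 2$ admits a conic bundle structure; as written, the rank-$\geqslant 2$ half of your forward direction is incomplete.
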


\begin{proof}
If $S$ is one of the~surfaces from Examples~\ref{example:dP1-1-D4} and \ref{example:dP1-2A32A1-4A2},
then $\rho(S)=1$, so that it does not contain cylinders by Theorem~\ref{theorem:dP-du-Val}.
To prove the converse assume that $S$ contains no cylinder.
Let~us show that $S$ is one of the~singular del Pezzo surfaces described in Examples~\ref{example:dP1-1-D4} and \ref{example:dP1-2A32A1-4A2}.
If $\uprho(S)=1$, this follows from Theorem~\ref{theorem:dP-du-Val} and \cite[Theorem~1.2]{Ye2002}.

We may assume that $\uprho(S)\geqslant 2$. Let us seek for a contradiction.
Since every smooth rational surface contains a cylinder, we see that $S$ is singular.
Then $K_S^2\leqslant 2$ by Theorem~\ref{theorem:dP-du-Val}.

Let $\pi\colon S\to Y$ be the~contraction of an extremal ray of the~Mori cone $\overline{\mathrm{NE}}(S)$ of the~surface $S$.
Then it follows from \cite{Morrison-1985} that one of the~following  cases hold:
\begin{itemize}
\item either $\pi$ is a conic bundle, $Y=\mathbb{P}^1$ and $\uprho(S)=2$;
\item or $\pi$ is birational, $Y$ is a del Pezzo surface with Du Val singularities, $\uprho(Y)=\uprho(S)+1$;
the~morphism $\pi$ is a weighted blowup of a smooth point in $Y$ with weights $(1,k)$ for $k\geqslant 1$, and $K_Y^2=K_S^2+k$.
\end{itemize}

Suppose that $\pi$ is a conic bundle.
Then we have the~following commutative diagram:
$$
\xymatrix{
&\widetilde{S}\ar[dl]_{\alpha}\ar[dr]^{\beta} \\
S\ar[d]_{\pi}&&\mathbb{F}_n\ar[d]\\
\mathbb{P}^1\ar@{=}[rr]&&\mathbb{P}^1}
$$
where $\alpha$ is a minimal resolution of singularities, $\beta$ is a birational map,
and $\mathbb{F}_n\to\mathbb{P}^1$ is a natural projection.
On the~other hand, it follows from Tsen's theorem that $S$ contains a smooth irreducible curve $Z$ that is a section of the~conic bundle $\pi$.
Let $C$ be its proper transform on $\mathbb{F}_n$. Then
$$
S\setminus\Big(Z\cup T_1\cup\cdots\cup T_r\Big)\cong S\setminus\Big(C\cup F_1\cup\cdots\cup F_r\Big)
$$
where $T_1,\ldots,T_r$ are fibers of $\pi$ that contain singular points of the~surface $S$,
and $F_1,\ldots,F_r$ are fibers of the~projection $\mathbb{F}_n\to\mathbb{P}^1$
over the~points $\pi(T_1),\ldots,\pi(T_r)$, respectively.
Then $S$ contains a~cylinder by Lemma~\ref{lemma:Fn}, which is a contradiction.

We see that $\pi$ is birational. Let $E$ be the~$\pi$-exceptional curve.
If $Y$ contains a cylinder $U$, then it also contains a cylinder $U^\prime\subset U$ such that $\pi(E)\not\in U^\prime$,
so that its preimage in $S$ is a cylinder as well.
Thus, the~surface $Y$ does not contain cylinders.
Then $Y$ is singular and $K_Y^2\leqslant 2$ by Theorem~\ref{theorem:dP-du-Val}.

We see that $K_Y^2=2$ and $\pi$ is a blowup of a smooth point in $Y$.
If $\uprho(Y)\geqslant 2$, then we can apply the~same arguments to $Y$ to show that it contains a cylinder.
Hence, we conclude that $\uprho(Y)=1$.
On the~other hand, all singularities of the~surface $Y$ are ordinary double points by Theorem~\ref{theorem:dP-du-Val}.
We see that $K_Y^2=2$ and $Y$ has $7$ singular points of type $\mathrm{A}_1$.
But such a surface does not exist.
\end{proof}

Let us conclude this subsection by presenting few results about polar cylinders in arbitrary rational surfaces.
To do this, fix an ample $\mathbb{Q}$-divisor $H$ on the~surface $S$.
If $S$ contains an $H$-polar cylinder, we~say that $H$ is \emph{cylindrical}.
The cylindrical ample $\mathbb{Q}$-divisors on $S$ form a cone, which we denoted earlier by $\mathrm{Amp}^{cyl}(S)$.
To investigate this cone, consider the~following number:
$$
\mu_H=\mathrm{inf}\left\{\lambda\in\mathbb{R}_{>0}\ \Big|\ \text{the $\mathbb{R}$-divisor}\ K_{S}+\lambda H\ \text{is pseudo-effective}\right\}.
$$

\begin{remark}
\label{remark:Fujita-invariant}
The number $\mu_H$ is known as the~Fujita invariant of the~divisor $H$,
because it was implicitly used by Fujita in \cite{F87,F92,F96,F97}.
It plays an essential role in Manin's conjecture (see \cite{BM90,HTT}).
\end{remark}

Let $\Delta_{H}$ be the~smallest extremal  face   of the~Mori cone $\overline{\mathbb{NE}}(S)$ that contains the~divisor $K_{S}+\mu_H H$.
Put $r_H=\dim(\Delta_{H})$. Observe that $r_H=0$ if and only if $S$ is a del Pezzo surface and $\mu_H H\sim_{\mathbb{Q}}-K_S$.

\begin{theorem}[{\cite{CheltsovSbornik}}]
\label{theorem:Sbornik}
Suppose that $S$ is smooth, $r_H+K_S^2\leqslant 3$, and the~self-intersection of every smooth rational curve in $S$ is at least $-1$.
Then $S$ does not contain $H$-polar cylinders.
\end{theorem}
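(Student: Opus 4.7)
The plan is to argue by contradiction. Assume $S$ contains an $H$-polar cylinder $U = S\setminus\Supp(D)$, where $D = \sum_{i=1}^n \lambda_i C_i \sim_\QQ H$ with $\lambda_i > 0$. Replacing $D$ by $\mu_H D$, we have $K_S + D \sim_\QQ K_S + \mu_H H$ pseudo-effective, with class lying in the face $\Delta_H$ of $\overline{\mathrm{NE}}(S)$ of dimension $r_H$. The strategy is to contract the $r_H$ extremal directions spanning $\Delta_H$, descend the cylinder, and reduce to the statement of Theorem~\ref{theorem:dP-du-Val} for smooth del Pezzo surfaces of degree at most $3$.

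First I would analyze the extremal rays of $\overline{\mathrm{NE}}(S)$ spanning $\Delta_H$. For any such ray $R$, ampleness of $\mu_H H$ gives $\mu_H H \cdot R > 0$, while $(K_S + \mu_H H) \cdot R \leq 0$ since $R \subset \Delta_H$; hence $K_S \cdot R < 0$. By Mori theory on smooth surfaces and the hypothesis forbidding smooth rational curves of self-intersection $\leq -2$, each such $R$ is generated by a $(-1)$-curve. Since $\Delta_H$ has dimension $r_H$, one can select $r_H$ such rays whose generating classes span $\Delta_H$; these $(-1)$-curves $E_1,\ldots,E_{r_H}$ turn out to be pairwise disjoint, using that the negative part of the Zariski decomposition of $K_S + \mu_H H$ has negative-definite intersection matrix, together with the numerical constraint that two distinct $(-1)$-curves on $S$ meet in either $0$ or at least $2$ points.

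Next, contract these via $\sigma\colon S\to S'$ to a smooth rational surface with $\uprho(S') = \uprho(S) - r_H$ and $K_{S'}^2 = K_S^2 + r_H \leq 3$. The pushforward $K_{S'} + \mu_H \sigma_* H$ now lies on the zero face of $\overline{\mathrm{NE}}(S')$, so $\sigma_* H \sim_\QQ -K_{S'}/\mu_H$; ampleness of $H$ on $S$ then forces $-K_{S'}$ to be ample on $S'$, so that $S'$ is a smooth del Pezzo surface of degree at most $3$.

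The most delicate step is descending the cylinder to $S'$. Since $U$ is affine it contains no complete curves, so each $E_j$ must meet $\Supp(D)$. A more careful argument, exploiting that $[E_j]$ appears with positive coefficient in the negative part of the Zariski decomposition of $K_S + \mu_H D$, and that the positive part of this decomposition is controlled by the remaining components of $D$, shows that $E_j$ is in fact a component of $D$, so $E_j\subset\Supp(D)$. Hence $\sigma$ restricts to an isomorphism on $U$, and $\sigma(U) = S'\setminus\Supp(\sigma_* D)$ is a $(-K_{S'})$-polar cylinder on the smooth del Pezzo $S'$ of degree at most $3$, contradicting Theorem~\ref{theorem:dP-du-Val}. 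The main obstacle is exactly this descent: a priori the contracted curves might merely meet $\Supp(D)$ rather than lie inside it, and overcoming this likely requires the full structure of the $\mathbb{P}^1$-fibration $\phi\colon\widetilde{S}\to\mathbb{P}^1$ from Remark~\ref{remark:obstruction}, and possibly preliminary elementary transformations on $\widetilde{S}$ to arrange all contracted curves to lie in the boundary before contraction.
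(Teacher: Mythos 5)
The survey does not reproduce a proof of Theorem~\ref{theorem:Sbornik} (it is quoted from \cite{CheltsovSbornik}), so your argument must stand on its own, and as written it has genuine gaps at both of its load-bearing steps. First, the structure you need for the face $\Delta_H$ is asserted rather than proved. The inequality $(K_S+\mu_H H)\cdot R\leqslant 0$ does not follow from $R\subset\Delta_H$: two classes lying in a common extremal face of $\overline{\mathrm{NE}}(S)$ satisfy no sign constraint on their intersection number, so the deduction that every ray of $\Delta_H$ is $K_S$-negative, hence a $(-1)$-curve, is unsupported. Note also that the hypothesis only controls \emph{smooth} rational curves; singular rational or non-rational curves of negative self-intersection, and rays coming from the nef positive part $P$ of the Zariski decomposition of $K_S+\mu_H H$ when $P\neq 0$, are not excluded by anything you say. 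Your disjointness argument is also off: it is false that two distinct $(-1)$-curves meet in $0$ or at least $2$ points (they can meet transversally once); what is true is that two meeting $(-1)$-curves cannot both lie in a negative definite configuration, but the rays you select to span $\Delta_H$ are never shown to be components of the negative part. Moreover, for the contraction to give $K_{S'}+\mu_H\sigma_*H\equiv 0$ you need $K_S+\mu_H H$ to be numerically a non-negative combination of the contracted curves, i.e.\ $P=0$ and the negative part supported on $r_H$ pairwise disjoint $(-1)$-curves spanning a simplicial face; none of this is established, and if $P\neq 0$ the image surface is not a del Pezzo surface and Theorem~\ref{theorem:dP-du-Val} cannot be invoked.

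Second, by your own admission the descent of the cylinder is left open: you need each contracted curve $E_j$ to lie in $\Supp(D)$, while the only a priori information is that $E_j$ meets $\Supp(D)$ (because $D\sim_{\QQ}H$ is ample). Since this is exactly where the contradiction with Theorem~\ref{theorem:dP-du-Val} would be produced, the proof is incomplete precisely at its decisive point. For comparison, the toolkit this survey develops for such non-existence statements (the obstruction of Remark~\ref{remark:obstruction}, the local analysis of Theorem~\ref{theorem:del-Pezzo-degree-1-2-3} and Appendix~\ref{section:log-pairs}) indicates that the intended route in \cite{CheltsovSbornik} is a direct argument on $S$ itself: use pseudo-effectivity of $K_S+\mu_H H$ together with the $\A^1$-fibration attached to the cylinder to produce a non-log-canonical point of the boundary pair, and then feed the numerical hypotheses $r_H+K_S^2\leqslant 3$ and the absence of curves of self-intersection $\leqslant -2$ into local intersection estimates, rather than contracting down to a del Pezzo surface and citing Theorem~\ref{theorem:dP-du-Val}. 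If you want to salvage your reduction, you must prove the face-structure claims and the inclusion $E_j\subset\Supp(D)$, and these appear to require essentially the same fibration analysis you are trying to avoid.
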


Note that if $S$ is smooth del Pezzo surface, then the~self-intersection of every smooth rational curve in $S$ is at least $-1$.
Moreover, it follows from \cite[Proposition~2.4]{deFernex} that this condition also holds if $S$ is obtained by blowing up $\mathbb{P}^2$ at any number of points in general position.

\begin{corollary}[{\cite{CPW17}}]
\label{corollary:del-Pezzo-3}
If $S$ is a smooth del Pezzo surface and $r_H+K_S^2\leqslant 3$, then $H\not\in\mathrm{Amp}^{cyl}(S)$.
\end{corollary}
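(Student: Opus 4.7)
The plan is to deduce this corollary directly from Theorem~\ref{theorem:Sbornik} by verifying that a smooth del Pezzo surface automatically satisfies the missing hypothesis on negative curves. The statement of Theorem~\ref{theorem:Sbornik} has three inputs: smoothness of $S$, the numerical condition $r_H+K_S^2\leqslant 3$, and the geometric condition that every smooth rational curve on $S$ has self-intersection at least $-1$. The first two are hypotheses of the corollary, so everything reduces to checking the third.

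First I would invoke the adjunction formula. For any smooth rational curve $C\subset S$, one has $C^2+K_S\cdot C=2g(C)-2=-2$, hence $C^2=-2-K_S\cdot C$. Since $-K_S$ is ample on the del Pezzo surface $S$ and $C$ is an irreducible curve, the intersection number $-K_S\cdot C$ is a positive integer, so $-K_S\cdot C\geqslant 1$, i.e.\ $K_S\cdot C\leqslant -1$. Substituting back gives $C^2\geqslant -1$, which is exactly the third hypothesis of Theorem~\ref{theorem:Sbornik}. This observation is in fact already recorded in the remark preceding the corollary.

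With all hypotheses of Theorem~\ref{theorem:Sbornik} in place, the conclusion is immediate: $S$ does not contain any $H$-polar cylinder, which by definition of $\mathrm{Amp}^{cyl}(S)$ means $H\notin\mathrm{Amp}^{cyl}(S)$.

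There is no genuine obstacle here; the content of the corollary is entirely in Theorem~\ref{theorem:Sbornik}, and the corollary merely packages its hypotheses in the clean del Pezzo setting. The only thing to be mindful of is that the condition $C^2\geqslant -1$ must hold for \emph{every} smooth rational curve, not only for extremal $(-1)$-curves; but the adjunction argument above treats all smooth rational curves uniformly, so no case analysis is required.
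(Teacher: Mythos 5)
Your proposal is correct and matches the paper's (implicit) argument: the paper simply remarks that on a smooth del Pezzo surface every smooth rational curve has self-intersection at least $-1$ and then reads off the corollary from Theorem~\ref{theorem:Sbornik}, exactly as you do, with your adjunction computation supplying the standard justification of that remark.
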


On the~other hand, we have the~following complimentary result:

\begin{theorem}[{\cite{CPW17,MW18}}]
\label{theorem:Amp-cyl-del-Pezzo}
Suppose that $S$ is a smooth rational surface. If $K_S^2\geqslant 4$, then
$$
\mathrm{Amp}^{cyl}(S)=\mathrm{Amp}(S).
$$
If $K_S^2=3$ and $-K_S$ is not ample, then $\mathrm{Amp}^{cyl}(S)=\mathrm{Amp}(S)$.
If $K_S^2=3$ and $-K_S$ is ample, then
$$
\mathrm{Amp}^{cyl}(S)=\mathrm{Amp}(S)\setminus\mathbb{Q}_{>0}[-K_S].
$$
\end{theorem}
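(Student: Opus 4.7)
The plan is to combine an obstruction for the exceptional ray with explicit cylinder constructions covering everything else. For the obstruction, when $K_S^2=3$ and $-K_S$ is ample, the surface $S$ is a smooth cubic surface; Theorem~\ref{theorem:dP-du-Val} then immediately yields $-K_S\notin\mathrm{Amp}^{cyl}(S)$. Alternatively, this exclusion also drops out of Corollary~\ref{corollary:del-Pezzo-3}: the choice $H=-K_S$ makes $r_H=0$, so $r_H+K_S^2=3$ and the obstruction of Theorem~\ref{theorem:Sbornik} applies. The rest of the theorem is a matter of constructing an $H$-polar cylinder for each ample $H$ in the asserted subcone of $\mathrm{Amp}(S)$.

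The construction I would use is the standard one via conic bundle structures. Given any conic bundle $\pi\colon S\to\mathbb{P}^1$, Tsen's theorem supplies a section $C$; after suitable elementary transformations one passes to a Hirzebruch model and applies Lemma~\ref{lemma:Fn} to conclude that $S\setminus(C\cup F_1\cup\cdots\cup F_r)$ is a cylinder, where $F_1,\dots,F_r$ are chosen fibers of $\pi$ containing every component of every reducible fiber that does not meet $C$. This cylinder is $H$-polar exactly when $H\in\mathbb{Q}_{>0}\langle[C],[F_1],\dots,[F_r]\rangle$, so that the problem reduces to a lattice statement: as $\pi$ varies over all conic bundle structures on $S$ and $C$ over admissible sections, the union of these positive cones should cover the appropriate portion of the ample cone.

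The key step is this coverage. For $K_S^2\geqslant 4$, the abundance of $(-1)$-curves and the corresponding conic bundle structures — read off from the classification of del Pezzo lattices — makes it straightforward to check that every ample class lies in at least one such cone. When $K_S^2=3$ and $-K_S$ is not ample, the surface carries extra negative curves (or reducible anticanonical configurations) that produce additional conic bundle structures, and these supply the extra flexibility needed to reach every ample class, including the anticanonical direction. When $K_S^2=3$ and $-K_S$ is ample, the $27$ conic-bundle pencils on the cubic surface cover exactly $\mathrm{Amp}(S)\setminus\mathbb{Q}_{>0}[-K_S]$, while the anticanonical ray is genuinely unreachable, consistent with the obstruction above. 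A short perturbation argument handles ample classes on the boundary between two neighbouring section-plus-fiber cones.

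The main obstacle is the coverage verification: enumerating the conic bundle structures on each surface type, writing down the section-plus-fiber cones in the Néron--Severi lattice, and checking that their union fills out the stated portion of $\mathrm{Amp}(S)$. This is a finite but intricate lattice calculation, performed case by case in the smooth del Pezzo range in \cite{CPW17} and extended to weak del Pezzo surfaces and other smooth rational surfaces of low degree in \cite{MW18}. The cubic case is the borderline where the unreachable direction is exactly the anticanonical ray, explaining the sharp statement of the third clause of the theorem.
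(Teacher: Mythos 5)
Your obstruction step is fine: for a smooth cubic surface, Theorem~\ref{theorem:dP-du-Val} (equivalently Theorem~\ref{theorem:Sbornik} with $r_H=0$) excludes the ray $\mathbb{Q}_{>0}[-K_S]$. The genuine gap is the coverage claim at the heart of your construction: conic-bundle cylinders of the type ``section plus fibers'' cannot reach the anticanonical ray, nor even a neighbourhood of it, so they do not suffice even for the first clause of the theorem. Concretely, if $U$ is such a cylinder for a conic bundle with fiber class $F$ and section $C$, then by Definition~\ref{definition:polar-cylinder} an $H$-polar structure requires an effective $D=\alpha C+\sum_i b_iL_i+\sum_k g_kF_k\sim_{\mathbb{Q}}H$ in which \emph{every} component of \emph{every} reducible fiber appears with $b_i>0$. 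Take $S$ a del Pezzo surface of degree $4$ and $H=-K_S$ (where the theorem, via Lemma~\ref{lemma:KPZ}, asserts existence). Intersecting with $F$ gives $\alpha=2$; intersecting with each of the four fiber components $N_k$ met by $C$ gives $1=D\cdot N_k\geqslant 2+b_{M_k}-b_{N_k}$, hence $b_{N_k}\geqslant 1$; and then $4=D\cdot(-K_S)\geqslant 2\deg C+\sum_k b_{N_k}\geqslant 6$, a contradiction. So no conic-bundle cylinder is $(-K_S)$-polar on a quartic del Pezzo surface, and the same computation (with five reducible fibers) shows that on a cubic surface these cylinders miss an entire open neighbourhood of the anticanonical ray, not just the ray itself; in particular your claim that the $27$ conic pencils cover exactly $\mathrm{Amp}(S)\setminus\mathbb{Q}_{>0}[-K_S]$ is false, and the ``sharpness'' of the third clause is not explained by your mechanism.

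Two further slips: removing only the fiber components that do not meet $C$ leaves $\Gm$-fibers over the corresponding points, so Lemma~\ref{lemma:Fn} does not apply --- one must remove entire reducible fibers, as in the proof of Theorem~\ref{theorem:dP-du-Val-cylinders}; and $\mathbb{P}^2$ has Picard rank one, hence no conic bundle at all, so it needs a separate (easy) argument. Note also that the survey does not prove this theorem but quotes it from \cite{CPW17,MW18}; the proofs there do use conic bundles and contractions, but the choice is dictated by the face $\Delta_H$ containing $K_S+\mu_HH$, and near the anticanonical ray the cylinders are of a different kind (complements of a conic plus tangent line plus exceptional curves as in Lemma~\ref{lemma:KPZ}, or cuspidal anticanonical-type configurations), with coefficients arranged so that $\mu_HH\sim_{\mathbb{Q}}-K_S+(\text{effective divisor supported on }\Delta_H)$ is absorbed into the boundary. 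Your plan would need these additional constructions, plus a genuine verification of coverage, to become a proof.
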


If $S$ is a smooth rational surface and $K_S^2\leqslant 2$,
then  $\mathrm{Amp}^{cyl}(S)$ is poorly understood (see \cite{CPW17}).

\subsection{Absence of polar cylinders}
\label{subsection:del-Pezzo-no-cylinders}
Now, we show that smooth del Pezzo  surfaces of degree~$\leqslant 3$ does not contain any anticanonical polar cylinders,
which is one way implication of Corollary~\ref{corollary:dP-du-Val}.
For~singular del Pezzo surfaces of degree $\leqslant 2$ with types of singular points listed in~Theorem~\ref{theorem:dP-du-Val},
the~same implication can be verified in a~similar way (see \cite{CPW16b} for the~details).

Let $S$ be a smooth del Pezzo surface of degree $K_S^2=d\leqslant 3$,
and let $D$ be an~effective $\mathbb{Q}$-divisor on the~surface $S$, i.e., we have
$$
D=\sum_{i=1}^ra_iC_i,
$$
where every $C_i$ is an~irreducible curve on $S$, and every $a_i$ is a~non-negative rational number.
Suppose that $D\sim_{\mathbb{Q}} -K_S$.
If  $d\in\{2,3\}$, then each $a_i$ does not exceed $1$ by Lemmas~\ref{lemma:Pukhlikov} and~\ref{lemma:double-plane}.
Similarly, if $d=1$, we have
$$
1=d=K_S^2=D\cdot (-K_S)=\sum_{i=1}^ra_iC_i\cdot(-K_S)\geqslant a_iC_i\cdot(-K_S),
$$
which immediately implies that $a_i\leqslant 1$ for each $i$.

\begin{theorem}
\label{theorem:del-Pezzo-degree-1-2-3}
Let $P$ be a point in $S$. Suppose that the~log pair $(S,D)$ is not log canonical at $P$.
Then there exists a~curve $T\in |-K_{S}|$ such that
\begin{itemize}
 \item the~curve $T$ is singular at $P$;
\item the~log pair $(S,T)$ is not log canonical at  $P$;
\item $\mathrm{Supp}(T)\subseteq\mathrm{Supp}(D)$.
\end{itemize}
\end{theorem}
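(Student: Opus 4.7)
The plan is to argue by a two-step reduction: first use the coefficient bound together with the non-log-canonical hypothesis to localize $D$ near $P$, and then show via intersection theory on $S$ that $\mathrm{Supp}(D)$ must contain the support of an anticanonical divisor inheriting the bad singularity at $P$.

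First I would reduce to a convenient form. By Lemmas~\ref{lemma:Pukhlikov} and \ref{lemma:double-plane} (for $d\in\{2,3\}$) and by the direct intersection estimate $1=K_S^2=D\cdot(-K_S)\geqslant a_iC_i\cdot(-K_S)$ (for $d=1$), every coefficient satisfies $a_i\leqslant 1$. Discard all components of $D$ not passing through $P$; the remaining pair is still not log canonical at $P$, and I may therefore assume each $C_i$ passes through $P$. A standard observation on smooth surfaces gives $\mathrm{mult}_P(D)>1$, for otherwise the pair would be log canonical at $P$. Moreover, using the log-pair results of the appendix, non-log-canonicity at $P$ produces, after possibly a sequence of blowups, a prime divisor $E$ over $P$ with discrepancy $<-1$, hence a distinguished infinitely-near flag $P=P_0\leftarrow P_1\leftarrow\cdots$ along which the multiplicities of the strict transforms of the $C_i$'s concentrate.

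Next I would construct $T$. Consider an anticanonical curve $T'\in|-K_S|$ passing through $P$, chosen to be singular at $P$ along the distinguished tangent direction identified in the previous step (for $d=3$, take the tangent hyperplane section at $P$ in $\mathbb{P}^3$; for $d=2$, a suitable member of the pencil $|-K_S\otimes\mathcal{I}_P|$ that is singular at $P$; for $d=1$, the unique member of the pencil $|-K_S|$ through $P$ when $P$ is not the base point, and a nodal/cuspidal member when it is). If $T'\not\subseteq\mathrm{Supp}(D)$, then the bound $D\cdot T'=K_S^2=d\leqslant 3$ combined with $(D\cdot T')_P\geqslant \mathrm{mult}_P(D)\cdot\mathrm{mult}_P(T')$ and the non-log-canonical flag constraints from the appendix yields a contradiction — the point is that the infinitely-near multiplicities forced by non-log-canonicity are incompatible with the tiny global intersection $d\leqslant 3$ once the tangent configurations of $D$ and $T'$ are aligned. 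Hence $T'\subseteq\mathrm{Supp}(D)$, and I set $T=T'$.

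The final step is to confirm the three required properties. Singularity of $T$ at $P$ holds by construction. The inclusion $\mathrm{Supp}(T)\subseteq\mathrm{Supp}(D)$ has just been established. For non-log-canonicity of $(S,T)$ at $P$, I would exploit the fact that $T$ is an integral divisor realizing the same infinitely-near flag as the non-log-canonical place of $(S,D)$; since its components appear with coefficient $1$, an explicit discrepancy computation along this flag shows that $(S,T)$ fails to be log canonical at $P$.

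The main obstacle is the contradiction in Step~3 showing $T'\subseteq\mathrm{Supp}(D)$. This genuinely requires the appendix's fine-grained analysis of two-dimensional log-pair singularities (the structure of non-LC loci on smooth surfaces, bounds on infinitely-near multiplicities, and Corti-type inequalities), together with a case split by degree $d\in\{1,2,3\}$ that exploits the very specific geometry of $|-K_S|$ — a pencil with one base point, a $g^2$ on a degree-two cover of $\mathbb{P}^2$, and hyperplane sections of a cubic in $\mathbb{P}^3$ — since in each case the existence and singular behavior of anticanonical curves through $P$ must be controlled by hand.
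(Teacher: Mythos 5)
There is a genuine gap, in fact two. First, your construction of the candidate curve $T'$ assumes from the outset that $|-K_S|$ contains a member singular at $P$. For $d=3$ the tangent hyperplane section always provides one, but for $d=1$ and $d=2$ no such member exists for a general point $P$: on a degree-$2$ surface the pull-back $\tau^*(L)$ of a line is smooth at $P$ whenever $\tau(P)\notin B$, and on a degree-$1$ surface the unique anticanonical curve through a non-base point is generically smooth there. The existence of a $T$ singular at $P$ is therefore part of the conclusion, not an available input; in the paper it is extracted from the hypothesis: for $d=2$ the second assertion of Lemma~\ref{lemma:double-plane} first forces $\tau(P)\in B$, and for $d=1$ one shows (via Lemma~\ref{lemma:Skoda} and the perturbation of Remark~\ref{remark:convexity}) that the irreducible anticanonical curve $C$ through $P$ must lie in $\mathrm{Supp}(D)$ \emph{and} that $(S,C)$ is not log canonical at $P$, which is what makes $C$ worse than nodal, hence singular, at $P$. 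Your proposal never establishes either fact. Second, your final step asserts that $(S,T)$ is not log canonical at $P$ because $T$ ``realizes the same infinitely-near flag'' as the non-log-canonical place of $(S,D)$; nothing in your construction guarantees this, and it is false in general (e.g.\ for $d=3$ the tangent hyperplane section can be nodal at $P$ with branches unrelated to where $D$ concentrates, in which case $(S,T_P)$ \emph{is} log canonical at $P$). In the paper this is exactly the hard case: one assumes via Remark~\ref{remark:convexity} that some component of $T_P$ is missing from $\mathrm{Supp}(D)$, shows every line through $P$ lies in $\mathrm{Supp}(D)$ so $\mathrm{mult}_P(T_P)=2$, blows up $P$, locates the new non-log-canonical point with Lemma~\ref{lemma:blow-up} and Lemma~\ref{lemma:double-plane} (applied through the degree-two projection from $P$), and then rules out the ``conic plus line'' and ``three lines'' configurations with the inequalities of Theorem~\ref{theorem:Vanya}.

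Your own write-up concedes that this contradiction is ``the main obstacle'' and defers it to ``the appendix's fine-grained analysis'' without carrying it out, so the heart of the theorem — simultaneously forcing $\mathrm{Supp}(T)\subseteq\mathrm{Supp}(D)$ and the failure of log canonicity of $(S,T)$ at $P$ — remains unproven. A smaller point: discarding the components of $D$ not passing through $P$ destroys the relation $D\sim_{\mathbb{Q}}-K_S$, which is needed for Lemma~\ref{lemma:double-plane} and for the exact intersection numbers $D\cdot C=K_S^2$ used in the contradictions, so you should keep $D$ intact (as the paper does) and only exploit locality of log canonicity where harmless.
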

\begin{proof}
We consider the~cases $d=1$, $d=2$ and $d=3$ separately.
See the~proof of \cite[Theorem~1.12]{CPW16a} for an~alternative proof in the~case $d=3$.

Suppose that $K_S^2=1$.
Let $C$ be a~curve in $|-K_S|$ that passes through $P$. Then $C$ is irreducible.
If $C$ is not contained in the~support of $D$, then it follows from  Lemma~\ref{lemma:Skoda} that
$$
1=d\geqslant K_S^2=D\cdot C\geqslant\mathrm{mult}_P(D)>1.
$$
This shows that $C\subset\mathrm{Supp}(D)$.
If $(S,C)$ is not log canonical at $P$, then we can put $T=C$ and we are done.
Thus, we may assume that $(S,C)$ is log canonical at~$P$.
Then Remark~\ref{remark:convexity} implies the~existence
of an~effective $\mathbb{Q}$-divisor $D^\prime$ such that $D^\prime\sim_{\mathbb{Q}} -K_S$,
the curve $C$ is not contained in the~support of $D^\prime$, and $(S,D^\prime)$ is not log canonical at $P$.
Now Lemma~\ref{lemma:Skoda} implies that
$$
1=d\geqslant K_S^2=D^\prime\cdot C\geqslant\mathrm{mult}_P(D^\prime)>1,
$$
which is absurd.

Now, we suppose that $K_S^2=2$.
In this case there exists a~double cover $\tau\colon S\to\mathbb{P}^2$ branched over a~smooth quartic curve $C$.
Moreover, we have
$$
D\sim_{\mathbb{Q}} -K_S\sim \tau^*(L),
$$
where $L$ is a~line in $\mathbb{P}^2$.
By Lemma~\ref{lemma:double-plane}, we have $\tau(P)\in C$.
Now let us choose $L$ to be the~tangent line to $C$ at the~point $\tau(P)$,
and let $R$ be the~curve in $|-K_S|$ such that $\tau(R)=L$. Then~$\mathrm{mult}_P(R)=2$.
If $R$ is irreducible and is not contained in the~support of $D$, then Lemma~\ref{lemma:Skoda} gives
$$
2=d\geqslant K_S^2=D\cdot R\geqslant\mathrm{mult}_P(D)\mathrm{mult}_P(R)\geqslant 2\mathrm{mult}_P(D)>2.
$$
Note that either $R$ is irreducible or $R$ consists of two $(-1)$-curves that both pass through $P$.
Therefore, if one component of the~curve $R$ is not contained in the~support of the~divisor $D$,
then~we obtain a~contradiction in a~similar way by intersecting $D$ with this irreducible component.
Thus,~we may assume that all irreducible component of the~curve $R$ are contained in $\mathrm{Supp}(D)$.
Now we can use  Remark~\ref{remark:convexity} as in the~case $d=1$ to conclude that $(S,R)$ is not
log canonical at~$P$. Hence, we can let $T=R$.

Finally, we suppose that $K_S^2=3$.
Then $S$ is a~smooth cubic surface in $\mathbb{P}^3$,
and $-K_S$ is rationally equivalent to its hyperplane section.
Let $T_P$ be the~intersection of the~surface $S$ with the~hyperplane that is tangent to $S$ at the~point $P$.
Then $T_P$ is a~reduced cubic curve that is singular at $P$.
If~$(S,T_P)$~is not log canonical at $P$ and $\mathrm{Supp}(T_P)\subseteq\mathrm{Supp}(D)$,
we can let $T=T_P$ and we are done.
Therefore, we may assume that at least one of the~following two conditions hold:
\begin{enumerate}
\item the~log pair $(S,T_P)$~is log canonical at $P$;
\item $\mathrm{Supp}(D)$ does not contain at least one irreducible components of the~curve $T_P$.
\end{enumerate}
To obtain a~contradiction, we may assume by Remark~\ref{remark:convexity}
that at least one irreducible component of the~curve $T_P$ is not contained in $\mathrm{Supp}(D)$.

If $L_P$ is a~line that passes through $P$, then $L_P\subseteq\mathrm{Supp}(D)$,
since otherwise we would get
$$
1\geqslant D\cdot L_P\geqslant\mathrm{mult}_P(D)\mathrm{mult}_P(L_P)\geqslant \mathrm{mult}_P(D)>1
$$
by Lemma~\ref{lemma:Skoda}. Thus, we see that $\mathrm{mult}_P(T_P)=2$.

Let $f\colon \widetilde{S}\to S$ be the~blowup of the~point $P$, let $E$ be the~exceptional curve of the~blowup~$f$,
and let $\widetilde{D}$ be the~proper transform on $\widetilde{S}$ of the~$\mathbb{Q}$-divisor $D$.
Then $\mathrm{mult}_P(D)>1$ by Lemma~\ref{lemma:Skoda}.
Moreover, if follows from Lemma~\ref{lemma:blow-up} that the~log pair
$$
\left(\widetilde{S},\widetilde{D}+\left(\mathrm{mult}_P(D)-1\right)E\right)
$$
is not log canonical at some point $Q\in E$.
Moreover, there is a~commutative diagram
$$
\xymatrix{
\widetilde{S}\ar[d]_f\ar[rr]^g&&\overline{S}\ar[d]^h \\
S\ar@{-->}^{\psi}[rr]&&\mathbb{P}^2,}
$$
where $\psi$ is a~projection from $P$, the~morphism $g$ is a~contraction of the~proper transforms of all lines in $S$ that pass through $P$,
and $h$ is a~double cover branched over a~quartic curve.
This quartic curve has at most two ordinary double points, because $\mathrm{mult}_P(T_P)\ne 3$.

Let $\widetilde{T}_P$ be the~proper transform on $\widetilde{S}$ of the~curve $T_P$.
Then $Q\in E\cap\widetilde{T}_P$ by Lemma~\ref{lemma:double-plane}.

Note that $T_P$ is one of the~following curves: an~irreducible cubic curve,
a union of a~conic and a~line, a~union of three lines.
Let us consider these cases separately.

Suppose that $T_P$ is a~union of a~conic and a~line, so that $T_P=L_P+C_P$, where $L_P$ is a~line, and~$C_P$ is an~irreducible conic.
Then $L_P\subset\mathrm{Supp}(D)$, so that $C_P$ is not contained in $\mathrm{Supp}(D)$.
Thus, we write $D=aL_P+\Omega$, where $a\in\mathbb{Q}_{>0}$, and $\Omega$ is an~effective $\mathbb{Q}$-divisor on $S$
whose support contains none of the~curves $L_P$ and $C_P$. Put
$m=\mathrm{mult}_P(\Omega)$. Then $\mathrm{mult}_P(D)=m+a$ and
$$
2-2a=\Omega\cdot C_P\geqslant m,
$$
which gives $m+2a\leqslant 2$.
Similarly, we obtain $1+a\geqslant m$ by using
$$
1+a=L_P\cdot D=\Omega\cdot L_P\geqslant m.
$$
Denote by $\widetilde{C}_P$ the~proper transform of the~conic $C_P$
on the~surface $\widetilde{S}$, denote by $\widetilde{L}_P$  the~proper
transform of the~line $L_P$ on the~surface $\widetilde{S}$, and denote by $\widetilde{\Omega}$ the~proper transform of the~divisor $\Omega$
on the~surface $\widetilde{S}$. Put $\widetilde{m}=\mathrm{mult}_{Q}(\widetilde{\Omega})$.
Then the~log pair
\begin{equation}
\label{equation:cubic-surface-L-C}
\left(\widetilde{S},a\widetilde{L}_P+\widetilde{\Omega}+\left(m+a-1\right)E\right)
\end{equation}
is not log canonical at $P$. Now, applying Lemma~\ref{lemma:Skoda} to this log pair, we obtain $2a+m+\widetilde{m}>2$.
On the~other hand, if $Q\in\widetilde{C}_P$, then
$$
2-2a-m=\widetilde{\Omega}\cdot\widetilde{C}_P\geqslant\widetilde{m},
$$
so that $Q\not\in\widetilde{C}_P$.
Since $Q\in\widetilde{T}_P$, we see that $Q\in\widetilde{L}_P$. Then we have
$$
1+a-m=\widetilde{\Omega}\cdot\widetilde{L}_P\geqslant\widetilde{m},
$$
so that $2\geqslant 1+a\geqslant m+\widetilde{m}\geqslant 2\widetilde{m}$, which gives $\widetilde{m}\leqslant 1$.
Thus, we can apply Theorem~\ref{theorem:Vanya} to the~log pair \eqref{equation:cubic-surface-L-C} at the~point $Q$.
This gives
$$
m=\widetilde{\Omega}\cdot E\geqslant\big(\widetilde{\Omega}\cdot E\big)_Q>2(2-a-m)
$$
or
$$
1+a-m=\widetilde{\Omega}\cdot\widetilde{L}\geqslant\big(\widetilde{\Omega}\cdot\widetilde{L}\big)_Q>2(1-a),
$$
so that we get $3a+m>3$ or $2a+m>2$, which is impossible since $a\leqslant 1$ and $m+2a\leqslant 2$.

Therefore, we conclude that the~curve $T_P$ a~union of three lines.
Hence, we have $T_P=L_1+L_2+L_3$, where $L_1$, $L_2$, $L_3$ are lines in $S$ such that $P=L_1\cap L_2$ and $P\not\in L_3$.
Then $L_1\subset\mathrm{Supp}(D)\supset L_2$.
Therefore, we can write $D=a_1L_1+a_2L_2+\Delta$, where $a_1$ and $a_2$ are some positive rational numbers,
and $\Delta$ is an~effective $\mathbb{Q}$-divisor whose support does not contain $L_1$ and $L_2$.
Put $\mathbf{m}=\mathrm{mult}_P(\Delta)$. Then
$$
\mathbf{m}\leqslant\Delta\cdot L_1=\left(H-a_1L_1-a_2L_2\right)\cdot L_1=1+a_1-a_2,
$$
because $L_1\cdot L_2=1$ and $L_1^2=-1$ on the~surface $S$. Similarly, we see that
$$
\mathbf{m}\leqslant\Delta\cdot L_2=\left(H-a_1L_1-a_2L_2\right)\cdot L_2=1-a_1+a_2.
$$
This gives $\mathbf{m}\leqslant 1$.
Thus, we can apply Theorem~\ref{theorem:Vanya} to the~log pair $(S,D)$ at the~point $P$.
Then
$$
1+a_1-a_2=\Delta\cdot L_1\geqslant\big(\Delta\cdot L_1\big)_P>2(1-a_2)
$$
or
$$
1-a_1+a_2=\Delta\cdot L_2\geqslant\big(\Delta\cdot L_2\big)_P>2(1-a_1),
$$
which implies that $a_1+a_2>1$.
On the~other hand, we have
$$
0\leqslant\Delta\cdot L_3=\left(H-a_1L_1-a_2L_2\right)\cdot L_3=1-a_1-a_2,
$$
which implies that $a_1+a_2\leqslant 1$. The obtained contradiction completes the~solution.
\end{proof}

We now claim that a~smooth del Pezzo surface of degree $d\leqslant 3$ cannot contain a~$(-K_S)$-cylinder.
If $d\leqslant 2$, the~claim is \cite[Proposition~5.1]{KPZ14a}.
Similarly, if $d=3$, then the~claim is \cite[Theorem~1.7]{CPW16a}.
Let us show how to derive the~claim from from Theorem~\ref{theorem:del-Pezzo-degree-1-2-3} and Remark~\ref{remark:obstruction}.

Suppose that $S$ contains a $(-K_S)$-polar cylinder $U$. Then
$$
S\setminus U=C_1\cup\cdots\cup C_n
$$
for some irreducible curves $C_1,\ldots,C_n$ in  $S$,
and there are positive rational numbers $\lambda_1,\ldots,\lambda_n$ such that
$$
\sum_{i=1}^n\lambda_iC_i\sim_{\mathbb{Q}} -K_{S}.
$$
Put $D=\lambda_1C_1+\cdots+\lambda_nC_n$.
Then $(S,D)$ is not log canonical at some point $P\in S$ by Remark~\ref{remark:obstruction}.
Hence, by Theorem~\ref{theorem:del-Pezzo-degree-1-2-3},
there exists a~curve $T\in |-K_{S}|$ such that
\begin{itemize}
\item the~log pair $(S, T)$ is not log canonical at $P$;
\item and $\mathrm{Supp}(T)\subseteq\mathrm{Supp}(D)$.
\end{itemize}
Then $D\ne T$, because $n>3$ by \eqref{equation:KPZ-r}, and $T$ does not have more than $d\leqslant 3$ irreducible components.
Thus, there exists a~rational number $\mu>0$ such that $(1+\mu)D-\mu T$ is effective,
and its support does not contain at least one irreducible component of the~curve $T$.
Then $(S,(1+\mu)D-\mu T)$ is not log canonical at $P$ by Remark~\ref{remark:obstruction},
which contradicts to Theorem~\ref{theorem:del-Pezzo-degree-1-2-3}, since
$$
(1+\mu)D-\mu T\sim_{\mathbb{Q}} -K_S.
$$

\subsection{Construction of polar cylinders}
\label{subsection:cylinders-constructions}
Now, we show how to construct anticanonical polar cylinders in singular del Pezzo surfaces with Du Val singularities.
We start with

\begin{lemma}[{\cite[Theorem~3.19]{KPZ11}}]
\label{lemma:KPZ}
Let $S$ is a smooth del Pezzo surface. Suppose that $K_S^2\geqslant 4$. Then the~surface $S$ contains a $(-K_S)$-polar cylinder.
\end{lemma}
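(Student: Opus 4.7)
The plan is to construct a $(-K_S)$-polar cylinder on $S$ explicitly by case analysis on $d = K_S^2 \in \{4, 5, 6, 7, 8, 9\}$. In each case I will exhibit irreducible curves $C_1, \ldots, C_n \subset S$ and positive rational numbers $a_1, \ldots, a_n$ with $\sum a_i C_i \sim_{\QQ} -K_S$ such that $S \setminus (C_1 \cup \cdots \cup C_n)$ is isomorphic to $\A^1 \times Z$ for an affine curve $Z$. By Definition~\ref{definition:polar-cylinder} this gives the desired $(-K_S)$-polar cylinder.

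The two highest-degree cases are immediate. For $d = 9$, i.e.\ $S = \PP^2$, the complement of a line $L$ is $\A^2 \cong \A^1 \times \A^1$ and $3L \sim -K_{\PP^2}$. For $d = 8$ with $S = \PP^1 \times \PP^1$, taking $L_1$ and $L_2$ to be a horizontal and a vertical ruling yields the cylinder $\A^1 \times \A^1$ with $2L_1 + 2L_2 \sim -K_S$. The remaining $d = 8$ case ($\FF_1$) and the cases $d \in \{6, 7\}$ admit similar direct constructions: each such $S$ carries an obvious $\PP^1$-fibration $\pi \colon S \to \PP^1$ (coming from the Hirzebruch ruling, a pencil of lines through one of the blown-up points, or the toric ruling for $d = 6$) with a $(-1)$-curve section, and Lemma~\ref{lemma:Fn}, together with a simple intersection-theoretic calculation to identify the coefficients, produces the required cylinder.

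For the remaining cases $d \in \{4, 5\}$, I would proceed as follows. Choose a conic bundle $\pi \colon S \to \PP^1$ and a $(-1)$-curve section $E$ of $\pi$; both exist in abundance on smooth del Pezzo surfaces of degree at least $4$. Contract $E$ together with one component from each reducible fibre of $\pi$ via a birational morphism $\tau \colon S \to \FF_n$ for some $n \geqslant 0$. Applying Lemma~\ref{lemma:Fn} on $\FF_n$ to the section $\tau(E)$ and a suitable choice of fibres $\bar F_1, \ldots, \bar F_r$ yields a cylinder on $\FF_n$, whose preimage under $\tau$ is a cylinder $U \subset S$ whose boundary is $E$ together with the full $\pi$-fibres lying over $\pi(\bar F_1), \ldots, \pi(\bar F_r)$.

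The main obstacle, and the place where the hypothesis $d \geqslant 4$ is used, lies in verifying that the irreducible components of $S \setminus U$ admit a $\QQ$-linear combination with strictly positive coefficients representing the class $-K_S$. Since these components generate $\mathrm{Cl}(S)$, some $\QQ$-combination exists; using $-K_S \cdot F = 2$ for a fibre $F$ of $\pi$ and $-K_S \cdot E = 1$ together with the explicit intersection matrix of the boundary, the problem reduces to solvability of a small linear system with positive entries. The abundance of $(-1)$-curves and reducible fibres on conic bundles of degree $\geqslant 4$ supplies enough free parameters to push all coefficients into $\QQ_{>0}$; by contrast, for $d \leqslant 3$ the analogous system has no positive solution, consistent with the non-existence result established via Theorem~\ref{theorem:del-Pezzo-degree-1-2-3}.
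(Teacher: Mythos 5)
Your reduction of the cases $d=K_S^2\geqslant 6$ to Lemma~\ref{lemma:Fn} is fine, but the construction you propose for the two remaining cases $d\in\{4,5\}$ cannot be completed: the positivity step that you defer ("push all coefficients into $\QQ_{>0}$") is not merely unverified, it is false for the boundaries your construction produces. Concretely, let $d=4$, write $\sigma\colon S\to\PP^2$ for the blowup of five general points, with $H$ the pullback of a line and $E_1,\dots,E_5$ the exceptional curves, and take the conic bundle $|H-E_1|$ with $(-1)$-curve section $E_1$; its reducible fibres are $\widetilde{L}_{1i}+E_i$ with $\widetilde{L}_{1i}=H-E_1-E_i$, $i=2,\dots,5$. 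Your boundary is $E_1$ together with full fibres over a finite set which must contain the four reducible ones, so a polar divisor would have the form $D=aE_1+\sum_{i=2}^{5}(b_i\widetilde{L}_{1i}+c_iE_i)+\sum_k e_kF_k\sim_{\QQ}-K_S=3H-\sum_j E_j$ with all coefficients positive ($F_k$ smooth fibres). Intersecting with a general fibre forces $a=2$; comparing coefficients of $E_i$ forces $c_i=b_i-1$, hence $b_i>1$ for all four $i$; but the coefficient of $H$ gives $\sum_i b_i+\sum_k e_k=3<4$, a contradiction. The obstruction is not an artifact of this choice: for any conic bundle on a quartic del Pezzo surface the coefficient of any irreducible section $Z$ in an anticanonical boundary is forced to equal $2$, so the vertical part is an effective member of $|-K_S-2Z|_{\QQ}$ with $-K_S\cdot(-K_S-2Z)=-2Z^2$, whence $Z^2=-1$, and running through the $(-1)$-curve sections ($E_1$, the lines $\widetilde{L}_{ij}$, the conics) the same bookkeeping in $\Pic(S)$ always fails; so no cylinder whose boundary is a section plus fibres of a conic bundle is $(-K_S)$-polar when $d=4$, and the identical computation ($c_i=b_i-1>0$ forcing $\sum b_i>3$) kills the "section plus full fibres" boundary when $d=5$. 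Thus the "free parameters" you invoke do not exist, and your calibration is off: for this type of boundary the system already has no positive solution for $d=4,5$, not only for $d\leqslant 3$. (A smaller slip: contracting $E$ \emph{together with} one component of each reducible fibre drops the Picard rank to $1$, so the target would be $\PP^2$, not $\FF_n$; you mean to contract only the fibre components.)

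The paper sidesteps fibrations on $S$ entirely and handles all $4\leqslant d\leqslant 9$, in particular the delicate degrees $4$ and $5$, with one configuration. Assuming $S\ne\PP^1\times\PP^1$ (that case being trivial), write $\sigma\colon S\to\PP^2$ as the blowup of $k\leqslant 5$ points in general position; since no three are collinear there is an irreducible conic $C$ through all of them, and one takes a line $L$ tangent to $C$ at a point different from the $k$ points. Then $\PP^2\setminus(C\cup L)\cong(\A^1\setminus\{0\})\times\A^1$ is a cylinder, it is untouched by $\sigma$, and $-K_S\sim_{\QQ}(1+\varepsilon)\widetilde{C}+(1-2\varepsilon)\widetilde{L}+\varepsilon\sum_{i=1}^{k}E_i$ for any $0<\varepsilon<\tfrac12$, so the boundary $\widetilde{C}\cup\widetilde{L}\cup E_1\cup\dots\cup E_k$ supports an anticanonical divisor with strictly positive coefficients. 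If you want to salvage a case-by-case argument, you must replace your vertical construction in degrees $4$ and $5$ by something of this kind; as it stands, those are exactly the cases where your proposal breaks down.
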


\begin{proof}
We may assume that $S\ne\mathbb{P}^1\times\mathbb{P}^1$.
Then there exists a birational map $\sigma\colon S\to\mathbb{P}^2$ that blows up $k\leqslant 5$ distinct points.
Let  $E_1,\ldots,E_k$ be the~$\sigma$-exceptional curves,
let~$C$ be an irreducible conic in $\mathbb{P}^2$ that contains all points $\sigma(E_1),\ldots,\sigma(E_k)$,
and let~$L$ be a line in $\mathbb{P}^2$ that is tangent to the~conic $C$ at some point that is different from $\sigma(E_1),\ldots,\sigma(E_k)$.
Denote by $\widetilde{C}$ and $\widetilde{L}$ the~proper transforms on $S$ of the~curves $C$ and $L$, respectively.
Then
$$
-K_{S}\sim\sigma^*(-K_{\mathbb{P}^2})-\sum_{i=1}^{k} E_i\sim_{\mathbb{Q}} (1+\varepsilon)\widetilde{C}+(1-2\varepsilon)\widetilde{L}+\varepsilon\sum_{i=1}^{k} E_i
$$
for every positive $\varepsilon<\frac{1}{2}$.
On the~other hand, we have
$$
S\setminus(\widetilde{C}\cup\widetilde{L}\cup E_1\cup\cdots\cup E_k)\cong\mathbb{P}^2\setminus(C\cup L)\cong\Big(\mathbb{A}^1\setminus\big\{0\big\}\Big)\times\mathbb{A}^1,
$$
so that the~surface $S$ contains a $(-K_S)$-polar cylinder.
\end{proof}

Now let us present an example of a singular del Pezzo surface of degree $2$ that
has one singular point of type $\mathrm{A}_2$ and contains an anticanonical polar cylinder.

\begin{example}
\label{example:dp2-A2}
Let $h\colon\widehat{S}\to\mathbb{P}^2$ be a~composition of $10$ blowups,
let $E_1,\ldots,E_{10}$ be the~exceptional curves of the birational morphism $h$,
let $L_1$ and $L_2$ be two distinct lines in~$\mathbb{P}^2$,
and let $\widehat{L}_1$ and $\widehat{L}_2$ be their proper transforms on $\widehat{S}$, respectively.
Now, let us choose $h$ such that the~intersections of these twelve curves are depicted as follows:
\begin{center}
\setlength{\unitlength}{0.70mm}
\begin{picture}(170,110)(-45,-47)
\thicklines
\put(-9,38){\mbox{$\widehat{L}_1$}}\put(0,40){\line(1,0){80}}
\put(10,0){\line(0,1){50}}\put(8,-8){$E_2$}
\put(-30,10){\line(1,0){50}}\put(-39,8){$E_3$}
\put(-20,-30){\line(0,1){50}}\put(-22,-38){$E_1$}
\put(-5,-30){\line(0,1){50}}\put(-7,-38){$E_4$}
\put(113,-22){$E_5$}\put(60,-20){\line(1,0){50}}
\put(113,-12){$E_6$}\put(60,-10){\line(1,0){50}}
\put(113,-2){$E_7$}\put(60,0){\line(1,0){50}}
\put(113,8){$E_8$}\put(60,10){\line(1,0){50}}
\put(113,18){$E_9$}\put(60,20){\line(1,0){50}}
\put(113,28){$E_{10}$}\put(60,30){\line(1,0){50}}
\put(68,54){\mbox{$\widehat{L}_2$}} \put(70,-30){\line(0,1){80}}
\end{picture}
\end{center}
Note that the $10$ blowups are arranged in the order indicated by the indices of their exceptional curves $E_i$.
To describe the~intersection form of the~curves $\widehat{L}_1$, $\widehat{L}_2$, $E_1,\ldots,E_{10}$, observe that
$$
\widehat{L}_1^2=-1,\quad \widehat{L}_2^2=-5,\,\,\, E_1^2=-3,\quad E_2^2=-2,\quad E_3^2=-2,\,\,\, E_4^2=
\ldots=E_{10}^2=-1.
$$
Let $g\colon\widehat{S}\to \widetilde{S}$ be the~contraction of the~curves $\widehat{L}_1$, $E_2$, $E_3$,
and let $\widetilde{L}_2$, $\widetilde{E}_1$, $\widetilde{E}_4,\ldots,\widetilde{E}_{10}$ be the~proper transforms on $\widetilde{S}$ of the~curves $\widehat{L}_2$, $\widehat{E}_1$, $\widehat{E}_4,\ldots,\widehat{E}_{10}$, respectively.
Then $\widetilde{S}$ is smooth and $K_{\widetilde{S}}^2=2$.
Moreover, the~divisor $-K_{\widetilde{S}}$ is nef.
To show this, fix an arbitrary positive rational number~$\epsilon<\frac{1}{3}$, let $D_{\widehat{S}}$ be the~following $\mathbb{Q}$-divisor:
$$
(2-\epsilon)\widehat{L}_1+(1+\epsilon)\widehat{L}_2+(1-\epsilon)E_{1}+(2-2\epsilon)E_{2}+(2-3\epsilon)E_{3}+(1-3\epsilon)E_{4}+\epsilon\Big(E_{5}+E_{6}+E_{7}+E_{8}+E_{9}+E_{10}\Big),
$$
and denote by $D_{\widetilde{S}}$ its proper transform on $\widetilde{S}$.
Then $D_{\widehat{S}}$ is effective, $D_{\widehat{S}}\sim_{\mathbb{Q}} -K_{\widehat{S}}$ and $D_{\widetilde{S}}\sim_{\mathbb{Q}} -K_{\widetilde{S}}$.
Moreover, we have $\widetilde{L}_2^2=\widetilde{E}_1=-2$, $\widetilde{E}_4^2=0$ and $\widetilde{E}_5^2=\cdots=\widetilde{E}_{10}^2=-1$, so that
$$
-K_{\widetilde{S}}\cdot\widetilde{L}_2=-K_{\widetilde{S}}\cdot\widetilde{E}_1=0, -K_{\widetilde{S}}\cdot\widetilde{E}_4=2, -K_{\widetilde{S}}\cdot\widetilde{E}_{5}=\cdots=-K_{\widetilde{S}}\cdot\widetilde{E}_{10}=1.
$$
This shows that $-K_{\widetilde{S}}$ is nef. Moreover, we also see that $\widetilde{L}_2$ and $\widetilde{E}_1$ are the~only $(-2)$-curves in~$\widetilde{S}$.
Let $f\colon\widetilde{S}\to S$ be the~birational contraction of these two $(-2)$-curves.
Then $S$ is a~del Pezzo surface with one singular point of type $\mathrm{A}_2$ such that $K_S^2=2$.
Let $D_{S}=f\circ g\big(D_{\widetilde{S}})$.
Then $D_S\sim_{\mathbb{Q}} -K_{S}$ and
$$
S\setminus \mathrm{Supp}(D_S)\cong\mathbb{P}^2\setminus \mathrm{Supp}(D_{\mathbb{P}^2})\cong \mathbb{A}^1\times\Big(\mathbb{A}^{1}\setminus\big\{0\big\}\Big),
$$
so that $S$ contains $(-K_S)$-polar cylinder.
\end{example}

One can use the~construction in Example~\ref{example:dp2-A2} to construct an~anticanonical polar cylinder
in \emph{every} del Pezzo surface of degree $2$ that has a~single singular point of type $\mathrm{A}_2$ (see \mbox{\cite[\S 4.3]{CPW16b}}).
Similarly, we can prove the~existence part of Theorem~\ref{theorem:dP-du-Val}.
However, there is an alternative proof, which is more algebraic. Let us describe it following \cite{CDP18}.

Let $S$ be a~singular del Pezzo surface  of degree $K_S^2\leqslant 3$ that has at most Du Val singularities, and let $P$ be its singular point.
Suppose, in addition, that the~following conditions hold:
\begin{itemize}
\item the~singular point $P$ is not of type $\mathrm{A}_1$ if $K_S^2=2$;
\item the~singular point is not of types $\mathrm{A}_1$, $\mathrm{A}_2$, $\mathrm{A}_3$, $\mathrm{D}_4$ if $K_S^2=1$.
\end{itemize}
Now, let us prove that $S$ contains a $(-K_S)$-polar cylinder (cf. Theorem~\ref{theorem:dP-du-Val}).

Denote by~$\mathbb{P}$ the~three-dimensional weighted projective space in which $S$ sits as a~hypersurface.
Note that~$\mathbb{P}=\mathbb{P}^3$ (respectively, $\mathbb{P}(1,1,1,2)$,  $\mathbb{P}(1,1,2,3)$) if $K_S^2=3$ (respectively, $K_S^2=2$,  $K_S^2=1$).
For the~quasi-homogenous coordinate system for $\mathbb{P}$, we use $[x:y:z:w]$.
By a~coordinate change, we may assume that $P=[1:0:0:0]$.
Then the~equation of $S$ can be described as follows:
\begin{itemize}
\item if $K_S^2=3$, then $S$ is given by
\begin{equation}
\label{eq:degree3}
xf_2(y,z,w)+f_{3}(y,z,w)=0,
\end{equation}
where $f_2$ and $f_3$ are polynomials of degrees $2$ and $3$, respectively;
\item if $K_S^2=2$, then $S$ is given by
\begin{equation}
\label{eq:degree2}
w^2+x\big(ayw+f_3(y,z)\big)+f_{4}(y,z)=0,
\end{equation}
where $f_3$ and $f_4$ are polynomials of degrees $3$ and $4$, respectively, and $a\in\Bbbk$;
\item if $K_S^2=1$, then $S$ is given by
\begin{equation}
\label{eq:degree1-y}
w^2+x\big(ay^2w+f_5(y,z)\big)+f_{6}(y,z)=0
\end{equation}
or
\begin{equation}
\label{eq:degree1-z}
w^2+x\big(zw+f_5(y,z)\big)+f_{6}(y,z)=0,
\end{equation}
where $f_5$ and $f_6$ are polynomials of degrees $5$ and $6$, respectively, and~$a\in\Bbbk$.
\end{itemize}

Let $\Pi$ be the~hyperplane in $\mathbb{P}$ defined by~$x=0$, and let $\pi\colon S\dasharrow \Pi$ be the~map given by
$$
\big[x:y:z:w\big]=\big[0:y:z:w\big].
$$
The hyperplane $\Pi$ is isomorphic to $\mathbb{P}^2$, $\mathbb{P}(1,1,2)$, $\mathbb{P}(1,2,3)$ according to $K_S^2=3$, $2$, $1$, respectively.
We denote by $g(y,z,w)$ the~coefficient of $x$ in each of equations \eqref{eq:degree3}, \eqref{eq:degree2}, \eqref{eq:degree1-y} and  \eqref{eq:degree1-z}.
Namely, if $K_S^2=3$, then $g(y,z,w)=f_2(y,z,w)$.
Similarly, if $K_S^2=2$, then
$$
g(y,z,w)=ayw+f_3(y,z).
$$
Finally, if $K_S^2=1$, then $g(y,z,w)=zw+f_5(y,z)$ or
$$
g(y,z,w)=ay^2w+f_5(y,z).
$$
Let $D$ be the~divisor on $S$ that is cut out by $g(y,z,w)=0$.
If $K_S^2=3$, then $D$ consists of the~lines that contains $P$.
There are at most six such lines and they are defined in $\mathbb{P}^3$ by
$$
\left\{\aligned%
&g(y,z,w)=0,\\
&f_3(y,z,w)=0.\\
\endaligned
\right.
$$
Similarly, if $K_S^2=2$, then the~divisor $D$ consists of at most six curves passing through the~point~$P$.
They are defined in $\mathbb{P}(1,1,1,2)$ by
$$
\left\{\aligned%
&g(y,z,w)=0,\\
&w^2+f_4(y,z)=0.\\
\endaligned
\right.
$$
Finally, if $K_S^2=1$, then the~divisor $D$ consists of at most five curves passing through the~point~$P$, which are defined in $\mathbb{P}(1,1,2,3)$ by
$$
\left\{\aligned%
&g(y,z,w)=0,\\
&w^2+f_6(y,z)=0.\\
\endaligned
\right.
$$
In each case, the~number of curves in $D$ is the~same as the~number of points determined by the~corresponding system of equations in $\Pi$.
We denote these curves by $L_1,\ldots, L_r$ in each case.
The~map $\pi$ contracts each curve $L_i$ to a~point on $\Pi$.

The equations \eqref{eq:degree3}, \eqref{eq:degree2}, \eqref{eq:degree1-y} and  \eqref{eq:degree1-z}
immediately imply that $\pi$ is a~birational map.
Moreover, it induces an~isomorphism
$$
\widetilde{\pi}\colon S\setminus \Big(L_1\cup\cdots\cup L_r\Big)\cong\mathrm{Im}\big(\widetilde{\pi}\big)\subset\Pi.
$$
Let $\mathcal{C}$ be the~curve on $\Pi$ defined by $g(y,z,w)=0$.
Then $\mathcal{C}$ can be reducible or non-reduced.

\begin{lemma}
\label{lemma:cylinders-d-3}
Suppose that $K_S^2=3$.
Then there is a~hyperplane section $H$ of the~surface $S$ such that the~complement $S\setminus\left( H\cup L_1\cup\cdots\cup L_r\right)$ is a $(-K_S)$-polar cylinder.
\end{lemma}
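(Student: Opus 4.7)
The plan is to leverage the birational projection $\pi\colon S\dashrightarrow\Pi\cong\mathbb{P}^2$ from the singular point $P$. By the discussion preceding the lemma, $\pi$ restricts to an isomorphism $S\setminus(L_1\cup\cdots\cup L_r)\xrightarrow{\sim}\Pi\setminus\mathcal{C}$, where $\mathcal{C}\subset\Pi$ is the conic $\{f_2=0\}$ and the lines $L_i$ are contracted to the points $q_i=\mathcal{C}\cap\{f_3=0\}$. Any hyperplane section $H$ of $S$ through $P$ corresponds under $\pi$ to a line $\ell\subset\Pi$, so the strategy is: first, pick $\ell$ so that $\Pi\setminus(\mathcal{C}\cup\ell)$ is a cylinder; second, transport this cylinder back to $S$ via $\pi$; third, assemble an anticanonical $\mathbb{Q}$-boundary supported on $H\cup L_1\cup\cdots\cup L_r$.

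For the first step I would choose $\ell\subset\Pi$ according to the shape of $\mathcal{C}$: if $\mathcal{C}$ is smooth, take $\ell$ to be the tangent to $\mathcal{C}$ at a general smooth point, chosen so that $\ell\cap\{q_1,\ldots,q_r\}=\emptyset$ (possible since the family of tangents is one-dimensional and only finitely many choices are forbidden); if $\mathcal{C}=\ell_1\cup\ell_2$ splits as two distinct lines, take $\ell=\ell_1$; if $\mathcal{C}=2\ell_0$ is a double line, take $\ell=\ell_0$. A direct affine computation gives
\[
\Pi\setminus(\mathcal{C}\cup\ell)\;\cong\;\begin{cases}\mathbb{A}^1\times\mathbb{G}_m,&\text{cases (i) and (ii)},\\ \mathbb{A}^2,&\text{case (iii)},\end{cases}
\]
so in each case the complement is a cylinder.

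For the second and third steps, let $H$ be the hyperplane section of $S$ cut out by the hyperplane in $\mathbb{P}^3$ defined by the same linear form in $y,z,w$ as $\ell$. Then $H\sim -K_S$, and for any $s\in S\setminus\{P\}$ one has $s\in H$ if and only if $\pi(s)\in\ell$; hence $\pi^{-1}(\ell)=H\setminus\{P\}$. Since $P\in L_i$ for every $i$, combining this identification with the projection isomorphism yields
\[
S\setminus(H\cup L_1\cup\cdots\cup L_r)=\bigl(S\setminus(L_1\cup\cdots\cup L_r)\bigr)\setminus H\;\cong\;\Pi\setminus(\mathcal{C}\cup\ell),
\]
which is the desired cylinder. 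To verify $(-K_S)$-polarity, I would exhibit the effective $\mathbb{Q}$-divisor
\[
D=(1-\varepsilon)H+\frac{\varepsilon}{2}\sum_{i=1}^r m_iL_i,\qquad 0<\varepsilon<1,
\]
where $m_i$ is the intersection multiplicity $(\mathcal{C}\cdot\{f_3=0\})_{q_i}$. The divisor $\sum m_iL_i$ is the scheme-theoretic trace of the quadric $\{f_2=0\}\subset\mathbb{P}^3$ on $S$, hence $\sim -2K_S$; so $D\sim_{\mathbb{Q}}-K_S$, and $\operatorname{Supp}(D)=H\cup L_1\cup\cdots\cup L_r$ exactly, exhibiting the complement as a $(-K_S)$-polar cylinder.

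The main obstacle I anticipate is the bookkeeping in the degenerate cases (ii) and (iii), where $\ell\subset\mathcal{C}$ and consequently the hyperplane defining $H$ contains some of the lines $L_i$ as components; one has to verify that the set-theoretic identification $S\setminus(H\cup L_1\cup\cdots\cup L_r)\cong\Pi\setminus(\mathcal{C}\cup\ell)$ and the support condition on $D$ still hold. Once these are settled, the argument is a routine transfer along $\pi$ together with the affine-chart calculation of $\Pi\setminus(\mathcal{C}\cup\ell)$ in each conic type.
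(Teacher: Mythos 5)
Your proposal is correct and follows essentially the same route as the paper: projection from the singular point $P$, the identification $S\setminus(L_1\cup\cdots\cup L_r)\cong\Pi\setminus\mathcal{C}$, a case analysis on the shape of the conic $\mathcal{C}$, and an anticanonical $\mathbb{Q}$-divisor built from $H$ together with the trace of $\{f_2=0\}$ on $S$ (which is $\sim -2K_S$ and supported on the lines). The only deviation is that in the degenerate cases you take $\ell$ to be a component of $\mathcal{C}$ rather than, as in the paper, a general line through the singular point of $\mathcal{C}$; then $H$ is supported on the lines $L_i$ and the cylinder is all of $S\setminus(L_1\cup\cdots\cup L_r)\cong\Pi\setminus\mathcal{C}$, which still verifies the statement, and the "obstacle" you flag at the end is harmless precisely because $\operatorname{Supp}(D)=\operatorname{Supp}(H)\cup L_1\cup\cdots\cup L_r$ coincides set-theoretically with $H\cup L_1\cup\cdots\cup L_r$ in every case.
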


\begin{proof}
Observe that $\mathrm{Im}(\widetilde{\pi})=\Pi\setminus \mathcal{C}$.
Let $\varphi\colon\overline{S}\to S$ be the~blowup of the~point $P$.
Then there exists a~commutative diagram
$$
\vcenter{\xymatrix{
&\overline{S}\ar@{->}[ld]_{\varphi}\ar@{->}[rd]^{\psi}&\\%
S\ar@{-->}[rr]^{\pi}&&\Pi}}
$$ %
where $\psi$ is the~birational morphism that contracts  the~proper transforms of the~lines $L_1,\ldots, L_r$.
Let~$E$ be the~exceptional curve of the~blowup $\varphi$.
Then $\psi(E)=\mathcal{C}$, and $\mathcal{C}$ contains each point~$\pi(L_i)$.

If $P$ is an~ordinary double point of the~cubic surface $S$, then the~curve $\mathcal{C}$ is a~smooth conic.
Similarly, if $P$ is a~singular point of type $\mathrm{A}_n$ for $n\geqslant 2$, then $\mathcal{C}$ splits as a union of two distinct~lines.
Finally, if $P$ is either of type $\mathrm{D}_n$ or of type $\mathrm{E}_6$, then $\mathcal{C}$ is a~double line.

If $\mathcal{C}$ is smooth, let $\ell$ be a~general line in $\Pi$ that is tangent to $\mathcal{C}$.
If $\mathcal{C}$ is singular, let $\ell$ be a~general line in $\Pi$ that passes through a~singular point of the~conic $\mathcal{C}$.
By a~suitable coordinate change, we may assume that $\ell$ is defined by $x=y=0$.
Let $H$ be the~curve in $S$ cut out by $y=0$.
Then
$$
S\setminus\Big(H\cup L_1\cup\cdots\cup L_r\Big)\cong\Pi\setminus\Big(\mathcal{C}\cup\ell\Big)\cong\left\{\aligned%
&\Big(\mathbb{A}^1\setminus\big\{0, 1\big\}\Big)\times\mathbb{A}^1 \mbox{ if $\mathcal{C}$ is a union of two distinct lines,}
\\
&\Big(\mathbb{A}^1\setminus\big\{0\big\}\Big)\times\mathbb{A}^1 \mbox{ otherwise.}
\\
\endaligned
\right.
$$
Therefore, $S\setminus(H\cup L_1\cup\cdots\cup L_r)$ is a cylinder. But $H+D\sim -3K_S$ and $L_1\cup\cdots\cup L_r=\mathrm{Supp}(D)$.
Thus, the~complement $S\setminus(H\cup L_1\cup\cdots\cup L_r)$ is a $(-K_S)$-polar cylinder.
\end{proof}

To deal with the~cases $K_S^2=1$ and $K_S^2=2$, let $\ell_y$ be the~curve in $\mathbb{P}$ that is given by $x=y=0$,
and let $H_y$ be the~curve in the~surface $S$ that is cut out by $y=0$.

\begin{lemma}
\label{lemma:cylinders-d-1-2}
If $K_S^2=2$ or if $K_S^2=1$ and the~surface $S$ is defined by~equation~\eqref{eq:degree1-y},
then the~complement $S\setminus\left(H_y\cup L_1\cup\cdots\cup L_r\right)$ is a $(-K_S)$-polar cylinder.
\end{lemma}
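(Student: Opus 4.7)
The plan is to mimic the proof of Lemma~\ref{lemma:cylinders-d-3}, working in the affine chart $\{y\neq 0\}$ of the weighted projective plane $\Pi$. As already recorded in the paragraph just before Lemma~\ref{lemma:cylinders-d-3}, the projection $\pi\colon S\dashrightarrow\Pi$ away from $P=[1:0:0:0]$ restricts to an isomorphism
$$
\widetilde{\pi}\colon S\setminus\bigl(L_1\cup\cdots\cup L_r\bigr)\xrightarrow{\ \sim\ }\Pi\setminus\mathcal{C},
$$
because \eqref{eq:degree2} and \eqref{eq:degree1-y} can each be solved uniquely for $x$ exactly when $g(y,z,w)\neq 0$, whereas over $\{g=0\}$ one finds either no preimage in $S$ or an entire fibre $L_i$. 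Under this isomorphism, $H_y\setminus\bigcup_i L_i$ is carried bijectively onto $\ell_y\setminus\mathcal{C}$, and therefore
$$
S\setminus\bigl(H_y\cup L_1\cup\cdots\cup L_r\bigr)\;\cong\;(\Pi\setminus\ell_y)\setminus\mathcal{C},
$$
so the question reduces to exhibiting a cylinder structure on the right-hand side.

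Next I would pass to the affine chart $\{y\neq 0\}$ of $\Pi$: the $\Gm$-quotient identifies $\Pi\setminus\ell_y$ with $\A^2_{z,w}$, both when $\Pi=\PP(1,1,2)$ and when $\Pi=\PP(1,2,3)$. In this chart, $\mathcal{C}$ is the zero set of $g(1,z,w)$, which in both \eqref{eq:degree2} and \eqref{eq:degree1-y} has the shape
$$
g(1,z,w)\;=\;aw+p(z),
$$
with $p(z)=f_3(1,z)\in\Bbbk[z]$ if $K_S^2=2$ and $p(z)=f_5(1,z)\in\Bbbk[z]$ if $K_S^2=1$. If $a\neq 0$, the substitution $w'=aw+p(z)$ identifies $\A^2\setminus\mathcal{C}$ with $\A^1_z\times\Gm$, a cylinder. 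If $a=0$, then $\mathcal{C}$ decomposes as the finite union of vertical lines $\{z=z_j\}\times\A^1_w$ indexed by the roots of $p$, and the complement is $\bigl(\A^1_z\setminus\{z_j\}\bigr)\times\A^1_w$, again a cylinder, this time with $w$ supplying the $\A^1$-factor.

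Finally, I would verify the $(-K_S)$-polar condition by a divisor-class computation. Writing $H$ for the restriction of $\mathcal{O}_{\PP}(1)$ to $S$, adjunction on the weighted hypersurface gives $-K_S\sim H$ in both degrees. Then $H_y\sim H\sim -K_S$, while the divisor cut out on $S$ by $g=0$ lies in $|dH|$ with $d=3$ when $K_S^2=2$ and $d=5$ when $K_S^2=1$. Consequently one can choose positive rationals $\alpha,\beta_1,\dots,\beta_r$ with
$$
\alpha H_y+\sum_{i=1}^{r}\beta_i L_i\;\sim_{\mathbb{Q}}\;-K_S
$$
and with support exactly $H_y\cup L_1\cup\cdots\cup L_r$, which is what is required for the cylinder to be $(-K_S)$-polar.

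The main point that needs care is the simultaneous treatment of the two subcases $a\neq 0$ and $a=0$, since they correspond to different tangent cones at $P$ and hence to different Du Val types (roughly, $A_n$-type on one side and $D_n$- or $E_n$-type on the other). What saves the argument is the uniform linear-in-$w$ shape of $g(1,z,w)$, which allows the cylinder structure to be read off in a single affine chart independently of whether $a$ vanishes; this is precisely why the normal form \eqref{eq:degree1-y} (as opposed to \eqref{eq:degree1-z}) is the one that fits into the present lemma.
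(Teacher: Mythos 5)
Your proposal is correct and follows essentially the same route as the paper: identify $S\setminus\left(H_y\cup L_1\cup\cdots\cup L_r\right)$ with $\Pi\setminus\left(\mathcal{C}\cup\ell_y\right)$ via $\widetilde{\pi}$, pass to the chart $\Pi\setminus\ell_y\cong\A^2$ where $\mathcal{C}$ becomes $aw+p(z)=0$, and then get $(-K_S)$-polarity exactly as in Lemma~\ref{lemma:cylinders-d-3}. Your explicit split into the cases $a\neq 0$ and $a=0$ and the divisor-class computation only spell out steps the paper leaves implicit.
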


\begin{proof}
Observe that the~morphism $\widetilde{\pi}$ gives an isomorphism $S\setminus\left(H_y\cup L_1\cup\cdots\cup L_r\right)\cong\Pi\setminus\left(\mathcal{C}\cup\ell_y\right)$.
But $\pi$ maps $S\setminus H_y$ onto $\Pi\setminus \ell_y\cong\mathbb{A}^2$.
Thus, if $K_S^2=2$, then $S\setminus\left(H_y\cup L_1\cup\cdots\cup L_r\right)$ is isomorphic to
the~complement in $\A^2$ of the~curve defined by
$$
aw+f_3(1,z)=0.
$$
Similarly, if $K_S^2=1$ and $S$ is defined by~\eqref{eq:degree1-y},
then $S\setminus\left(H_y\cup L_1\cup\cdots\cup L_r\right)$ is isomorphic to
the~complement in $\A^2$ of the~curve defined by
$$
aw+f_5(1,z)=0.
$$
Therefore, in both cases, the~complement $S\setminus\left(H_y\cup L_1\cup\cdots\cup L_r\right)$ is a cylinder.
Now, arguing as in~the~proof of Lemma~\ref{lemma:cylinders-d-3}, we see that $S\setminus\left(H_y\cup L_1\cup\cdots\cup L_r\right)$ is a $(-K_S)$-polar cylinder.
\end{proof}

Finally, to deal with the~remaining case, let $\ell_z$ be the~curve in $\mathbb{P}$ that is given by $x=z=0$,
and~let $H_z$ be the~hyperplane section of $S$ that is cut by $z=0$.

\begin{lemma}
\label{lemma:cylinders-d-1}
Suppose that $K_S^2=1$ and the~del Pezzo surface $S$ is given by equation~\eqref{eq:degree1-z}.
Then the~complement $S\setminus\left( H_z\cup L_1\cup\cdots\cup L_r\right)$ is a $(-K_S)$-polar cylinder.
\end{lemma}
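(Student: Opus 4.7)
The plan is to mimic the strategy of Lemma \ref{lemma:cylinders-d-1-2}, the one twist being that $z$ has weight $2$ in $\mathbb{P}(1,1,2,3)$, so the relevant chart of the ambient space is a $\mumu_2$-quotient rather than an affine plane. Since the birational morphism $\widetilde{\pi}$ already identifies $S\setminus(L_1\cup\cdots\cup L_r)$ with $\Pi\setminus\mathcal{C}$, the task reduces to checking that removing in addition the curve $H_z$, which on the target corresponds to excising $\ell_z=\{z=0\}$, still leaves a cylinder.

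I would work directly in the chart $\{z\ne 0\}\subset\mathbb{P}(1,1,2,3)$, which is the quotient of $\mathbb{A}^3_{x,y,w}=\{z=1\}$ by the involution $(x,y,w)\mapsto(-x,-y,-w)$. On this cover the equation of $S$ becomes
$$
w^2+x\bigl(w+f_5(y,1)\bigr)+f_6(y,1)=0,
$$
and on the further open subset where $v:=w+f_5(y,1)$ is nonzero, one solves uniquely $x=-(w^2+f_6(y,1))/v$, producing an isomorphism with the open subset $\{v\ne 0\}\subset\mathbb{A}^2_{y,v}$, which is $\mathbb{A}^1\times\Gm$. The weighted-degree restriction forces $f_5(y,z)$ to be a linear combination of $y^5$, $y^3z$ and $yz^2$, so $f_5(y,1)$ is an odd polynomial in $y$, and hence the involution transports to $(y,v)\mapsto(-y,-v)$. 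A short invariants computation then yields
$$
\Bbbk[y,v,v^{\pm 1}]^{\mumu_2}=\Bbbk\bigl[y/v,\,v^{\pm 2}\bigr],
$$
so the $\mumu_2$-quotient is again isomorphic to $\mathbb{A}^1\times\Gm$. Thus $S\setminus(H_z\cup L_1\cup\cdots\cup L_r)$ is a cylinder.

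For the $(-K_S)$-polar condition I would argue as at the end of the proof of Lemma~\ref{lemma:cylinders-d-3}: by adjunction $-K_S\sim\mathcal{O}_S(1)$, while $H_z\sim\mathcal{O}_S(2)\sim -2K_S$ (since $z$ has weight $2$) and the divisor $D$ cut out by $g=zw+f_5$ of weighted degree $5$ satisfies $D\sim\mathcal{O}_S(5)\sim -5K_S$. Therefore $\tfrac{1}{7}(H_z+D)$ is an effective $\mathbb{Q}$-divisor, $\mathbb{Q}$-linearly equivalent to $-K_S$, with support equal to $H_z\cup L_1\cup\cdots\cup L_r$, which exhibits the complement as a $(-K_S)$-polar cylinder.

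The main obstacle compared with the two previous lemmas is the $\mumu_2$-quotient subtlety: the chart $\{z\ne 0\}\subset\Pi=\mathbb{P}(1,2,3)$ is not an affine plane but carries a quotient singularity at $[0:1:0]$, so the clean ``complement of a curve in $\mathbb{A}^2$'' reduction used in Lemma~\ref{lemma:cylinders-d-1-2} is not directly available and one must verify by hand that the $\mumu_2$-action on the double cover preserves the cylinder structure. A minor point worth monitoring is the degenerate case in which the coefficient of $y^5$ in $f_5(y,z)$ vanishes, so that $H_z$ becomes an irreducible component of $D$; the cylinder identification still goes through, and in that case $\tfrac{1}{5}D$ alone already serves as a $(-K_S)$-polar divisor with support $L_1\cup\cdots\cup L_r=H_z\cup L_1\cup\cdots\cup L_r$.
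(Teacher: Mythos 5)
Your proof is correct and takes essentially the same route as the paper: your substitution $v=w+f_5(y,1)$ is precisely the $\mumu_2$-equivariant shear the paper uses (exploiting the same oddness of $f_5(y,1)$), and your explicit divisor $\tfrac{1}{7}(H_z+D)\sim_{\mathbb{Q}}-K_S$ just spells out the polarity step that the paper delegates to the argument of Lemma~\ref{lemma:cylinders-d-3}. The only cosmetic difference is that you compute the invariant ring on the $\mumu_2$-cover of the chart $\{z\neq 0\}$ of $S$ directly, rather than first transferring via $\widetilde{\pi}$ to $\Pi\setminus\ell_z\cong\mathbb{A}^2/\mumu_2$ and quotienting there.
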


\begin{proof}
Observe that the~morphism $\widetilde{\pi}$ gives an isomorphism $S\setminus\left(H_z\cup L_1\cup\cdots\cup L_r\right)\cong\Pi\setminus\left(\mathcal{C}\cup\ell_z\right)$.
But $\pi$ maps $S\setminus H_z$ onto $\Pi\setminus\ell_z$.
Then $\Pi\setminus\left(\mathcal{C}\cup\ell_z\right)$  is the~complement of the~curve defined by
$$
w+f_5(y,1)=0
$$
in $\Pi\setminus \ell_z\cong\mathbb{A}^2/ \boldsymbol\mu_2$, where the~$\mumu_2$-action is given by $(y,w)\mapsto (-y,-w)$.

Since $f_5(y,1)$ is an~odd polynomial in $y$, the~isomorphism $\mathbb{A}^2\to\mathbb{A}^2$ defined by
$$
\big(y,w\big)\mapsto\big(y, w+f_5(y,1)\big)
$$
is $\mumu_2$-equivariant and gives an isomorphism between the~complement $\Pi\setminus\left(\mathcal{C}\cup\ell_z\right)$
and the~complement in $\mathbb{A}^2/\mumu_2$ of the~image of the~curve defined by $w=0$,
which is isomorphic to $\mathbb{A}^1\setminus\{0\}\times \mathbb{A}^1$.

We see that  $S\setminus\left(H_z\cup L_1\cup\cdots\cup L_r\right)$ is a cylinder.
Now, arguing as in the~proof of Lemma~\ref{lemma:cylinders-d-3}, we conclude that $S\setminus\left(H_z\cup L_1\cup\cdots\cup L_r\right)$ is a $(-K_S)$-polar cylinder.
\end{proof}

\section{Cylinders in higher-dimensional varieties}
\label{section:Fanos-cylinders}

In this section, we describe known results about cylinders in smooth Fano threefolds and fourfolds, and varieties fibred into del Pezzo surfaces.
Let us say few words about Fano varieties \cite{Iskovskikh-1980-Anticanonical,Mu89,IP99}.

Let $V$ be a~smooth Fano variety of dimension~$n\geqslant 3$.
The number $(-K_V)^n$ is known as the~degree of the~Fano variety $V$. Put
$$
\iota(V)=\max\Big\{t\in\mathbb{N}\ \big| -K_V \sim tH\ \text{for}\ H\in \Pic(V)\Big\}.
$$
Then $\iota(V)$ is known as the~(Fano) index of the~variety $V$.
It is well known that $1\leqslant \iota (V)\leqslant n+1$.
Moreover, one has
$$
\iota (V)=n+1\ \iff V\cong\PP^n.
$$
Similarly, we have $\iota (V)=n$ if and only if $V$ is a quadric (see \cite{Kobayashi-Ochiai-1973,IP99}).

\begin{remark}[{\cite{Fujita1980,Fujita1981,Fujita1984,IP99}}]
\label{remark:del-Pezzo-n-folds}
Suppose that $\iota(V)=n-1$. Then
$$
-K_V\sim (n-1)H
$$
for some ample divisor $H\in\Pic(V)$. In this case, the~variety $V$ is usually called a \textit{del Pezzo variety}.
If $\uprho(V)=1$, then there are just the~following possibilities:
\begin{itemize}
\item
$H^n=1$ and $V=V_6$ is a~weighted hypersurface in $\PP(1^n,2,3)$ of degree $6$;
\item
$H^n=2$ and $V=V_4$ is a~weighted hypersurface in $\PP(1^{n+1},2)$ of degree $4$;
\item
$H^n=3$ and $V=V_3$ is a~cubic hypersurface in $\PP^{n+1}$;
\item
$H^n=4$ and $V=V_{2\cdot 2}$ is a~complete intersection of two quadrics in $\PP^{n+2}$;
\item
$H^n=5$, $n\in\{3,4,5,6\}$ and $V$ is described in Example~\ref{example:V5}.
\end{itemize}
If $\dim (V)=3$ and $\uprho(V)=1$, then the~values of the~Hodge number $h^{1,2}(V)$ are given in
\begin{center}\renewcommand{\arraystretch}{1.4}
\begin{tabular}{|c||c|c|c|c|c|}
\hline
\quad$H^3$\quad \quad&\quad $1$\quad\quad &\quad  $2$\quad\ \quad &\quad $3$\quad\quad &\quad $4$\quad\quad &\quad $5$\quad\quad\\
\hline
$h^{1,2}(V)$&$21$&$10$&$5$&$2$&$0$\\
\hline
\end{tabular}
\end{center}
\end{remark}

Let us prove cylindricity of any higher-dimensional smooth intersection of two quadrics.

\begin{lemma}[{\cite{KPZ11}}]
\label{lemma:dP4}
Let $V$ be a smooth complete intersection of two quadric hypersurfaces in $\PP^{n+2}$.
Then $V$ is cylindrical.
\end{lemma}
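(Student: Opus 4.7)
The plan is to project $V$ from a line $\ell \subset V$ to $\PP^n$, extract an explicit birational model, and then carve out a cylinder using a tangent hyperplane to the exceptional quadric of the projection.

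For $n = 2$ the variety $V$ is a smooth del Pezzo surface of degree $K_V^2 = 4$, and Lemma~\ref{lemma:KPZ} already yields cylindricity; so I assume $n \geq 3$. A line $\ell \subset V$ exists for any such $V$ because the Fano scheme of lines has nonnegative expected dimension $2n-4$ (a normal bundle count: $\ell \cdot (-K_V) = n-1$ gives $\deg N_{\ell/V} = n-3$ and $\chi(N_{\ell/V}) = 2n-4$). Pick such an $\ell$ and choose coordinates $(x_0, x_1; x_2, \ldots, x_{n+2})$ with $\ell = \{x_2 = \cdots = x_{n+2} = 0\}$. The condition $\ell \subset V$ forces the defining pencil to take the form $Q_0 = x_0 L + x_1 M + q$ and $Q_1 = x_0 L' + x_1 M' + q'$, with $L, M, L', M'$ linear and $q, q'$ quadratic in $(x_2, \ldots, x_{n+2})$.

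Next, restricting the $Q_i$ to the $2$-plane through $\ell$ parameterized by $(s:t:u y_2:\cdots:u y_{n+2})$ and solving the residual linear system in $(s:t:u)$, one verifies that the linear projection $\pi_\ell \colon V \dashrightarrow \PP^n = \Proj\,\Bbbk[y_2, \ldots, y_{n+2}]$ is birational with inverse
\[
\pi_\ell^{-1}(y) = \bigl((Mq' - M'q)(y) : (L'q - Lq')(y) : y_2 \Delta(y) : \cdots : y_{n+2}\Delta(y)\bigr),
\]
where $\Delta := LM' - L'M$. The identity $\pi_\ell \circ \pi_\ell^{-1} = \mathrm{id}$ on $\{\Delta \neq 0\}$ shows that $\pi_\ell^{-1}$ restricts to an open immersion $\PP^n \setminus \{\Delta = 0\} \hookrightarrow V$. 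Moreover $\Delta \not\equiv 0$: otherwise a short case analysis on whether $(L, M)$ and $(L', M')$ are proportional shows that some member of the pencil would be a quadric cone with vertex along $\ell$, forcing $V$ to be singular at a point of $\ell$ — contradicting smoothness of $V$.

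Finally, pick a hyperplane $\Pi \subset \PP^n$ tangent to $\{\Delta = 0\}$ at a smooth point. In suitable coordinates we may write $\Pi = \{y_2 = 0\}$ and $\Delta = y_2 y_3 + q''(y_4, \ldots, y_{r+1})$, where $r = \operatorname{rank}(\Delta) \geq 2$. In the affine chart $\{y_3 = 1\}$ the quadric $\{\Delta = 0\}$ becomes the graph $y_2 = -q''(y_4, \ldots, y_{r+1})$, so the substitution $(y_2, y_4, \ldots, y_{n+2}) \mapsto (y_2 + q'', y_4, \ldots, y_{n+2})$ identifies $\A^n \setminus \{\Delta = 0\}$ with $\Gm \times \A^{n-1}$. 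Since $n \geq 3$, splitting $\A^{n-1} = \A^1 \times \A^{n-2}$ exhibits $\Gm \times \A^{n-1}$ as a cylinder; pulling back through $\pi_\ell^{-1}$ gives a cylinder inside $V$. The main obstacle is the explicit derivation of the inverse projection together with the short case analysis showing $\Delta \not\equiv 0$ when $V$ is smooth; once these are in place, the remaining cylinder construction is a routine change of variables.
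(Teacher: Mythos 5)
Your argument is correct and starts from the same idea as the paper's proof --- projection of $V$ from a line $\ell\subset V$ --- but it finishes differently, and the difference is worth recording. The paper phrases the projection via the blowup $\sigma\colon\widetilde V\to V$ of $\ell$ and the birational morphism $\varphi\colon\widetilde V\to\PP^n$ contracting the divisor $\widetilde D$, so that $V\setminus D\cong\PP^n\setminus\varphi(E)$; this is exactly the isomorphism your explicit inverse formula produces, with $\varphi(E)=\{\Delta=0\}$. The paper then extracts the cylinder from the claim that the quadric $\varphi(E)$ is \emph{singular} (because it carries a one-parameter family of $(n-2)$-planes), the complement of a singular quadric admitting an $\A^1$-fibration via projection from a vertex. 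You avoid the singularity discussion altogether: your tangent-chart plus shear argument shows that $\PP^n$ minus \emph{any} quadric with a smooth point contains $\Gm\times\A^{n-1}$, hence a cylinder. That is a genuine simplification of the final step --- note in particular that for $n=3$ the paper's stated reason for singularity needs care, since a smooth quadric surface does contain one-parameter families of lines --- while the price you pay is the coordinate work (the explicit inverse and the verification $\Delta\not\equiv0$), which the paper's diagrammatic formulation bypasses.

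Three small points to tighten, none fatal. (1) The identity $\pi_\ell\circ\pi_\ell^{-1}=\id$ gives an injective birational morphism $\{\Delta\neq0\}\to V$; to get an \emph{open} immersion either invoke Zariski's main theorem (an injective birational morphism to a normal variety is an open immersion), or observe directly that if the plane spanned by $\ell$ and $y$ contains a line of $V$ other than $\ell$, or lies on a member of the pencil, then the $2\times3$ matrix with rows $(L,M,q)$ and $(L',M',q')$ has rank at most one at $y$, so $\Delta(y)=0$; hence over $\{\Delta\neq0\}$ the fibre of the projection off $\ell$ is a single point and the image of your section is the whole open preimage, which is what your cylinder needs to be open in $V$. (2) You assert $\operatorname{rank}(\Delta)\geqslant 2$ without proof; this is harmless, because if $\Delta$ were the square of a linear form the complement $\PP^n\setminus\{\Delta=0\}$ would already be $\A^n$, an even better cylinder, but the case should be noted (or excluded by an argument in the spirit of your $\Delta\not\equiv0$ analysis). (3) Nonnegative expected dimension of the Fano scheme of lines is a heuristic rather than a proof of nonemptiness; the paper also takes the existence of $\ell$ for granted, and for $n\geqslant3$ one can simply remark that the lines of $V$ through a fixed point are cut out, in the projective space of directions, by two hyperplanes and two quadrics, hence exist through every point of $V$.
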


\begin{proof}
Let $\ell$ be a line in $V$, let $D$ be an irreducible divisor in $X$ swept out by  lines meeting~$\ell$,
let~$\sigma\colon\widetilde{V}\to V$ be the~blowup of the~line $\ell$, let $E$ be its exceptional divisor,
and let $\widetilde{D}$ be  the~proper transform on $\widetilde{V}$ of the~divisor $D$.
There exists the~following commutative diagram:
$$
\xymatrix{
&\widetilde{V}\ar[dl]_{\sigma}\ar[dr]^{\varphi}&\\
V\ar@{-->}[rr]^{\psi}&& \PP^n}
$$
where $\varphi$ is a~birational morphism that contracts $\widetilde{D}$, and $\psi$ is  the~projection from $\ell$.
Thus, we have
$$
V\setminus D\cong \PP^n\setminus\varphi(E).
$$
But $\varphi(E)$ is a~quadric that contains a~one-parameter family of linear subspaces of dimension~\mbox{$n-2$}.
Hence, this quadric is singular,
so that $\PP^n\setminus\varphi(E)$ contains a cylinder.
\end{proof}

Smooth Fano varieties of dimension $n\geqslant 3$ and index $n-2$ are known as \textit{Fano--Mukai varieties}.
If $V$ is a Fano--Mukai variety and $H\in\Pic(V)$ such that $-K_V\sim (n-2)H$, then the~number
$$
\g(V)=\frac{1}{2}H^n+1
$$
is integral and is called the~\textit{genus} of the~Fano--Mukai variety $V$.
The possible values of the~genus are given in the~following table:
\begin{center}\renewcommand{\arraystretch}{1.4}
\begin{tabular}{|c||c|c|c|c|c|c|c|}
\hline
\quad$\g(V)$\quad\quad & \quad$2\leqslant\g(V)\leqslant 5$\quad\quad & \quad$6$\quad\quad & \quad$7$\quad\quad & \quad$8$\quad\quad & \quad$9$\quad\quad &\quad$10$\quad\quad &\quad$12$\quad\quad\\
\hline
$\dim(V)$&any&$\leqslant 6$&$\leqslant 10$&$\leqslant 8$&$\leqslant 6$ &$\leqslant 5$&$3$\\
\hline
\end{tabular}
\end{center}
Moreover, the~following result has been recently proved in \cite{KP2020}.

\begin{theorem}
\label{theorem:g-7-8-9-10}
Let $V$ be a smooth Fano--Mukai variety such that $\uprho(V)=1$ and $\g(V)\in\{7,8,9,10\}$.
Suppose that $\dim(V)\geqslant 5$. Then $V$ is cylindrical.
\end{theorem}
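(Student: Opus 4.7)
My plan is to reduce to Mukai's classification of smooth Fano--Mukai varieties of Picard rank one. For each $\g\in\{7,8,9,10\}$ there is a canonical \emph{Mukai variety} $M_{\g}$, a rational homogeneous space in its minimal projective embedding, of dimensions $10,8,6,5$ respectively (the spinor tenfold $\mathrm{OG}^+(5,10)\subset\PP^{15}$ for $\g=7$, the Grassmannian $\mathrm{Gr}(2,6)\subset\PP^{14}$ for $\g=8$, the Lagrangian Grassmannian $\mathrm{LGr}(3,6)\subset\PP^{13}$ for $\g=9$, and the five-dimensional $G_2$-homogeneous variety $\Sigma_{G_2}\subset\PP^{13}$ for $\g=10$). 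By Mukai, every smooth Fano--Mukai variety $V$ with $\uprho(V)=1$ and $\g(V)=\g$ is a transverse linear section $V=M_{\g}\cap\Lambda$ of the appropriate codimension. The hypothesis $\dim V\geqslant 5$ restricts the possible codimensions to a finite list in each genus.

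The first step is to handle the \emph{maximal-dimensional} cases, namely $V=M_{\g}$ itself; this covers $(\g,\dim V)\in\{(10,5),(9,6),(8,8),(7,10)\}$. In each such case $V$ is a generalized Grassmannian $G/P$, so its Bruhat decomposition has an open cell isomorphic to the affine space $\A^{\dim V}$. As in Example~\ref{example:Grassmannian}, this shows that $V$ is cylindrical and in fact contains an open affine chart.

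For the remaining cases, where $V$ is a proper linear section of $M_{\g}$, I would construct a cylinder via a double projection. Any smooth Fano variety of index $\iota\geqslant 3$ has a nonempty family of lines through a general point by the standard deformation-theoretic dimension count, so $V$ contains a line $\ell$; the expected family dimension is $\iota-2=n-4\geqslant 1$ here. Let $\sigma\colon\widetilde V\to V$ be the blowup of $\ell$, with exceptional divisor $E$. Using the explicit Mukai model, one obtains (possibly after further blowups) a morphism $\phi\colon\widetilde V\to Y$ onto a lower-dimensional rational variety whose general fiber is $\PP^1$, fitting into a Sarkisov-type link. The goal is to identify an appropriate section $\widetilde\Gamma$ of $\phi$ and a divisor $\widetilde D\subset\widetilde V$ supported on $E$ together with several fiber components, such that $\widetilde V\setminus(\widetilde\Gamma\cup\widetilde D)$ is a trivial $\A^1$-bundle over an affine open in $Y$ (in the spirit of Lemma~\ref{lemma:Fn} applied fiberwise). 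Pushing this open set forward under $\sigma$ produces a cylinder in $V$.

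The main obstacle will be the last step, in the intermediate-dimension cases: for each pair $(\g,\dim V)$ with $\dim V$ strictly smaller than $\dim M_{\g}$, the existence of the right subvariety $\ell$ (a line in most cases, possibly a conic or a $\sigma$-plane for the spinor tenfold sections), the analysis of the Sarkisov link produced by blowing it up, and the verification that the resulting morphism $\phi$ admits a section with the desired affine complement, all depend sensitively on the explicit geometry of linear sections of $M_{\g}$. I would carry this out case by case, drawing on the known descriptions of Hilbert schemes of lines on the Mukai varieties and on projection techniques analogous to those in Example~\ref{example:V5} and Lemma~\ref{lemma:dP4}.
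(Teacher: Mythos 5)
Your first two steps are fine: Mukai's classification \cite{Mu89} does reduce the statement to linear sections $V=M_{\g}\cap\Lambda$ of the four homogeneous models, and the Bruhat-cell argument settles the maximal cases $(\g,\dim V)=(10,5),(9,6),(8,8),(7,10)$, exactly as in Example~\ref{example:Grassmannian}. But those are the trivial cases. The entire content of the theorem lies in the proper linear sections --- the hyperplane section of $\operatorname{LGr}(3,6)$, the codimension $1$--$3$ sections of $\Gr(2,6)$, and the codimension $1$--$5$ sections of the spinor tenfold --- and for these you do not give a proof: you give a programme (``blow up a line, obtain a Sarkisov-type link, find a section of a $\PP^1$-fibration whose complement is a trivial $\A^1$-bundle over an affine base'') and then explicitly defer every verification to an unspecified case-by-case analysis. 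That is a genuine gap, not a proof; note also that the survey itself does not prove this statement but quotes it from \cite{KP2020}, so there is no ``as in the paper'' to lean on.

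Beyond being unexecuted, the mechanism you propose is doubtful. Double projection from a line is a threefold-specific technique: the links it produces (as in Theorems~\ref{theorem:g-9-10-12} and~\ref{theorem:g-12}) come from the Iskovskikh--Mori theory of Fano threefolds, and in dimension $\geqslant 5$ there is no general principle ensuring that blowing up a line in $V$ yields a morphism to a lower-dimensional variety with $\PP^1$-fibers, let alone one admitting a section with the affine complement you need. (Your existence claim for lines is also not justified as stated: an expected-dimension count does not produce curves, and for $n=5$ bend-and-break only gives rational curves of $H$-degree $\leqslant 2$, so the existence of lines has to be extracted from the linear-section geometry.) The constructions that are actually known in this range work differently: in the fourfold cases (Examples~\ref{example:FM-g-7}, \ref{example:FM-g-8}, \ref{example:FM-g-9} and Theorem~\ref{theorem:FM-g-10}) one blows up distinguished subvarieties such as planes or quadric surfaces and identifies an explicit Sarkisov link to a simpler cylindrical variety (the quintic del Pezzo fourfold, an intersection of two quadrics, a quadric, $\PP^4$), then pulls a cylinder back through the link; and the higher-dimensional cases of \cite{KP2020} likewise rest on explicit birational links anchored at a rational point --- which is precisely why the cylindricity statement survives over non-closed fields with only an $\mathbb{F}$-point assumed (Theorem~\ref{theorem:g-7-8-9-10-non-closed}) --- rather than on double projection from a line. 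To turn your plan into a proof you would have to produce such an explicit link and exhibit the cylinder for every pair $(\g,\dim V)$ with $V$ a proper section; none of that is carried out.
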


In Subsection~\ref{subsection:Fano-3-folds}, we will outline several known results about cylindrical smooth Fano threefolds.
Then, in Subsection~\ref{subsection:Fano-4-folds}, we will present constructions of cylinders in some smooth Fano fourfolds.
In particular, we will explain how to prove the~following result:

\begin{theorem}
\label{theorem:g-7-8-9-10-fourfolds}
For every $g\in\{7,8,9,10\}$, there is a cylindrical Fano--Mukai fourfold of genus $g$.
\end{theorem}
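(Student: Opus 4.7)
The plan is to produce, for each $g\in\{7,8,9,10\}$, an explicit cylindrical example, using the classification of Fano--Mukai varieties and the higher-dimensional cylindrical models supplied by Theorem~\ref{theorem:g-7-8-9-10}. Recall that Fano--Mukai varieties of each genus $g\in\{7,8,9,10\}$ arise (in their maximal dimension) as general linear sections of specific rational homogeneous spaces: $\mathrm{Gr}(2,6)\subset\PP^{14}$ for $g=8$ (dimension $8$), the spinor tenfold $\mathrm{OG}(5,10)\subset\PP^{15}$ for $g=7$ (dimension $10$), the Lagrangian Grassmannian $\mathrm{LG}(3,6)\subset\PP^{13}$ for $g=9$ (dimension $6$), and the $G_2$--fivefold in $\PP^{13}$ for $g=10$ (dimension $5$). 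In each case, the Fano--Mukai fourfolds of genus $g$ are obtained as transverse linear sections of the corresponding ambient variety by a projective subspace of the appropriate codimension.

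The first step is to recall, from Theorem~\ref{theorem:g-7-8-9-10} and the constructions in \cite{KP2020}, the explicit cylinder $U=Z\times\A^1\subset V$ in a cylindrical Fano--Mukai fivefold $V$ of each genus $g\in\{7,8,9,10\}$. These cylinders are built from a distinguished open Schubert-like cell or a Sarkisov link to $\PP^n$ (in the spirit of Example~\ref{example:V5}), so that the boundary $V\setminus U$ is a union of explicit divisors whose structure is well-understood. The second step is to cut down to dimension~$4$: one chooses a hyperplane $H$ in the ambient projective space so that the linear section $V'=V\cap H$ is a smooth Fano--Mukai fourfold of the same genus $g$ (this is possible by Bertini combined with Mukai's characterization), and so that $H$ is compatible with the product decomposition $U=Z\times\A^1$, namely $H\cap U\cong Z'\times\A^1$ for some $Z'\subset Z$ of codimension one. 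Compatibility is arranged by demanding that $H$ be a ``vertical'' hyperplane with respect to the $\A^1$-fibration $p_Z\colon U\to Z$, i.e.\ that $H$ be the pullback of a hyperplane on a compactification $\bZ$ under the rational map induced by $p_Z$.

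The main obstacle is precisely the existence of such a ``vertical'' hyperplane. A generic hyperplane section will destroy the cylindrical structure: fibers $\{z\}\times\A^1$ that are linear in $V$ become points upon intersection with $H$ unless $H$ is tangent to them, but in that case genericity of $V\cap H$ (smoothness, correct genus) may fail. The resolution comes from the concrete geometry of each case: for $g\in\{9,10\}$ the cylinder $U$ on $V$ is a big open subset with boundary a single divisor coming from a Sarkisov link, and one can choose $H$ to pull back from the target of that link (which is a projective space or a quadric), making the compatibility automatic. For $g\in\{7,8\}$ the Fano--Mukai fourfold is a linear section of higher codimension in the homogeneous ambient variety, so one must iterate the hyperplane-section procedure while checking at each stage that the induced cylinder survives; this is where one exploits the fact that the boundary divisors $V\setminus U$ constructed in \cite{KP2020} are themselves pulled back from the ambient homogeneous space.

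Once the compatible hyperplane $H$ is in hand, the final step is routine: one verifies that $V'=V\cap H$ is a smooth Fano--Mukai fourfold of genus $g$ (degree, index, and Picard rank are preserved by general hyperplane sections in these ranges by Lefschetz-type results), and that $V'\setminus(V'\cap (V\setminus U))\cong Z'\times\A^1$ is a cylinder in $V'$. Running this argument separately for each $g\in\{7,8,9,10\}$ completes the proof. I expect the hardest case to be $g=7$, where the ambient spinor tenfold forces $V'$ to be cut out in codimension~$6$, so the transversality and compatibility conditions on the chain of hyperplanes require the most care.
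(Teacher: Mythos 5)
Your route --- start from the cylindrical Fano--Mukai varieties of dimension $\geqslant 5$ given by Theorem~\ref{theorem:g-7-8-9-10} and slice down to fourfolds by hyperplanes compatible with the $\A^1$-fibration --- is not the paper's, and its key step is missing rather than merely technical. Everything hinges on producing a hyperplane $H$ in the Mukai embedding of $V$ such that simultaneously (a) $V\cap H$ is a \emph{smooth} Fano--Mukai fourfold of the same genus with $\uprho=1$, and (b) $H\cap U$ is $p_Z$-vertical. You never exhibit such an $H$ in any of the four cases. Verticality forces every $\A^1$-fiber not contained in $H$ to meet the ample divisor $H$ only along the boundary $V\setminus U$, a strong incidence condition on the closures of the fibers that a hyperplane section has no reason to satisfy; and if some special $H$ does satisfy it, Bertini no longer guarantees smoothness of $V\cap H$, which you need for the section to be a Fano--Mukai fourfold at all. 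Your fix for $g\in\{9,10\}$, ``choose $H$ to pull back from the target of the Sarkisov link,'' conflates two linear systems: divisors pulled back from $\mathbb{P}^n$ or a quadric under the link live in systems of the form $aH-bE$ on the blowup, not in $|\OOO_V(1)|$, so they cannot be the hyperplane cutting out the fourfold. That the fourfold case is genuinely harder than slicing is visible from the fact that genus $10$ alone required the dedicated analysis of \cite{PZ18}, even though the ambient $G_2$-fivefold is homogeneous and trivially contains $\A^5$.

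The paper instead argues directly on the fourfolds. For $g=10$ it invokes Theorem~\ref{theorem:FM-g-10} (from \cite{PZ18}): \emph{every} smooth Fano--Mukai fourfold of genus $10$ has a hyperplane section whose complement is isomorphic to $\A^4$. For $g=8$ (Example~\ref{example:FM-g-8}) it takes special members containing a plane of type $\upsigma_{4,2}$; blowing up this plane yields a Sarkisov link onto the quintic del Pezzo fourfold $V_5$, identifying $X\setminus H_X$ with $V_5\setminus H_{V_5}$, which contains a cylinder by \cite{PZ16}. For $g=7$ and $g=9$ (Examples~\ref{example:FM-g-7} and~\ref{example:FM-g-9}) the same mechanism runs through links to $V_4$ (blowup of a plane) and to $V_5$ (blowup of a quadric surface), respectively. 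So the existence statement is proved on explicit special members of each family via fourfold Sarkisov links, not by descending cylinders from dimension $\geqslant 5$; to salvage your approach you would have to prove the existence of the ``vertical'' hyperplanes case by case, which is precisely the content you have left unargued.
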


Finally, in Subsection~\ref{subsection:MFS}, we will present results about cylinders in Mori fibrations.

\subsection{Cylindrical Fano threefolds}
\label{subsection:Fano-3-folds}

Let $X$ be a smooth Fano variety that has dimension three.
Then $X$ belongs to one of $105$ families, which have been explicitly described in  \cite{Iskovskikh77, Iskovskikh78, Iskovskikh1990, Iskovskikh-1980-Anticanonical, MM81, Mori-Mukai1983, Mori-Mukai-1986, Mori-Mukai2003}.
Their~automorphism groups have been studied in \cite{CheltsovPrzyjalkowskiShramov,Kuznetsov-Prokhorov,Kuznetsov-Prokhorov-Shramov,Mukai-Umemura-1983,Prokhorov-1990c}.
In particular, we have

\begin{theorem}
\label{theorem:Fano-threefolds-Aut-infinite}
Let $X$ be a~smooth Fano threefold such that $\uprho(X)=1$ and $\Aut(X)$ is infinite.
Then $X$ and $\Aut(X)$ can be described as follows:
\begin{enumerate}
\item $X=\PP^3$ and $\Aut(X)\cong \PGL_4(\Bbbk)$;
\item $X$ is a smooth quadric in $\PP^4$ and $\Aut(X)\cong \mathrm{PSO}_5(\Bbbk)$;
\item $X$ is the~quintic del Pezzo threefold described in Example~\ref{example:V5} and $\Aut(X)\cong \PGL_2(\Bbbk)$;
\item $X$ is one of the~following Fano threefolds in $\PP^{13}$ of degree $22$ and genus $12$:
\begin{enumerate}
\item the~Mukai--Umemura threefold $X_{22}^{\mathrm{mu}}$ with $\Aut(X_{22}^{\mathrm{mu}})\cong \PGL_2(\Bbbk)$;
\item the~unique special threefold $X_{22}^{\mathrm{a}}$ with $\Aut(X_{22}^{\mathrm{a}})\cong \Ga\rtimes \mumu_4$;
\item a threefold $X_{22}^{\mathrm{m}}$ in one-parameter family with $\Aut(X_{22}^{\mathrm{m}})\cong \Gm\rtimes \mumu_2$.
\end{enumerate}
\end{enumerate}
\end{theorem}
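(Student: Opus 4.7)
The plan is to combine the Iskovskikh--Mori--Mukai classification of smooth Fano threefolds with an analysis of $H^{0}(X,T_X)$ for each family. Since $\Aut(X)$ is infinite if and only if its identity component $\Aut^{0}(X)$ is a positive-dimensional connected linear algebraic group, we must identify all smooth $X$ with $\uprho(X)=1$ admitting a non-trivial infinitesimal automorphism, and then determine the full group $\Aut(X)$ by computing the component group.

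First I would dispose of the high-index cases. By Kobayashi--Ochiai, $\iota(X)=4$ forces $X\cong\PP^{3}$ with $\Aut(X)=\PGL_{4}(\Bbbk)$, and $\iota(X)=3$ forces $X$ to be a smooth quadric in $\PP^{4}$ with $\Aut(X)=\mathrm{PSO}_{5}(\Bbbk)$. For $\iota(X)=2$, the Fujita classification in Remark~\ref{remark:del-Pezzo-n-folds} gives explicit projective models: $V_{5}$ (unique in its family) is a three-dimensional linear section of $\Gr(2,5)\subset\PP^{9}$, whose stabilizer is the image of $\PGL_{2}(\Bbbk)$ in $\PGL_{10}(\Bbbk)$ via the representation $\mathrm{Sym}^{4}$, giving $\Aut(V_{5})=\PGL_{2}(\Bbbk)$; for $V_{6}, V_{4}, V_{3}$ and $V_{2\cdot 2}$, the smoothness of the defining equations forces $h^{0}(X,T_X)=0$ by a standard cohomology computation on the conormal sequence.

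Next, for $\iota(X)=1$ and $g(X)\in\{2,\dots,10\}$, I would argue family-by-family using in each genus the concrete birational or biregular description of $X$ (a weighted hypersurface for $g=2$, a double cover for $g=3$, a complete intersection in a Grassmannian for $g\in\{7,8,9,10\}$, etc.). Any one-parameter subgroup of $\Aut(X)$ lifts, via the anticanonical embedding $X\hookrightarrow\PP^{g+1}$, to a one-parameter subgroup of $\PGL_{g+2}(\Bbbk)$ preserving the auxiliary ambient variety and the equations cutting out $X$; an explicit weight analysis of the possible linearizations then shows that no smooth $X$ admits such a lift. These case analyses are the substance of \cite{Prokhorov-1990c,Kuznetsov-Prokhorov-Shramov} and may be assumed.

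The main obstacle, and the heart of the theorem, is the genus $12$ case $V_{22}\subset\PP^{13}$. Here the plan is to use Mukai's biregular description of $V_{22}$ in terms of spaces of sums of powers of binary duodecimic forms, so that the irreducible $13$-dimensional representation $\mathrm{Sym}^{12}$ of $\PGL_{2}(\Bbbk)$ acts naturally on the ambient $\PP^{13}$. One then classifies smooth $V_{22}$'s preserved by a positive-dimensional subgroup $G\subset\PGL_{2}(\Bbbk)$, exploiting the fact that every connected positive-dimensional subgroup of $\PGL_{2}(\Bbbk)$ is conjugate either to $\PGL_{2}(\Bbbk)$ itself, to a maximal unipotent $\Ga$, or to a maximal torus $\Gm$, and is thus determined by its weight decomposition on $\mathrm{Sym}^{12}$. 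The full $\PGL_{2}(\Bbbk)$ preserves exactly one smooth $V_{22}$, the closure $X_{22}^{\mathrm{mu}}$ of an open $\PGL_{2}(\Bbbk)$-orbit; a maximal $\Ga$ preserves a unique smooth $V_{22}$, namely $X_{22}^{\mathrm{a}}$, whose normalizer in $\PGL_{2}(\Bbbk)$ contributes the extra factor $\mumu_{4}$; and a maximal $\Gm$ preserves a one-parameter family of smooth $V_{22}$'s, each with automorphism group $\Gm\rtimes\mumu_{2}$ (see \cite{Mukai-Umemura-1983,Prokhorov-1990c,CheltsovPrzyjalkowskiShramov}). The finiteness of the component group $\Aut(X)/\Aut^{0}(X)$ in each case is extracted from the normalizer of $\Aut^{0}(X)$ inside $\PGL_{14}(\Bbbk)$ intersected with the stabilizer of $X$, and the completeness of the list---that no further smooth $V_{22}$ has infinite automorphism group---follows by semicontinuity of $h^{0}(T_{X})$ in flat families combined with the classification of positive-dimensional subgroups of $\PGL_{2}(\Bbbk)$ just used.
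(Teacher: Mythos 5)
The paper itself offers no proof of this theorem: it is quoted as a known classification with references to \cite{Mukai-Umemura-1983,Prokhorov-1990c,Kuznetsov-Prokhorov,Kuznetsov-Prokhorov-Shramov,CheltsovPrzyjalkowskiShramov}, so there is no internal argument to compare yours against. Your overall skeleton --- Kobayashi--Ochiai for index $\geqslant 3$, Fujita's list plus $h^0(T_X)=0$ for the index-two del Pezzo threefolds other than $V_5$, and deferral of the genus $\leqslant 10$ cases of index one to \cite{Prokhorov-1990c,Kuznetsov-Prokhorov-Shramov} --- is the standard route and matches what those references do.

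The genus-$12$ step, however, which you rightly call the heart of the theorem, has a genuine gap as formulated. You assume that for an \emph{arbitrary} smooth $V_{22}\subset\PP^{13}$ the ambient space is $\PP(\mathrm{Sym}^{12}\Bbbk^2)$ and that $\Aut^0(X)$ sits inside a fixed $\PGL_2(\Bbbk)$ acting through $\mathrm{Sym}^{12}$. That identification is available only a posteriori, and essentially only for the Mukai--Umemura orbit closure; a priori $\Aut^0(X)$ is just some positive-dimensional connected subgroup of $\PGL_{14}(\Bbbk)$ preserving $X$, and its representation on $H^0(-K_X)$ need not be $\mathrm{Sym}^{12}$ restricted to a subgroup. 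This is precisely what the quoted papers have to prove, and they do it by different means: an $\Aut(X)$-equivariant double projection from an invariant line (the Sarkisov link \eqref{eq:slV22} to $V_5$) or Mukai's vector-bundle/VSP description, not a weight analysis inside a preassigned $\mathrm{Sym}^{12}$. Two further concrete slips: the connected positive-dimensional subgroups of $\PGL_2(\Bbbk)$ are $\Ga$, $\Gm$, the Borel subgroup $\Ga\rtimes\Gm$, and $\PGL_2(\Bbbk)$ itself, so your trichotomy omits the Borel, which must be excluded before one can conclude $\Aut(X_{22}^{\mathrm{a}})\cong\Ga\rtimes \mumu_4$ and $\Aut(X_{22}^{\mathrm{m}})\cong\Gm\rtimes\mumu_2$ rather than something larger; and the factor $\mumu_4$ cannot arise as the ``normalizer of $\Ga$ in $\PGL_2(\Bbbk)$'' --- that normalizer is the Borel subgroup --- so the component groups need a separate computation. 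Finally, semicontinuity of $h^0(T_X)$ only shows that the locus of threefolds with positive-dimensional automorphism group is closed in the family; it does not by itself yield completeness of the list $X_{22}^{\mathrm{mu}}$, $X_{22}^{\mathrm{a}}$, $X_{22}^{\mathrm{m}}$.
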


Before we describe some cylindrical smooth Fano threefolds, observe that we have the~following implications:
\begin{center}
$X$ contains $(-K_X)$-polar cylinder $\Longrightarrow$ $X$ is cylindrical  $\Longrightarrow$ $X$ is rational.
\end{center}
Moreover, the~rationality problem for smooth Fano threefolds is \emph{almost} completely solved (see~\cite{IP99}).
In particular, for \emph{general} member of every family, we know whether it is rational or irrational.
It is expected that the~same answer holds for \emph{every} smooth member in each family.

If $\iota(X)\geqslant 3$, then either $X\cong\mathbb{P}^3$ or $X$ is a smooth quadric in $\mathbb{P}^4$, so that $X$ is cylindrical.

If~$\iota(X)=2$, then $-K_X\sim 2H$ for $H\in\mathrm{Pic}(X)$, and we have the~following possibilities:
\begin{itemize}
\item $H^3=1$ and $X=V_1$ is a sextic hypersurface in $\mathbb{P}(1,1,1,2,3)$;

\item $H^3=2$ and $X=V_2$ is quartic hypersurface in $\mathbb{P}(1,1,1,1,2)$;

\item $H^3=3$ and $X=V_3$ is a cubic hypersurface in $\mathbb{P}^4$;

\item $H^3=4$ and $X=V_4$ is an intersection of two quadrics in $\mathbb{P}^5$;

\item $H^3=5$ and $X=V_5$ is the~quintic del Pezzo threefold described in Example~\ref{example:V5};

\item $H^3=6$ and $X$ is a divisor in $\mathbb{P}^2\times\mathbb{P}^2$ of degree $(1,1)$;

\item $H^3=6$ and $X=\mathbb{P}^1\times\mathbb{P}^1\times\mathbb{P}^1$;

\item $H^3=7$ and $X=V_7$ is a blowup of $\mathbb{P}^3$ at a point.
\end{itemize}
In this case, if $H^3\leqslant 3$, then $X$ is irrational (see \cite{ArtinMumford,CheltsovPrzyjalkowskiShramov2019,CG,Grinenko03,Grinenko04,Voi88}), so that it is not cylindrical.
On~the other hand, if $H^3\geqslant 4$, then $X$ contains a $(-K_X)$-polar cylinder.
Indeed, if $H^3=4$, this follows from Lemma~\ref{lemma:dP4}.
If $H^3\geqslant 6$, this is obvious.
Finally, if $H^3=5$, this follows from

\begin{theorem}
\label{theorem:V5-A3}
Let $V_5$ be the~quintic del Pezzo threefold in $\mathbb{P}^6$ that is described in Example~\ref{example:V5}.
Then $V_5$ contains a hyperplane section $H$ such that $V_5\setminus H\cong\A^3$.
\end{theorem}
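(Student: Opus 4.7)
The plan is to prove this via a classical Sarkisov link from $V_5$ (obtained by blowing up a line) to a smooth three-dimensional quadric, combined with the standard affine realization of the quadric as $\A^3$ complement of a tangent hyperplane section.

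Pick a general line $\ell\subset V_5$; its normal bundle is $\mathcal{O}_\ell^{\oplus 2}$. Let $\sigma:\widetilde{V_5}\to V_5$ be the blowup with exceptional divisor $E_\sigma\cong\PP^1\times\PP^1$. Using $H^3=5$ and the standard blowup formulas one finds $(\sigma^*H-E_\sigma)^3=2$, and the base-point-free linear system $|\sigma^*H-E_\sigma|$ defines a birational morphism $\varphi:\widetilde{V_5}\to Q\subset\PP^4$ onto a smooth three-dimensional quadric, realizing $\widetilde{V_5}$ as the blowup of $Q$ along a smooth rational cubic curve $\Gamma$. Comparing canonical classes gives $E_\varphi=\sigma^*H-2E_\sigma$; pushing forward by $\sigma$ identifies $E_\varphi$ with the proper transform of the hyperplane section $S_\ell\subset V_5$ swept out by lines on $V_5$ meeting $\ell$, which is singular along $\ell$ with multiplicity two.

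I then use that for any point $p_0\in Q$, the complement $Q\setminus(T_{p_0}Q\cap Q)$ of the tangent hyperplane section (a quadric cone with vertex $p_0$) is isomorphic to $\A^3$; explicitly, writing $Q=\{x_0x_4=x_1^2+x_2^2+x_3^2\}$ and $p_0=(1{:}0{:}0{:}0{:}0)$, the chart $\{x_4=1\}$ parameterizes the complement by $(x_1,x_2,x_3)$. Choose $p_0$ to be the pole of the linear span $\langle\Gamma\rangle\subset\PP^4$ with respect to the quadratic form defining $Q$; then $T_{p_0}Q=\langle\Gamma\rangle$, and $p_0$ lies on $Q$ precisely when $\langle\Gamma\rangle\cap Q$ is singular. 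For a generic line $\ell$ this condition holds, so that $p_0\in Q$ and $\Gamma\subset T_{p_0}Q\cap Q$.

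With this choice, tracking through the Sarkisov link shows that the preimage $\varphi^{-1}(T_{p_0}Q\cap Q)$ decomposes on $\widetilde{V_5}$ as a proper transform together with the $\varphi$-exceptional divisor $E_\varphi$, and its pushforward via $\sigma$ is a single hyperplane section $H\sim H_{V_5}$ of $V_5$ (the class identification uses $\sigma_*(h-e)=H$). The birational map $\varphi\circ\sigma^{-1}$ then restricts to an isomorphism
\[
V_5\setminus H \;\xrightarrow{\;\sim\;}\; Q\setminus(T_{p_0}Q\cap Q) \;\cong\; \A^3.
\]
The main obstacle is verifying that $\langle\Gamma\rangle\cap Q$ is indeed a quadric cone (rather than a smooth quadric surface), which ensures the existence of the crucial pole $p_0\in Q$; this amounts to a careful analysis of the image $\varphi(E_\sigma)\subset Q$ and its relation to $\Gamma$, or alternatively an explicit computation in Pl\"ucker coordinates using the realization $V_5=\mathrm{Gr}(2,5)\cap\Pi^6$ to exhibit an $\A^3$-cell directly inside $V_5$.
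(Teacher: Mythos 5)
Your strategy coincides with the paper's first construction: blow up a line $\ell\subset V_5$, pass through the classical link to the smooth quadric threefold $Q\subset\PP^4$ (blowup of a twisted cubic $\Gamma$), identify $V_5\setminus H_\ell$ with the complement in $Q$ of the unique hyperplane section containing $\Gamma$, and use the standard chart showing that $Q$ minus a \emph{tangent} hyperplane section is $\A^3$. However, the step you yourself flag as the main obstacle is where the argument breaks, and it breaks in the opposite direction from what you assert. The hyperplane section $\langle\Gamma\rangle\cap Q$ is exactly the image of the $\sigma$-exceptional divisor $E_\sigma$. If $\ell$ is general, so that $\NNN_{\ell/V_5}\cong\OOO_\ell\oplus\OOO_\ell$ as in your first line, then $E_\sigma\cong\PP^1\times\PP^1$ and $(\sigma^*H-E_\sigma)|_{E_\sigma}$ is a divisor of type $(1,1)$, which \emph{embeds} $E_\sigma$ as a smooth quadric surface. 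Hence for generic $\ell$ the section $\langle\Gamma\rangle\cap Q$ is smooth, the pole $p_0$ does not lie on $Q$, and your construction gives $V_5\setminus H_\ell\cong Q\setminus(\text{smooth hyperplane section})$, a smooth affine quadric threefold, which is homotopy equivalent to $S^3$ and is not $\A^3$.

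The fix is to take $\ell$ to be a \emph{special} line with $\NNN_{\ell/V_5}\cong\OOO_\ell(1)\oplus\OOO_\ell(-1)$; such lines form a conic in the $\PP^2$ parameterizing lines on $V_5$, so they are precisely non-generic. Then $E_\sigma\cong\FF_2$, the restricted class is $C_0+2f$, which contracts the $(-2)$-section, so $\langle\Gamma\rangle\cap Q$ is a quadric cone, i.e.\ a tangent hyperplane section, and your explicit chart then yields $V_5\setminus H_\ell\cong\A^3$. This is exactly the paper's argument (note that the paper's sentence asserting that $H_C$ is \emph{smooth} iff $L$ is of type $(1,-1)$ is a misprint for \emph{singular}, as confirmed by Remark~\ref{remark:g-12} and the proof of Theorem~\ref{theorem:compactifications-threefolds}). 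The remaining ingredients of your proposal --- uniqueness of the hyperplane through $\Gamma$, the identification of complements across the link, and the affine chart on $Q$ --- are correct once the line is chosen to be special.
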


\begin{proof}
Let us give two constructions of the~required hyperplane section.
First, let $L$ be a line in~$X$.
Let $\alpha\colon\widetilde{V}_5\to V_5$ be the~blowup of the~line $L$.
Then we have the~following commutative diagram:
$$
\xymatrix{&
\widetilde{V}_5\ar[dl]_{\alpha} \ar[dr]^{\beta}&&     \\
V_5&& Q\\}
$$
where $Q$ is a smooth quadric in $\mathbb{P}^4$,
and $\beta$ is the blowup of a twisted cubic curve $C$ contained~in~$Q$.
Let~$H_C$ be the~unique hyperplane section of $Q$ that contains $C$,
and let $H_L$ be the~unique hyperplane section of $V_5$ that is singular along $L$.
Then $H_L$ is the~proper transform of the~$\beta$-exceptional surface, and $H_C$ is the~proper transform of the~$\alpha$-exceptional surface.
Note that $H_L$ is swept out by the~lines that intersects the~line $L$.
Moreover, it follows from \cite{IP99,Kuznetsov-Prokhorov-Shramov} that
$$
\NNN_{L/V_5}\cong
\begin{cases}
\OOO_L\oplus \OOO_L & \text{$L$ is a line of type $(0,0)$},\\
\OOO_L(1) \oplus \OOO_L(-1) & \text{$L$ is a line of type $(1,-1)$}.
\end{cases}
$$
The lines in $V_5$ are parameterized by $\mathbb{P}^2$,
and the~lines of the~type $(1,-1)$ are parameterized by a~smooth conic in this plane (see \cite{Iskovskikh-1980-Anticanonical,Furushima1989a,Kuznetsov-Prokhorov-Shramov}).
Furthermore, the~surface $H_C$ is smooth if and only if $L$ is a~line of type $(1,-1)$.
Thus, if we choose $L$ to be a~line of type $(1,-1)$ and put~$H=H_L$, then  $V_5\setminus H\cong Q\setminus H_C\cong\A^3$ as required.

To present the~second construction, let $P$ be a point in $V_5$.
Recall that $\mathrm{Aut}(V_5)\cong\PGL_2(\Bbbk)$.
Moreover, it follows from \cite{CheltsovShramov2016,Furushima1989a,Kuznetsov-Prokhorov-Shramov,Iskovskikh-1980-Anticanonical,Mukai-Umemura-1983}
that $\mathrm{Aut}(V_5)$ has exactly three orbits on $V_5$:
\begin{enumerate}
\item a closed one-dimensional orbit $\mathcal{C}$, which is a~twisted rational sextic curve in $\PP^6$;
\item a two-dimensional orbit $\mathring{\mathcal{S}}$ whose closure is a~surface $\mathcal{S}\sim -K_V$ which is singular along $\mathcal{C}$;
\item an open orbit $V_5\setminus \mathcal{S}$.
\end{enumerate}
Furthermore, let $k_P$ be the~number of lines in $V_5$ passing through $P$. Then
$$
k_P=\left\{\aligned%
&1\ \text{if $P\in\mathcal{C}$},\\
&2\ \text{if $\mathcal{S}\setminus\mathcal{C}$},\\
&3\ \text{if $V_5\setminus\mathcal{S}$}.\\
\endaligned
\right.
$$
Observe also that $\mathcal{S}$ is swept out by the~lines of type $(1,-1)$.

Let $\sigma\colon\widehat{V}_5\to V_5$ be the~blowup of the~point $P$.
Then it follows from \cite{Furushima1989} that there exists the~following Sarkisov link:
$$
\xymatrix{
&\widehat{V}_5\ar[dl]_{\sigma}\ar@{-->}[rr]^{\chi}&&\overline{V}_5\ar[dr]^{\varphi}&\\
V_5\ar@{-->}[rrrr]^{\psi}&&&& \PP^2}
$$
where $\chi$ is a~composition of flops of the~proper transforms of lines in $V_5$ that pass through $P$,
the~morphism $\varphi$ is a~$\PP^1$-bundle, and $\psi$ is given by the~linear system of hyperplane sections that are singular at the~point $P$.
Now we suppose that $P\in\mathcal{C}$.

Let $E$ be the~$\sigma$-exceptional surface, and let $\overline{E}$ be its proper transform on the~threefold $\overline{V}_5$.
Then~$\overline{E}$ is a~del Pezzo surface of degree $6$ with at most Du Val singularities,
and its singular locus consists of one singular point of type $\mathrm{A}_2$.
Moreover, the~$\PP^1$-bundle $\varphi\colon\overline{V}_5\to\PP^2$ induces a birational map $\overline{E}\to\mathbb{P}^2$
that contracts a single curve $\Gamma\subset\overline{E}$ to a point in $\mathbb{P}^2$.

Let $\mathscr{L}$ be a line in $\PP^2$ that passes through the~point $\varphi(\Gamma)$,
let $\overline{H}$ be its preimage in $\overline{V}_5$ via $\varphi$,
let $\widehat{H}$ be its proper transform on $\widehat{V}_5$, and let $H=\sigma(\widehat{H})$.
Then
$$
V_5\setminus H\cong \overline{V}_5\setminus\big(\overline{E}\cup\overline{H}\big),
$$
and $H$ is a~hyperplane section of the~threefold $V_5$ that is singular at~$P$.
Furthermore, one can show that the~surface $H$ is smooth away from $P$,
and $H$ has Du Val singularity of type $\mathrm{A}_4$ at this point.
Then the~$\PP^1$-bundle $\varphi$ induces a morphism
$\overline{V}_5\setminus\big(\overline{E}\cup\overline{H}\big)\to \PP^2\setminus \mathscr{L}$ that is an~$\mathbb{A}^1$-bundle over $\mathbb{A}^2$.
This implies that $V_5\setminus H\cong \overline{V}_5\setminus(\overline{E}\cup\overline{H})\cong\A^3$ as required.
\end{proof}

Now, we assume that $\iota(X)=1$. This leaves us $95$ families of smooth Fano threefolds \cite{MM81,IP99}.
If $\uprho(X)=1$, $\iota(X)=1$ and $\g(X)\leqslant 6$, then we have the~following possibilities:
\begin{enumerate}
\item $\g(X)=2$ and $X$ is a sextic hypersurface in $\mathbb{P}(1^4,3)$;

\item \label{Fano3-folds:g=3}
 $\g(X)=3$ and $X$ is an intersection of a quadric and a quartic in $\mathbb P(1^5,2)$;

\item \label{Fano3-folds:g=4} $\g(X)=4$ and $X$ is a complete intersection of a quadric and a cubic in $\mathbb{P}^5$;

\item \label{Fano3-folds:g=5} $\g(X)=5$ and $X$ is a complete intersection of three quadrics in $\mathbb{P}^6$;

\item \label{Fano3-folds:g=6} $\g(X)=6$ and $X$ is a section of  the~cone in $\mathbb{P}^8$ over the~smooth quintic del Pezzo fourfold described in Example~\ref{example:V5} by a quadric and a hyperplane.
\end{enumerate}
All these deformation families are irreducible.
General members of the~family~\eqref{Fano3-folds:g=3} are~smooth quartic hypersurfaces in $\mathbb{P}^4$,
and special members are double covers of the~quadric threefold branched over octic surfaces.
Similarly, general members of the~family~\eqref{Fano3-folds:g=6} are sections of the~smooth quintic del Pezzo fourfold
in $\mathbb{P}^7$ by quadrics, and special members are double covers of the~smooth quintic del Pezzo threefold branched over anticanonical surfaces.

In the~first two cases, the~Fano threefold $X$ is known to be
irrational even if we allow mild isolated singularities \cite{CheltsovPark2010,IlievKatzarkovPrzyjalkowski,IM,Iskovskikh1980,Kuznetsova,Mella,PrzyjalkowskiShramov,Shramov2007}.
In the~case \eqref{Fano3-folds:g=5}, the~threefold $X$ is also irrational~\cite{Beauville}.
General threefolds of the families \eqref{Fano3-folds:g=4}  and \eqref{Fano3-folds:g=6} are irrational~\cite{Beauville,Iskovskikh1997,IskovskikhPukhlikov,PukhlikovAMS,HassettTschinkel2019},
and every smooth member is also expected to be irrational.
Therefore, in all these cases, the~threefold $X$ is either non-cylindrical or it is expected to be irrational and, thus,~non-cylindrical.

\begin{remark}
\label{remark:g-6-special}
Let $V_5$ be the~smooth quintic del Pezzo threefold, see Example~\ref{example:V5},
and let $\pi\colon X\to V_5$ be a double cover branched over a surface $S\in |-K_{V_5}|$.
If $S$ has an isolated ordinary double point, then~$X$ is rationally connected \cite{Zhang2006},
it is $\mathbb{Q}$-factorial \cite{Cynk},
and it follows from \cite{Prokhorov2017} that there exists the~following Sarkisov link:
$$
\xymatrix{
&\widetilde{X}\ar[dl]_{\alpha}\ar[dr]^{\beta}&\\
X && \mathbb{P}^2,}
$$
where $\alpha$ is the blow up of the singular point of $X$,
and $\beta$ is a standard conic bundle, whose discriminant curve has degree $6$.
Hence, in this case, the threefold $X$ is irrational by \cite[Theorem~10.2]{Shokurov1983}.
Now, using \cite[Theorem~IV.1.8.3]{Kollar-1996-RC},
we conclude that  $X$ is also irrational if $S$ is a very general surface in the~linear system $|-K_{V_5}|$.
\end{remark}

If $\uprho(X)=1$, $\iota(X)=1$ and $\g(X)\geqslant 7$, then $\g(X)\in\{7,8,9,10,12\}$.
Moreover, if $\g(X)=8$, then the~threefold $X$ is birational to a smooth cubic hypersurface in $\mathbb{P}^4$ (see, for example, \cite{Iskovskikh1980,IP99,Takeuchi}),
so~that it is irrational~\cite{CG}.
On the~other hand, we know that $X$ is rational if
$$
\g(X)\in\big\{7,9,10,12\big\}.
$$
In these cases, the~divisor $-K_X$ is very ample, and $|-K_X|$ gives an embedding $X\hookrightarrow\PP^{\g(X)+1}$.
Moreover, all the known constructions of cylinders in $X$ use the \emph{double projection} from a line in $X$ (see~\cite{Iskovskikh1990}).
Recall from \cite{IP99,KPZ14b,Prokhorov-PhD} that $X$ can contain two types of lines depending on their normal bundles.
Namely, for a~line $\ell\subset X$, we have the~following two possibilities:
$$
\NNN_{\ell/X}\cong
\begin{cases}
\OOO_\ell\oplus \OOO_\ell(-1) & \text{$\ell$ is of type $(0,-1)$},
\\
\OOO_\ell(1) \oplus \OOO_\ell(-2) & \text{$\ell$ is of type $(1,-2)$}.
\end{cases}
$$
If $X$ is a sufficiently general member of one of these three families of smooth Fano threefolds, then~$X$~does not contain lines of type $(1,-2)$.
Moreover, one can show that the~threefolds containing lines of type $(1,-2)$ form a codimension one subset in the~corresponding moduli spaces.
On the~other hand, we have the~following result:

\begin{theorem}[{\cite[Theorem~0.1]{KPZ14b}}]
\label{theorem:g-9-10-12}
Suppose that $\uprho(X)=1$, $\iota(X)=1$, and \mbox{$\g(X)=9$~or~$\g(X)=10$.}
If~$X$ contains a line of type $(1,-2)$, then $X$ is cylindrical.
\end{theorem}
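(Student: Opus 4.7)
The plan is to exploit the double projection from the line $\ell$ of type $(1,-2)$ to construct an explicit Sarkisov link realizing $X$ as birational to a well-understood smooth rational Fano threefold $Y$ on which a cylinder is evident, and then transport this cylinder back to $X$.

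First I would blow up the line via $\sigma\colon\widetilde{X}\to X$ with exceptional divisor $E$. Because $\ell$ has normal bundle $\OOO_\ell(1)\oplus\OOO_\ell(-2)$, the surface $E$ is a Hirzebruch surface $\mathbb{F}_3$, and the linear system defining the double projection from $\ell$, namely a suitable multiple of $|\sigma^*(-K_X)-2E|$, has base locus exactly along the proper transforms of those lines in $X$ which meet $\ell$. A flop $\chi\colon\widetilde{X}\dashrightarrow\overline{X}$ of these finitely many $(-K_{\widetilde{X}})$-trivial curves converts the double projection into an honest morphism $\tau\colon\overline{X}\to Y$, producing a Sarkisov link
$$
\xymatrix{
&\widetilde{X}\ar[dl]_{\sigma}\ar@{-->}[rr]^{\chi}&&\overline{X}\ar[dr]^{\tau}&\\
X\ar@{-->}[rrrr]^{\psi}&&&&Y.
}
$$
For $\g(X)=9$, the standard output is $Y\cong\PP^3$ with $\tau$ a divisorial contraction collapsing the proper transform of $E$ onto a surface of low degree; for $\g(X)=10$, the target $Y$ is a smooth quadric threefold $Q\subset\PP^4$, with $\tau$ again divisorial. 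In each case $Y$ is birational to $X$ under $\psi$, and the exceptional locus of $\psi^{-1}$ on $Y$ is a proper closed subset that we can identify explicitly.

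Next, since $Y$ is either $\PP^3$ or a smooth quadric threefold, it admits very simple cylinders: $\PP^3$ minus a hyperplane is isomorphic to $\A^3$, and a smooth quadric threefold minus a tangent hyperplane section is isomorphic to $\A^3$ as well. I would choose such a cylinder $U\subset Y$ whose complement contains $\tau(\overline{E})$ together with the images in $Y$ of all curves contracted or flopped by the link. Then $\psi^{-1}$ restricts to an isomorphism from $U$ onto a Zariski open subset $W\subset X$, and $W$ inherits the cylinder structure $U\cong\A^1\times Z$ from $Y$. Hence $X$ is cylindrical.

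The main obstacle is the existence and explicit description of the Sarkisov link, namely the existence of the flop $\chi$ and the verification that $\tau$ has the claimed target. This rests on controlling the numerical and geometric behavior of curves in $\widetilde{X}$ with respect to the class $\sigma^*(-K_X)-2E$, and on checking that the lines of $X$ meeting $\ell$ form only a finite set with normal bundles suitable for the Atiyah flop. The hypothesis that $\ell$ is of type $(1,-2)$ is essential throughout: it determines the ruling structure on $E=\mathbb{F}_3$, forces the double-projection linear system to have the right base locus order along $\ell$, and guarantees the existence of the flop. Once the link is established, the pullback of the cylinder from $Y$ is essentially formal, modulo the care needed to pick $U$ avoiding every exceptional locus of $\psi$.
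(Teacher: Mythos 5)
Your construction of the Sarkisov link (blowup of $\ell$, flops of the proper transforms of the lines meeting $\ell$, contraction to $Y=\PP^3$ for $\g(X)=9$ or to a quadric $Q\subset\PP^4$ for $\g(X)=10$) agrees with the paper's, but the final step, where you transport a cylinder from $Y$ back to $X$, has a genuine gap. The birational inverse $\psi^{-1}\colon Y\dasharrow X$ contracts the surface $\mathscr{S}=\tau(\overline{E})$ onto the line $\ell$, so the only Zariski open subsets of $Y$ that map isomorphically onto open subsets of $X$ are contained in $Y\setminus\mathscr{S}$; in fact the precise statement is $X\setminus S\cong Y\setminus\mathscr{S}$, where $S$ is the hyperplane section of $X$ with $\mathrm{mult}_{\ell}(S)=3$. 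Here $\mathscr{S}$ is a (possibly non-normal) del Pezzo surface of degree $\g(X)-3=6$ or $7$ in $Y$. Your proposed cylinders, $\PP^3$ minus a hyperplane or $Q$ minus a tangent hyperplane section, cannot work: their complements are surfaces of degree $1$ or $2$ and so can never contain $\mathscr{S}$, hence these cylinders necessarily meet $\mathscr{S}$ and do not pull back to open subsets of $X$. What is actually required is a cylinder inside $Y\setminus\mathscr{S}$ itself, i.e.\ in the complement of a sextic (resp.\ septic) surface, and this is not at all automatic.

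This is exactly where the hypothesis that $\ell$ is of type $(1,-2)$ enters, and you have misplaced its role: the double projection and the flop exist for lines of either type $(0,-1)$ or $(1,-2)$, so the link itself does not use the hypothesis. Its purpose is that for a line of type $(1,-2)$ the surface $\mathscr{S}$ is \emph{non-normal}, and it is the non-normality (singular locus of positive dimension) that allows one to produce a cylinder in $Y\setminus\mathscr{S}$; this is the technical heart of the argument in the cited reference. For a line of type $(0,-1)$ the same link yields $X\setminus S\cong Y\setminus\mathscr{S}$ with $\mathscr{S}$ normal, and no cylinder is known in that complement -- indeed the paper conjectures that a very general such $X$ is not cylindrical -- which shows that without the non-normality step your argument cannot close.
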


\begin{proof}
Let $\ell$ be a line in the~Fano threefold $X$, and let $\sigma\colon\widetilde{X}\to X$ be the~blowup of the~line $\ell$.
Then it follows from \cite{Iskovskikh-1980-Anticanonical,Iskovskikh1990,Pr91,IP99} that there is the~Sarkisov link:
$$
\xymatrix{
&\widetilde{X}\ar[dl]_{\sigma}\ar@{-->}[rr]^{\chi}&&\widehat{X}\ar[dr]^{\varphi}&\\
X &&&& Y,}
$$
where $Y$ is a~smooth Fano threefold described below,
the~morphism $\varphi$ is the~blowup of a~smooth rational curve $\Gamma$,
and $\chi$ is a~composition of flops of the~proper transforms of the~lines that meet $\ell$. Moreover, we have the~following options:
\begin{itemize}
\item if $\g(X)=9$, then $Y=\PP^3$, and $\Gamma$ is a curve of degree $7$ and genus $3$;
\item if $\g(X)=10$, then $Y$ is a~smooth quadric in $\PP^4$, and $\Gamma$ is a curve of degree $7$ and genus $2$.
\end{itemize}

Let $E$ be the~$\sigma$-exceptional surface, let $\widehat{E}$ be its proper transform on $\widehat{X}$, and let $\mathscr{S}=\varphi(\widehat{E})$.
Then $\mathscr{S}$ is a~(maybe singular or non-normal) del Pezzo surface of degree $\g(X)-3$ that contains~$\Gamma$.
Similarly, let $S$ be the~proper transform of the~$\phi$-exceptional surface on the~Fano threefold $X$.
Then $S$ is a~hyperplane section of $X$ such that $\mathrm{mult}_{\ell}(S)=3$. Using this, we conclude that
$$
X\setminus S\cong Y\setminus \mathscr{S}.
$$
Moreover, if $\ell$ is a line of type $(1,-2)$, then the~surface $\mathscr{S}$ is not normal.
This implies that the~complement $Y\setminus \mathscr{S}$ contains a cylinder, so that $X$ is cylindrical.
\end{proof}

In fact, we believe that the~following is true:

\begin{conjecture}
\label{conjecture:g-9-10}
Let $X$ be a very general smooth Fano threefold such that $\uprho(X)=1$, $\iota(X)=1$, and \mbox{$\g(X)=9$~or~$\g(X)=10$.}
Then $X$ is not cylindrical.
\end{conjecture}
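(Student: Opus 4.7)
The plan is to assume for contradiction that such a very general $X$ admits a cylinder and then to show that this forces $X$ to contain a line of type $(1,-2)$, contradicting the codimension-one nature of that locus in moduli. First I would reduce to an anticanonical polar cylinder: since $\uprho(X)=1$ and $\iota(X)=1$, the complement of any cylinder $U\subset X$ is the support of an effective divisor $D$ with $D\sim_{\mathbb{Q}}\lambda(-K_X)$ for some $\lambda\in\mathbb{Q}_{>0}$, so after rescaling we obtain a $(-K_X)$-polar cylinder $U=X\setminus\Supp(D)$ with $D\sim_{\mathbb{Q}}-K_X$.

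Next I would extract the rational fibration structure. The isomorphism $U\cong Z\times\mathbb{A}^1$ yields a rational map $\psi\colon X\dashrightarrow\overline{Z}$ whose general fiber is $\mathbb{P}^1$. Resolving indeterminacies gives $\pi\colon\widetilde{X}\to\overline{Z}$ with a two-dimensional family of rational curves on $X$ sweeping out $X$; a general member $\ell$ of this family meets $\Supp(D)$ only at the (unique) point at infinity of the cylinder direction. Using $\uprho(X)=1$ together with the adjunction-type computation in Remark~\ref{remark:obstruction} applied to the compactified conic bundle $\widetilde{X}\to\overline{Z}$ restricted to a pencil, one estimates $(-K_X)\cdot\ell\in\{1,2\}$, so a general fiber is either a line or a conic on $X$.

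The third step is to rule out the conic case and then match the line case against the Sarkisov link of Theorem~\ref{theorem:g-9-10-12}. For very general $X$ of genus $9$ or $10$, the Hilbert scheme of conics is known to be a smooth irreducible surface carrying no rational pencil structure whose general members sweep out $X$ while accumulating along a single anticanonical boundary divisor; this would contradict the $(-K_X)$-polarity of $D$. Hence a general fiber $\ell$ must be a line. The two-dimensional family of lines through general points cannot be absorbed by the one-dimensional Hilbert scheme of lines unless the cylinder is obtained, via the construction of Theorem~\ref{theorem:g-9-10-12}, as the preimage of $Y\setminus\mathscr{S}$ for some fixed line $\ell_0\subset X$, with the fibers of $\psi$ being the proper transforms of lines on $Y$ passing through $\Gamma$. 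For $Y\setminus\mathscr{S}$ to contain a cylinder, the surface $\mathscr{S}\subset Y$ must be non-normal, and by the analysis of the Sarkisov link this is equivalent to $\ell_0$ being of type $(1,-2)$.

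The hard part will be the middle step: ruling out \emph{every} possible cylinder structure, not just those arising from projection from a line. One must control families of rational curves of anticanonical degree $\geq 2$ and exclude exotic cylinder structures such as those coming from conic bundles over rational surfaces or from non-obvious $\mathbb{A}^1$-fibrations, perhaps using Bogomolov-type inequalities for the relative cotangent sheaf of $\pi$ or the classification of Mori fiber structures on weak Fano resolutions of $X$. A secondary difficulty is making precise what ``very general'' means: one should show that the locus of cylindrical $X$ in each deformation family is a \emph{countable} union of proper subvarieties containing the locus of $X$ with a line of type $(1,-2)$, which requires a uniform control of the boundary divisor $D$ as one varies in moduli.
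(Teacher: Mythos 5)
This statement is not a theorem of the paper: it is stated as an open conjecture (the authors only say they ``believe'' it, motivated by the fact that the only known cylinder constructions for genus $9$ and $10$ use the double projection from a line of type $(1,-2)$, a codimension-one condition in moduli). So there is no proof in the paper to compare against, and your text is a strategy sketch rather than a proof; the gaps you flag at the end are exactly the essential content of the conjecture, not finishing touches.

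Concretely, three steps do not hold up. First, your degree estimate $(-K_X)\cdot\ell\in\{1,2\}$ for a general fiber of the induced $\mathbb{A}^1$-fibration is unjustified: the computation of Remark~\ref{remark:obstruction} only evaluates $(K_X+D)\cdot F$, which vanishes when $D\sim_{\mathbb{Q}}-K_X$, and since the closure of an $\mathbb{A}^1$-fiber meets $\Supp(D)$ at a single point with arbitrary local intersection multiplicity, no bound on $-K_X\cdot F$ follows; indeed in the known cylinders of Theorems~\ref{theorem:g-9-10-12} and~\ref{theorem:g-12} the fibers are proper transforms of lines or conics on the \emph{target} $Y$ of the Sarkisov link, and their anticanonical degree on $X$ is not $1$ or $2$. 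Second, the claim that every cylinder must arise from the double projection of Theorem~\ref{theorem:g-9-10-12}, so that non-normality of $\mathscr{S}$ (equivalently a $(1,-2)$-line) is forced, is precisely what one would need to prove; Theorem~\ref{theorem:g-9-10-12} gives a sufficient condition for cylindricity, and no converse is known --- ruling out ``exotic'' $\mathbb{A}^1$-fibrations is the open problem, and no existing obstruction applies (the threefolds are rational, $\uprho=1$ makes cylindricity equivalent to the existence of a $(-K_X)$-polar cylinder, and no bound $\alpha(X)\geqslant 1$ in the spirit of Theorem~\ref{theorem:alpha} is available here). Third, the ``very general'' reduction (that cylindrical members form a countable union of proper closed subsets, e.g.\ via a relative Hilbert-scheme or specialization argument as in \cite{DK19b,Ka18}) is asserted but not carried out, and without it the statement for very general $X$ does not follow even from control of individual cylinders. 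In short, the proposal restates the heuristic behind the conjecture but does not close any of the gaps that make it a conjecture.
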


Using a similar Sarkisov link as in the~proof of Theorem~\ref{theorem:g-9-10-12}, we obtain

\begin{theorem}[{\cite{KPZ11}}]
\label{theorem:g-12}
Suppose that $\uprho(X)=1$, $\iota(X)=1$ and $\g(X)=12$. Then $X$ is cylindrical.
\end{theorem}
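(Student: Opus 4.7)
The plan is to mimic the proof of Theorem~\ref{theorem:g-9-10-12}: produce a~Sarkisov link by blowing up a~line $\ell\subset X$, identify the target variety $Y$ and the image $\mathscr{S}$ of the exceptional divisor, and exhibit a~cylinder in $Y\setminus\mathscr{S}$. The improvement relative to the $g=9,10$ cases is that for $g=12$ the conclusion should hold for \emph{every} line $\ell$ on \emph{every} smooth $X_{22}$, without any restriction on the splitting type of the normal bundle of $\ell$.

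First I would pick any line $\ell\subset X$; such lines always exist and sweep out a~divisor. Let $\sigma\colon\widetilde X\to X$ be the blowup of $\ell$, with exceptional divisor $E$. The double projection from $\ell$ should fit into a~Sarkisov link of the same shape as in Theorem~\ref{theorem:g-9-10-12},
$$
\xymatrix{
&\widetilde{X}\ar[dl]_{\sigma}\ar@{-->}[rr]^{\chi}&&\widehat{X}\ar[dr]^{\varphi}&\\
X &&&& Y,}
$$
where $\chi$ flops the proper transforms of the finitely many lines incident to $\ell$, $Y$ is a~smooth variety of larger Fano index (expected to be $\PP^3$), and $\varphi$ is the blowup of a~smooth rational curve $\Gamma\subset Y$. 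Letting $\widehat E$ denote the transform of $E$ under $\chi$, the image $\mathscr{S}=\varphi(\widehat E)\subset Y$ is a~surface (with controlled degree determined by the numerics), and if $S\subset X$ is the proper transform of the $\varphi$-exceptional divisor, which is a~hyperplane section of $X$ with $\mult_\ell(S)=3$, then the same computation as in the proof of Theorem~\ref{theorem:g-9-10-12} yields
$$X\setminus S\cong Y\setminus\mathscr{S}.$$

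Next I would exhibit a cylinder in $Y\setminus\mathscr{S}$. With $Y=\PP^3$, one can in principle fiber $Y\setminus\mathscr{S}$ by affine lines using a~pencil of planes through a~line lying in the non-normal locus of $\mathscr{S}$, much as in the proofs of Lemma~\ref{lemma:dP4} and Theorem~\ref{theorem:g-9-10-12}. The crucial point is that for $g=12$ the surface $\mathscr{S}$ is expected to always be non-normal, regardless of~$\ell$ and of the moduli of $X$, so that such a~pencil of planes is available uniformly. Pulling the resulting cylinder back through the isomorphism $X\setminus S\cong Y\setminus\mathscr{S}$ then produces a~cylinder in $X$.

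The main obstacle is the precise description of the Sarkisov link in the $g=12$ case: identifying $Y$, the curve $\Gamma$, the surface $\mathscr{S}$, and verifying that $\mathscr{S}$ is non-normal for \emph{every} admissible line~$\ell$. This is particularly delicate for the highly symmetric members $X_{22}^{\mathrm{mu}}$ and $X_{22}^{\mathrm{a}}$ in Theorem~\ref{theorem:Fano-threefolds-Aut-infinite}, where the Hilbert scheme of lines is unusually small and one must check that the flops $\chi$ and the image $\mathscr{S}$ behave as in the generic case. The requisite uniform description of the double projection from a~line on $X_{22}$ goes back to Iskovskikh~\cite{Iskovskikh1990} and is what \cite{KPZ11} exploits to run the argument.
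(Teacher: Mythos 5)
There is a genuine gap, and it sits exactly at the point your argument relies on. First, the target of the double projection is misidentified: for $\g(X)=12$ the Sarkisov link obtained by blowing up a line $\ell\subset X$ does not land in $\PP^3$ but in the quintic del Pezzo threefold $V_5\subset\PP^6$ of Example~\ref{example:V5}, with $\varphi$ the blowup of a rational quintic curve $\Gamma\subset V_5$; the surface $\mathscr{S}=\varphi(\widehat{E})$ is then a hyperplane section of $V_5$ containing $\Gamma$, and one gets $X\setminus S\cong V_5\setminus\mathscr{S}$. Second, and more importantly, your ``crucial point'' --- that $\mathscr{S}$ is non-normal for \emph{every} line $\ell$ and every smooth $X_{22}$ --- is false. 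The surface $\mathscr{S}$ is non-normal precisely when $\ell$ is a line of type $(1,-2)$, and a sufficiently general $X_{22}$ contains no such lines (they occur only in codimension one in moduli); this special situation is exactly what Remark~\ref{remark:g-12} isolates. So the plan of fibering $Y\setminus\mathscr{S}$ by a pencil of planes through a line in the non-normal locus of $\mathscr{S}$ has nothing to run on in the general case, and the proposal stalls at the step where the cylinder must actually be produced.

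What the paper does instead, after establishing $X\setminus S\cong V_5\setminus\mathscr{S}$ for an arbitrary line $\ell$, is a second reduction: it picks a line $L\subset V_5$ contained in $\mathscr{S}$ and uses the link of Theorem~\ref{theorem:V5-A3} (blowup of $L$, contraction to a smooth quadric $Q\subset\PP^4$) to obtain $V_5\setminus(\mathscr{S}\cup H_L)\cong Q\setminus(\mathrm{S}\cup H_C)$, where $\mathrm{S}$ and $H_C$ are hyperplane sections of $Q$; a linear projection of $Q$ from a general point of $\mathrm{S}\cap H_C$ then yields a cylinder. No normal-bundle hypothesis on $\ell$ and no non-normality of $\mathscr{S}$ is needed for this. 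To repair your proposal you would need to replace the non-normality claim by this (or some other) construction of a cylinder in the complement of an \emph{arbitrary} hyperplane section of $V_5$ through the quintic curve $\Gamma$.
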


\begin{proof}
Let $\ell$ be a line in $X$. Then there exists a~unique surface $S\in|-K_X|$ such that $\mathrm{mult}_{\ell}(S)=3$.
Moreover, it follows from \cite{Iskovskikh-1980-Anticanonical,Iskovskikh1990,Pr91,IP99} that there exists the~following Sarkisov link:
\begin{equation}
\label{eq:slV22}
\vcenter{\xymatrix{
&\widetilde{X}\ar[dl]_{\sigma}\ar@{-->}[rr]^{\chi}&&\widehat{X}\ar[dr]^{\varphi}&\\
X &&&& V_5,}}
\end{equation}
where $\sigma$ is the~blowup of the~line $\ell$, the~variety $V_5$ is a~smooth quintic del Pezzo threefold in $\mathbb{P}^6$,
the~morphism $\varphi$ is the~blowup of a~rational quintic curve $\Gamma$,
and $\chi$ is a~composition of flops.

Let $E$ be the~$\sigma$-exceptional surface, let $\widehat{E}$ be its proper transform on $\widehat{X}$, and let $\mathscr{S}=\varphi(\widehat{E})$.
Then $\mathscr{S}$ is a~hyperplane section of the~threefold $V_5$ that contains the~curve $\Gamma$,
and $S$ is the~proper transform of the~$\varphi$-exceptional surface.
Moreover, we have
$$
X\setminus S\cong V_5\setminus \mathscr{S}.
$$
Let us show that $V_5\setminus\mathscr{S}$ contains a cylinder.
In fact, this follows from the~proof of Theorem~\ref{theorem:V5-A3}.
We will use the~notation and assumptions introduced in this proof.

Let $L$ be a line in $V_5$ that is contained in $\mathscr{S}$ (it does exists).
If $\mathscr{S}\ne H_L$, let $\mathrm{S}$ be the~proper transform on $Q$ of the~surface $\mathscr{S}$.
Otherwise, we let $\mathrm{S}=H_C$. Then the~surface $\mathrm{S}$ is a hyperplane section of the~quadric $Q$.
Thus, we see that
$$
V_5\setminus\big(\mathscr{S}\cup H_L\big)\cong Q\setminus\big(\mathrm{S}\cup H_C\big).
$$
Now taking the linear projection $Q\dasharrow\mathbb{P}^3$ from a sufficiently general point in $\mathrm{S}\cap H_C$,
one can easily show that the~complement $Q\setminus(\mathrm{S}\cup H_C)$ contains a cylinder, so~that $X$ is cylindrical.
\end{proof}

\begin{remark}[{\cite{Pr91}}]
\label{remark:g-12}
In the~notation and assumptions of the~proof of  Theorem~\ref{theorem:g-12}, let~$\ell$ be a line of type  $(-1,2)$.
Then $\mathscr{S}$ is a non-normal surface whose singular locus is a line in $V_5$.
Letting $L$ to be this line gives $\mathscr{S}=H_L$, so that
$$
X\setminus S\cong V_5\setminus \mathscr{S}\cong Q\setminus H_C.
$$
Thus, if we also have $\NNN_{L/V_5}\cong\OOO_L(1)\oplus \OOO_L(-1)$,
then $H_C$ is singular (see the~proof of Theorem~\ref{theorem:V5-A3}), so that $X\setminus S\cong\A^3$.
We can always find such $\ell$ and $L$ if $\mathrm{Aut}(X)$ is infinite (see Theorem~\ref{theorem:Fano-threefolds-Aut-infinite}).
\end{remark}

We do not know examples of cylindrical smooth Fano threefolds of Picard rank $1$ and genus~$7$.
In fact, we believe that any such threefold is not cylindrical.

\begin{conjecture}
\label{conjecture:g-7}
Let $X$ be a smooth Fano threefold such that $\uprho(X)=1$, $\iota(X)=1$, and \mbox{$\g(X)=7$.}
Then $X$ is not cylindrical.
\end{conjecture}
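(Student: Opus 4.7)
My plan is to combine the reduction to anticanonical polar cylinders that is available because $\uprho(X)=1$ with the Sarkisov link geometry from a line on $X$, in the spirit of the proofs of Theorems~\ref{theorem:g-9-10-12} and~\ref{theorem:g-12}, but running the argument in reverse to rule out cylinders rather than construct them. By Corollary~\ref{corollary:criterion} it is enough to show that $X$ contains no $(-K_X)$-polar cylinder, so I would assume for contradiction that $U = X\setminus\Supp(D)\cong Z\times\mathbb{A}^1$ for an affine surface $Z$, with $D=\sum a_iC_i$ effective and $D\sim_{\mathbb{Q}} -K_X$.

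The first substantive step is to transport the argument of Remark~\ref{remark:obstruction} to dimension three. The projection $U\to Z$ extends to a rational map $\psi\colon X\dashrightarrow\overline{Z}$; resolving indeterminacy on some $\pi\colon\widetilde{X}\to X$, the induced morphism $\phi\colon\widetilde{X}\to \overline{Z}$ has rational curves as its general fibers, and combining $K_{\widetilde X}\cdot \widetilde F=-2$ with $(K_X+D)\cdot F=0$ for a general fiber $F$ together with the contribution of the horizontal section should force the pair $(X,D)$ to fail to be log canonical along some subvariety $\Sigma\subset \Supp(D)$. This would already finish the proof if one knew $\alpha(X)\geq 1$, but the $\alpha$-invariant of a smooth prime Fano threefold of genus~$7$ is in general strictly less than one, so this obstruction alone is insufficient and one must invoke more refined geometry.

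Here I would exploit the birational geometry specific to genus~$7$. By \cite{IP99, Iskovskikh1990}, the double projection from a general line $\ell\subset X$ produces a Sarkisov link whose target is (birationally) a standard conic bundle over $\mathbb{P}^2$ with discriminant curve of degree~$7$. Choosing $\ell$ to meet $\Sigma$ appropriately, I would transport $D$ through the link and study its image: the anticanonical class, the conic bundle structure, and the position of the non-log-canonical locus jointly constrain the irreducible components of the image of $D$, their multiplicities, and their intersection numbers with the fibers. The goal is to show that the resulting numerical configuration is incompatible with $D\sim_{\mathbb{Q}} -K_X$, mirroring how the proof of Theorem~\ref{theorem:del-Pezzo-degree-1-2-3} first forces a tangent hyperplane section into $\Supp(D)$ and then derives a contradiction from sharp intersection bounds.

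The main obstacle is that the Sarkisov link from a line in a genus~$7$ Fano threefold depends on the type of the line, by analogy with lines of type $(0,-1)$ and $(1,-2)$ in Theorem~\ref{theorem:g-9-10-12}, and its target is considerably less uniform than in the cases $g\in\{9,10,12\}$, where one lands on $\mathbb{P}^3$, a smooth quadric, or $V_5$. One therefore needs a classification of which line types must occur on every smooth prime Fano threefold of genus~$7$, and for each type a separate argument producing the contradiction on the conic bundle target; a secondary difficulty is making the argument uniform in $X$ and not merely generic, so as to cover special loci in moduli where reducible anticanonical divisors or exceptional line configurations appear. Together these two obstacles form the technical core that keeps the statement at the level of a conjecture.
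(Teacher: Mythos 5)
You were asked to prove Conjecture~\ref{conjecture:g-7}, but the paper contains no proof of it: it is stated as an open problem, and the authors explicitly say they know no cylindrical example and no obstruction in genus~$7$. Your text is likewise not a proof but a programme. The opening reduction is fine --- since $\uprho(X)=1$, any cylinder is automatically a $(-K_X)$-polar cylinder (this is the discussion following Problem~\ref{problem:polar-cylinders}; Corollary~\ref{corollary:criterion}, which you cite, is about $\Ga$-actions on affine cones), and the conclusion that $(X,D)$ is then not log canonical is exactly the proof of Theorem~\ref{theorem:alpha}. But everything after that is phrased as an aim ("the goal is to show that the resulting numerical configuration is incompatible\ldots") rather than an argument, and you concede at the end that the remaining obstacles are precisely what keep the statement a conjecture. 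So the decisive contradiction is never derived, and there is no proof here to certify.

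Beyond the incompleteness, the one concrete geometric input you propose is factually wrong, and in a way that undermines the strategy. For a smooth prime Fano threefold of genus $7$ the double projection from a line $\ell$ is given by $|-K_X-2\ell|$, which has dimension $g-6=1$, i.e.\ it is a pencil; by Iskovskikh and Takeuchi (see \cite{Iskovskikh1990,Takeuchi,IP99}) the associated Sarkisov link terminates not in a conic bundle over $\PP^2$ but in a del Pezzo fibration of degree $5$ over $\PP^1$ --- indeed $(-K_{\widetilde X})^2\cdot(\sigma^*(-K_X)-2E)=5$ --- and this is exactly the classical proof that these threefolds are rational, which the paper records in Section~\ref{subsection:Fano-3-folds}. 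A standard conic bundle over $\PP^2$ with a discriminant curve of degree $7$ would be irrational by Shokurov's criterion (compare Remark~\ref{remark:g-6-special}), contradicting that rationality. Worse for your plan: the model the link actually produces is a quintic del Pezzo fibration with a rational section, and by Corollary~\ref{corollary:DP-cylinders} such a total space contains vertical cylinders. Since cylindricity is a biregular, not a birational, property, transporting $D$ through the link cannot by itself produce a numerical contradiction --- the birational model on the other side is cylindrical --- so the missing idea is a genuinely biregular obstruction on $X$ itself, which neither your sketch nor the current literature supplies.
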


Before we close this section, let us mention that most of smooth Fano threefolds with~\mbox{$\uprho(X)\geqslant 2$} are
rational \cite{Iskovskikh1997,IP99,Prokhorov2019},
and many of them are known to be cylindrical.
However, we do not know the~existence of anticanonical polar cylinders in majority of cylindrical smooth Fano threefolds.
Let~us list few examples.

\begin{example}
\label{example:blow-up-dP-n}
Let $Y$ be a smooth Fano threefold such that $Y$ is a del Pezzo threefold or $Y=\mathbb{P}^3$.
Take~$H\in\mathrm{Pic}(Y)$ on $Y$ such that $-K_Y\sim 2H$.
Choose a smooth curve $\mathscr{C}\subset Y$ that is a~complete intersection of two surfaces from~$|H|$.
Suppose that $X$ is a blowup of the~threefold $Y$ along  $\mathscr{C}$.
Then $X$ is a smooth Fano threefold.
Moreover, if $H^3\geqslant 4$, then $X$ is cylindrical.
\end{example}

\begin{example}
\label{example:blow-up-P3-two-cubic}
Suppose that $X$ is a blowup of $\mathbb{P}^3$ along a smooth curve
that is a complete intersection of two cubic surfaces.
Then $X$ is a cylindrical smooth Fano threefold.
\end{example}

\begin{example}
\label{example:blow-up-P3-g-3-d-6}
Suppose that $X$ is a blowup of $\mathbb{P}^3$ along a~smooth curve of degree $6$ and genus~$3$,
which is an~intersection of cubic hypersurfaces. Then $X$ is a cylindrical smooth Fano~threefold.
\end{example}

\begin{example}
\label{example:blow-up-Q-three-quadrics}
Let $Q$ be a smooth quadric threefold in $\mathbb{P}^4$, and let $H$ be its hyperplane section.
Suppose that $X$ is a blowup of $Q$ along a smooth curve
that is a complete intersection of two surfaces from $|2H|$.
Then $X$ is a cylindrical smooth Fano threefold.
 \end{example}

Each smooth Fano threefold described in Examples~\ref{example:blow-up-dP-n},
\ref{example:blow-up-P3-two-cubic}, \ref{example:blow-up-P3-g-3-d-6} and \ref{example:blow-up-Q-three-quadrics} is cylindrical,
but we do not know whether any of these threefolds contains anticanonical polar cylinders or not.

\subsection{Cylindrical Fano fourfolds}
\label{subsection:Fano-4-folds}
Now, let $X$ be a smooth Fano fourfold such that $\uprho(X)=1$.
By Corollary~\ref{corollary:Fano-fourfolds} we have the~following implications:
$$
\text{$X$ is cylindrical}\Longrightarrow\text{$X$ is rational}.
$$
If~$\iota(X)=5$ or $\iota(X)=4$, then $X=\mathbb{P}^4$ or $X$ is a smooth quadric fourfold, so that $X$ is cylindrical.
Similarly, if~$\iota(X)=3$, then it follows from Remark~\ref{remark:del-Pezzo-n-folds} that $X$ is one of the~following fourfolds:
\begin{enumerate}
\item a smooth sextic hypersurface in $\PP(1,1,1,1,2,3)$;
\item a smooth quartic hypersurface in $\PP(1,1,1,1,1,2)$;
\item a smooth cubic fourfold in $\PP^{5}$;
\item a smooth complete intersection of two quadrics in $\PP^6$;
\item the~quintic del Pezzo fourfold described in Example~\ref{example:V5}.
\end{enumerate}
In the~first two cases, we expect that $X$ is always irrational.
In fact, we know that a~very general quartic hypersurface in $\PP(1,1,1,1,1,2)$ is irrational \cite{HassettPirutkaTschinkel},
so that it is definitely not cylindrical.
Similarly,  general cubic fourfold in $\PP^5$ is expected to be irrational.
But there are rational smooth cubic fourfolds (see \cite{Tregub1984,Hassett1999,Hassett,RussoStagliano}), so that it is very natural to ask the~following question:

\begin{question}
\label{question:cubic-fourfolds}
Are there smooth rational cylindrical cubic fourfolds?
\end{question}

\begin{remark}
\label{remark:Fermat-cubics}
Every smooth cubic fourfold in $\mathbb{P}^5$ containing two skew planes is rational (see \cite{Hassett}).
In particular, the~Fermat cubic fourfold is rational.
If it is cylindrical, then the~affine cone over it admits an effective action of the~group $\Ga$ by Theorem~\ref{theorem:criterion},
which contradicts Conjecture~\ref{conjecture:Pham--Brieskorn}.
\end{remark}

By Lemma~\ref{lemma:dP4}, we know that a smooth complete intersection of two quadrics in $\PP^6$ is cylindrical.
Let us prove that the~quintic del Pezzo fourfold described in Example~\ref{example:V5} is cylindrical as well.
To~do this, let us present a~detailed description of this fourfold given in \cite{Pr94}.

Let $V_5$ be the~quintic del Pezzo fourfold in $\PP^7$.
By \cite[Theorem 6.6]{Piontkowski1999}, we have the~following exact sequence of groups:
$$
1\longrightarrow (\Ga)^4\rtimes\Gm \longrightarrow \Aut(V_5)\longrightarrow\PGL_2(\CC)\longrightarrow 1,
$$
so that the~group $\Aut(V_5)$ is not reductive. In particular, the~fourfold $V_5$ is not K-polystable \cite{AlperBlumHalpernLeistnerXu}.
The planes on $V_5$ belong to one of the~following two classes:
\begin{itemize}
\item[(i)] a~unique plane $\Xi$ which is a Schubert variety of type $\upsigma_{2,2}$;
\item[(ii)] a~one-parameter family of planes $\Pi_t$ that are Schubert varieties of type $\upsigma_{3,1}$.
\end{itemize}
We say that $\Xi$ is the~plane of type $\upsigma_{2,2}$, and $\Pi_t$ are planes of type $\upsigma_{3,1}$.
They are distinguished~by the~types of the~normal bundles: $\mathrm{c}_2(\NNN_{\Xi/X})=2$ and $\mathrm{c}_2(\NNN_{\Pi_t/X})=1$.
Moreover, there is a hyperplane section $\mathcal{H}$ of the~fourfold $V_5$ that contains all planes in $V_5$.
Furthermore, one has $\mathrm{Sing}(\mathcal{H})=\Xi$,
the~threefold $\mathcal{H}$ is the~union of all the~$\upsigma_{3,1}$-planes in $V_5$,
and $\Xi$ contains a special conic $\mathcal{C}$ such that
\begin{itemize}
\item the~intersection $\Pi_t\cap\Xi$ is a~tangent line to the~conic $\mathcal{C}$;
\item two distinct $\upsigma_{3,1}$-planes $\Pi_{t_1}$ and $\Pi_{t_2}$ meet in a~point in $\Xi\setminus\mathcal{C}$.
\end{itemize}
The automorphism group $\Aut(V_5)$ has the~following orbits in $V_5$:
\begin{enumerate}
\item the~open orbit $X\setminus \mathcal{H}$;
\item the~three-dimensional orbit $\mathcal{H}\setminus \Xi$;
\item the~two-dimensional orbit $\Xi\setminus \mathcal{C}$;
\item the~one-dimensional closed orbit $\mathcal{C}$.
\end{enumerate}
The Hilbert scheme of lines on the~del Pezzo fourfold $V_5$ is smooth, irreducible, and four-dimensional.
Moreover, if $\ell$ is a line in $V_5$, then $\ell$ belongs to one the~following five classes:
\begin{enumerate}
\renewcommand\labelenumi{(\alph{enumi})}
\renewcommand\theenumi{(\alph{enumi})}
\item\label{line:type:a}
$\ell\not\subset\mathcal{H}$, $\ell\cap \Xi=\emptyset$, and $l\cap\mathcal{H}$ is a point;
\item\label{line:type:b}
$\ell\subset \mathcal{H}$, $l\cap \Xi$ is a point, and $\ell\cap\mathcal{C}=\emptyset$;
\item\label{line:type:c}
$\ell\subset \mathcal{H}$, and $l\cap \Xi=l\cap \mathcal{C}$ is a point;
\item\label{line:type:d}
$\ell\subset \Xi$, and the~intersection $\ell\cap \mathcal{C}$ consists of two points;
\item\label{line:type:e}
$\ell\subset \Xi$ and $\ell$ is tangent to $\mathcal{C}$.
\end{enumerate}
The~group $\Aut(V_5)$ acts transitively on the~lines in each of these classes.
For a line $\ell\subset V_5$, the~lines meeting $\ell$ sweep out a hyperplane section $H_\ell$ of the~fourfold $V_5$ that is singular along the~line $\ell$.
Vice versa, if $H$ is a~hyperplane section of the~quintic del Pezzo fourfold $V_5$ that has non-isolated singularities, then $H=H_\ell$ for some line $\ell\subset V_5$.

\begin{theorem}[\cite{Pr94}]
\label{theorem:V5-A4}
Let  $\ell$ be a line in $V_5$ that is  not a line of type \textup{\ref{line:type:b}}. Then $V_5\setminus H_\ell\cong \A^4$.
\end{theorem}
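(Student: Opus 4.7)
The plan is to adapt the Sarkisov-link argument from the second part of the proof of Theorem~\ref{theorem:V5-A3}, combined with the fact that $\Aut(V_5)$ acts transitively on each of the four admissible line classes \ref{line:type:a}, \ref{line:type:c}, \ref{line:type:d}, \ref{line:type:e}. By transitivity, it suffices, for one representative $\ell$ in each such class, to produce an isomorphism $V_5\setminus H_\ell\cong\A^4$.

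For the construction, let $\sigma\colon\widetilde{V}_5\to V_5$ be the blowup of $\ell$, with exceptional divisor $E$, and consider the map defined by a suitable subsystem of $|\sigma^{*}(-K_{V_5})-2E|$ (a version of the double projection from $\ell$). By analogy with the threefold case, one expects a Sarkisov-type diagram
$$\xymatrix{&\widetilde{V}_5\ar[dl]_{\sigma}\ar@{-->}[rr]^{\chi}&&\widehat{V}_5\ar[dr]^{\varphi}&\\ V_5&&&&Y,}$$
in which $\chi$ is a composition of flops of the proper transforms of the lines in $V_5$ meeting $\ell$, and $\varphi$ is a divisorial contraction onto a smooth fourfold $Y$. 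A degree count (together with the fact that $H_\ell$ is a hyperplane section singular along $\ell$) suggests $Y\cong\PP^{4}$, so that the proper transform of $H_\ell$ on $\widehat{V}_5$ is the $\varphi$-exceptional divisor, and the image $\varphi(\widehat{E})$ of the strict transform of $E$ is a hyperplane in $\PP^{4}$. This would yield $V_5\setminus H_\ell\cong\PP^{4}\setminus(\text{hyperplane})\cong\A^{4}$.

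The main obstacle is verifying the link in each of the four line types and identifying $\varphi(\widehat{E})$ explicitly. Since $\NNN_{\ell/V_5}$ depends on the type of $\ell$, the structure of $E$ and the locus flopped by $\chi$ vary case by case, and a case-by-case analysis using explicit coordinates, for instance via the realization of $V_5$ as a linear section of $\Gr(2,5)\subset\PP^{9}$, appears necessary. The exclusion of type \ref{line:type:b} should correspond to a genuine geometric obstruction: for such an $\ell$, the line lies in $\mathcal{H}$ but meets $\Xi$ off the special conic $\mathcal{C}$, and one expects $\varphi(\widehat{E})$ to degenerate (to a singular quadric, say, or to a subvariety of codimension $\geqslant 2$), so that the complement is no longer affine space. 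A complementary approach I would pursue in parallel is to use the transitive $\Aut(V_5)$-action to normalize $\ell$ within its orbit and then exhibit four algebraically independent regular functions on $V_5\setminus H_\ell$ coming from Schubert-cell coordinates on $\Gr(2,5)$, giving the isomorphism with $\A^{4}$ directly and bypassing the birational link.
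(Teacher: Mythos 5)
Your plan leaves the central step unproved, and the link you guess at is not the one that actually exists. Since $\uprho(\Bl_\ell V_5)=2$, the blowup $\pi\colon\widehat V_5\to V_5$ of the line admits exactly one further extremal contraction, and it is \emph{not} a flop-then-blowdown onto $\PP^4$: it is the morphism induced by the \emph{simple} projection from $\ell$, which maps $\widehat V_5$ onto an irreducible quadric $Q\subset\PP^5$ and contracts the proper transform of $H_\ell$ to a cubic scroll surface. No flops occur, the target is four-dimensional but is a quadric rather than $\PP^4$, and the image of the $\pi$-exceptional divisor is a \emph{singular} hyperplane section of $Q$; the isomorphism $V_5\setminus H_\ell\cong Q\setminus(\text{singular hyperplane section})\cong\A^4$ then comes from the standard affine cell of a quadric minus a tangent hyperplane section. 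So the "degree count suggesting $Y\cong\PP^4$" is wrong, and your diagram with $\chi$ a composition of flops of the lines meeting $\ell$ cannot be realized (those lines sweep out the divisor $H_\ell$, not a flopping locus). Your heuristic for the exclusion of type \ref{line:type:b} is likewise unsubstantiated and tied to the nonexistent link.

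Beyond this, the case analysis you defer is exactly where the real work lies, and it does not reduce to one uniform construction. For lines of types \ref{line:type:d} and \ref{line:type:e} one has $H_\ell=\mathcal H$, and the paper treats these by a completely different link: the blowup of the plane $\Xi$ maps to $\PP^4$ (contracting onto a twisted cubic), giving $V_5\setminus\mathcal H\cong\PP^4\setminus(\text{hyperplane})$. For type \ref{line:type:a} one must check that the second contraction has only one-dimensional fibers, so that $Q$ is smooth; for type \ref{line:type:c} the line lies in a plane, the contraction acquires a two-dimensional fiber, $Q$ may be singular, and a separate careful analysis is needed. Your fallback suggestion (transitivity of $\Aut(V_5)$ plus "four independent Schubert coordinates") is only a restatement of the goal: exhibiting four regular functions is far from showing that they define an isomorphism of $V_5\setminus H_\ell$ onto $\A^4$. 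As it stands, the proposal contains no complete argument for any of the four line types.
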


\begin{proof}
If $\ell$ is a line of type \textup{\ref{line:type:d}} or \textup{\ref{line:type:e}}, then $H_\ell=\mathcal{H}$.
On the~other hand, there exists the~following  $\Aut(V_5)$-equivariant Sarkisov link:
$$
\xymatrix{
&\widetilde{V}_5\ar[dl]_{\sigma}\ar[dr]^{\varphi}&\\
V_5\ar@{-->}[rr]^{\psi}&& \PP^4}
$$
where $\sigma$ is the~blowup of the~plane $\Xi$, $\varphi$ is the~blowup of a~twisted cubic curve $C$, and $\psi$ is the~linear projection from $\Xi$.
Then the~$\varphi$-exceptional divisor is the~proper transform of the~threefold $\mathcal{H}$.
Moreover, if $E$ is the~$\sigma$-exceptional divisor, then $\varphi(E)$ is the~hyperplane in $\PP^4$ that contains $C$.
Thus, if $\ell$ is a line of type \textup{\ref{line:type:d}} or \textup{\ref{line:type:e}}, then $V_5\setminus H_\ell=V_5\setminus \mathcal{H}\cong \PP^4\setminus \varphi(E)\cong\A^5$.

Let $\pi\colon \widehat{V}_5$ be the~blowup of the~line $\ell$.
Then there exists the~following Sarkisov link:
\begin{equation}
\label{equation:V5-Sarkisov-link-2}
\vcenter{\xymatrix{
&\widehat{V}_5\ar[dl]_{\pi}\ar[dr]^{\eta}&\\
V_5\ar@{-->}[rr]^{\zeta}&& Q}}
\end{equation}
where $Q$ is an~irreducible quadric in $\PP^5$,
the map $\zeta$ is the~projection from $\ell$,
and $\eta$ is a birational morphism that contracts the~proper transform of the~hyperplane section $H_\ell$ to a surface of degree~$3$.
Let $\widehat{H}_\ell$ be the~proper transform on $\widehat{V}_5$ of the~threefold $H_\ell$, and let $F$ be the~$\pi$-exceptional divisor.
Then $V_5\setminus H_\ell\cong Q\setminus \eta(F)$,
and $\eta(F)$ is a singular hyperplane section of the~quadric $Q$.

If $\ell$ is a~line of type \textup{\ref{line:type:a}}, then all fibers of $\eta$ are one-dimensional,
so that $Q$ is smooth (see \cite{Ando:85:ext-rays}).
Thus, in this case, we have $V_5\setminus H_\ell\cong Q\setminus \eta(F)\cong \A^4$.

To complete the~proof, we may assume that $\ell$ is of type \textup{\ref{line:type:c}}.
Then $\ell$ is contained in a~plane in $V_5$, so that $\eta$ has a two-dimensional fiber.
Hence, in this case, the~quadric $Q$ can be singular~(cf.~\cite{Andreatta-Wisniewski:contr-1}).
Analyzing the~situation more carefully,  we see that $V_5\setminus H_\ell\cong Q\setminus \eta(F)\cong \A^4$.
\end{proof}

\begin{corollary}
\label{corollary:V5-A4}
The quintic del Pezzo fourfold is cylindrical.
\end{corollary}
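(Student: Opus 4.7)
The statement follows immediately from Theorem~\ref{theorem:V5-A4}, so the proposal is essentially to invoke that theorem and unwind the definition of cylindricity. The plan is as follows.

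First, I would observe that lines of types \textup{\ref{line:type:a}}, \textup{\ref{line:type:c}}, \textup{\ref{line:type:d}}, \textup{\ref{line:type:e}} all exist on $V_5$ (the Hilbert scheme of lines on $V_5$ is irreducible and four-dimensional, and each of the five $\Aut(V_5)$-orbits on this scheme is nonempty). In particular, one can choose a line $\ell\subset V_5$ that is not of type~\textup{\ref{line:type:b}}.

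Next, by Theorem~\ref{theorem:V5-A4}, the complement $U=V_5\setminus H_\ell$ is isomorphic to $\mathbb{A}^4$. Since $\mathbb{A}^4\cong\mathbb{A}^1\times \mathbb{A}^3$ and $\mathbb{A}^3$ is an affine variety, the open subset $U\subset V_5$ is a cylinder in the sense of the definition given in the Introduction. Hence $V_5$ is cylindrical.

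There is no real obstacle; the work has been done in the proof of Theorem~\ref{theorem:V5-A4}, and the corollary is a one-line unwinding. I would also remark in passing that, since $H_\ell$ is a hyperplane section and $-K_{V_5}\sim 3H_\ell$ (because $\iota(V_5)=3$), the cylinder $U$ is in fact $(-K_{V_5})$-polar, so this simultaneously confirms that the quintic del Pezzo fourfold contains an anticanonical polar cylinder.
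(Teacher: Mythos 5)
Your proposal is correct and matches the paper's (implicit) argument: the corollary is stated without a separate proof precisely because Theorem~\ref{theorem:V5-A4} already produces an open subset $V_5\setminus H_\ell\cong\A^4\cong\A^1\times\A^3$, which is a cylinder. Your extra remark that this cylinder is $(-K_{V_5})$-polar (since $-K_{V_5}\sim 3H_\ell$) is also accurate and consistent with the paper.
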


In the~remaining part of this subsection, we present known constructions of cylinders in some smooth Fano--Mukai fourfolds.
Basically, our main goal is to explain how to prove Theorem~\ref{theorem:g-7-8-9-10-fourfolds}.
Thus, we suppose that $X$ is a smooth Fano--Mukai fourfold, $\uprho(X)=1$ and $\g(X)\in\{7,8,9,10\}$.

Let $H$ be an ample Cartier divisor on $X$ such that
$$
-K_X\sim 2H.
$$
Then $H^4=2\g(X)-2\in\{12,14,16,18\}$. Moreover, the~divisor $H$ is very ample, and the~linear system $|H|$ gives an embedding $X\hookrightarrow \PP^{\g(X)+2}$.
Let us deal with four cases separately.

If $\g(X)=10$, then $X=X_{18}$ is a hyperplane section of the~homogeneous fivefold $G_2/P\subset \PP^{13}$,
where $G_2$ is the~simple algebraic group of exceptional type $\mathrm{G_2}$,
and $P$ is its~parabolic subgroup that corresponds to a~short root (see \cite{Mukai-1988,Mu89}).
The family $\mathfrak{X}$ of all such fourfolds is one-dimensional.
Moreover, if $X=X_{18}$ is a general member of $\mathfrak{X}$, then $\Aut(X)\cong\Gm^2\rtimes\mumu_2$.
Besides, there are three distinguished fourfolds in this family:
\begin{enumerate}
\item[(0)] $X_{18}^{\mathrm{r}}$ such that $\Aut(X_{18}^{\mathrm{r}})\cong\Gm^2\rtimes\mumu_6$;
\item[(1)] $X_{18}^{\mathrm{s}}$ such that $\Aut(X_{18}^{\mathrm{s}})\cong\GL_2(\CC)\rtimes\mumu_2$;
\item[(2)] $X_{18}^{\mathrm{a}}$ such that $\Aut(X_{18}^{\mathrm{a}})\cong (\Ga\times\Gm)\rtimes\mumu_2$.
\end{enumerate}
See \cite{PZ18} for details, where the~following result has been proved:

\begin{theorem}[{\cite{PZ18}}]
\label{theorem:FM-g-10}
Let $X$ be a smooth Fano--Mukai fourfold in $\PP^{12}$ of genus $10$ with \mbox{$\uprho(X)=1$}.
Then there exists an~$\Aut^0(X)$-invariant hyperplane section $H$ of $X$ such that
the~complement $X\setminus H$ is $\Aut^0(X)$-equivariantly isomorphic to $\A^4$.
\end{theorem}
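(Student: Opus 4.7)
The plan is to construct the required hyperplane section using the Białynicki-Birula decomposition of $X$ with respect to the action of a maximal torus $T \subset \Aut^0(X)$. In all four cases enumerated before the theorem, $\Aut^0(X)$ contains a two-dimensional torus $T \cong \Gm^2$ inherited from a maximal torus of $G_2$ that stabilizes the hyperplane cutting out $X \subset G_2/P$. First I would fix a one-parameter subgroup $\lambda \colon \Gm \to T$ in general position. Since $X$ is smooth projective and $X^T$ is finite (being contained in the finite fixed-point set $(G_2/P)^T$), the Białynicki-Birula theorem decomposes $X$ into attractive affine cells $X^+_p$ indexed by $p \in X^T$, each isomorphic to the negative part of $T_p X$ under $\lambda$.

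By standard Morse theory on $X$ viewed as a smooth Kähler manifold, there is a unique fixed point $p_{\max} \in X^T$ whose tangent space $T_{p_{\max}} X$ has all four weights negative under $\lambda$, so that $X^+_{p_{\max}} \cong T_{p_{\max}} X \cong \A^4$ is open and dense in $X$. Choose a $T$-linearization of $\OOO_X(1)$ and let $s \in H^0(X, \OOO_X(1))$ be the weight vector of maximal $\lambda$-weight, dual to $p_{\max}$. Then the vanishing locus of $s$ in $X$ coincides with the closed complement $X \setminus X^+_{p_{\max}}$, so setting $H := \mathrm{div}(s)$ yields a $T$-equivariant isomorphism $X \setminus H \cong \A^4$.

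To upgrade $T$-invariance of $H$ to $\Aut^0(X)$-invariance, I would observe that $\Bbbk \cdot s \subset H^0(X, \OOO_X(1))$ is the unique highest-weight line for a Borel subgroup $B \subset \Aut^0(X)$ containing $T$, hence is preserved by $B$; the analogous argument applied to the opposite Borel, together with the fact that $\Aut^0(X)$ is generated by a Borel and its opposite, implies that the divisor class of $H$ is preserved by all of $\Aut^0(X)$. The main obstacle is to identify the resulting $\Aut^0(X)$-equivariant structure on $X \setminus H \cong \A^4$ in each of the four cases: for the generic case and for $X^{\mathrm{r}}_{18}$ the action is a direct sum of characters of $T = \Gm^2$ and the verification is routine, but for $X^{\mathrm{s}}_{18}$ with $\Aut^0(X) \cong \GL_2(\Bbbk)$ one must identify the resulting linear $\GL_2$-representation on $\A^4$, while for $X^{\mathrm{a}}_{18}$ with $\Aut^0(X) \cong (\Ga \times \Gm) \rtimes \mumu_2$ one must verify that the unipotent direction acts non-trivially and compatibly with the torus. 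Secondary technical issues include verifying uniqueness of $p_{\max}$ via an explicit weight analysis on $H^0(X, \OOO_X(1))$ and checking that for each of the three special fourfolds the chosen $T$ actually extends to a maximal torus of the larger $\Aut^0(X)$.
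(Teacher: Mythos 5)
Your proposal has genuine gaps, the first of which is fatal to the strategy in one of the four cases. The claim that $\Aut^0(X)$ contains a two-dimensional torus in all cases is false: for $X_{18}^{\mathrm{a}}$ one has $\Aut^0(X_{18}^{\mathrm{a}})\cong\Ga\times\Gm$, whose maximal torus is one-dimensional, so there is no $T\cong\Gm^2$ inside $\Aut^0(X)$ with which to run the Bia{\l}ynicki--Birula argument. With only a $\Gm$-action the fixed locus need not be finite, and the open attracting set is then only an affine bundle over the source component of $X^{\Gm}$, so the decomposition does not by itself produce an open $\A^4$, let alone an $\Aut^0$-equivariant one. Since the theorem is asserted for \emph{every} smooth Fano--Mukai fourfold of genus $10$, this case cannot be discarded.

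Even in the cases where $\Gm^2\subset\Aut^0(X)$ acts with isolated fixed points, what Bia{\l}ynicki--Birula gives you is a dense open cell isomorphic to $\A^4$; the actual content of the theorem, namely that its complement is a \emph{hyperplane section}, is not established by your argument. Because the open cell is affine and $\Pic(X)=\mathbb{Z}[H]$, the boundary is a divisor, but a priori it may be reducible or lie in $|mH|$ with $m\geqslant 2$; nothing in your argument forces the closure of the union of the lower-dimensional cells to have degree $H^4=18$, and the assertion that it coincides with $\mathrm{div}(s)$ for the extremal weight vector $s\in H^0(X,\OOO_X(1))$ is unjustified (the zero locus of a single weight vector need not contain all non-open cells). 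The invariance upgrade is also incorrect as stated: a highest-weight line is invariant only under the corresponding Borel subgroup, and the highest-weight line for the opposite Borel is in general a \emph{different} line, so you cannot conclude that one and the same divisor is preserved by both; for $X_{18}^{\mathrm{s}}$ with $\Aut^0\cong\GL_2(\Bbbk)$ you would need to exhibit a one-dimensional $\GL_2$-subrepresentation of $H^0(X,\OOO_X(1))$, which is a specific fact requiring proof, and in the non-reductive case the ``generated by a Borel and its opposite'' argument is unavailable. Note that the survey does not reproduce a proof but cites \cite{PZ18}, where the result is obtained by explicit projective and birational geometry of $X$ inside the adjoint $G_2$-fivefold (invariant hyperplane sections built from special surfaces, such as cones over twisted cubics, contained in $X$, with the complement identified with $\A^4$ directly); that route in particular treats the case $\Aut^0(X)\cong\Ga\times\Gm$, which your torus-theoretic setup cannot reach.
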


This theorem implies, in particular, that any smooth Fano--Mukai fourfolds of genus $10$ is cylindrical.
See also Example~\ref{example:Fano-foufolds} for another application of Theorem~\ref{theorem:FM-g-10}.

If  $\g(X)=8$, then $X=X_{14}$ is a~section of the~Grassmannian $\Gr(2,6)\subset\PP^{14}$ by a~linear subspace of dimension $10$ (see \cite{Mukai-1988,Mu89}).
Some of these fourfolds are cylindrical.

\begin{example}[{\cite{PZ16}}]
\label{example:FM-g-8}
Suppose that $\g(X)=8$ and $X$ contains a~plane $\Pi$ which is a Schubert variety of type $\upsigma_{4,2}$,
and $X$ does not contain planes meeting $\Pi$ along a~line.
Such fourfolds do exist and form a~subspace of codimension one in the~moduli space of all Fano--Mukai fourfolds of genus~$8$.
Then it follows from \cite{Prokhorov-1993c} that there exists the~following Sarkisov link:
$$
\xymatrix{
&\widetilde{X}\ar[dl]_{\sigma}\ar[dr]^{\varphi}&\\
X && V_5}
$$
where $V_5$ is the~del Pezzo quintic fourfold in $\PP^7$ (see Theorem~\ref{theorem:V5-A4}),
$\sigma$ is the~blowup of the~plane~$\Pi$, and~$\varphi$ is the~blowup of a smooth rational surface $S$ of degree $7$ such that $K_S^2=3$.
Then
$$
X\setminus H_X\cong V_5\setminus H_{V_5},
$$
where $H_{V_5}$ is the~proper transform on $V_5$ of the~$\sigma$-exceptional divisor,
and $H_{X}$ is the~proper transform on $X$ of the~$\varphi$-exceptional divisor.
On the~other hand, the~divisor $H_{V_5}$ is a~hyperplane section of the~fourfold $V_5$ that contains $S$,
and $H_X$ is a~hyperplane section of $X$ containing $\Pi$.
Thus, the~set $V_5\setminus H_{V_5}$ contains a cylinder by \cite[Theorem~4.1]{PZ16}, so that $X$ is cylindrical.
\end{example}

If $\g(X)=7$, then $X=X_{12}$ is a~section of the~orthogonal Grassmannian $\operatorname{OGr}(4,9)\subset \PP^{15}$
by a~linear subspace of dimension $9$ (see \cite{Mukai-1988,Mu89}).
In this case, we also have cylindrical fourfolds.

\begin{example}[{\cite{PZ16}}]
\label{example:FM-g-7}
Suppose that $\g(X)=7$ and $X$ contains a~plane $\Pi$.
Such fourfolds do exist.
Suppose that $X$ is a~sufficiently general Fano--Mukai fourfold of genus $7$ that contains the~plane~$\Pi$.
Then by \cite{Prokhorov-1993c} there exists the Sarkisov link
$$
\xymatrix{
&\widetilde{X}\ar[dl]_{\sigma}\ar[dr]^{\varphi}&\\
X && V_4}
$$
where $V_4$ is a smooth complete intersection of two quadrics in $\PP^6$, $\sigma$ is the~blowup of the~plane~$\Pi$,
and $\varphi$ is the~blowup of a smooth del Pezzo surface $S$ such that $K_S^2=5$.
Arguing as in Example~\ref{example:FM-g-8}, we conclude that $X$ is cylindrical.
\end{example}

If $\g(X)=9$, then $X=X_{16}$ is a~section of the~Lagrangian Grassmannian $\operatorname{LGr}(3,6)\subset \PP^{13}$
by a~linear subspace of dimension $11$ (see \cite{Mukai-1988,Mu89}).
There are cylindrical fourfolds in this family.

\begin{example}[\cite{PZ17}]
\label{example:FM-g-9}
Suppose that $\g(X)=9$. Then $X_{16}$ contains an~irreducible two-dimensional quadric surface $S$.
Suppose, for simplicity, that $X_{16}$ is a general Fano--Mukai fourfold of genus $9$ that contains $S$.
Then there exists the~following Sarkisov link:
$$
\xymatrix{
&\widetilde{X}\ar[dl]_{\sigma}\ar[dr]^{\varphi}&\\
X && V_5}
$$
where $V_5$ is the~del Pezzo quintic fourfold, $\sigma$ is the~blowup of the~surface~$S$,
and~$\varphi$ is the~blowup along a smooth del Pezzo surface of degree $6$.
Arguing as in Example~\ref{example:FM-g-8}, we see that $X$ is~cylindrical.
\end{example}

The interested reader can consult also the recent preprint~\cite{HHT2021} for further examples of cylindrical Fano fourfolds.

\subsection{Cylinders in Mori fibrations}
\label{subsection:MFS}
This subsection is inspired by the~following

\begin{question}
\label{question:Adrien}
Given a~family of cylindrical varieties, when its total space is cylindrical?
\end{question}

For example, irrational three-dimensional conic bundles are not cylindrical, though their general fibers are.
In general, this question is very subtle and has birational nature, so that it is natural to consider it for Mori fibred spaces first.

Let $V$ be a projective variety with terminal $\mathbb{Q}$-factorial singularities,
let $\pi\colon V\to B$ be a dominant projective non-birational morphism such that $-K_V$ is $\pi$-ample, $\pi_*\mathcal{O}_V=\mathcal{O}_B$ and $\uprho(V)=\uprho(B)+1$.
Let~$X_{\eta}$ be the~fiber of the~morphism $\pi$ over the~(scheme-theoretic) generic point $\eta$ of the~base~$B$.
Then $X_{\eta}$ is a Fano variety that has at most terminal singularities, which is defined over $\mathbb{K}=\Bbbk(B)$, i.e. the~field of rational functions on $B$.
Over the~(algebraically non-closed) field $\mathbb{K}$, the~divisor class group of the~Fano variety $X_{\eta}$ is of rank $1$, because we assume that $\uprho(V)=\uprho(B)+1$.

\begin{definition}[{\cite{DK18}}]
\label{definition:vertical-cylinder}
If the~variety $V$ contains a (Zariski open) cylinder $U=\A^1\times Z$,
we say that the~cylinder $U$ is \emph{vertical} (with respect to~$\pi$)
if there is a~morphism $h\colon Z\to B$ such that the~restriction $\pi|_U\colon U\to B$
is a~composition $h\circ \pr_Z$, where $\pr_Z\colon U \to Z$ is the~natural projection.
In this case, we have commutative diagram:
\begin{equation}
\label{equation:vertical-cylinder}
\vcenter{\xymatrix{
\mathbb{A}^1\times Z=U\ar[d]^{p_Z}\ar@{^{(}->}[rr]&&V\ar@{->}^{\pi}[d]\\
Z\ar@{->}[rr]^h&& B}}
\end{equation}
A cylinder in $V$ which is not vertical is called \emph{twisted}.
\end{definition}

If $V$ contains a vertical cylinder  $U=\A^1\times Z$, then the~Fano variety $X_{\eta}$ contains a cylinder
$$
U_\eta=\A^1\times Z_\eta,
$$
where $U_\eta$ and $Z_\eta$ are generic (scheme) fibers of the~morphisms $h\circ \pr_Z$ and $h$ in \eqref{equation:vertical-cylinder}, respectively.
Vice versa, if the~Fano variety $X_{\eta}$ contains a cylinder defined over the~field $\mathbb{K}$, then $V$ does contain a~vertical cylinder by \cite[Lemma~3]{DK18}.
This gives a motivation to study cylinders in Fano varieties defined over arbitrary fields (cf. \cite{BW2019,HT2019,KP2019,KP2020})
The first step in this direction is

\begin{theorem}[{\cite{DK18}}]
\label{theorem:DP-cylinders}
Let $S$ be a geometrically irreducible smooth del Pezzo surface defined over a~field $\mathbb{F}$ of characteristic~$0$.
Suppose that $\uprho(S)=1$.
Then the~following conditions are equivalent:
\begin{itemize}
\item[(i)] the~surface $S$ contains a cylinder defined over $\mathbb{F}$;
\item[(ii)] the~surface $S$ is rational over $\mathbb{F}$;
\item[(iii)] $K_S^2\geqslant 5$ and $S$ has an $\mathbb{F}$-point.
\end{itemize}
\end{theorem}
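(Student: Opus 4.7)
The plan is to establish the cycle $(\mathrm{iii}) \Rightarrow (\mathrm{ii}) \Rightarrow (\mathrm{i}) \Rightarrow (\mathrm{iii})$. Since the equivalence $(\mathrm{ii}) \Leftrightarrow (\mathrm{iii})$ is essentially a classical theorem on minimal geometrically rational $\mathbb{F}$-surfaces, the real content is the implication $(\mathrm{i}) \Rightarrow (\mathrm{ii})$: the existence of an $\mathbb{F}$-cylinder on a geometrically irreducible smooth del Pezzo $\mathbb{F}$-surface with $\uprho(S)=1$ must upgrade to $\mathbb{F}$-rationality.

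For $(\mathrm{iii}) \Rightarrow (\mathrm{ii})$, assuming $S(\mathbb{F}) \neq \varnothing$ and $K_S^2 \geqslant 5$, I would apply case-by-case rationality criteria: for $K_S^2 = 9$ (a Severi--Brauer surface) an $\mathbb{F}$-point forces the Brauer class to split, so $S \cong \PP^2_\mathbb{F}$; for $K_S^2 = 8$ (an $\mathbb{F}$-form of a quadric with $\uprho = 1$) an $\mathbb{F}$-point allows a birational projection to $\PP^2_\mathbb{F}$; for $K_S^2 = 7$ no such minimal surface exists; for $K_S^2 = 6$ this is a theorem of Colliot-Th\'el\`ene--Manin; and for $K_S^2 = 5$ an $\mathbb{F}$-point on such a surface always exists and yields $\mathbb{F}$-rationality via a Sarkisov link to $\PP^2_{\mathbb{F}}$ (Enriques--Swinnerton-Dyer--Skorobogatov). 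Conversely, $(\mathrm{ii}) \Rightarrow (\mathrm{iii})$ combines the Zariski density of $\mathbb{F}$-points on $\mathbb{F}$-rational surfaces with Iskovskikh's theorem that minimal del Pezzo $\mathbb{F}$-surfaces with $\uprho = 1$ and $K_S^2 \leqslant 4$ are $\mathbb{F}$-birationally rigid, hence not $\mathbb{F}$-rational.

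The implication $(\mathrm{ii}) \Rightarrow (\mathrm{i})$ is immediate: choosing an $\mathbb{F}$-birational map $\phi\colon\PP^2_{\mathbb{F}}\dasharrow S$ and an $\mathbb{F}$-line $\ell\subset\PP^2$ that contains the image of the exceptional locus of a common resolution of $\phi$, one obtains an $\mathbb{F}$-open embedding into $S$ of an $\mathbb{F}$-open subset of $\A^2 = \PP^2\setminus\ell$, which visibly contains an $\mathbb{F}$-cylinder of the form $\A^1 \times (\A^1 \setminus\{\text{finite set}\})$. For the converse $(\mathrm{i}) \Rightarrow (\mathrm{ii})$, suppose $S$ contains an $\mathbb{F}$-cylinder $U \cong \A^1\times Z$, and let $\bar{Z}$ be the smooth projective $\mathbb{F}$-completion of $Z$. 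Since $S_{\bar{\mathbb{F}}}$ is rational, $Z$ is geometrically rational, and hence $\bar{Z}$ is a geometrically rational smooth $\mathbb{F}$-conic. The second projection $\mathrm{pr}_Z\colon U\to Z$ extends to a rational $\mathbb{F}$-map $\pi\colon S\dasharrow\bar{Z}$; resolving its indeterminacy by a finite sequence of $\mathbb{F}$-blowups yields a birational morphism $\sigma\colon\widetilde{S}\to S$ together with an $\mathbb{F}$-morphism $\widetilde{\pi}\colon\widetilde{S}\to\bar{Z}$ whose generic fibre is a geometrically rational curve. Running an $\mathbb{F}$-MMP relative to $\bar{Z}$ converts $\widetilde{S}$ into a standard $\mathbb{F}$-conic bundle over $\bar{Z}$, exhibiting $S$ as $\mathbb{F}$-birationally ruled.

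It now remains to invoke Iskovskikh's classification of minimal smooth geometrically rational $\mathbb{F}$-surfaces: any such surface with $\uprho = 1$ that is $\mathbb{F}$-birationally ruled is in fact $\mathbb{F}$-birational to $\PP^2$, i.e., $\mathbb{F}$-rational. The remaining minimal cases --- non-trivial Severi--Brauer surfaces ($K^2=9$), non-split $\mathbb{F}$-forms of quadrics ($K^2=8$), and minimal del Pezzo surfaces with $\uprho=1$ and $K_S^2\leqslant 4$ --- are all $\mathbb{F}$-birationally rigid and admit no $\mathbb{F}$-ruling. Hence our $S$, being $\mathbb{F}$-ruled by construction, must be $\mathbb{F}$-rational, establishing $(\mathrm{i}) \Rightarrow (\mathrm{ii})$; composing with $(\mathrm{ii}) \Rightarrow (\mathrm{iii})$ closes the cycle. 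The principal obstacle is this appeal to Iskovskikh's rigidity theorem: ruling out $\mathbb{F}$-rulings on non-trivial Severi--Brauer surfaces and on low-degree minimal del Pezzo surfaces is delicate and ultimately rests either on a careful analysis of the Sarkisov links available over $\mathbb{F}$ or on Galois-cohomological invariants such as the Brauer group and $H^1\!\bigl(\mathrm{Gal}(\bar{\mathbb{F}}/\mathbb{F}),\Pic(S_{\bar{\mathbb{F}}})\bigr)$.
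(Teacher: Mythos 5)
Your cycle stands or falls with the implication (i)$\Rightarrow$(ii), and the step you use there --- ``a cylinder gives an $\mathbb{F}$-ruling, and a minimal surface with $\uprho=1$ that is $\mathbb{F}$-ruled must be $\mathbb{F}$-rational'' --- is false. A minimal del Pezzo surface of degree $4$ with $\uprho(S)=1$ and an $\mathbb{F}$-point is $\mathbb{F}$-birational to a conic bundle (blow up a suitable $\mathbb{F}$-point to get a cubic surface containing an $\mathbb{F}$-line, then project from that line), yet it is not $\mathbb{F}$-rational by Iskovskikh's criterion; degree $4$ is precisely the case where birational rigidity fails but irrationality persists, so your appeal to rigidity does not cover it. Since your argument extracts from the cylinder nothing beyond the birational ruling, it would equally ``prove'' rationality of these degree-$4$ surfaces, i.e.\ it cannot distinguish containing an $\mathbb{F}$-cylinder from being $\mathbb{F}$-ruled --- but that distinction is exactly the content of the theorem. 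The missing input is what the paper's proof gets from $\uprho(S)=1$: the section $\widetilde{\Gamma}$ of the induced $\mathbb{P}^1$-fibration coming from the boundary of the cylinder is $\pi$-exceptional, hence is contracted to an $\mathbb{F}$-point $P\in S$, and the boundary curves support an effective $\mathbb{Q}$-divisor $D\sim_{\mathbb{Q}}-K_S$ (equivalently, a mobile linear system $\lambda\mathcal{M}\sim_{\mathbb{Q}}-K_S$) with $(S,D)$ not log canonical at $P$ (Remark~\ref{remark:obstruction}, Lemma~\ref{lemma:Skoda}). Corti's inequality $M_1\cdot M_2\geqslant (M_1\cdot M_2)_P>4/\lambda^2$ then yields $K_S^2\geqslant 5$ directly; alternatively one gets $K_S^2\geqslant 4$ from the $(-K_S)$-polar cylinder via Corollary~\ref{corollary:dP-du-Val} and excludes $K_S^2=4$ by blowing up $P$ (which lies on no $(-1)$-curve, by a Galois-orbit count) and contradicting the same corollary on the resulting cubic surface. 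Your proposal never produces this local non-log-canonical singularity at an $\mathbb{F}$-point, so the gap is not cosmetic: (i)$\Rightarrow$(iii) has to be proved directly, not routed through ruledness.

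A secondary, repairable issue: in (ii)$\Rightarrow$(i) the complement in $\mathbb{P}^2$ of the curves contracted by a birational map $\mathbb{P}^2\dasharrow S$ need not ``visibly'' contain a cylinder at all --- if the contracted curves have total degree $\geqslant 4$ the complement has non-negative log Kodaira dimension and hence contains no cylinder, and removing one more $\mathbb{F}$-line does not help. The paper instead obtains (iii)$\Rightarrow$(i) from explicit Sarkisov links starting at $S$ (the proof of Proposition~12 in \cite{DK18}), which produce a concrete $\mathbb{F}$-cylinder; some argument of this kind, or at least a careful choice of the birational map, is needed in place of your one-line construction.
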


\begin{proof}
It is commonly known that the~conditions (ii) and (iii) are equivalent (see, for example,~\cite{IskovskikhUMN}).
Moreover, the~implication (iii)$\Rightarrow$(i) can be shown using well-known Sarkisov links that start at~$S$, which are described in \cite{IskovskikhUMN}.
For details, see the~proof of \cite[Proposition~12]{DK18}.
Thus, we just have to show that (i) implies (iii).
This can also be shown using Sarkisov links, but we present another proof.

Suppose that $S$ contains a cylinder $U$ which is defined over~$\mathbb{F}$.
Then $U\cong\mathbb{A}^1\times Z$ for some affine curve $Z$ defined over $\mathbb{F}$.
Let $\overline{Z}$ be the~completion of the~curve~$Z$.
Then $\overline{Z}$ is a geometrically irreducible curve.
Moreover, we have the~following commutative diagram
$$
\xymatrix{\mathbb{P}^1\times\overline{Z}\ar[ddrr]^{\overline{p}_{2}}&\mathbb{A}^1\times\overline{Z}\ar@{_{(}->}[l]\ar[ddr]^{p_{2}}&~\mathbb{A}^1\times Z\cong U\ar@{_{(}->}[l]\ar[d]^{p_Z}\ar@{^{(}->}[r] &S\ar@{-->}^{\psi}[ddl]& &\widetilde{S}\ar[ll]_{\pi}\ar[ddlll]^{\phi}& \\
&&Z\ar@{_{(}->}[d]&&&& \\
& &\overline{Z}& & &&}
$$
where $p_Z$, $p_{2}$ and $\overline{p}_2$ are the~natural projections to the~second factors,
$\psi$ is the~ rational map induced by $p_Z$,
$\pi$ is a~birational morphism resolving the~indeterminacy of $\psi$ and $\phi$ is a~morphism.
By construction, a~general fiber of $\phi$ is isomorphic to $\mathbb{P}^1$.

Let $\Gamma$ be the~section of $\overline{p}_2$ that is the~complement of $\mathbb{A}^1\times\overline{Z}$ in $\mathbb{P}^1\times\overline{Z}$,
and let $\widetilde{\Gamma}$ be the~proper transform on $\widetilde{S}$ of the~curve $\Gamma$.
Then $\widetilde{\Gamma}\cong\Gamma\cong\overline{Z}$, the~curve $\widetilde{\Gamma}$ is a~section of $\phi$,
and the~curve $\widetilde{\Gamma}$ is $\pi$-exceptional, because $\uprho(S)=1$.
Let $P=\pi(\widetilde{\Gamma})$. Then $P$ is an~$\mathbb{F}$-point.

Now, we can proceed in two (slightly different) ways.
First, as in the~proof of \cite[Theorem 1]{DK18}, we can let $\mathcal{M}$ to be the~linear system on $S$ that gives the~map $\psi$.
Then, arguing as in Section~\ref{subsection:del-Pezzo-no-cylinders}, we conclude that $(S,\lambda\mathcal{M})$ is not log canonical at $P$
for a some $\lambda\in\mathbb{Q}_{>0}$ such that $\lambda\mathcal{M}\sim_{\mathbb{Q}} -K_S$.
Such number exists, since $\uprho(S)=1$.
Let $M_1$ and $M_2$ be two general curves in $\mathcal{M}$. Then
$$
\frac{K_S^2}{\lambda^2}=M_1\cdot M_2\geqslant\big(M_1\cdot M_2\big)_P>\frac{4}{\lambda^2}
$$
by \cite[Theorem~3.1]{Corti2000}. This gives $K_S^2\geqslant 5$, so that (i) implies (iii).

Alternatively, we can use Corollary~\ref{corollary:dP-du-Val}.
Let $C_1,\ldots,C_n$ be the~irreducible curves in $S$ that lie in the~complement $S\setminus U$.
Then we put $D=\lambda(C_1+\cdots+C_n)$ for $\lambda\in\mathbb{Q}_{>0}$ such that $D\sim_{\mathbb{Q}} -K_S$.
Therefore, we conclude that $S$ contains a $(-K_S)$-polar cylinder, so that $K_S^2\geqslant 4$ by Corollary~\ref{corollary:dP-du-Val}.
Thus, we may assume that $K_S^2=4$.
Then our point $P$ is not contained in any $(-1)$-curve in $S\otimes_{\mathbb{F}}\overline{\mathbb{F}}$, where $\overline{\mathbb{F}}$ is an algebraic closure of the~field $\mathbb{F}$.
Indeed, otherwise the~$\mathrm{Gal}(\overline{\mathbb{F}}/\mathbb{F})$-orbit of this curve
would consist of at least four $(-1)$-curves that all pass through the~point $P$, which is impossible.
Let $\xi\colon\widehat{S}\to S$ be the~blowup of the~point $P$, and let $E$ be the~exceptional curve of the~blowup~$\xi$.
Then $\widetilde{S}$ is a smooth del Pezzo surface of degree $K_{\widetilde{S}}^2=3$ and
$$
\widetilde{D}+\big(\mathrm{mult}_{P}(D)-1\big)E\sim_{\mathbb{Q}}-K_{\widehat{S}},
$$
where $\mathrm{mult}_{P}(D)>1$ by Remark~\ref{remark:obstruction} and Lemma~\ref{lemma:Skoda}. Then $\widetilde{S}$ contains a $(-K_{\widetilde{S}})$-polar cylinder,
which is impossible by Corollary~\ref{corollary:dP-du-Val}. This again shows that (i) implies (iii).
\end{proof}

\begin{corollary}[{\cite[Theorem 1]{DK18}}]
\label{corollary:DP-cylinders}
Suppose that $X_\eta$ is a del Pezzo surface.
Then $V$ contains a~vertical cylinder $\iff$ $K_{X_\eta}^2\geqslant 5$ and $\pi$ has a~rational section.
\end{corollary}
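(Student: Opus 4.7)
The plan is to reduce the corollary to Theorem~\ref{theorem:DP-cylinders} applied to the generic fiber $X_\eta$, viewed as a del Pezzo surface over the field $\mathbb{K}=\Bbbk(B)$, and then translate the resulting conclusions back into statements about $V$ and $\pi$.

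First I would verify that the hypotheses of Theorem~\ref{theorem:DP-cylinders} are satisfied by $X_\eta$ over $\mathbb{K}$. Geometric irreducibility is built into the Mori fibration setup, $\uprho(X_\eta)=1$ follows from $\uprho(V)=\uprho(B)+1$, and smoothness is automatic because $X_\eta$ has at most terminal singularities and terminal surface singularities in characteristic zero are in fact smooth points.

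Next I would invoke the correspondence, already recorded in the paragraph preceding Theorem~\ref{theorem:DP-cylinders}, between vertical cylinders in $V$ and cylinders in $X_\eta$ defined over $\mathbb{K}$. One direction is automatic by taking the generic fibers of the morphisms $h\circ\pr_Z$ and $h$ in~\eqref{equation:vertical-cylinder}. The converse, namely spreading out a $\mathbb{K}$-cylinder in $X_\eta$ to a vertical cylinder over some Zariski open subset of $B$, is~\cite[Lemma~3]{DK18}. At this point Theorem~\ref{theorem:DP-cylinders} instantly yields the equivalence of the existence of a vertical cylinder in $V$ with the conjunction of $K_{X_\eta}^2\geqslant 5$ and the existence of a $\mathbb{K}$-point on $X_\eta$.

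Finally I would note that a $\mathbb{K}$-point of $X_\eta$ is the same data as a rational section of $\pi\colon V\to B$, since $\mathbb{K}=\Bbbk(B)$ is the function field of the base and a rational section corresponds precisely to a morphism $\Spec\mathbb{K}\to X_\eta$ over $\Spec\mathbb{K}$. Putting these equivalences together gives the statement. The only step with any genuine content is the backward direction of the cylinder correspondence, but this is already packaged as~\cite[Lemma~3]{DK18}, so no new work is required; the corollary is essentially a translation of Theorem~\ref{theorem:DP-cylinders} to the relative setting.
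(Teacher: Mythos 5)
Your proposal is correct and follows exactly the route the paper intends: the paper derives this corollary from Theorem~\ref{theorem:DP-cylinders} together with the correspondence (generic fibers in one direction, \cite[Lemma~3]{DK18} in the other) between vertical cylinders in $V$ and $\mathbb{K}$-cylinders in $X_\eta$, plus the identification of $\mathbb{K}$-points of $X_\eta$ with rational sections of $\pi$. Your added remarks verifying the hypotheses of the theorem (smoothness via terminality of two-dimensional singularities, rank one over $\mathbb{K}$ from $\uprho(V)=\uprho(B)+1$) are accurate and consistent with the paper's setup.
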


Note that if $\Bbbk$ is uncountable and the~general fiber of $\pi$ contains a cylinder,
then it follows from \cite{DK19b,Ka18} that the~total space of the~family $V\times_B B^\prime\to B$ contains a vertical cylinder for an appropriate finite base change $B^\prime\to B$.
This basically means that $X_\eta\otimes_{\mathbb{K}}\mathbb{K}^\prime$ contains a cylinder defined over $\mathbb{K}^\prime$ for an appropriate finite extension of fields $\mathbb{K}\subset\mathbb{K}^\prime$.

\begin{remark}
\label{remark:twisted-cylinders}
If  $X_\eta$ is a del Pezzo surface and $K_{X_\eta}^2\leqslant 4$, then  $V$ can contain twisted cylinders.
In~fact, there are three-dimensional examples constructed in \cite{DK17,DK18} such that $K_{X_\eta}^2\leqslant 3$,
$B=\mathbb{P}^1$, and $V$ contains a Zariski open subset isomorphic to $\A^3$.
See also {color{red}\cite{GMM14,DK16,Saw19a, Saw19b}.}
\end{remark}

Now let us mention one relevant result about forms of the~quintic del Pezzo threefold defined over a~non-algebraically closed field (cf. \cite[Theorem 3.3]{KP2019}).

\begin{theorem}[{\cite{DK19a}}]
\label{theorem:V5}
Let $X$ be a smooth Fano threefold defined over a~field $\mathbb{F}$ of characteristic~$0$.
Suppose that $X\otimes_{\mathbb{F}}\overline{\mathbb{F}}\cong V_5$, where $V_5$ is the~quintic del Pezzo threefold described in Example~\ref{example:V5},
where $\overline{\mathbb{F}}$ is the~algebraic closure of the~field $\mathbb{F}$. Then the~following assertions hold:
\begin{itemize}
\item $X$ contains a Zariski open subset $U\cong\mathbb{A}^2\times Z$ for some affine curve $Z$;
\item $X$ contains a Zariski open subset isomorphic to $\A^3$ if and only if $X$ contains a smooth rational curve $\ell$ defined over $\mathbb{F}$
such that $-K_{V_5}\cdot\ell=2$ and $\NNN_{\ell/X} \cong \OOO_{\ell} (-1) \oplus \OOO_{\ell}(1)$.
\end{itemize}
\end{theorem}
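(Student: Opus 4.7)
The plan is to reduce to the Sarkisov link of Theorem~\ref{theorem:V5-A3} for part~(ii), and to use a pencil of hyperplane sections combined with the rationality of quintic del Pezzo surfaces over arbitrary fields for part~(i).

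For part~(i), I would choose an $\mathbb{F}$-rational pencil $\Lambda\cong\PP^1_{\mathbb{F}}$ inside the complete linear system $|H|$; this is possible because $-K_X=2H$ with $H\in\Pic(X)$ defined over $\mathbb{F}$, so $H^0(X,H)$ is an $\mathbb{F}$-vector space carrying many $\mathbb{F}$-rational pencils. The induced rational map $X\dashrightarrow\Lambda$ has generic fiber $S_\eta$ which, by adjunction $K_{S_\eta}=(K_X+H)|_{S_\eta}=-H|_{S_\eta}$, is a smooth del Pezzo surface of degree $5$ over the function field $\mathbb{F}(\Lambda)=\mathbb{F}(t)$. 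By a classical theorem (going back to Enriques and worked out for arbitrary fields by Manin, Skorobogatov, and Shepherd-Barron), every smooth del Pezzo surface of degree $5$ over a field of characteristic $0$ admits a rational point and is rational over its base field. Hence $S_\eta$ contains a Zariski open subset isomorphic to $\A^2_{\mathbb{F}(t)}$; spreading it out over a suitable affine open $Z\subset\Lambda$, the open immersion $\A^2_{\mathbb{F}(t)}\hookrightarrow S_\eta$ extends to an open immersion $\A^2\times Z\hookrightarrow X$ after resolving the indeterminacy of the pencil and shrinking the base, yielding the desired cylinder.

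For the backward implication of part~(ii), given an $\mathbb{F}$-rational line $\ell\subset X$ with $\NNN_{\ell/X}\cong\OOO_\ell(-1)\oplus\OOO_\ell(1)$, I would run the double projection construction from the proof of Theorem~\ref{theorem:V5-A3} directly over $\mathbb{F}$. Since $\ell$ is $\mathbb{F}$-rational, the Sarkisov link
$$
X\stackrel{\alpha}{\longleftarrow}\widetilde{X}\stackrel{\beta}{\longrightarrow}Q
$$
descends to $\mathbb{F}$, with $Q$ a smooth quadric threefold in $\PP^4_{\mathbb{F}}$; the normal bundle hypothesis forces the exceptional surface $H_C\subset Q$ to be geometrically smooth, as observed in the proof of Theorem~\ref{theorem:V5-A3}. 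Hence $X\setminus H_\ell\cong Q\setminus H_C$ over $\mathbb{F}$, and the final identification $Q\setminus H_C\cong\A^3$ over $\mathbb{F}$ is supplied by an explicit $\mathbb{F}$-rational birational parameterization inherent in the inverse link. For the forward implication, given $U\cong\A^3\subset X$ with complement $D$, the argument over $\overline{\mathbb{F}}$ in the proof of Theorem~\ref{theorem:V5-A3} identifies $\overline{D}=H_L$ for some line $L$ of type $(1,-1)$ (which one verifies by inspection of the two constructions given there, ruling out any other configuration yielding an $\A^3$-cylinder on $V_5$). Since $L=\Sing(\overline{D})$ is intrinsic to the divisor $\overline{D}$, it is $\mathrm{Gal}(\overline{\mathbb{F}}/\mathbb{F})$-invariant, hence defined over $\mathbb{F}$, with the asserted normal bundle.

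The main obstacle is the identification $Q\setminus H_C\cong\A^3$ over $\mathbb{F}$ in part~(ii): a generic smooth affine quadric hypersurface in $\A^4$ is \emph{not} isomorphic to $\A^3$ over a general field, so one must exploit the specific configuration arising from the double projection, e.g.\ by following the inverse link step by step and checking that each birational modification is $\mathbb{F}$-rational, or by producing explicit $\mathbb{F}$-rational one-parameter subgroup actions. A subsidiary difficulty is the uniqueness statement in the forward implication (the classification of $\A^3$-cylinder complements on $V_5$), and in part~(i), ensuring that the spread-out of the $\A^2$ slice remains trivial over the affine curve $Z$ after shrinking.
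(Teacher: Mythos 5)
The survey states Theorem~\ref{theorem:V5} only as a citation to [DK19a] and contains no proof of it, so there is no in-paper argument to compare yours with; judged on its own terms, your proposal has genuine gaps in both parts. In part~(i) the decisive step is unproved: rationality of the generic fibre $S_\eta$ (a quintic del Pezzo surface over $\mathbb{F}(t)$) only gives a birational map to $\PP^2_{\mathbb{F}(t)}$, not an open immersion of $\A^2_{\mathbb{F}(t)}$. Even over an algebraically closed field an $\A^2$-chart in a quintic del Pezzo surface has to be produced by hand, from a boundary of five curves whose classes form a basis of the Picard lattice; over $\mathbb{F}(t)$ the Galois-invariant Picard rank of $S_\eta$ may be $1$, and you give no reason why such a Galois-stable configuration exists over $\mathbb{F}(t)$ --- let alone one avoiding the base curve of the pencil (an anticanonical curve, which must be avoided for the open subset of $S_\eta$ to be an open subset of $X$). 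This is a polar-cylinder-type condition strictly stronger than rationality, i.e.\ exactly the kind of subtlety the theorem is about, and your sketch does not address it.

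In part~(ii) the backward direction rests on a false geometric claim and leaves its key point open. If $\NNN_{\ell/X}\cong\OOO_\ell(-1)\oplus\OOO_\ell(1)$, the surface $H_C$ is not smooth: it is a quadric cone (this is what Remark~\ref{remark:g-12} and Theorem~\ref{theorem:compactifications-threefolds} rely on; the sentence you quote from the proof of Theorem~\ref{theorem:V5-A3} has the two cases interchanged). Indeed, if $H_C$ were smooth, then $Q\setminus H_C$ would be a smooth affine quadric of Euler characteristic $0$, hence not $\A^3$ even over $\overline{\mathbb{F}}$. The singularity is also what resolves the descent issue you flag as the ``main obstacle'': the vertex of the cone is Galois-invariant, hence an $\mathbb{F}$-point $p\in Q$, one has $H_C=Q\cap T_pQ$, and linear projection from $p$ restricts to an isomorphism $Q\setminus H_C\cong\A^3$ over $\mathbb{F}$; without this observation your appeal to an ``explicit $\mathbb{F}$-rational parameterization'' is a placeholder, not an argument. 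In the forward direction, your claim that the complement of any open $\A^3$ in $V_5$ must be $H_L$ for a line $L$ of type $(1,-1)$ contradicts case (3)(b) of the paper's own Theorem~\ref{theorem:compactifications-threefolds}: the boundary can also be a normal quintic del Pezzo surface with a single $\mathrm{A}_4$ point, arising from the second construction in the proof of Theorem~\ref{theorem:V5-A3}. That case must be handled as well, e.g.\ by noting that the $\mathrm{A}_4$ point is Galois-invariant, lies on the closed orbit curve $\mathcal{C}$, and the unique line of $X$ through it (recall $k_P=1$ for $P\in\mathcal{C}$, and the surface $\mathcal{S}$ is swept out by lines of type $(1,-1)$) is therefore an $\mathbb{F}$-line of the required type. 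As written, both implications of (ii) are incomplete.
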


Let us conclude this section with the~following generalization of Theorem~\ref{theorem:g-7-8-9-10}.

\begin{theorem}[{\cite{KP2020}}]
\label{theorem:g-7-8-9-10-non-closed}
Let $X$ be a smooth Fano threefold defined over a~field $\mathbb{F}$ of characteristic~$0$.
Suppose that $X\otimes_{\mathbb{F}}\overline{\mathbb{F}}\cong X_{2g-2}$, where $X_{2g-2}$ is a Fano--Mukai variety of genus $g$ with $\uprho(X_{2g-2})=1$,
where $\overline{\mathbb{F}}$ be the~algebraic closure of $\mathbb{F}$.
Suppose  that the~following conditions hold:
\begin{enumerate}
\item $\dim(X)\geqslant 5$;
\item  $g\in\{7,8,9,10\}$;
\item $X$ has an $\mathbb{F}$-point.
\end{enumerate}
Then $X$ is cylindrical over $\mathbb{F}$.
\end{theorem}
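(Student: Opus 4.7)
The plan is to make the constructions of Examples~\ref{example:FM-g-8}, \ref{example:FM-g-7}, \ref{example:FM-g-9}, and Theorem~\ref{theorem:FM-g-10} Galois-equivariant so that they descend from $\overline{\mathbb F}$ to the ground field $\mathbb F$. The argument splits along the value of $g$, and in each case boils down to producing the center of a Sarkisov link over $\mathbb F$, performing the link over $\mathbb F$, and pulling back a cylinder from a simpler Fano variety.

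For $g=10$ (where $\dim(X)=5$), Theorem~\ref{theorem:FM-g-10} produces a hyperplane section $H$ of $X\otimes\overline{\mathbb F}$ which is invariant under $\mathrm{Aut}^0(X\otimes\overline{\mathbb F})$. Since such $H$ is unique, it is Galois-stable, hence defined over $\mathbb F$, and the isomorphism $X\setminus H\cong \mathbb A^4$ descends. In this case the $\mathbb F$-point hypothesis is not needed. For $g\in\{7,8,9\}$, the geometric construction proceeds by blowing up a distinguished subvariety $\Sigma\subset X\otimes\overline{\mathbb F}$, namely a plane for $g=7,8$ and a quadric surface for $g=9$; this fits into a Sarkisov link to a smaller Fano variety $Y$ (namely the intersection of two quadrics $V_4\subset\mathbb P^6$ for $g=7$, and the quintic del Pezzo fourfold $V_5$ for $g=8,9$), and the cylinders in $V_4$ and $V_5$ produced by Lemma~\ref{lemma:dP4} and Theorem~\ref{theorem:V5-A4}, respectively, pull back along the link to cylinders in $X$. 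Both Lemma~\ref{lemma:dP4} and Theorem~\ref{theorem:V5-A4} are inherently field-free when their data can be chosen over $\mathbb F$, so once $\Sigma$ is realized over $\mathbb F$ the link, and with it the cylinder, descends.

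The role of the $\mathbb F$-point $p\in X(\mathbb F)$ is precisely to produce $\Sigma$ over $\mathbb F$. For each of $g=7,8,9$ and each admissible dimension $\dim(X)\ge 5$, I would first recall from \cite{PZ16,PZ17,PZ18} the structure of the Hilbert scheme $\mathcal H$ of planes (resp.\ quadric surfaces) on $X$, which is defined over $\mathbb F$, and show that the incidence locus $\mathcal H_p\subset \mathcal H$ of members passing through $p$ is non-empty and geometrically rational. Then I would apply a Lang--Nishimura type argument, combined with $\mathbb F$-unirationality of $\mathcal H_p$ witnessed by $p$, to conclude that $\mathcal H_p(\mathbb F)\ne\emptyset$, yielding $\Sigma$ over $\mathbb F$. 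With $\Sigma$ in hand, the Sarkisov link is canonically determined and the target $Y\in\{V_4,V_5\}$ is automatically defined over $\mathbb F$.

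The main obstacle lies in this last step: verifying $\mathbb F$-rationality (or at least $\mathbb F$-unirationality witnessed by a point) of the parameter space $\mathcal H_p$ of distinguished subvarieties through the chosen $\mathbb F$-point. This has to be done case by case using the projective geometry of each $X_{2g-2}$, taking care of special loci where the generic description of $\mathcal H_p$ fails (for instance, points lying on several such $\Sigma$ or on a closed orbit of the symmetry group). Once this is settled, the rest of the proof is a straightforward descent, since the cylindrical structures on $V_4$ and $V_5$ extend to non-closed base fields whenever their Sarkisov-link data do.
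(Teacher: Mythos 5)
The survey itself gives no proof of this statement: both Theorem~\ref{theorem:g-7-8-9-10} and Theorem~\ref{theorem:g-7-8-9-10-non-closed} are quoted from \cite{KP2020} (and ``threefold'' in the statement is evidently a slip for ``variety''). The decisive problem with your plan is that it does not address the varieties the theorem is about. Every construction you propose to descend --- Theorem~\ref{theorem:FM-g-10} and Examples~\ref{example:FM-g-7}, \ref{example:FM-g-8}, \ref{example:FM-g-9} --- concerns Fano--Mukai \emph{fourfolds}, whereas the hypothesis here is $\dim X\geqslant 5$. For $g=10$ the assumption forces $\dim X=5$ and $X\otimes_{\mathbb F}\overline{\mathbb F}\cong G_2/P$, the homogeneous fivefold itself, about which Theorem~\ref{theorem:FM-g-10} (a statement on its hyperplane sections) says nothing; for $g\in\{7,8,9\}$ and $\dim X\geqslant 5$ there is no analogue of the fourfold Sarkisov links to $V_4$ and $V_5$ obtained by blowing up a plane or a quadric surface, and even in dimension $4$ those links exist only for special (or general) members containing the relevant surface, not for every smooth member. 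So, even if carried out in full, your argument would at best give a non-closed-field version of Theorem~\ref{theorem:g-7-8-9-10-fourfolds}; it does not engage with the case $\dim X\geqslant 5$, which is precisely the regime in which \cite{KP2020} works, with constructions centred at (subvarieties through) the given $\mathbb F$-point rather than descent of the fourfold links.

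Even within its own four-dimensional scope the proposal is incomplete at its two critical steps. First, the existence over $\mathbb F$ of the centre $\Sigma$ through the chosen point is only asserted to follow from rationality or pointed unirationality of the incidence variety $\mathcal H_p$, which you yourself flag as ``the main obstacle''; nothing is proved there, and a Lang--Nishimura argument requires a smooth proper parameter space with an $\mathbb F$-point, which is exactly what is missing. Second, descending the cylinders on the targets is again a rational-point problem: the proof of Lemma~\ref{lemma:dP4} starts from a line on the intersection of two quadrics, and Theorem~\ref{theorem:V5-A4} starts from a line of suitable type on the quintic del Pezzo fourfold; an $\mathbb F$-form of $V_4$ or $V_5$ arising as the target of your link need not contain such a line over $\mathbb F$ merely because $X$ has an $\mathbb F$-point, so the phrase ``when their data can be chosen over $\mathbb F$'' conceals an unproved arithmetic input of the same nature as the statement being proved.
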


\section{Beyond cylindricity}
\label{section:related}

\subsection{Flexible affine varieties}
\label{subsection:flexibility}
Let $X$ be an~affine variety.
Given a~$\Ga$-action on $X$, it induces a~representation of the~group $\Ga$ on the~structure $\Bbbk$-algebra $\OOO(X)$ of the~form
$$
(t,f)\longmapsto \exp(t\partial)(f)
$$
for $t\in\Ga$ and $f\in\OOO(X)$, where the~infinitesimal generator $\partial$ of the~$\Ga$-subgroup is a~\emph{locally nilpotent derivation} of $\OOO(X)$,
which means that every element $f\in\OOO(X)$ is annihilated by $\partial^{(m)}$~for some sufficiently large $m$ that depends on the~element $f$.
Conversely, any locally nilpotent derivation of the~$\Bbbk$-algebra $\OOO(X)$ generates a~$\Ga$-action on $X$ (see \cite{Fr06}).

Recall that the~derivations of $\OOO(X)$ correspond to the~regular vector fields on $X$.
We say that a~vector field on $X$ is \emph{locally nilpotent} if the~corresponding derivation is.

 If an open variety $X$ admits a~$\Ga$-action, then the log-Kodaira dimension of $X$ is negative. However, the converse does not hold, in general. Indeed, there are smooth affine surfaces of negative log-Kodaira dimension which admit no effective $\Ga$-action \cite{GMMR18}. Let us stay on this in more detail.

As we mentioned already, any smooth affine surface $X$ of negative log-Kodaira dimension contains a cylinder \cite[Ch. 2, Theorem 2.1.1]{Miyanishi2000}. Moreover, $X$ is affine-ruled, that is, there is a morphism $X\to C$ onto a smooth curve $C$ with general fiber $\A^1$. The base curve $C$ could be affine or projective. However, a smooth affine surface $X$ admits an effective $\Ga$-action if and only if it admits an $\A^1$-ruling $X\to C$ over an affine curve $C$, or, which is equivalent, a \emph{principal} cylinder, see Theorem~\ref{thm:affine-cylindricity}. In \cite{GMMR18}
there are examples of smooth rational affine surfaces $\A^1$-ruled over $\PP^1$ and with no $\A^1$-ruling over an affine curve. Hence, such a surface admits no effective $\Ga$-action.

To construct such a surface $X$ we start with the quadric $\PP^1\times\PP^1$ endowed with the first projection to $\PP^1$. We blow up three distinct points on the section  $S=\PP^1\times\{\infty\}$ and infinitesimally near points in such a way that each of the 3 resulting reducible fibers has a unique $(-1)$-component of multiplicity 2 and the union of the section $S$ and the remaining components of the reducible fibers forms  a connected divisor $D$. The complement of $D$ in the resulting projective surface is a smooth affine surface $X$. It comes equipped with an $\A^1$-fibration $X\to \PP^1$. Each fiber of this fibration is irreducible, and three of them are multiple of multiplicity 2. According to \cite[Theorem~4.1]{GM2005}, such a surface $X$ does not carry any $\A^1$-fibration over an affine curve. Hence, $X$ admits  no $\Ga$-action.

\begin{definition}
\label{definition:flexibility}
A point $P\in X$ is said to be \emph{flexible} if locally nilpotent vector fields on $X$ span the~tangent space $T_{P}X$.
The variety $X$ is said to be \emph{flexible} if every smooth point of $X$ is flexible.
We~also say that $X$ is \emph{generically flexible} if every point in a~non-empty Zariski open subset of $X$ is flexible.
\end{definition}

Let $\SAut(X)$ be the~subgroup of $\Aut(X)$ generated by all the~$\Ga$-subgroups.
The flexibility of $X$ is ultimately related to the transitivity of the~action of the~group $\SAut(X)$.
Indeed, we have the following criteria of flexibility.

\begin{theorem}[{\cite{AFKKZ13}}]
\label{theorem:flexibility}
Suppose that $\mathrm{dim}(X)\geqslant 2$. Then the~following conditions are equivalent:
\begin{enumerate}
\item the~variety $X$ is flexible;
\item the~group $\SAut(X)$ acts transitively on the~smooth locus of $X$;
\item the~group $\SAut(X)$ acts  highly transitively on the~smooth locus of $X$.
\end{enumerate}
\end{theorem}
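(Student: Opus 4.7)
The implication $(3)\Rightarrow(2)$ is tautological, and $(2)\Rightarrow(1)$ is almost so: for a smooth point $P\in X$, transitivity gives that the orbit $\SAut(X)\cdot P$ equals $X_{\mathrm{sm}}$, so it has dimension $\dim X$; its tangent space at $P$ is spanned by the velocity vectors $\partial(P)$ of the one-parameter subgroups $\exp(t\partial)$ with $\partial\in\LND(\OOO(X))$, hence these values must span $T_PX$ and $P$ is flexible. For the reverse $(1)\Rightarrow(2)$, the plan is to show that at a flexible smooth point the orbit $\SAut(X)\cdot P$ is Zariski open in $X$. Choose $\partial_1,\ldots,\partial_r\in\LND(\OOO(X))$ with $\partial_1(P),\ldots,\partial_r(P)$ a basis of $T_PX$; the map $\Bbbk^r\to X$ sending $(t_1,\ldots,t_r)\mapsto\exp(t_1\partial_1)\circ\cdots\circ\exp(t_r\partial_r)\cdot P$ has surjective differential at the origin, hence is dominant, so its image contains a neighbourhood of $P$. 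Since $X$ is irreducible and its smooth locus is therefore connected, any two such open orbits must coincide, giving transitivity.

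The substantive step is $(2)\Rightarrow(3)$, which I would approach via the classical \emph{replica} construction: if $\partial\in\LND(\OOO(X))$ and $f\in\ker\partial$, then $f\partial$ is again locally nilpotent, generating a $\Ga$-subgroup whose elements act trivially on the zero locus of $f$. I would prove $m$-transitivity by induction on $m$, the base case $m=1$ being the hypothesis $(2)$. Given two ordered $(m{+}1)$-tuples of distinct smooth points $(P_1,\ldots,P_{m+1})$ and $(Q_1,\ldots,Q_{m+1})$, the inductive hypothesis produces an element of $\SAut(X)$ matching the first $m$ entries; then, holding $P_1,\ldots,P_m$ fixed, one uses elements generated by replicas $f\partial$ with $f$ vanishing on $\{P_1,\ldots,P_m\}$ to move $P_{m+1}$ to $Q_{m+1}$. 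This reduces everything to showing that the subgroup of $\SAut(X)$ generated by such replicas still acts transitively on $X_{\mathrm{sm}}\setminus\{P_1,\ldots,P_m\}$.

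The heart of the argument, and the step I expect to be hardest, is to verify that the subspace
\[
W_Q \;=\; \Span\bigl\{\,(f\partial)(Q) \,:\, \partial\in\LND(\OOO(X)),\ f\in\ker\partial,\ f|_{\{P_1,\ldots,P_m\}}=0\,\bigr\}
\]
fills $T_QX$ for every smooth $Q\notin\{P_1,\ldots,P_m\}$. Dropping the vanishing condition on $f$ gives exactly flexibility at $Q$, so what must be shown is that imposing finitely many linear conditions on $f$ inside the kernels of the available LNDs does not shrink the span. Here flexibility at $Q$ must be used in strong form: for each $v\in T_QX$ there are many $\partial$ with $\partial(Q)=v$, and the kernels $\ker\partial$ constitute an infinite-dimensional family of subalgebras of $\OOO(X)$, so a generic choice within this family contains a function that vanishes on the prescribed finite set while remaining nonzero at $Q$. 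Once $W_Q=T_QX$ is established, the argument of $(1)\Rightarrow(2)$ above, applied to the subgroup fixing $P_1,\ldots,P_m$ acting on the smooth locus of $X\setminus\{P_1,\ldots,P_m\}$, yields an open orbit through $P_{m+1}$; irreducibility of this complement again forces a single open orbit and completes the inductive step.
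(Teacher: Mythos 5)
The survey itself gives no proof of this statement (it is quoted from Arzhantsev--Flenner--Kaliman--Kutzschebauch--Zaidenberg), and your plan follows the same route as that original argument: the trivial implication $(3)\Rightarrow(2)$, the orbit--tangent-space argument for $(2)\Leftrightarrow(1)$, and replicas $f\partial$, $f\in\ker\partial$, for $(2)\Rightarrow(3)$. Two remarks on the easy part. In $(2)\Rightarrow(1)$ the claim that the tangent space of the orbit at the \emph{given} point $P$ is spanned by the vectors $\partial(P)$ is not immediate: what one gets directly is spanning at a general point (some finite composition of flows $\A^r\to X$ is dominant), and one then transports this to $P$ using transitivity together with the observation that the distribution $Q\mapsto\Span\{\partial(Q)\}$ is invariant under conjugation by $\SAut(X)$. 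Your $(1)\Rightarrow(2)$ argument is fine.

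The genuine gap is the step you yourself single out, namely $W_Q=T_QX$; the justification ``the kernels form an infinite-dimensional family, so a generic choice contains a function vanishing on $\{P_1,\dots,P_m\}$ and nonzero at $Q$'' is not an argument, and the failure mode is concrete. For a fixed $\partial$, every $f\in\ker\partial$ is constant along closures of $\partial$-orbits, so if $Q$ lies in the orbit closure of some $P_i$ (or, more generally, cannot be separated from $P_i$ by $\ker\partial$), then every admissible replica $f\partial$ has zero velocity at $Q$; and flexibility at $Q$ does not hand you a replacement, since the set of locally nilpotent derivations is not a vector space --- the only modifications of $\partial$ available pointwise are rescalings $f\partial$, which do not change the orbit through $Q$. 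The tell-tale symptom is that your sketch of $(2)\Rightarrow(3)$ never uses $\dim X\geqslant 2$, whereas the implication is false in dimension one: $X=\A^1$ is flexible and $\SAut(\A^1)\cong\Ga$ acts transitively but not $2$-transitively, and indeed $\ker\partial=\Bbbk$ contains no nonzero function vanishing at $P_1$. In the original proof this is exactly where $\dim X\geqslant 2$ enters: the bad loci attached to $P_1,\dots,P_m$ (one-dimensional orbit closures, respectively the sets where all kernel elements vanishing on the $P_i$ are forced to vanish) are proper closed subsets, and one avoids them not pointwise but by working with a saturated family of derivations (closed under taking replicas \emph{and} conjugation by the group), factoring a given motion into several flows whose intermediate points are first put in general position using $1$-transitivity, and invoking the orbit theory of algebraically generated groups (orbits are locally closed, openness is checked at general points). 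Your stronger requirement that $W_Q$ span $T_QX$ at \emph{every} smooth $Q\notin\{P_1,\dots,P_m\}$ is true, but it is essentially equivalent to the theorem and cannot simply be asserted; as it stands, the inductive step is unproved.
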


One says that a group acts \emph{highly transitively} \footnote{Or \emph{infinitely transitively} in another terminology}. on an infinite set if it acts $m$-transitively for any natural number $m$.

\begin{remark}
\label{remark:algebraic-group-transiively}
A~dimension count shows that an~algebraic group cannot act highly transitively on an affine variety.
Moreover, it cannot act even $3$-transitively on an~affine variety \cite{Borel1953,Knop1983}.
\end{remark}

Let us present examples of flexible affine varieties;   see e.g. \cite{AZ21, AKZ12, AKZ19} for further examples.

\begin{example}
\label{example:An-flexible}
Let $X=\A^n$, where $n\geqslant 2$. Then the~subgroup of translations in $\SAut(\A^n)$ acts transitively on the~variety $X$,
so that $X$ is flexible by Theorem~\ref{theorem:flexibility} (cf. \cite{KZ99}).
\end{example}

\begin{example}
\label{example:KuyumzhiyanAndrist}
Let $X$ be the~$n$th Calogero--Moser space defined as follows:
$$
\Big\{(A,B)\in\,\mathrm{Mat}_n(\Bbbk)\times \mathrm{Mat}_n(\Bbbk)\ \big\vert\ \mathrm{rk}\big([A,B]+I_n\big)=1\Big\}/\!\!/\PGL_n(\Bbbk),
$$
where $\PGL_n(\Bbbk)$ acts via $g.(A,B)=(gAg^{-1}, gBg^{-1})$.
Then $X$ is a smooth rational irreducible~affine algebraic variety of dimension~$2n$ \cite{Popov2014,Wilson1998},
and it follows from \cite{BerestEshmatovEshmatov,Kuyumzhiyan2020} that $\mathrm{Aut}(X)$ acts highly transitively on $X$ for every $n\geqslant 1$.
Moreover, the~variety $X$ is flexible by \cite[Proposition~2.9]{Andrist2020}.
\end{example}

There are several constructions producing new flexible varieties from given ones (see~\cite{AFKKZ13,AKZ12,FKZ16,KZ99}).
For~instance, the~product of flexible varieties is flexible. Some further examples of flexible varieties are as follows.

\begin{example}
\label{example:flexible-varieties}
Suppose that $X$ is an~affine $G$-variety of dimension $\geqslant 2$, where $G$ is a~connected linear algebraic group that acts on $X$ with an~open orbit.
Then $X$ is flexible in the~following cases:
\begin{itemize}
\item $X$ is a~normal toric variety with no torus factor \cite[Theorem~0.2.2]{AKZ12};
\item $X=G/H$ is a~homogeneous space and $G$ has no nontrivial character \cite[Theorem~5.4]{AFKKZ13};
\item $X$ is smooth and $G$ is semisimple \cite[Theorem~5.6]{AFKKZ13};
\item $X$ is smooth with only constant invertible functions and $G$ is reductive \cite[Theorem~2]{GS19};
\item
$X$ is normal and $G=\SL_2(\Bbbk)$ \cite[Theorem~5.7]{AFKKZ13};
\item
$X$ is normal horospherical and $G$ is semisimple \cite[Theorem~2]{Sh17};
\item
$X$ is normal horospherical with no non-constant invertible regular function \cite[Theorem~3]{GS19}.
\end{itemize}
\end{example}

See also~\cite{BoldyrevGaifullin, Dub2015, Dub2019, Gi18, Ka2019, KM2012}.

If we replace the~smooth locus of $X$ in Theorem~\ref{theorem:flexibility} by the~open orbit of the~group $\SAut(X)$,
we obtain a criterion for the~generic flexibility \cite{AFKKZ13}.
If $X$ contains $\A^n$ as a~principal Zariski open~set, then $X$ is generically flexible.
Generically flexible varieties are unirational, but they are not always stably rational (see \cite[Proposition~4.9]{Liendo2010} and \cite[Example 1.22]{Popov2011}).

\begin{example}
\label{example:Gizatulin}
Suppose that $X$ is a normal affine surface such that $X$ can be completed by a~simple normal crossing chain of rational curves.
Then $X$ is often called a \emph{Gizatullin surface}.
If $X\not\cong\A^1\times (\A^1\setminus\{0\})$,
then it is generically flexible \cite{Gi71}, but it is not necessarily flexible \cite{Kov15}.
\end{example}

Affine cones over cylindrical Fano varieties often provide examples of flexible affine varieties.

\begin{example}
\label{example:flag-var-cones}
Let $V=G/P$, where $G$ is a~semisimple algebraic group, and $P$ is its~parabolic subgroup.
Then $V$ is a smooth Fano variety.
Let $V\hookrightarrow\mathbb{P}^n$  be any projectively normal embedding,
and let $\widehat{V}$ be the~affine cone in $\A^{n+1}$ over~$V$.
If $\mathrm{dim}(V)\geqslant 2$, then $\widehat{V}$ is flexible by \cite[Theorem~1.1]{AKZ12}.
\end{example}

To~explain why this is the~case, let us present two explicit criteria of flexibility of affine cones.
To do this, fix a~smooth projective variety $V$. Let $H$ be a very ample divisor on the~variety $X$.
Then~the~linear system $|H|$ gives an embedding $V\hookrightarrow\mathbb{P}^n$. Let $\widehat{V}$ be the~affine cone in $\A^{n+1}$ over~$V$.
We are interested in the~case when $V$ is a smooth cylindrical Fano variety.

If the~variety $V$ is uniformly cylindrical, then each point of $V$ is contained in a cylinder, so that the~variety $V$ admits a covering
\begin{equation}
\label{equation:covering}
V=\bigcup_{i\in I} U_i,
\end{equation}
where each $U_i$ is a Zariski open subset in $V$ such that $U_i\cong \A^1\times Z_i$ for some affine variety $Z_i$.
In~this case, a~subset $Y\subset V$ is said to be \emph{invariant} with respect to a~cylinder $U_i$ if
$$
Y\cap U_i=\pi_i^{-1}\big(\pi_i(Y\cap U_i)\big),
$$
where $\pi_i\colon U_i \to Z_i$ is the~natural projection.

\begin{definition}
\label{definition:transversal cylindres}
If $V$ is uniformly cylindrical, then we say that the~covering \eqref{equation:covering} is \emph{transversal}
if no proper subset $Y\subset X$ is invariant with respect to every cylinder $U_i$ in the~covering \eqref{equation:covering}.
\end{definition}

Now, we are ready to state the~first flexibility criterion for affine cones.

\begin{theorem}[{\cite{Pe13}}]
\label{theorem:Perepechko}
Suppose that $V$ is uniformly cylindrical and has a covering \eqref{equation:covering} such~that
\begin{itemize}
\item[(i)] the~covering \eqref{equation:covering} is transversal;
\item[(ii)] each cylinder in the~covering \eqref{equation:covering} is $H$-polar.
\end{itemize}
Then the~affine cone $\widehat{V}$ is flexible.
\end{theorem}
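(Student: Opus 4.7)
The plan is to deduce the theorem from the flexibility criterion of Theorem \ref{theorem:flexibility}: since $V$ is smooth (being covered by cylinders), the only possibly singular point of the affine cone $\widehat{V}$ is its vertex $0$, so it suffices to prove that the group $\SAut(\widehat{V})$ acts transitively on $\widehat{V}\setminus\{0\}$. The transversal $H$-polar covering of $V$ will be used to produce enough $\Ga$-actions on $\widehat{V}$ to establish this transitivity.

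First, I would apply Theorem \ref{theorem:criterion} to each cylinder $U_i$: since it is $H$-polar, it gives rise to an effective $\Ga$-action $\varphi_i\colon\Ga\times\widehat{V}\to\widehat{V}$. Its infinitesimal generator is a homogeneous locally nilpotent derivation $\partial_i$ of the graded ring $R=\bigoplus_{n\geq 0}H^0(\OOO_V(nH))$, of some degree $d_i\in\ZZ$. Writing $\pi\colon\widehat{V}\setminus\{0\}\to V$ for the natural $\Gm$-bundle, the $\Ga_i$-orbit through a point $p$ lying over $U_i$ projects via $\pi$ onto the $\A^1$-ruling of $U_i\cong\A^1\times Z_i$ through $\pi(p)$, while points outside $\pi^{-1}(U_i)$ are either fixed or flow to the vertex.

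Second, to move points not only along the base $V$ but also along the $\Gm$-fibers of $\pi$, I would invoke the classical replica trick: whenever $\partial$ is a locally nilpotent derivation of $R$ and $f\in\ker(\partial)$ is a homogeneous element, $f\partial$ is again a locally nilpotent derivation, hence yields a further $\Ga$-subgroup lying in $\SAut(\widehat{V})$. Heuristically, elements of $\ker(\partial_i)$ are those functions that are constant along the $\A^1$-rulings of $U_i$; in a transversal family of cylinders, no non-constant element can lie in all the kernels $\ker(\partial_i)$ simultaneously. Multiplying the $\partial_i$ by suitable elements of their kernels should then yield a collection of locally nilpotent vector fields whose values span $T_p\widehat{V}$ at every $p\in\widehat{V}\setminus\{0\}$. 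By the equivalence (1)$\Leftrightarrow$(3) in Theorem \ref{theorem:flexibility}, this tangential surjectivity is exactly the flexibility of $\widehat{V}$.

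The main obstacle, in my view, is making this last step fully rigorous: one must translate transversality of the cylinder covering into a genuine tangent-space generation statement at every smooth point of the cone, and, in particular, one has to produce, at an arbitrary $p\in\widehat{V}\setminus\{0\}$, a locally nilpotent vector field whose value at $p$ has a non-zero ``radial'' ($\Gm$-direction) component. The general framework for such arguments was developed in \cite{AFKKZ13}, and a closely analogous strategy was carried out for toric affine cones in \cite{AKZ12}; the expected proof adapts these techniques to the present setting of an arbitrary smooth cylindrical base.
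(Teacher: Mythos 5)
The survey does not actually prove this statement; it is quoted from \cite{Pe13}, so your attempt has to be judged against Perepechko's argument. Your general framework is the right one and matches his: reduce to Theorem~\ref{theorem:flexibility}, use Theorem~\ref{theorem:criterion} to attach to each $H$-polar cylinder $U_i$ of the covering \eqref{equation:covering} an effective $\Ga$-action on $\widehat{V}$ with homogeneous locally nilpotent generator $\partial_i$, and enlarge the supply of locally nilpotent fields by replicas $f\partial_i$, $f\in\Ker(\partial_i)$.

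However, the step you yourself flag as the ``main obstacle'' is the entire content of the theorem, and the mechanism you propose cannot deliver it. At a fixed point $p\in\widehat{V}\setminus\{0\}$ every replica $f\partial_i$ has value $f(p)\,\partial_i(p)$, i.e.\ it is proportional to $\partial_i(p)$; hence the span at $p$ of all replicas of all the $\partial_i$ is just the span of the finitely many vectors $\partial_i(p)$, and nothing in the hypotheses makes this equal to $T_p\widehat{V}$ at \emph{every} $p$ (a point of $V$ may lie in a single cylinder of the covering, and even over an overlap there is no a priori vector with nonzero ``radial'' component). Transversality is a global condition on invariant subsets of $V$ and does not translate into such a pointwise spanning statement; likewise, your remark that points outside $\pi^{-1}(U_i)$ ``flow to the vertex'' is not accurate, since $\Ga$-orbits in an affine variety are closed and the cone over $\Supp(D_i)$ is merely invariant, not necessarily fixed. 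The actual proof establishes transitivity of the subgroup $G\subset\SAut(\widehat{V})$ generated by these $\Ga$-subgroups and their replicas: over $U_i$ the $\Ga$-orbit of any point of the punctured cone surjects onto the $\A^1$-fiber through its image, so the image in $V$ of any $G$-orbit is invariant with respect to every cylinder and hence equals $V$ by transversality; one must then still exclude the possibility that an orbit is an $n$-dimensional ``horizontal'' multisection of $\pi\colon\widehat{V}\setminus\{0\}\to V$, and this is exactly where the polarity of the cylinders (the ampleness of $H$, i.e.\ the nontriviality of the $\Gm$-fibration) and the interaction of actions coming from different cylinders are used --- for instance, a multisection invariant under all the actions would be locally constant along the rulings of each cylinder and would trivialize a positive multiple of $H$. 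Only after transitivity is in hand does Theorem~\ref{theorem:flexibility} give flexibility. As written, your text correctly lists the ingredients but leaves out the decisive passage from transversality to transitivity, in particular the production of motion along the $\Gm$-direction, so it is a plan rather than a proof.
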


The second useful criterion is given by the~following

\begin{theorem}[{\cite{MPS18}}]
\label{theorem:criterion-MPS}
The~affine cone $\widehat{V}$ is flexible if the~variety $V$ is uniformly cylindrical and admits a covering
$$
V=\bigcup_{j\in J} W_j
$$
where each $W_j$ is a flexible affine Zariski open subset in $V$ such that $W_j=V\setminus\Supp{D_j}$
for some effective $\QQ$-divisor $D_j$ on the~variety $V$ that satisfies $D_j\sim_{\QQ} H$.
\end{theorem}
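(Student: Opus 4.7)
The plan is to apply Theorem~\ref{theorem:flexibility}: since $\dim\widehat V\geqslant 2$, it suffices to verify that the locally nilpotent derivations on $\mathcal O(\widehat V)$ span the tangent space $T_x\widehat V$ at every smooth point $x$. Because each $W_j$ is flexible, hence smooth, and $V=\bigcup_jW_j$, the variety $V$ is smooth; replacing $H$ by a large multiple if necessary, one may assume $V\hookrightarrow\PP^n$ is projectively normal, so that $\widehat V$ is normal and its smooth locus equals $\widehat V\setminus\{o\}$, a $\Gm$-torsor over $V$ via the projection $p\colon\widehat V\setminus\{o\}\to V$.

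For each $j\in J$ I would choose an integer $m_j>0$ such that $m_jD_j\sim m_jH$ is Cartier, and a section $s_j\in H^0(V,m_jH)\subset\mathcal O(\widehat V)$ with $\Div(s_j)=m_jD_j$. Then $\widehat W_j:=(\widehat V)_{s_j}$ is a principal affine open on which the $\Gm$-bundle $p$ is trivialised by $s_j$, giving $\widehat W_j\cong W_j\times\Gm$, and these opens cover $\widehat V\setminus\{o\}$. The core step is the lifting of LNDs: given $\partial_0\in\LND(\mathcal O(W_j))$, extend it to $\mathcal O(\widehat W_j)=\mathcal O(W_j)[s_j^{\pm 1}]$ by $\partial(s_j)=0$; this extension $\partial$ is locally nilpotent, $\Gm$-homogeneous of degree $0$, and commutes with multiplication by $s_j$, so $(s_j^k\partial)^N=s_j^{kN}\partial^N$ shows $s_j^k\partial$ is locally nilpotent for every $k\geqslant 0$. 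The key claim is that for $k$ large enough, $s_j^k\partial$ preserves the subring $\mathcal O(\widehat V)\subset\mathcal O(\widehat W_j)$, producing a homogeneous LND $\widehat\partial$ of degree $km_j$. This is a pole-order analysis in the spirit of the proof of Theorem~\ref{theorem:Perepechko}: elements of $\mathcal O(\widehat V)_{(n)}=H^0(V,nH)$ correspond, via $s_j^{n/m_j}$, to rational functions on $W_j$ with pole order along each component of $\Supp(D_j)$ bounded in terms of $n$ and the multiplicities of $D_j$; the regular vector field $\partial_0$ on $W_j$ extends to a rational vector field on $V$ with bounded pole order along $\Supp(D_j)$, and multiplying by $s_j^k$ absorbs that extra pole.

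Next, I would verify that the lifted LNDs span every tangent space $T_x\widehat V$ at smooth $x\in\widehat V\setminus\{o\}$. Pick an index $j_0$ with $p(x)\in W_{j_0}$; via $\widehat W_{j_0}\cong W_{j_0}\times\Gm$ write $x=(w_0,\lambda_0)$. The flexibility of $W_{j_0}$ and Theorem~\ref{theorem:flexibility} supply LNDs of $\mathcal O(W_{j_0})$ whose values span $T_{w_0}W_{j_0}$; their lifts produce tangent vectors at $x$ spanning the ``horizontal'' subspace $T_{w_0}W_{j_0}\subset T_x\widehat V$, which is of codimension one and complementary to the $\Gm$-fibre of that particular trivialisation. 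To reach the missing direction I pick a second index $j_1\neq j_0$ with $p(x)\in W_{j_1}$, which exists by uniform cylindricity of $V$ combined with the density of cover intersections. The trivialisation $\widehat W_{j_1}\cong W_{j_1}\times\Gm$ induced by $s_{j_1}$ differs from that of $\widehat W_{j_0}$ by a non-constant transition function, so the horizontal subspace $T_{w_1}W_{j_1}\subset T_x\widehat V$ is a \emph{different} codimension-one subspace; its sum with $T_{w_0}W_{j_0}$ equals $T_x\widehat V$. Consequently the combined lifted LNDs span $T_x\widehat V$, and by Theorem~\ref{theorem:flexibility} the affine cone $\widehat V$ is flexible.

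The main obstacle is the uniform pole-order bound in the lifting step: one needs a single integer $k$ that makes $s_j^k\partial$ land in $\mathcal O(\widehat V)$ on every graded piece simultaneously, not merely on bounded-degree subspaces. This relies on an a priori bound on how much $\partial_0$ can raise pole orders along components of $\Supp(D_j)$, obtained from viewing $\partial_0$ as a regular vector field on $W_j$ that extends to a rational one on $V$ with controlled poles, together with the finite generation of the section ring $\bigoplus_n H^0(V,nH)$. A secondary subtlety — precisely where the uniform cylindricity hypothesis enters — is arranging that every smooth point of $\widehat V$ lies over some $p(x)\in V$ contained in two \emph{distinct} cover members $W_{j_0},W_{j_1}$, so that the two horizontal subspaces defined by the non-proportional sections $s_{j_0},s_{j_1}$ genuinely differ and therefore jointly span $T_x\widehat V$.
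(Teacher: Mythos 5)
First, a caveat on the comparison: the survey does not prove Theorem~\ref{theorem:criterion-MPS} at all — it is quoted from \cite{MPS18} — so your attempt can only be measured against that paper's argument. Your general plan (lift locally nilpotent derivations from the charts $W_j$ to homogeneous ones on the cone, then verify the spanning criterion of Theorem~\ref{theorem:flexibility}) is in the right spirit, but the two places where you are briefest are exactly where the real content lies, and both have genuine gaps. The first is the claimed trivialisation $\widehat W_j\cong W_j\times\Gm$, equivalently $\mathcal{O}(\widehat W_j)=\mathcal{O}(W_j)[s_j^{\pm1}]$. Since only $D_j\sim_{\QQ}H$ is assumed, the restriction of $\mathcal{O}_V(1)$ to $W_j$ is merely a torsion class in $\Pic(W_j)$, so the punctured cone over $W_j$ is in general a nontrivial $\Gm$-torsor; algebraically, $\mathcal{O}(W_j)[s_j^{\pm1}]$ has graded pieces only in degrees divisible by $m_j$, while the localisation $\mathcal{O}(\widehat V)_{s_j}$ has nonzero pieces in every degree. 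A concrete counterexample: $V=\PP^2$, $H$ a line, $D_j=\tfrac12 C$ with $C$ a smooth conic; the punctured cone over $\PP^2\setminus C$ is $\A^3\setminus\{q=0\}$, with trivial Picard group, whereas $(\PP^2\setminus C)\times\Gm$ has Picard group $\ZZ/2$. Hence "extend $\partial_0$ by $\partial(s_j)=0$" does not define a derivation of $\mathcal{O}(\widehat W_j)$: you must also prescribe the action on the graded pieces of degree not divisible by $m_j$, i.e.\ choose an invariant lift of the vector field to a nontrivial torsor, and local nilpotency of such a lift is not automatic — your identity $(s_j^k\partial)^N=s_j^{kN}\partial^N$ uses precisely the trivialisation. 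Dealing with this torsion is the technical heart of the correspondence in Theorem~\ref{theorem:criterion} (\cite{KPZ13}) and of \cite{MPS18}; it cannot be passed over.

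The second gap is the spanning step. The fields lifted from $W_{j_0}$ are horizontal for that chart's (partial) trivialisation, so at a point $x$ whose image $p(x)$ lies in only one member of the covering — nothing in the hypotheses excludes this — they span only a hyperplane of $T_x\widehat V$; and even over an overlap the two horizontal subspaces coincide wherever the differential of the transition function vanishes. Uniform cylindricity does not produce a second chart of the \emph{given} family through $p(x)$, so the "secondary subtlety" you flag is not repairable in the form you propose. The simplest example already shows the failure: $V=\PP^1$, $H$ a point, $\widehat V=\A^2$, the two standard charts; the lifted derivations are multiples of $y^k\partial_x$ and $x^k\partial_y$, and at a point of a coordinate axis their values span only a line. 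Flexibility of $\A^2$ is of course true, but one sees it by composing the flows (equivalently, by also using conjugated locally nilpotent fields) — that is, by proving that the subgroup of $\SAut(\widehat V)$ generated by the lifted actions acts transitively on the smooth locus, which is how \cite{Pe13} (Theorem~\ref{theorem:Perepechko}, via transversality) and \cite{MPS18} (via flexibility of the charts) actually argue; pointwise linear algebra with the initially constructed fields is not enough. Two smaller slips: flexibility of $W_j$ does not imply smoothness (smoothness of $V$ is part of the survey's standing setup, not a consequence), and you cannot "replace $H$ by a large multiple", since that replaces $\widehat V$ by a different cone, while the theorem concerns the cone attached to the given $H$.
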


Using these criteria and the~proof of Lemma~\ref{lemma:KPZ}, one can prove the~following result:

\begin{theorem}[\cite{Pe13,PW16}]
\label{theorem:flexible-DP}
Suppose that $V$ is a~smooth del Pezzo surface such that $K_V^2\geqslant 4$.
Then the~affine cone $\widehat{V}$ is flexible for every very ample divisor $H$ on the~surface $V$.
\end{theorem}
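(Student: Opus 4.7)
The plan is to apply Theorem~\ref{theorem:Perepechko}, which reduces the flexibility of $\widehat{V}$ to exhibiting a transversal covering of $V$ by $H$-polar cylinders. The starting point is the construction of Lemma~\ref{lemma:KPZ}: for any smooth del Pezzo surface $V$ with $K_V^2\geqslant 4$, one fixes a birational morphism $\sigma\colon V\to\mathbb{P}^2$ that contracts $k\leqslant 5$ exceptional curves $E_1,\ldots,E_k$, and one picks an irreducible conic $C$ through $\sigma(E_1),\ldots,\sigma(E_k)$ together with a line $L$ tangent to $C$ at a generic point. The cylinder is then $U=V\setminus (\widetilde{C}\cup \widetilde{L}\cup E_1\cup\cdots\cup E_k)\cong (\mathbb{A}^1\setminus\{0\})\times\mathbb{A}^1$. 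The boundary components of $U$ generate $\mathrm{Pic}(V)\otimes\mathbb{Q}$ (since $\mathrm{Pic}(U)=0$), so any $\mathbb{Q}$-divisor class is a rational combination of them.

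First I would upgrade this to produce $H$-polar cylinders for an \emph{arbitrary} very ample $H$. The construction has a number of degrees of freedom: one may vary the point configuration on $\mathbb{P}^2$ (equivalently, the morphism $\sigma$, since one can start from different birational models), the conic $C$ through these points, and the tangent line $L$. A direct computation of intersection numbers shows that by a generic choice of these data one can arrange all coefficients of the expression $H\sim_{\mathbb{Q}}\alpha\widetilde{C}+\beta\widetilde{L}+\sum\gamma_i E_i$ to be strictly positive, which furnishes an $H$-polar effective $\mathbb{Q}$-divisor supported on the complement of $U$. This is compatible with the equality $\mathrm{Amp}^{cyl}(V)=\mathrm{Amp}(V)$ supplied by Theorem~\ref{theorem:Amp-cyl-del-Pezzo}.

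Next I would produce a \emph{transversal} family of such $H$-polar cylinders covering $V$. Here one uses the fact that for del Pezzo surfaces of degree at least $4$ the group $\mathrm{Aut}(V)$, together with the birational self-maps induced by varying the contraction $\sigma$ and the auxiliary curves $C,L$, acts with large orbits: moving the tangency point of $L$ on $C$ already sweeps out a dense open subset of $V$, and varying the pair $(C,L)$ together with $\sigma$ accounts for the remaining points (in particular the proper transforms of the initial $E_i$). Because the boundaries of cylinders obtained from distinct choices intersect transversely along curves of different classes, no proper closed subset of $V$ can be preserved by the projections $\pi_i$ of all cylinders in the family; this is the transversality condition of Definition~\ref{definition:transversal cylindres}. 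Once this is established, Theorem~\ref{theorem:Perepechko} yields the flexibility of $\widehat{V}$.

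The main obstacle is the second step: for a given $H$ and a given target point $p\in V$, one must simultaneously arrange that the cylinder contains $p$ and that its boundary divisor represents a positive linear combination equivalent to $H$. This is the most delicate when $K_V^2=4$, where $V$ is not toric, has small automorphism group generically, and where the ample cone has the largest number of extremal rays; here one must verify by a case analysis on the configuration of $(-1)$-curves that the positivity of the coefficients $\alpha,\beta,\gamma_i$ can indeed be preserved as the cylinder is translated to cover $V$.
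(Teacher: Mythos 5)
Your overall strategy---Perepechko's criterion (Theorem~\ref{theorem:Perepechko}) applied to a transversal covering by $H$-polar cylinders of the type built in Lemma~\ref{lemma:KPZ}---is exactly the route the survey indicates (the details being deferred to \cite{Pe13,PW16}), but both of your key steps are asserted rather than proved, and the first one fails as stated. For a fixed contraction $\sigma\colon V\to\mathbb{P}^2$, varying the conic $C$ and the tangent line $L$ does not change the classes of the boundary curves at all: one always has $\widetilde{L}\sim\ell$ and $\widetilde{C}\sim 2\ell-E_1-\cdots-E_k$ (and in degree $4$ the conic through the five points is \emph{unique}, so there is nothing to vary). Writing $H\sim a\ell-\sum_i b_iE_i$, the relation $H\sim_{\mathbb{Q}}\alpha\widetilde{C}+\beta\widetilde{L}+\sum_i\gamma_iE_i$ forces $\beta=a-2\alpha$ and $\gamma_i=\alpha-b_i$, so strictly positive coefficients exist only when $\max_i b_i<a/2$. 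This is not automatic: on a quartic del Pezzo surface the class $H=21\ell-11E_1-E_2-\cdots-E_5$ is ample (its positive multiples are very ample, and $H$-polarity depends only on the ray $\mathbb{Q}_{>0}H$), yet $11>21/2$, so no choice of $C$ and $L$ makes the Lemma~\ref{lemma:KPZ} cylinder $H$-polar for this marking---not even with some coefficients allowed to vanish. Hence ``generic choice of the data'' does not suffice; you would have to prove that a suitable \emph{different} contraction always works (a nontrivial claim you do not address), or, as is actually done in \cite{Pe13,PW16,CPW17}, use genuinely different families of cylinders adapted to the position of $H$ in the ample cone. Note also that Theorem~\ref{theorem:Amp-cyl-del-Pezzo} cannot simply be invoked here: it supplies one $H$-polar cylinder, whereas Theorem~\ref{theorem:Perepechko} needs a whole covering of $V$ by them.

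The second step has the same character. Every cylinder attached to a fixed $\sigma$ misses $\widetilde{C}\cup E_1\cup\cdots\cup E_k$, so covering those points for the \emph{given} $H$ runs into the problem above; and the transversality of Definition~\ref{definition:transversal cylindres} is not implied by the boundaries ``intersecting transversely along curves of different classes''---one must show that no proper subset $Y\subset V$ is a union of fibers of all the $\mathbb{A}^1$-fibrations simultaneously, i.e.\ exhibit cylinders whose fibration directions move any candidate $Y$ off itself. These verifications (carried out case by case in \cite{Pe13,PW16}, with degree $4$ the delicate case, as you yourself note) constitute the actual mathematical content of the theorem, so as it stands your text is an outline of the known strategy rather than a proof.
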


Unfortunately, we cannot apply Theorems~\ref{theorem:Perepechko} and \ref{theorem:criterion-MPS}
to the~affine cone in $\A^4$ over a smooth cubic surface in $\mathbb{P}^3$,
simply because its anticanonical divisor is not cylindrical by Corollary~\ref{corollary:dP-du-Val}.
On the~other hand, in this case, we know from Theorem~\ref{theorem:Amp-cyl-del-Pezzo}
that every ample $\mathbb{Q}$-divisor that is not a~multiple of the~anticanonical divisor is cylindrical.
Using this and the~construction of cylinders given in the~proof of Theorem~\ref{theorem:Amp-cyl-del-Pezzo},
Perepechko very recently proved the~following result:

\begin{theorem}[{\cite{Perepechko2020}}]
\label{theorem:Perepechko-new}
If $V$ is a~smooth cubic surface, then the~affine cone $\widehat{V}$ is generically flexible for every very ample divisor $H$ on the~surface $V$ such that $H\not\in\mathbb{Z}_{>0}[-K_V]$.
\end{theorem}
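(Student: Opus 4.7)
The strategy is to upgrade the existence of $H$-polar cylinders on $V$, guaranteed by Theorem~\ref{theorem:Amp-cyl-del-Pezzo}, into a family of $\Ga$-actions on $\widehat V$ whose tangent vectors span the tangent space at a generic point of the cone.

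First I would observe that, since $H$ is ample and $H\notin\QQ_{>0}[-K_V]$, Theorem~\ref{theorem:Amp-cyl-del-Pezzo} places $H$ in $\mathrm{Amp}^{cyl}(V)$, so $V$ carries at least one $H$-polar cylinder $U_0$. By Theorem~\ref{theorem:criterion} this produces a nontrivial $\Ga$-action on the affine cone $\widehat V$. The next step is to revisit the construction of $H$-polar cylinders from \cite{CPW17, MW18}: that construction depends on auxiliary data (a birational contraction of suitable $(-1)$-curves $V\dashrightarrow \PP^2$, together with a choice of line--conic configuration on the target), and varying these discrete and continuous choices yields a family $\{U_\lambda\}_{\lambda\in\Lambda}$ of $H$-polar cylinders. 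I would then argue that the union $V^\circ=\bigcup_{\lambda}U_\lambda$ is a Zariski-dense open subset of $V$: the combinatorial data of the contraction and the auxiliary curves on $\PP^2$ can be perturbed so as to avoid any prescribed generic point of $V$, so the locus missed by the family lies in a proper closed subset.

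Each $U_\lambda$ produces a $\Ga$-action $\alpha_\lambda\in \SAut(\widehat V)$ via Theorem~\ref{theorem:criterion}, and at a point $\hat p\in\widehat V$ lying over $p\in V^\circ$ the tangent vector of the orbit of $\alpha_\lambda$ is controlled by the $\A^1$-fiber direction of the cylinder $U_\lambda$ at $p$. Generic flexibility would then follow by verifying that, for a generic $p\in V^\circ$, the fiber directions of the $U_\lambda$ passing through $p$, together with the cone-scaling direction automatically picked up from the grading (since the locally nilpotent derivations corresponding to the $\alpha_\lambda$ have positive weight), span $T_{\hat p}\widehat V$. Once this spanning holds on a Zariski-open subset of $\widehat V$, generic flexibility is a consequence of the same mechanism underlying Theorems~\ref{theorem:Perepechko} and~\ref{theorem:criterion-MPS}, adapted to the generic (rather than uniform) setting.

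The hard part will be the spanning step, which amounts to a versality statement for the cylinder construction on the cubic surface: as the parameters of the construction vary, the resulting $\A^1$-directions at a generic $p\in V$ must generate $T_pV$. This is precisely what is expected to fail for $H\in\QQ_{>0}[-K_V]$, in agreement with Corollary~\ref{corollary:dP-du-Val}; away from that ray the construction should become sufficiently rich, and any residual loci that are inaccessible to the family form exactly the obstruction to upgrading \emph{generic} to \emph{full} flexibility of $\widehat V$.
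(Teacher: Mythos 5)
Your outline follows the same general route that the survey attributes to Perepechko (the theorem is quoted from \cite{Perepechko2020}; the survey itself gives no proof, only the remark that the argument uses the cylinder constructions behind Theorem~\ref{theorem:Amp-cyl-del-Pezzo}), but as written it is a programme rather than a proof: both load-bearing steps are left unverified. First, the claim that varying the auxiliary data in the constructions of \cite{CPW17,MW18} produces a family of $H$-polar cylinders whose union is dense in $V$ is not automatic and is not argued; for a fixed polarization $H$ off the anticanonical ray the complements of these cylinders contain specific negative curves dictated by $H$, and one must actually exhibit enough deformations of the construction to move the complement off a given general point. Second, and more seriously, you explicitly defer the spanning step (``the hard part''), which is exactly where the content of the theorem lies; without it nothing beyond the existence of a single $\Ga$-action on $\widehat{V}$ (already a consequence of Theorems~\ref{theorem:Amp-cyl-del-Pezzo} and~\ref{theorem:criterion}) has been established.

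Within that step there is also a concrete error: the assertion that the radial (cone-scaling) direction is ``automatically picked up from the grading'' because the relevant locally nilpotent derivations have positive weight is unjustified. A homogeneous locally nilpotent derivation of nonzero degree contributes, at a given point $\hat p$, a single tangent vector, and there is no a priori reason this or its torus-conjugates produce a component transverse to the fibers of $\widehat{V}\setminus\{0\}\to V$; this is precisely why the flexibility criteria recalled in the survey (Theorems~\ref{theorem:Perepechko} and~\ref{theorem:criterion-MPS}) require a \emph{transversal} covering in the sense of Definition~\ref{definition:transversal cylindres}, or charts that are themselves flexible, rather than the mere existence of several $H$-polar cylinders through a point. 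To repair the argument you would need either to verify a transversality statement for a suitable (generic) collection of the cylinders you construct, or to show that some $H$-polar open chart is a flexible affine variety, and then deduce spanning of the tangent space of the cone at points over a dense open subset; that verification is the actual substance of \cite{Perepechko2020} and is missing from your proposal.
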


Now, let us consider the~flexibility of affine cones over some cylindrical smooth Fano threefolds.
Many of them are flexible by Theorem~\ref{theorem:criterion-MPS},
because the~underlying Fano threefolds admit covering like in Theorem~\ref{theorem:criterion-MPS} with each Zariski open subset $W_j$ isomorphic to $\A^3$.
A possibly non-complete list of such smooth Fano threefolds is given in \cite[Proposition~4]{APS14}.
This gives

\begin{corollary}
\label{corollary:Fano-threefods-PSL-2}
Suppose that $V$ is a~smooth Fano threefold admitting an~effective $\PSL_2(\Bbbk)$-action.
If  $\uprho(V)=1$,  then the~affine cone $\widehat{V}$ is flexible.
\end{corollary}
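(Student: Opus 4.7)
The plan is to apply Theorem~\ref{theorem:criterion-MPS} using a covering of $V$ by translates, under the $\PSL_2(\Bbbk)$-action, of a single Zariski open subset isomorphic to $\A^3$; since $\A^n$ is flexible by Example~\ref{example:An-flexible}, such a covering automatically satisfies the flexibility hypothesis on each piece.

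By Theorem~\ref{theorem:Fano-threefolds-Aut-infinite}, the smooth Fano threefolds with $\uprho(V)=1$ admitting an effective $\PSL_2(\Bbbk)$-action are exactly $\PP^3$, the smooth quadric in $\PP^4$, the quintic del Pezzo threefold $V_5$, and the Mukai--Umemura threefold $X_{22}^{\mathrm{mu}}$. For each of them, \cite[Proposition~4]{APS14} exhibits a Zariski open subset $W\cong\A^3$; alternatively, Theorem~\ref{theorem:V5-A3} furnishes such a $W$ for $V_5$ and Remark~\ref{remark:g-12} does the same for $X_{22}^{\mathrm{mu}}$. Write $W=V\setminus \Supp(D)$, where $D$ is the reduced effective boundary divisor.

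Next I would fix a projectively normal embedding $V\hookrightarrow\PP^N$ with hyperplane class $H$ and spread $W$ around by the group action. Since $\uprho(V)=1$ and $\PSL_2(\Bbbk)$ is connected, its action on $\Pic(V)=\ZZ$ is trivial, so for every $g\in\PSL_2(\Bbbk)$ the class of $g^*D$ equals that of $D$; if $D\sim nG$ and $H\sim mG$ for the ample generator $G$, then $\tfrac{m}{n}g^*D\sim_{\QQ} H$. Each translate $W_g:=g\cdot W=V\setminus \Supp(g^*D)$ is isomorphic to $\A^3$ and hence flexible. Provided the family $\{W_g\}_{g\in\PSL_2(\Bbbk)}$ covers $V$, Theorem~\ref{theorem:criterion-MPS} applies and yields the desired flexibility of $\widehat{V}$.

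The main obstacle is thus to verify that $\bigcap_{g\in\PSL_2(\Bbbk)} \Supp(g^*D)=\emptyset$. This intersection is a $\PSL_2(\Bbbk)$-invariant closed subvariety of $V$, so it must be contained in the union of the non-open $\PSL_2$-orbits; one checks case by case that no such orbit is contained in every translate of $\Supp(D)$. For $\PP^3$ and the quadric this is immediate, because the full automorphism group is transitive on each $\PSL_2$-orbit and one can enlarge the family of translates if needed. For $V_5$, one uses the explicit orbit stratification recalled in the proof of Theorem~\ref{theorem:V5-A3}: the closed orbits are the sextic curve $\mathcal{C}$ and the surface $\mathcal{S}$, and the $\A^3$-chart $V_5\setminus H_L$ can be chosen so that its translates meet both. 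For $X_{22}^{\mathrm{mu}}$ one appeals to the analogous $\PSL_2$-orbit description due to Mukai--Umemura \cite{Mukai-Umemura-1983} together with the $\A^3$-chart from Remark~\ref{remark:g-12}. Once this orbit-level check is completed in the four cases, Theorem~\ref{theorem:criterion-MPS} delivers the flexibility of $\widehat{V}$.
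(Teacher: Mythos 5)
Your proposal is correct and takes essentially the same route as the paper: reduce via Theorem~\ref{theorem:Fano-threefolds-Aut-infinite} to the four threefolds, cover $V$ by the $\PSL_2(\Bbbk)$-translates of the $\A^3$-charts furnished by Theorems~\ref{theorem:V5-A3} and~\ref{theorem:g-12}, and apply Theorem~\ref{theorem:criterion-MPS} once the common intersection of the boundary divisors is shown to be empty by orbit analysis. The paper makes your deferred case-by-case check precise by noting that a nonempty invariant closed set would contain a closed orbit of minimal dimension, that there are no $\PSL_2(\Bbbk)$-fixed points, and that the unique one-dimensional closed orbit (note that $\mathcal{S}$ is only the closure of an orbit, not itself a closed orbit) is contained in no hyperplane section singular along a line.
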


\begin{proof}
If $\uprho(V)=1$, then it follows from Theorem~\ref{theorem:Fano-threefolds-Aut-infinite} that one of the~following four cases occurs:
\begin{itemize}
\item[(i)] $V=\PP^3$;
\item[(ii)] $V$ is the~smooth quadric threefold in $\PP^4$;
\item[(iii)] $V$ is the~smooth quintic del Pezzo threefold $V_5\subset\mathbb{P}^6$ described in Example~\ref{example:V5};
\item[(iv)] $V$ is the~Mukai--Umemura threefold $X=X_{22}^{\mathrm{mu}}\subset \PP^{13}$.
\end{itemize}
We may assume that we are in the~case (iii)~or~(iv), because the~required assertion is clear in the~remaining cases.
Then it follows from the~proofs of Theorems~\ref{theorem:V5-A3} and~\ref{theorem:g-12} that $V$ contains
a~one-parameter family of hyperplane sections $H_\ell$ such that each $H_\ell$ is singular along a line $\ell$ and
$$
V\setminus H_\ell\cong \A^3.
$$
The group $\PSL_2(\Bbbk)$ acts transitively on this family.
So, to apply Theorem~\ref{theorem:criterion-MPS}, we need to check that the~intersection of all these hyperplane sections is empty.
Suppose that this is not the~case. Then this intersection is $\PSL_2(\Bbbk)$-invariant,
so that it contains a~closed $\PSL_2(\Bbbk)$-orbit of minimal dimension.
But the~variety $V$ does not contain $\PSL_2(\Bbbk)$-fixed points,
and the~only one-dimensional closed $\PSL_2(\Bbbk)$-orbit in $V$ is not contained in any hyperplane section singular along a~line.
\end{proof}

For more examples of smooth Fano threefolds with flexible affine cones, see \cite[Theorem~4.5]{MPS18}.
Now, let us present examples of smooth Fano fourfolds with flexible affine cones.

\begin{example}[{\cite{PZ18}}]
\label{example:Fano-foufolds}
It follows from Theorems~\ref{theorem:V5-A4} and~\ref{theorem:FM-g-10}
that the~following smooth cylindrical Fano fourfolds admit coverings by affine charts isomorphic to~$\A^4$:
\begin{enumerate}
\item the~quintic del Pezzo fourfold $V_{5}$ described in Example~\ref{example:V5} (see Theorem~\ref{theorem:V5-A4});
\item the~Fano--Mukai fourfold $X_{18}^{\mathrm{s}}$ of genus $10$ with $\Aut(X_{18}^{\mathrm{s}})\cong\GL_2(\CC)\rtimes\mumu_2$;
\item the~Fano--Mukai fourfolds  $X_{18}$ of genus $10$ with $\Aut^0(X_{18})\cong \Gm^2$ (there is a~one-parameter family of these, up to isomorphism).
\end{enumerate}
Hence, all of them have flexible affine cones.
\end{example}

By Theorem~\ref{theorem:FM-g-10}, every smooth Fano--Mukai fourfold in $\PP^{12}$ of genus $10$ contains a~Zariski open subset isomorphic to $\A^4$.
Moreover, the~following result has been recently proved in \cite{PZ20}.

\begin{theorem}
\label{thm:genus10}
The affine cones over any smooth Fano--Mukai fourfold of genus $10$~are~flexible.
\end{theorem}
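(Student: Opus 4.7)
The plan is to verify the hypotheses of the MPS criterion (Theorem~\ref{theorem:criterion-MPS}), applied with $V = X$ and $H$ a hyperplane class of the projective embedding $X \hookrightarrow \PP^{12}$ given by $|-K_X/2|$. By Theorem~\ref{theorem:FM-g-10}, there exists an $\Aut^0(X)$-invariant hyperplane section $H_0$ such that $W_0 := X \setminus H_0 \cong \A^4$, which is flexible by Example~\ref{example:An-flexible}. So $W_0$ is a flexible, $H$-polar Zariski open subset of $X$, and the task is to cover $X$ by such subsets.

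For the three distinguished fourfolds of genus $10$ listed in Example~\ref{example:Fano-foufolds} the conclusion is already known: there $\Aut(X)$ is big enough to cover $X$ by translates of $W_0$ directly, and Theorem~\ref{theorem:criterion-MPS} applies. For a general $X$ in the one-parameter family, where $\Aut^0(X) \cong \Gm^2$, the orbits of $\Aut^0(X)$ on the boundary $H_0$ have dimension at most $2$, while $H_0$ has dimension $3$, so $\Aut^0(X)$-translates of $W_0$ alone cannot cover $X$. I would therefore exploit the embedding of $X$ as a hyperplane section of the $G_2$-homogeneous fivefold $G_2/P \subset \PP^{13}$ and the rich geometry of lines, conics, and planes on $X$ to construct additional flexible open subsets of the form $X \setminus \Supp(D)$ with $D$ an effective $\mathbb{Q}$-divisor linearly equivalent to $H$. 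Two natural sources of such charts are available: blowups of lines or conics in $X$ giving Sarkisov links to simpler Fano varieties (along which $\A^4$-charts can be transferred back to $X$, as in Section~\ref{subsection:Fano-4-folds}), and a degeneration argument realizing $X$ as a limit of one of the three special members of Example~\ref{example:Fano-foufolds} and transporting the flexible cover along the family.

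The main obstacle is verifying that every point of $X$ lies in at least one such flexible, $H$-polar open subset, and in particular handling the distinguished loci --- closed $\Aut^0(X)$-orbits, or points contained in every candidate hyperplane section in the constructed family --- which will likely require an ad hoc birational construction, for example a double projection from a carefully chosen line $\ell \subset X$ producing a separate $\A^4$-chart whose boundary avoids such a point. Once this covering by flexible $H$-polar open subsets is in place, Theorem~\ref{theorem:criterion-MPS} yields the flexibility of $\widehat{X}$; flexibility for cones over other projective embeddings then follows from the fact that $\uprho(X) = 1$ (so every very ample divisor is a positive multiple of $H$) together with the standard passage from flexibility for $H$ to flexibility for $mH$.
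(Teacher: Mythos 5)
There is nothing in this survey to compare you against: Theorem~\ref{thm:genus10} is stated here with a citation to \cite{PZ20} and no proof is reproduced, so your proposal has to stand on its own. As it stands it is a plan rather than a proof, and the gap is exactly the step you defer: producing, for \emph{every} smooth Fano--Mukai fourfold $X$ of genus $10$ and every point of $X$, a flexible $H$-polar open chart through that point, as required by Theorem~\ref{theorem:criterion-MPS}. Theorem~\ref{theorem:FM-g-10} hands you one $\Aut^0(X)$-invariant chart $W_0\cong\A^4$, but since $W_0$ is $\Aut^0(X)$-invariant its translates under $\Aut^0(X)$ give nothing new, and you explicitly leave the construction of the remaining charts (``an ad hoc birational construction'') to the reader. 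Moreover your case bookkeeping inverts the actual situation: Example~\ref{example:Fano-foufolds} (from \cite{PZ18}) already asserts coverings by $\A^4$-charts for $X_{18}^{\mathrm{s}}$ and for \emph{all} members with $\Aut^0(X)\cong\Gm^2$ (including $X_{18}^{\mathrm{r}}$ and the general member), so those cases are the easy ones for Theorem~\ref{theorem:criterion-MPS}; the fourfold your plan never addresses is $X_{18}^{\mathrm{a}}$ with $\Aut^0(X_{18}^{\mathrm{a}})\cong\Ga\times\Gm$, which is absent from that list and is precisely where a new argument is needed to cover the boundary.

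The fallback you propose, degenerating a general $X$ to one of the special members and ``transporting the flexible cover along the family,'' has no support: neither flexibility of affine cones nor the existence of a flexible $H$-polar covering is known to pass to limits or to nearby fibres, and the deformation results quoted in this survey (\cite{DK19b,Ka18}) concern cylinders, require an uncountable base field and a finite base change, and yield conclusions about \emph{generic} fibres, not about a chosen special or limiting member. So that route cannot close the gap either. (Your final reduction is fine but was never the issue: if $D\sim_{\QQ}H$ then $mD\sim_{\QQ}mH$, so a covering by flexible $H$-polar charts is simultaneously a covering by $mH$-polar ones, and since $\uprho(X)=1$ every very ample divisor is a multiple of $H$; Theorem~\ref{theorem:criterion-MPS} then applies to each embedding directly.) In short, the criterion and the starting chart are the right ingredients, but the heart of the theorem --- the explicit covering, in particular for $X_{18}^{\mathrm{a}}$ and for the boundary $H_0$ in the $\Gm^2$-cases --- is exactly what is missing, and this is the content of \cite{PZ20}.
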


For more higher-dimensional examples of flexible affine cones, see \cite{MPS18}.

\subsection{Cylinders in~complements to hypersurfaces}
\label{subsection:complements}
This section is motivated by the~following folklore conjecture that first appeared in 2005 \cite{FR05}.

\begin{conjecture}
\label{conj:Gizatullin}
Let $S$ be a~smooth cubic surface in $\mathbb{P}^3$.
Then any automorphism of the~affine variety $\mathbb{P}^3\setminus S$ is induced by an~automorphism of $\mathbb{P}^3$, i.e., we have
$$
\mathrm{Aut}\big(\mathbb{P}^3\setminus S\big)=\mathrm{Aut}\big(\mathbb{P}^3,S\big).
$$
\end{conjecture}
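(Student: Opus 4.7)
The inclusion $\Aut(\PP^3, S) \hookrightarrow \Aut(\PP^3 \setminus S)$ given by restriction is tautological, so the content of the conjecture is the reverse containment. My plan is to exploit the natural étale cyclic triple cover of $V := \PP^3 \setminus S$ by the complement of a hyperplane section in a smooth cubic threefold, and to transfer the extension problem to a rigidity statement for that threefold.

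As a rigidity input for $V$ itself, $V$ contains no cylinder by Example~\ref{example:DK15}, so by Theorem~\ref{thm:affine-cylindricity} it admits no effective $\Ga$-action; moreover $\Pic(V) \cong \ZZ/3$ is generated by $\OOO_{\PP^3}(1)|_V$ since $S \sim 3H$. Fixing a cubic form $s$ defining $S$, the hypersurface $\tX = \{x_4^3 = s(x_0, x_1, x_2, x_3)\} \subset \PP^4$ is a smooth cubic threefold, and projection onto $[x_0{:}\cdots{:}x_3]$ realizes a cyclic triple cover $\pi\colon \tX \to \PP^3$ branched along $S$. Setting $H := \tX \cap \{x_4 = 0\} \cong S$, the restriction $\pi|_{\tX \setminus H}\colon \widetilde V := \tX \setminus H \to V$ is an étale $\mumu_3$-Galois cover, namely the universal cover of $V$ (since $\pi_1(V) = \ZZ/3$).

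Given $\phi \in \Aut(V)$, I would first lift it to an automorphism $\widetilde\phi$ of $\widetilde V$ normalizing the deck group $\mumu_3$, using that $\phi$ acts on $\pi_1(V) = \ZZ/3$. The key step is to extend $\widetilde\phi$ to a biregular automorphism of $\tX$. Since $\widetilde V$ is Zariski open in $\tX$ with irreducible complement $H$, the map extends at least to a birational self-map $\overline{\widetilde\phi}\colon \tX \dashrightarrow \tX$ whose indeterminacy locus and contracted divisors must all lie in $H$. Once $\overline{\widetilde\phi}$ is known to be biregular, the identity $\widetilde\phi(\widetilde V) = \widetilde V$ forces $\overline{\widetilde\phi}(H) = H$, and the descent by $\mumu_3$ through $\tX/\mumu_3 = \PP^3$ produces an element of $\Aut(\PP^3, S)$ inducing $\phi$.

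The main obstacle is precisely this birational-to-biregular extension on the cubic threefold. The cleanest route is to invoke birational rigidity of smooth cubic threefolds in the strong form $\operatorname{Bir}(\tX) = \Aut(\tX)$; if that is unavailable at the generality needed, the fallback is a direct Sarkisov/MMP analysis of a resolution $\tX \leftarrow W \to \tX$ of $\overline{\widetilde\phi}$. Since the only divisor that could be contracted on either side is $H$, one would rule this out by combining adjunction on $H$ (a smooth cubic surface with $-K_H$ ample) with the absence of cylinders on $V$, which---propagated up the étale cover---should forbid the $\A^1$-ruled structures on $\widetilde V$ needed to support a Sarkisov link centred on $H$. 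A subsidiary care point is verifying that the lift and descent interact correctly with the Galois action, since $\widetilde\phi$ in general only normalizes rather than centralizes $\mumu_3$.
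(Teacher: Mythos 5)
You have not proved the statement, and indeed you could not have done so by comparison with the paper: this is Conjecture~\ref{conj:Gizatullin}, an open folklore conjecture which the survey records without proof; the strongest result in its direction proved there is Theorem~\ref{theorem:smoothdP}, which only shows that $\Aut(\PP^3\setminus S)$ contains no nontrivial \emph{connected} algebraic subgroup (via irrationality of certain del Pezzo threefolds), far short of the full equality $\Aut(\PP^3\setminus S)=\Aut(\PP^3,S)$.

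The concrete gap in your argument is the step you yourself flag: extending the lifted automorphism $\widetilde\phi$ of $\tX\setminus H$ to a biregular automorphism of the cubic threefold $\tX$. The ``strong form'' of rigidity you invoke, $\operatorname{Bir}(\tX)=\Aut(\tX)$, is false: smooth cubic threefolds are birationally rigid in Iskovskikh's sense but not superrigid, and $\operatorname{Bir}(\tX)$ contains many non-biregular self-maps (for instance the classical birational involutions attached to lines), so rigidity alone does not force $\overline{\widetilde\phi}$ to be an isomorphism. Your fallback --- a Sarkisov/MMP analysis in which the only contractible divisor is $H$, ruled out by adjunction together with the absence of cylinders in $\PP^3\setminus S$ --- is a heuristic, not an argument: the absence of cylinders (Example~\ref{example:DK15}) combined with Theorem~\ref{thm:affine-cylindricity} only excludes effective $\Ga$-actions, i.e.\ connected unipotent subgroups of $\Aut(\PP^3\setminus S)$, and gives no control over an individual birational self-map or over untwisting a chain of Sarkisov links whose composite happens to restrict to an automorphism of $\tX\setminus H$. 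In effect you have transferred Gizatullin's problem to the statement ``every automorphism of the complement of a hyperplane section $H\cong S$ in a smooth cubic threefold extends biregularly,'' which is an equally open extension problem one dimension up, not a reduction to known results. The preparatory steps (the \'etale $\mumu_3$-cover, $\pi_1(\PP^3\setminus S)\cong\ZZ/3$, lifting $\phi$ to a map normalizing the deck group, and the descent to $\Aut(\PP^3,S)$ once biregularity is granted) are fine, but they do not touch the core difficulty.
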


If $S$ is smooth surface in $\mathbb{P}^3$ of degree $\geqslant 4$, then it is easy to see that $\mathrm{Aut}(\mathbb{P}^3\setminus S)=\mathrm{Aut}(\mathbb{P}^3,S)$.
Vice versa, if $S$ is either a smooth quadric surface or a plane in $\mathbb{P}^3$, then $\mathrm{Aut}(\mathbb{P}^3\setminus S)\ne\mathrm{Aut}(\mathbb{P}^3,S)$.
Moreover, it is not hard to see that Conjecture~\ref{conj:Gizatullin} fails for \textbf{some} singular cubic surfaces.

\begin{example}
\label{example:cubic-surfaces}
Let $S$ be one of the three cubic surfaces with Du Val singularities in $\PP^3$ that
admits an effective $\Ga$-action (see \cite{CheltsovProkhorov,MartinStadlmayr,Sakamaki2010}).
Then $\mathrm{Aut}(\mathbb{P}^3,S)$  contains a subgroup isomorphic~to~$\Ga$,
so that $\mathrm{Aut}(\mathbb{P}^3\setminus S)$ also contains a subgroup isomorphic~to~$\Ga$.
Then $\mathrm{Aut}(\mathbb{P}^3\setminus S)$ must be infinite dimensional (see \cite{Fr06}),
so that $\mathrm{Aut}(\mathbb{P}\setminus S)\ne \mathrm{Aut}(\mathbb{P},S)$, because $\mathrm{Aut}(\mathbb{P},S)$ is algebraic.
\end{example}

Based on the~results in  \cite{CPW16a,CPW16b,CDP18},
we may generalize the~problem to del Pezzo surfaces that are hypersurfaces in weighted projective spaces.
To be precise, let $S$ be a~del Pezzo surface that has at most Du Val singularities such that $K_S^2\leqslant 3$.
Then we have one of the~following three cases:
\begin{enumerate}
\item $K_S^2=1$, and $S$ is a~hypersurface of degree $6$ in $\mathbb{P}(1,1,2,3)$;
\item $K_S^2=2$, and $S$ is a~hypersurface of degree $4$ in $\mathbb{P}(1,1,1,2)$;
\item $K_S^2=3$, and $S$ is a~hypersurface of degree $3$ in $\mathbb{P}^3$.
\end{enumerate}
Denote by $\mathbb{P}$ the~weighted projective space in these three cases:
$\mathbb{P}(1,1,2,3)$, $\mathbb{P}(1,1,1,2)$ or $\mathbb{P}^{3}$.
Then, very surprisingly, we have the~following result:

\begin{theorem}[{\cite{CDP18,Park20}}]
\label{theorem:del-Pezzo-1}
The following three conditions are equivalent:
\begin{itemize}
\item the~surface $S$ contains a~$(-K_S)$-polar cylinder;
\item the~complement $\mathbb{P}\setminus S$ is cylindrical;
\item the~group $\mathrm{Aut}(\mathbb{P}\setminus S)$ contains a~unipotent subgroup.
\end{itemize}
\end{theorem}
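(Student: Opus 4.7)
The plan is to establish the cycle of implications $(3)\Rightarrow(2)\Rightarrow(1)\Rightarrow(3)$. The first of these, $(3)\Rightarrow(2)$, is immediate from Theorem~\ref{thm:affine-cylindricity}: a unipotent subgroup of $\mathrm{Aut}(\mathbb{P}\setminus S)$ provides an effective $\Ga$-action, hence a principal cylinder, on the affine variety $\mathbb{P}\setminus S$.

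For $(1)\Rightarrow(3)$, in each of the three cases one has $-K_S\sim\mathcal{O}_S(1)$, so by Theorem~\ref{theorem:criterion} the existence of a $(-K_S)$-polar cylinder in $S$ is equivalent to an effective $\Ga$-action on the affine cone $V=\{f=0\}\subset\mathbb{A}^{n+1}$, where $f$ is the defining equation of $S$. The plan is to promote this $\Ga$-action on $V$ to an $f$-preserving $\Ga$-action on the ambient affine space $\mathbb{A}^{n+1}$. Concretely, starting from a $\Gm$-homogeneous locally nilpotent derivation $\partial$ on $\Bbbk[x_0,\ldots,x_n]/(f)$, I would lift it to a polynomial derivation $\tilde\partial$ of $\Bbbk[x_0,\ldots,x_n]$ and modify the lift, using that any two lifts of $\partial$ differ by an element of $f\cdot\Der(\Bbbk[x_0,\ldots,x_n])$, so as to arrange $\tilde\partial(f)=0$. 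The special shape of $f$ established in Subsection~\ref{subsection:cylinders-constructions} --- linear in the coordinate dual to the singular point $P$ of $S$ --- is precisely what makes this modification possible. Once $\tilde\partial(f)=0$, the derivation $\tilde\partial$ acts on every level set of $f$, in particular on $\{f\neq 0\}$, commuting with the $\Gm$-scaling up to a character; it therefore descends to an effective $\Ga$-action on the quotient $\{f\neq 0\}/\Gm\cong\mathbb{P}\setminus S$, providing the required unipotent subgroup of $\mathrm{Aut}(\mathbb{P}\setminus S)$.

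For $(2)\Rightarrow(1)$, my first step is to deduce a $\Ga$-action on $\mathbb{P}\setminus S$. Since $\mathrm{Cl}(\mathbb{P})\cong\mathbb{Z}$ is generated by $[\mathcal{O}_{\mathbb{P}}(1)]$ and $[S]=d\cdot[\mathcal{O}_{\mathbb{P}}(1)]$ with $d\in\{3,4,6\}$, the divisor class group $\mathrm{Cl}(\mathbb{P}\setminus S)\cong\mathbb{Z}/d\mathbb{Z}$ is finite. Thus if $U\subset\mathbb{P}\setminus S$ is a cylinder with boundary $D=(\mathbb{P}\setminus S)\setminus U$, some positive multiple $N\cdot D$ is effective and principal, and Theorem~\ref{thm:affine-cylindricity} then yields an effective $\Ga$-action on $\mathbb{P}\setminus S$. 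I would next reverse the construction of $(1)\Rightarrow(3)$: the $\Ga$-action on $\mathbb{P}\setminus S$ lifts to a $\Gm$-equivariant $\Ga$-action on $\{f\neq 0\}\subset\mathbb{A}^{n+1}$, extends across $V=\{f=0\}$ while preserving $f$, and restricts to an effective $\Ga$-action on $V$; Theorem~\ref{theorem:criterion} then delivers the desired $(-K_S)$-polar cylinder on $S$.

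The main obstacle is the lifting of locally nilpotent derivations between the cone $V$ and the ambient affine space. A naive lift $\tilde\partial$ of an LND $\partial$ on $V$ satisfies only $\tilde\partial(f)=g\cdot f$ for some polynomial $g$, and correcting to $\tilde\partial(f)=0$ requires a derivation $D$ of $\Bbbk[x_0,\ldots,x_n]$ with $D(f)=-g$, i.e.\ $g$ must lie in the Jacobian ideal $J(f)$. This is not automatic for arbitrary hypersurfaces, and is where the explicit form of $f$ at the singular point, controlled by the singularity type via Theorem~\ref{theorem:dP-du-Val}, becomes essential. Pushing through this case analysis across $K_S^2\in\{1,2,3\}$ and the permissible Du Val singularities will be the technical heart of the argument, following \cite{CDP18,Park20}.
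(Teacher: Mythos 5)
Your cycle starts well: (3)$\Rightarrow$(2) via Theorem~\ref{thm:affine-cylindricity} is correct, and in the first half of (2)$\Rightarrow$(1) the reduction to a principal cylinder using finiteness of $\mathrm{Cl}(\mathbb{P}\setminus S)$ is fine once you add the standard remark that the complement of an affine open subset of a normal variety is pure of codimension one, so the boundary of the cylinder really is a divisor (this half is \cite[Proposition~3.5]{KPZ11}, as the survey notes). The genuine gaps are in the two transfers between $\mathbb{P}\setminus S$ and the cone $\{f=0\}$, which are exactly the hard content of the theorem (the survey itself does not prove it, but quotes \cite{CDP18,Park20}, isolating Theorem~\ref{theorem:main-tool} as the key tool). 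In (2)$\Rightarrow$(1) you assert that the lifted $\Ga$-action on $\{f\neq 0\}$ ``extends across $V=\{f=0\}$ while preserving $f$, and restricts to an effective $\Ga$-action on $V$.'' There is no reason for this: the corresponding vector field will in general have poles along the cone, and even when it extends its restriction to $\{f=0\}$ may vanish identically. This step is precisely Theorem~\ref{theorem:main-tool} (\cite[Theorem~3.1]{Park20}); it is obtained not by extending the derivation but by degenerating it, e.g.\ by filtering $\mathcal{O}(\mathbb{P}\setminus S)=\big(\Bbbk[x_0,\ldots,x_n][1/f]\big)_0$ by pole order along $S$, whose associated graded ring is the anticanonical ring of $S$, and extracting a nonzero homogeneous locally nilpotent derivation there before invoking Theorem~\ref{theorem:criterion}. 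Since a degeneration cannot be reversed, ``reversing the construction of (1)$\Rightarrow$(3)'' is not an available shortcut.

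In (1)$\Rightarrow$(3), beyond the Jacobian-ideal membership that you flag yourself, two further steps fail as written. First, even when a lift with $\tilde\partial(f)=0$ exists, the corrected derivation $\tilde\partial+fD$ has no reason to be locally nilpotent on $\Bbbk[x_0,\ldots,x_n]$ or on $\{f\neq0\}$; local nilpotency is not inherited from the hypersurface quotient, and you never address it. Second, the descent to $\mathbb{P}\setminus S=\{f\neq0\}/\Gm$ is false as stated: the homogeneous derivation supplied by Theorem~\ref{theorem:criterion} has some degree $e$, and when $e\neq 0$ the resulting $\Ga$ is normalized but not centralized by $\Gm$, and an action that only ``commutes up to a character'' does not descend to the $\Gm$-quotient; to act on the degree-zero ring $\mathcal{O}(\mathbb{P}\setminus S)$ you would need a degree-zero derivation, say a replica $(h/f^k)\tilde\partial$ with $h$ a homogeneous element of $\ker\tilde\partial$ of the right degree, whose existence and local nilpotency are again unproved. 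This is why \cite{CDP18} does not argue by lifting from the cone: using Theorem~\ref{theorem:dP-du-Val} to see that condition (1) forces $S$ to have a singular point of the type treated in Subsection~\ref{subsection:cylinders-constructions}, the unipotent subgroup of $\mathrm{Aut}(\mathbb{P}\setminus S)$ is produced by an explicit construction adapted to that point (the same projection from $P$ that produces the polar cylinders there). As it stands, both transfer steps, which are the heart of the statement, remain unproved in your proposal.
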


Combining this result with  Theorem~\ref{theorem:dP-du-Val}, we obtain

\begin{corollary}[{\cite[Corollary~1.6]{Park20}}]
\label{corollary:del-Pezzo}
The group $\mathrm{Aut}(\mathbb{P}\setminus S)$ contains no unipotent subgroup exactly when $S$ is one of the~surfaces listed in Theorem~\ref{theorem:dP-du-Val}.
\end{corollary}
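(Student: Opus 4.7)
The proof will be a straightforward combination of the two main results cited just before the corollary, so the plan is more a matter of assembling the logical chain than overcoming any genuine difficulty. First I would use the classification in Theorem~\ref{theorem:dP-du-Val}, which characterizes exactly those del Pezzo surfaces $S$ with Du Val singularities of degree $K_S^2\leqslant 3$ that fail to carry a $(-K_S)$-polar cylinder: for $K_S^2=3$ these are the smooth cubic surfaces; for $K_S^2=2$ these are those whose worst singularity (if any) is of type $\mathrm{A}_1$; and for $K_S^2=1$ these are those whose singularities (if any) are of types $\mathrm{A}_1$, $\mathrm{A}_2$, $\mathrm{A}_3$, $\mathrm{D}_4$. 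Note that for $K_S^2\geqslant 4$ a smooth del Pezzo surface always contains a $(-K_S)$-polar cylinder by Lemma~\ref{lemma:KPZ}, so the interesting hypersurface embeddings are precisely the three listed above $S\subset\PP$ where $\PP$ is $\PP(1,1,2,3)$, $\PP(1,1,1,2)$, or $\PP^3$.

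Next I would invoke Theorem~\ref{theorem:del-Pezzo-1}, which gives the non-trivial equivalence between three conditions on $S\subset\PP$: existence of a $(-K_S)$-polar cylinder on $S$, cylindricity of the affine complement $\PP\setminus S$, and existence of a unipotent subgroup in $\mathrm{Aut}(\PP\setminus S)$. Taking the contrapositive of the equivalence between the first and third conditions, $\mathrm{Aut}(\PP\setminus S)$ contains no unipotent subgroup if and only if $S$ has no $(-K_S)$-polar cylinder.

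Chaining these two observations gives the claim: the absence of a unipotent subgroup in $\mathrm{Aut}(\PP\setminus S)$ is equivalent (by Theorem~\ref{theorem:del-Pezzo-1}) to the absence of a $(-K_S)$-polar cylinder on $S$, which by Theorem~\ref{theorem:dP-du-Val} occurs precisely for the list of surfaces described there. The only mildly subtle point is to notice that the hypothesis of Theorem~\ref{theorem:del-Pezzo-1} requires $S$ to be realized as a hypersurface in the appropriate weighted projective space $\PP$, which is exactly the setup in which the classification of Theorem~\ref{theorem:dP-du-Val} with $K_S^2\leqslant 3$ is phrased, so the two statements glue with no compatibility issue. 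There is no genuine obstacle to overcome here; the substantive content sits inside Theorems~\ref{theorem:del-Pezzo-1} and~\ref{theorem:dP-du-Val}, and the corollary merely records their juxtaposition.
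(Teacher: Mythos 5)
Your proposal is correct and coincides with the paper's own argument: the corollary is stated there as an immediate consequence of combining Theorem~\ref{theorem:del-Pezzo-1} (equivalence of the $(-K_S)$-polar cylinder condition with the existence of a unipotent subgroup of $\mathrm{Aut}(\mathbb{P}\setminus S)$) with the classification in Theorem~\ref{theorem:dP-du-Val}, exactly as you do. Your remark on the compatibility of the hypersurface embedding $S\subset\mathbb{P}$ with the range $K_S^2\leqslant 3$ is a sensible sanity check, and nothing further is needed.
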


\begin{corollary}[{\cite[Corollary~4.10]{CDP18}}]
\label{corollary:del-Pezzo-1}
Suppose that the~surface $S$ contains a~$(-K_S)$-polar cylinder. Then $\mathrm{Aut}(\mathbb{P}\setminus S)\ne \mathrm{Aut}(\mathbb{P},S)$.
\end{corollary}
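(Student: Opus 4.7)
The plan is to deduce Corollary~\ref{corollary:del-Pezzo-1} as a short consequence of Theorem~\ref{theorem:del-Pezzo-1} combined with the structural dichotomy between algebraic and non-algebraic automorphism groups that was recalled earlier in the introduction. The key point is that carrying a non-trivial $\Ga$-action forces an affine variety of dimension at least two to have an enormous automorphism group, while $\Aut(\mathbb{P},S)$ is automatically a finite-dimensional algebraic group.

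First I would apply Theorem~\ref{theorem:del-Pezzo-1}: the hypothesis that $S$ contains a $(-K_S)$-polar cylinder, together with the equivalences of that theorem, guarantees that $\Aut(\mathbb{P}\setminus S)$ contains a non-trivial unipotent one-parameter subgroup $\Ga$. Next, I would observe that $\mathbb{P}\setminus S$ is an affine threefold, since $S$ is an ample hypersurface in the weighted projective space $\mathbb{P}$. By Freudenburg's theorem recalled in the introduction, an affine variety of dimension at least two that carries a non-trivial $\Ga$-action has an infinite-dimensional and non-algebraic automorphism group. Therefore $\Aut(\mathbb{P}\setminus S)$ is infinite-dimensional and not an algebraic group.

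On the other hand, $\Aut(\mathbb{P},S)$ is the closed subgroup of $\Aut(\mathbb{P})$ preserving the hypersurface $S$. Since $\mathbb{P}$ is a weighted projective space, $\Aut(\mathbb{P})$ is a linear algebraic group, realized as the quotient by the $\Gm$-scaling of the finite-dimensional group of degree-preserving graded $\Bbbk$-algebra automorphisms of the homogeneous coordinate ring of $\mathbb{P}$. Consequently $\Aut(\mathbb{P},S)$ is itself a finite-dimensional algebraic group. Since a finite-dimensional algebraic group cannot equal an infinite-dimensional non-algebraic one, the desired inequality $\Aut(\mathbb{P}\setminus S)\neq\Aut(\mathbb{P},S)$ follows.

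There is essentially no obstacle in this argument; all of the substantive content is already contained in Theorem~\ref{theorem:del-Pezzo-1}, and the corollary only packages its conclusion with the above dimensional asymmetry. The only point one should double-check is that $\Aut(\mathbb{P})$ is genuinely algebraic when $\mathbb{P}=\mathbb{P}(1,1,2,3)$ or $\mathbb{P}(1,1,1,2)$, i.e.\ in the singular cases; but this is standard, since these automorphisms are induced by weighted-homogeneous substitutions in the variables, modulo the torus rescaling, and so form a finite-dimensional linear algebraic group.
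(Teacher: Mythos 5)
Your argument is correct and coincides with the paper's own proof: Theorem~\ref{theorem:del-Pezzo-1} provides the unipotent subgroup, whence $\mathrm{Aut}(\mathbb{P}\setminus S)$ is infinite dimensional (non-algebraic) by the result of Freudenburg recalled in the introduction, while $\mathrm{Aut}(\mathbb{P},S)$ is a (finite-dimensional) algebraic group, so the two cannot be equal. Your extra remark checking algebraicity of $\mathrm{Aut}(\mathbb{P})$ for the weighted projective spaces is a sensible precaution, but it adds nothing beyond what the paper's one-line proof already uses.
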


\begin{proof}
By Theorem~\ref{theorem:del-Pezzo-1}, the~group $\mathrm{Aut}(\mathbb{P}\setminus S)$ contains a~unipotent subgroup,
so that it is infinite dimensional, which implies that $\mathrm{Aut}(\mathbb{P}\setminus S)\ne \mathrm{Aut}(\mathbb{P},S)$,
because $\mathrm{Aut}(\mathbb{P},S)$ is algebraic.
\end{proof}

This corollary together with Theorem~\ref{theorem:dP-du-Val} show that Conjecture~\ref{conj:Gizatullin} fails for \textbf{all} singular cubic surfaces
that have Du Val singularities.
On the~other hand, we have

\begin{theorem}[{\cite[Theorem~4.1]{CDP18}}]
\label{theorem:smoothdP}
Suppose that $S$ is smooth. If $K_S^2=1$, then
$$
\mathrm{Aut}\left(\mathbb{P}\setminus S\right)=\mathrm{Aut}\left(\mathbb{P},S\right).
$$
If $K_S^2=2$ or $K_S^2=3$, then $\mathrm{Aut}(\mathbb{P}\setminus S)$ does not contain nontrivial connected algebraic groups.
\end{theorem}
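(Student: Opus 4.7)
The starting point is Corollary~\ref{corollary:dP-du-Val}, which says that a smooth del Pezzo surface $S$ with $K_S^2\leqslant 3$ contains no $(-K_S)$-polar cylinder. Combined with the equivalence of the first and third bullets of Theorem~\ref{theorem:del-Pezzo-1}, this immediately implies that in all three cases the group $\mathrm{Aut}(\mathbb{P}\setminus S)$ contains no one-dimensional unipotent subgroup. Since every nontrivial connected algebraic group other than a torus contains $\Ga$ as a subgroup (either inside its unipotent radical or as a root subgroup of the semisimple part of a Levi factor), it follows that every connected algebraic subgroup of $\mathrm{Aut}(\mathbb{P}\setminus S)$ must be a torus.

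To complete the case $K_S^2\in\{2,3\}$, it therefore remains to rule out nontrivial $\Gm$-subgroups. Since $S$ is ample, $U=\mathbb{P}\setminus S$ is affine, and a $\Gm$-action on $U$ corresponds to an effective $\mathbb{Z}$-grading of its coordinate ring. My plan is to show that such an action extends to a regular $\Gm$-action on $\mathbb{P}$ preserving~$S$: applying Sumihiro's equivariant completion theorem produces a $\Gm$-equivariant normal projective completion $\overline{U}\supset U$, and running an appropriate $\Gm$-equivariant birational procedure relative to the boundary identifies the resulting pair with $(\mathbb{P},S)$, using that $\mathbb{P}$ is essentially the unique smooth (weighted) projective completion of $U$ whose boundary is the single ample irreducible divisor~$S$. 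The induced $\Gm$-action on $\mathbb{P}$ then preserves $S$ setwise and restricts to a nontrivial algebraic-group action on~$S$, contradicting the finiteness of $\mathrm{Aut}(S)$ for smooth del Pezzo surfaces of degree $\leqslant 4$. This extension step is the main technical difficulty of the proof.

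For $K_S^2=1$ the assertion is the stronger equality $\mathrm{Aut}(\mathbb{P}\setminus S)=\mathrm{Aut}(\mathbb{P},S)$, so one must extend every individual $\phi\in\mathrm{Aut}(U)$, not merely continuous families. The inclusion $\mathrm{Aut}(\mathbb{P},S)\hookrightarrow\mathrm{Aut}(U)$ is clear from restriction; for the reverse inclusion I would view $\phi$ as a birational self-map $\widetilde{\phi}$ of $\mathbb{P}(1,1,2,3)$ and show that $\widetilde{\phi}$ is regular and sends $S$ to~$S$. Here the idea is to exploit the Weierstrass-type form $x_3^2=f_6(x_0,x_1,x_2)$ available for a smooth sextic~$S$: the forgetful map $\mathbb{P}(1,1,2,3)\dashrightarrow\mathbb{P}(1,1,2)$ restricts to a double cover $S\to\mathbb{P}(1,1,2)$ branched over $\{f_6=0\}$, and this very rigid double-cover structure, together with the absence of $(-K_S)$-polar cylinders used once more, confines the possible indeterminacy of $\widetilde{\phi}$ and forces $\widetilde{\phi}$ to lie in $\mathrm{Aut}(\mathbb{P},S)$. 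The most delicate aspect is the treatment of discrete automorphisms of~$U$, for which the continuous-group arguments of the previous paragraph do not suffice and one must use the specific weighted projective geometry of $\mathbb{P}(1,1,2,3)$.
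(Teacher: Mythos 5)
Your reduction of the connected-subgroup statement to excluding $\Ga$ and $\Gm$ is fine: Theorem~\ref{theorem:del-Pezzo-1} together with Corollary~\ref{corollary:dP-du-Val} rules out unipotent subgroups, and a connected algebraic subgroup of the automorphism group of an affine variety is linear, hence contains a copy of $\Ga$ unless it is a torus. The gap is that everything beyond this point is asserted rather than proved. For $K_S^2\in\{2,3\}$ the whole content of the theorem is the exclusion of $\Gm$, and your argument rests on the claim that a Sumihiro equivariant completion of $U=\mathbb{P}\setminus S$ can be modified $\Gm$-equivariantly, over $U$, into the pair $(\mathbb{P},S)$ because $\mathbb{P}$ is ``essentially the unique'' completion of $U$ with irreducible ample boundary. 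No such uniqueness is established, and it is exactly the hard point: an equivariant MMP run on an equivariant completion has no reason to terminate at $(\mathbb{P},S)$ rather than at some other log Mori fibre space completion of $U$, and classifying those is as difficult as the theorem itself (for $K_S^2=3$ the analogous extension statement for the full group $\mathrm{Aut}(\mathbb{P}^3\setminus S)$ is the open Conjecture~\ref{conj:Gizatullin}). The actual proof in \cite{CDP18} goes another way, which is what the sentence after the statement alludes to: since $\mathrm{Cl}(U)$ is finite cyclic, $U$ carries a cyclic \'etale cover $W=V\setminus H$, where $V$ is the del Pezzo threefold of degree $K_S^2$ (a sextic in $\PP(1,1,1,2,3)$, a quartic in $\PP(1,1,1,1,2)$, or a cubic in $\PP^4$) and $H\cong S$ is a hyperplane section; a $\Ga$-action on $U$ lifts to $W$, a $\Gm$-action lifts after an isogeny, and an effective action of either group makes $W$, hence $V$, birational to $Z\times\PP^1$ with $Z$ rationally connected (for $\Gm$ the generic orbit is a trivial torsor by Hilbert~90), so $V$ would be rational, contradicting \cite{CG,Voi88,Grinenko03,Grinenko04}. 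Your proposal never uses this irrationality input, on which the survey explicitly says the proof depends.

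The case $K_S^2=1$ is in worse shape, since there you must extend arbitrary, in particular discrete, automorphisms, so regularization and equivariant-completion techniques are unavailable. Saying that the Weierstrass double-cover structure $S\to\PP(1,1,2)$ together with the absence of polar cylinders ``confines the possible indeterminacy of $\widetilde{\phi}$'' is not an argument: that structure lives on the surface $S$, whereas what must be controlled are the centres of indeterminacy of a birational self-map of the threefold $\PP(1,1,2,3)$ preserving the boundary. For that one needs genuine rigidity input, e.g.\ a Noether--Fano-type analysis of the log pair, or a lift of $\phi$ to the $\mumu_6$-cover $V_1\setminus H$ followed by the birational rigidity of the degree-one del Pezzo threefold $V_1$ \cite{Grinenko03,Grinenko04} to force the lifted map to be biregular and to preserve $H$. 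None of this machinery appears in your sketch, so both the torus exclusion for degrees $2$ and $3$ and the extension statement for degree $1$ remain unproved as written.
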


The proof of this result depends on irrationality of some del Pezzo threefolds (see \cite{CG,Grinenko03,Grinenko04,Voi88}).
Taking into account Theorem~\ref{theorem:smoothdP}, Corollary~\ref{corollary:dP-du-Val} and Corollary~\ref{corollary:del-Pezzo-1}, we propose the following

\begin{conjecture}
\label{conjecture:main}
The surface $S$ contains no $(-K_{S})$-polar cylinder $\iff$ $\mathrm{Aut}\left(\mathbb{P}\setminus S\right)=\mathrm{Aut}\left(\mathbb{P},S\right)$.
\end{conjecture}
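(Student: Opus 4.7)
The implication $\Aut(\PP\setminus S)=\Aut(\PP,S)\Rightarrow S$ has no $(-K_S)$-polar cylinder is the contrapositive of Corollary~\ref{corollary:del-Pezzo-1} and requires no new argument: a $(-K_S)$-polar cylinder would, by Theorem~\ref{theorem:del-Pezzo-1}, produce a $\Ga$-subgroup of $\Aut(\PP\setminus S)$, which is incompatible with its coinciding with the algebraic group $\Aut(\PP,S)$. So the substance of the conjecture lies in the converse, and I would organise the proof of that direction by the classification in Theorem~\ref{theorem:dP-du-Val}: the smooth $K_S^2=1$ case is already Theorem~\ref{theorem:smoothdP}, which leaves the smooth cases $K_S^2\in\{2,3\}$ and the Du Val surfaces explicitly listed in Theorem~\ref{theorem:dP-du-Val}.

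In each remaining case, the hypothesis together with Theorem~\ref{theorem:del-Pezzo-1} forces $\PP\setminus S$ to be rigid, i.e.\ to admit no effective $\Ga$-action. By the general structure theorem for automorphism groups of rigid affine varieties \cite{AG17}, $\Aut(\PP\setminus S)$ then contains a unique maximal torus $\mathbb{T}$, and the whole group is an extension of the centralizer of $\mathbb{T}$ by a discrete subgroup of $\GL_r(\ZZ)$. In the smooth cases $K_S^2\in\{2,3\}$, Theorem~\ref{theorem:smoothdP} further kills all nontrivial connected algebraic subgroups, so $\mathbb{T}=\{1\}$ and $\Aut(\PP\setminus S)$ is countable. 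The remaining task is to promote this structural information into the birational statement that every $\phi\in\Aut(\PP\setminus S)$ extends to an automorphism of $\PP$ preserving $S$.

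The extension step I would approach as follows. The isomorphism $\phi$ extends to a birational self-map $\bar\phi\colon\PP\dashrightarrow\PP$; a common resolution $V\stackrel{\pi}{\to}\PP$, $V\stackrel{\mu}{\to}\PP$ then has all $\pi$- and $\mu$-exceptional divisors supported over $S$, and any $\pi$-exceptional divisor which is not $\mu$-exceptional must map under $\mu$ onto the unique codimension-one component $S$, and symmetrically. I would then decompose $\bar\phi$ into Sarkisov links on the log Fano pair $(\PP,S)$ (this pair is log Fano because $-(K_{\PP}+S)$ is ample in all three ambient weighted projective spaces) and argue that each link either preserves $S$ or produces an $\A^1$-ruling on some Zariski open subset of $\PP\setminus S$; the latter would yield a $(-K_S)$-polar cylinder on $S$ via Theorem~\ref{theorem:criterion}, contradicting the hypothesis. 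Ruling out all nontrivial links should therefore leave $\bar\phi\in\Aut(\PP,S)$.

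The main obstacle is the smooth $K_S^2=3$ case, which coincides with Gizatullin's Conjecture~\ref{conj:Gizatullin} and has resisted direct attack since 2005. In that case the exclusion of Sarkisov links ultimately rests on the irrationality of the smooth cubic threefold \cite{CG}, applied either to a double cover of $\PP^3$ branched along $S$ or to an auxiliary Fano threefold produced by a putative link; the smooth $K_S^2=2$ case should succumb to a parallel strategy based on irrationality of suitable double covers. The Du Val cases look more tractable because the singularities both rigidify the pair $(\PP,S)$ and restrict the allowable Sarkisov links, but I expect each singularity profile listed in Theorem~\ref{theorem:dP-du-Val} to require its own explicit verification.
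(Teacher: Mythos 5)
The statement you are proving is stated in the paper as a \emph{conjecture}, not a theorem: the paper offers no proof of it, only supporting evidence. What the paper does establish is exactly one implication, namely that a $(-K_S)$-polar cylinder forces $\mathrm{Aut}(\mathbb{P}\setminus S)\ne\mathrm{Aut}(\mathbb{P},S)$ (Corollary~\ref{corollary:del-Pezzo-1}, via Theorem~\ref{theorem:del-Pezzo-1} and the infinite-dimensionality of automorphism groups of affine varieties with $\Ga$-actions), together with the partial results of Theorem~\ref{theorem:smoothdP}: full equality of the groups only for smooth $S$ with $K_S^2=1$, and merely the absence of nontrivial connected algebraic subgroups for smooth $S$ with $K_S^2\in\{2,3\}$. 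Your first paragraph reproduces this known implication correctly, and your reduction of the problem to the converse direction and to the cases listed in Theorem~\ref{theorem:dP-du-Val} is a reasonable organisation.

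The converse direction, however, is where your proposal has a genuine gap: what you give is a research programme, not a proof, and the steps you phrase with ``should'' are precisely the open content of the conjecture. Concretely: (a) from rigidity of $\mathbb{P}\setminus S$ and Theorem~\ref{theorem:smoothdP} you conclude that the maximal torus is trivial and hence $\mathrm{Aut}(\mathbb{P}\setminus S)$ is countable, but the structure theorem of \cite{AG17} describes $\mathrm{Aut}$ as an extension of the \emph{centralizer} of the maximal torus by a discrete group, and when the torus is trivial this centralizer is the whole group, so no countability follows; (b) the key claim that every Sarkisov link for the pair $(\mathbb{P},S)$ which does not preserve $S$ produces an $\A^1$-ruling on an open subset of $\mathbb{P}\setminus S$, and hence a $(-K_S)$-polar cylinder, is asserted without argument, and it is exactly this step that nobody knows how to carry out --- an abstract automorphism of the affine variety need not interact with any fibration structure, and the known proofs of Theorem~\ref{theorem:smoothdP} instead go through irrationality of del Pezzo threefolds and only control algebraic subgroups, not arbitrary automorphisms; (c) you yourself concede that the smooth cubic case is Gizatullin's Conjecture~\ref{conj:Gizatullin}, open since 2005, and the Du Val cases are left as ``each requires its own explicit verification.'' So the proposal cannot be accepted as a proof; at best it is a plausible strategy whose central steps coincide with the unresolved difficulties that make the statement a conjecture in the paper.
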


If $S$ is a~smooth cubic surface, then it does not contain any $(-K_S)$-polar cylinder by Theorem~\ref{theorem:dP-du-Val}.
In this case, Conjecture~\ref{conjecture:main} claims that $\mathrm{Aut}\left(\mathbb{P}\setminus S\right)=\mathrm{Aut}\left(\mathbb{P},S\right)$,
which is Conjecture~\ref{conj:Gizatullin}.

In \cite{Park20}, Theorem~\ref{theorem:del-Pezzo-1} has been generalized as follows.
Let $X$ be a~normal projective variety, and let $D$ be an~ample Cartier divisor on $X$.
Suppose that  the~following conditions are satisfied:
\begin{enumerate}
\item\label{assumption-1}
the section ring of $(X, D)$ is a~hypersurface, i.e., one has
$$
\bigoplus_{m=0}^{\infty}\mathrm{H}^0(X, \mathcal{O}_X(mD))\cong \Bbbk[x_0,x_1,\ldots, x_n]/(F),
$$
where $\Bbbk[x_0,\ldots, x_n]$ is a~polynomial ring in variables $x_0,\ldots, x_n$ with weights
$$
a_0=\mathrm{wt}(x_0)\leqslant a_1=\mathrm{wt}(x_1)\leqslant\ldots\leqslant a_n=\mathrm{wt}(x_n),
$$
and $F$ is a~quasi-homogeneous polynomial of degree $d$,
so~that $X$ is a~hypersurface in the~weighted projective space  $\mathbb{P}(a_0,a_1,\ldots, a_n)=\mathrm{Proj}(\Bbbk[x_0,x_1,\ldots, x_n])$;
\item\label{assumption-2}
the Veronese map $v_d:\mathbb{P}(a_0, a_1,\ldots, a_n)\dasharrow \mathbb{P}^N$ given by
$\left|\mathcal{O}_{\mathbb{P}(a_0, a_1,\ldots, a_n)}\left(d\right)\right|$ is an~embedding.
\end{enumerate}
Recall from \cite[Proposition~3.5]{KPZ11} that the~complement  $\mathbb{P}(a_0, a_1,\ldots, a_n)\setminus X$ admits a~nontrivial $\Ga$-action
if and only if it is cylindrical. On the~other hand, we have the~following result:

\begin{theorem}[{\cite[Theorem~3.1]{Park20}}]
\label{theorem:main-tool}
Suppose that $\mathbb{P}(a_0, a_1,\ldots, a_n)\setminus X$ has a~nontrivial $\Ga$-action. Then $X$ contains a~$D$-polar cylinder.
\end{theorem}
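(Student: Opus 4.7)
The plan is to translate the hypothesis into an effective $\Ga$-action on the affine cone over $X$ with respect to $D$ and then invoke Theorem~\ref{theorem:criterion}. By assumption~(1) this affine cone is
\[
V=\Spec\Big(\bigoplus_{m\geqslant 0}H^0(X,\mathcal{O}_X(mD))\Big)=\{F=0\}\subset\A^{n+1},
\]
and assumption~(2) ensures that the embedding of $X$ via $v_d$ is of the kind to which Theorem~\ref{theorem:criterion} legitimately applies. Hence it suffices to produce a nonzero locally nilpotent derivation on the graded ring $A:=\Bbbk[x_0,\ldots,x_n]/(F)$ starting from the given nontrivial $\Ga$-action on $\mathbb{P}(a_0,\ldots,a_n)\setminus X$.

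This $\Ga$-action corresponds to a nonzero locally nilpotent derivation $\partial$ on the affine coordinate ring $B:=\mathcal{O}(\mathbb{P}\setminus X)=(\Bbbk[x_0,\ldots,x_n][F^{-1}])_0$. The main construction is to lift $\partial$ to a weighted-homogeneous, degree-zero derivation $\tilde\partial$ of $\Bbbk[x_0,\ldots,x_n]$ normalized by the condition $\tilde\partial(F)=0$. For each coordinate $x_i$, the element $x_i^d/F^{a_i}$ lies in $B$, and the Leibniz rule combined with $\tilde\partial(F)=0$ forces
\[
\tilde\partial(x_i^d)=F^{a_i}\cdot\partial\bigl(x_i^d/F^{a_i}\bigr).
\]
From this, together with the analogous relations for other degree-zero monomial combinations, one reads off $\tilde\partial(x_i)\in\Bbbk[x_0,\ldots,x_n]_{a_i}$. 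The hypersurface structure of assumption~(1) is essential here to ensure that these prescriptions are consistent and that the lift is regular on the polynomial ring rather than acquiring poles along $\{F=0\}$.

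Granting the construction of $\tilde\partial$, the main obstacle is to establish its local nilpotency. For each $i$ there is some $m_i$ with $\partial^{m_i}(x_i^d/F^{a_i})=0$, and since $\tilde\partial(F)=0$ this gives $\tilde\partial^{m_i}(x_i^d)=0$ in $\Bbbk[x_0,\ldots,x_n]$. The standard characteristic-zero fact that the set of elements annihilated by some power of a derivation is a subalgebra of an integral domain that is integrally closed in it then yields $\tilde\partial^{m'_i}(x_i)=0$ for some $m'_i$, since $x_i$ is integral over $\Bbbk[x_i^d]$. Because the $x_i$ generate the polynomial ring, $\tilde\partial$ is an LND, and $\tilde\partial(F)=0$ lets it descend to a nonzero LND on $A$, furnishing an effective $\Ga$-action on $V$. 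Theorem~\ref{theorem:criterion} then produces the required $D$-polar cylinder in $X$. The delicate points I expect to demand the most work are the regularity of $\tilde\partial$ along $\{F=0\}$ in the lifting step and the passage from nilpotency on $x_i^d$ to nilpotency on $x_i$; both rest on characteristic-zero properties of the graded hypersurface structure carved out by assumption~(1).
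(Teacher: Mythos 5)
Your endgame is right: the statement does reduce to producing an effective $\Ga$-action on the cone $V=\Spec\bigl(\bigoplus_{m\geqslant 0}H^0(X,\mathcal{O}_X(mD))\bigr)$ and then quoting Theorem~\ref{theorem:criterion}. But the mechanism you propose for getting that action --- lifting the locally nilpotent derivation $\partial$ of $B=\mathcal{O}(\mathbb{P}\setminus X)$ to a \emph{degree-zero} derivation $\tilde\partial$ of $\Bbbk[x_0,\ldots,x_n]$ with $\tilde\partial(F)=0$ --- does not work, and this is the heart of the matter. A degree-zero locally nilpotent derivation of the graded polynomial ring annihilating $F$ exponentiates to a one-parameter unipotent subgroup of $\Aut(\mathbb{P},X)$; so if your lifting step were valid, every $\Ga$-action on $\mathbb{P}\setminus X$ would extend to $\mathbb{P}$ preserving $X$, which is false in general and is precisely the opposite of the phenomenon behind Conjecture~\ref{conj:Gizatullin} (typically $\Aut(\mathbb{P}\setminus X)\neq\Aut(\mathbb{P},X)$). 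Concretely, the lift is \emph{unique} as a rational vector field (extend $\partial$ to the degree-zero part of $\Frac(\Bbbk[x_0,\ldots,x_n])$ and impose $\tilde\partial(F)=0$), but it is usually not regular. Take weights $(1,1,1)$ and $F=z$, so $X$ is a line in $\PP^2$ and all assumptions hold; on $\PP^2\setminus X=\A^2=\Spec\Bbbk[u,v]$ with $u=x/z$, $v=y/z$, take the LND $u^2\,\partial/\partial v$. Your own prescription gives $\tilde\partial(y)=F\cdot\partial(y/F)=x^2/z$, which has a pole along $\{F=0\}$: no regular degree-zero lift killing $F$ exists (the action is a quadratic birational map of $\PP^2$ and does not extend to an automorphism). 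Moreover, even when the lift happens to be regular, the induced derivation on $\Bbbk[x_0,\ldots,x_n]/(F)$ may vanish identically (e.g.\ $\partial/\partial u$ lifts to $z\,\partial/\partial x$, which is zero modulo $(z)$), so nontriviality on the cone would need a separate argument that you do not give.

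The survey does not reproduce a proof and only cites \cite{Park20}; the argument there (and the analogous one for del Pezzo surfaces in \cite{CDP18}) is a \emph{degeneration}, not a lift. One filters $B$ by pole order along $X$, setting $\mathcal{F}_m=F^{-m}\,\Bbbk[x_0,\ldots,x_n]_{md}$; since $F$ is a nonzerodivisor, assumption (1) identifies the associated graded algebra with $\bigoplus_{m\geqslant 0}H^0\bigl(X,\mathcal{O}_X(mdD)\bigr)$, i.e.\ with the coordinate ring of the affine cone over $X$ re-embedded by $|dD|$ (assumption (2) gives this ring its geometric meaning as the cone over $v_d(X)$). Because the Rees algebra of this filtration is the $d$-th Veronese subring of $\Bbbk[x_0,\ldots,x_n]$, hence finitely generated, the LND $\partial$ has a well-defined degree $s$ with $\partial(\mathcal{F}_m)\subseteq\mathcal{F}_{m+s}$, and its leading term is a nonzero homogeneous derivation of the associated graded ring which inherits local nilpotency. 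This yields an effective $\Ga$-action on the cone over $(X,dD)$, and Theorem~\ref{theorem:criterion} then gives a $dD$-polar, hence $D$-polar, cylinder. In the example above the induced derivation on the cone is $x^2\,\partial/\partial y$, homogeneous of degree $+1$: the action one obtains on the cone is in general not of degree zero, which is exactly why a degree-zero lifting Ansatz cannot capture it.
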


Based on the~results on non-ruledness of  smooth hypersurfaces of low degrees in the~projective spaces such as
\cite{Cheltsov2000,CG,IM,Ko95,Pu87,Pu98,deFe13,Sch19} one can extend Conjecture~\ref{conj:Gizatullin} as follows:

\begin{conjecture}
\label{conjecture:big}
Let $X$ be a~smooth hypersurface in $\mathbb{P}^n$ of degree $d\geqslant 3$.
Then
$$
\mathrm{Aut}\big(\mathbb{P}^n\setminus X\big)=\mathrm{Aut}\big(\mathbb{P}^n,X\big).
$$
\end{conjecture}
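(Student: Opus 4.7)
The plan is to show that every automorphism $\phi$ of $U = \mathbb{P}^n \setminus X$ extends to a biregular self-map of $\mathbb{P}^n$; such an extension automatically satisfies $\bar\phi(X) = X$ (because $\bar\phi$ preserves $U$ set-theoretically) and therefore lies in $\mathrm{Aut}(\mathbb{P}^n, X)$. Since $U$ is Zariski dense, $\phi$ extends uniquely to a birational self-map $\bar\phi \colon \mathbb{P}^n \dashrightarrow \mathbb{P}^n$, so the entire problem reduces to proving that $\bar\phi$ is regular. The analysis splits naturally according to the sign of the log canonical divisor $K_{\mathbb{P}^n} + X \sim (d - n - 1)H$.

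When $d \geqslant n + 2$, the pair $(\mathbb{P}^n, X)$ is log smooth with $K_{\mathbb{P}^n} + X$ ample, and hence coincides with its own log canonical model; the fact that $\bar\phi_{*}X = X$ then forces $\bar\phi$ to be biregular by standard log minimal model theory, reproducing in particular the paper's observation that the conjecture is straightforward for smooth surfaces of degree $\geqslant 4$ in $\mathbb{P}^3$. The borderline case $d = n + 1$ is log Calabi--Yau with $K_{\mathbb{P}^n} + X \sim 0$: here $X$ itself is a smooth Calabi--Yau hypersurface that $\bar\phi$ must preserve, and one would combine the rigidity of the restricted birational self-map $\bar\phi|_X$ with the Matsumura--Monsky theorem (asserting that every automorphism of a smooth hypersurface of degree $\geqslant 3$ in $\mathbb{P}^n$ extends to a projective automorphism of $\mathbb{P}^n$) to force $\bar\phi$ itself to be biregular.

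The main obstacle is the range $3 \leqslant d \leqslant n$, where $-(K_{\mathbb{P}^n} + X)$ is ample, $X$ is a smooth Fano hypersurface, and $U$ has negative log Kodaira dimension; here log minimal model arguments fail completely. The natural strategy I would pursue is to lift $\bar\phi$ to a cyclic cover $\pi \colon Y \to \mathbb{P}^n$ of suitable degree branched along $X$, chosen so that $Y$ is birationally super-rigid in the sense of the Iskovskikh--Manin, Pukhlikov, Cheltsov, de Fernex, and Schreieder circle of results cited in the paper. Super-rigidity of $Y$ would force the lifted birational self-map $\tilde\phi$ to be biregular, and equivariance under the deck group of $\pi$ would descend it to an automorphism of $\mathbb{P}^n$ preserving the branch locus $X$.

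The truly hardest sub-cases --- the real heart of the conjecture --- are those for which no such super-rigidity is available: above all $d = 3$ (for $n = 3$ this is precisely the open Conjecture~\ref{conj:Gizatullin}, and for $n = 4$ one runs squarely into Question~\ref{question:cubic-fourfolds}), and hypersurfaces whose cyclic covers happen to be rational or unirational with large birational automorphism group. For these one would have to argue intrinsically from the complement, using Theorem~\ref{theorem:del-Pezzo-1} as the template: the equality $\mathrm{Aut}(\mathbb{P}^n \setminus X) = \mathrm{Aut}(\mathbb{P}^n, X)$ is tightly linked to the absence of $(-K_X)$-polar cylinders in $X$, so the non-cylindricity proofs via singularities of log pairs developed in Subsection~\ref{subsection:del-Pezzo-no-cylinders}, together with the $\alpha$-invariant obstruction of Theorem~\ref{theorem:alpha}, would need to be transferred to the higher-dimensional setting. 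This last step is where I expect genuinely new ideas to be required.
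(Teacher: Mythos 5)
What you have written is a research programme, not a proof, and indeed no proof exists to compare it with: the statement is an open conjecture, and the paper does not prove it. The paper only records the known partial cases --- the conjecture holds for $d>n$ because $X$ then has non-negative Kodaira dimension, and it holds for $d=n\geqslant 4$ and for $(n,d)=(4,3)$ by the cited irrationality and birational (super)rigidity results for hypersurfaces and their covers \cite{Cheltsov2000,CG,IM,Pu87,Pu98,deFe13}. Your treatment of the easy range $d\geqslant n+1$ (log general type, respectively log Calabi--Yau, so the birational extension $\bar\phi$ is forced to be biregular) is essentially the standard argument behind that remark, modulo the point you gloss over: one must first rule out that $\bar\phi$ contracts $X$ or has indeterminacy along it, which is where the non-uniruledness of $X$ enters, before degree considerations on $\operatorname{Pic}(\mathbb{P}^n)$ give linearity.

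The genuine gap is the range $3\leqslant d\leqslant n$, which is the entire content of the conjecture, and there your proposal does not close it. The cyclic-cover-plus-super-rigidity strategy is only sketched: you do not specify which cover to take for which $(n,d)$, super-rigidity of the relevant covers is not known in all these cases, and the descent of a biregular lift through the deck group back to an automorphism of $\mathbb{P}^n$ preserving $X$ is asserted rather than argued (this is delicate already in the surface case treated in Theorem~\ref{theorem:smoothdP}, whose proof hinges on irrationality of specific del Pezzo threefolds). You yourself concede that the cases $d=3$, $n=3$ (Conjecture~\ref{conj:Gizatullin}) and $n=4$ (tied to Question~\ref{question:cubic-fourfolds}), as well as hypersurfaces whose covers are rational, require "genuinely new ideas"; that concession is accurate, but it means the proposal establishes nothing beyond what the paper already cites as known, and the statement remains open. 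If you want to extract an actual theorem from your plan, the honest formulation is a conditional one: under an explicit super-rigidity hypothesis on a specified cover, plus a worked-out equivariant descent, the equality $\operatorname{Aut}(\mathbb{P}^n\setminus X)=\operatorname{Aut}(\mathbb{P}^n,X)$ holds --- and then verify that hypothesis case by case, in the spirit of Theorem~\ref{theorem:del-Pezzo-1} and Theorem~\ref{theorem:alpha}.
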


The conjecture holds when $d>n$ since the~hypersurface $X$ has non-negative Kodaira dimension.
It~remains true if $d=n\geqslant 4$ and $(n,d)=(4,3)$ due to the~results by  \cite{Cheltsov2000,CG,IM,Pu87,Pu98,deFe13}.

\subsection{Compactifications of $\CC^n$}
\label{subsection:compactifications}
In this subsection, we assume that varieties are defined over~$\CC$.
In this case, the~problem of existence of (Zariski open) cylinders in smooth Fano varieties
is closely related to the~following famous problem posed by Hirzebruch 65 years ago in \cite{Hirzebruch1954}.

\begin{problem}
\label{problem:Hirzebruch}
Find all complex analytic compactifications of $\CC^n$ with second Betti number $1$.
\end{problem}

This problems asks to describe all compact complex manifolds $X$ with $\mathrm{b}_2(X)=1$ that contain
an~open subset $U$ which is biholomorphic to $\CC^n$ and whose complement $A=X\setminus U$ is a~closed complex analytic subspace.
Thus, we call a~\textit{compactification} of $\CC^n$ a~pair $(X,A)$ consisting of
\begin{itemize}
\item a~compact complex manifold $X$ with $\mathrm{b}_2 (X)=1$;
\item and a~closed complex analytic subset $A\subset X$ such that $X\setminus A \underset{\mathrm{bihol}}{\cong} \CC^n$.
\end{itemize}
A~compactification $(X,A)$ of $\CC^n$ is said to be \textit{algebraic} if $X$ is a~smooth projective variety,
and the~biholomorphism $X\setminus A \underset{\mathrm{bihol}}{\cong} \CC^n$ is an~algebraic isomorphism.
Thus, we see that
$$
\text{$(X,A)$ is an algebraic compactification of $\CC^n$}\Longrightarrow \text{$X$ is a cylindrical~Fano~variety}.
$$

\begin{proposition}[{\cite{Ven1962,Brenton1978}}]
\label{proposition:compactificationCn}
Let $(X,A)$ be a~compactification of $\CC^n$. Then the~following holds.
\begin{enumerate}
\item $A$ is purely $1$-codimensional and irreducible;
\item $H^{i}(X,\ZZ)\cong H^{i}(A,\ZZ)$,\ $H_{i}(X,\ZZ)\cong H_{i}(A,\ZZ)$ for every $i\leqslant 2n-2$;
\item $H^1(X,\ZZ)=0$ and $H_1(X,\ZZ)=0$;
\item the~class of $A$ generates the~groups $H^2(X,\ZZ)\cong \ZZ$ and $H^2(A,\ZZ)\cong \ZZ$;
\item if $X$ is Moishezon, then $H^1(X,\OOO_X)=0$ and $H^2(X,\OOO_X)=0$, so that $\Pic(X)\cong H^2(X,\ZZ)$.
\end{enumerate}
\end{proposition}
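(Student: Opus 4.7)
The plan is to establish parts (1)--(4) by purely topological arguments based on the long exact sequence of the pair $(X,A)$, and then to deduce (5) from Hodge theory for Moishezon manifolds. The starting observation is that $(X,A)$ is a good pair and $X/A$ is the one-point compactification of $X\setminus A\cong\CC^n$, hence $X/A\cong S^{2n}$. This gives $H^i(X,A;\ZZ)\cong H_i(X,A;\ZZ)\cong\ZZ$ for $i=2n$ and zero otherwise, and plugging into the long exact sequences of the pair immediately yields (2) for $i\leqslant 2n-2$. A short extra computation in the top two degrees, using $H^{2n}(A,\ZZ)=0$ since $A$ has real dimension $\leqslant 2n-2$, also gives $H^{2n-1}(X,\ZZ)\cong H^{2n-1}(A,\ZZ)$, which I will need below.

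First I would prove (1) by contradiction. Suppose $A$ has an irreducible component $A_0$ of complex codimension $\geqslant 2$ not lying in the divisorial part of $A$. Pick a point $p\in A_0$ outside the other components and a chart $U\cong B\subset\CC^n$ around $p$ in $X$ disjoint from those other components, so that $U\cap A$ is an analytic subset of $U$ of codimension $\geqslant 2$. The coordinate functions $z_1,\ldots,z_n$ on $X\setminus A\cong\CC^n$ are holomorphic on $U\setminus A$, and by the Hartogs extension theorem across analytic sets of codimension $\geqslant 2$ they extend to holomorphic functions on $U$. However, any sequence $q_k\in U\setminus A$ converging to $p$ must leave every compact subset of $\CC^n$, since compacta of $\CC^n$ are closed in $X$ and avoid $p$; hence some $|z_i(q_k)|\to\infty$, contradicting continuity of the extension at $p$. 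Thus $A$ is purely of complex codimension $1$. Irreducibility then follows from the isomorphism $H^{2n-2}(X,\ZZ)\cong H^{2n-2}(A,\ZZ)$ of (2): the left-hand side has rank $\mathrm{b}_2(X)=1$ by Poincar\'e duality and universal coefficients, while the right-hand side has rank equal to the number of top-dimensional irreducible components of $A$, so there must be exactly one.

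Part (3) then follows from Poincar\'e duality on $X$: $H_1(X,\ZZ)\cong H^{2n-1}(X,\ZZ)\cong H^{2n-1}(A,\ZZ)=0$ (the last vanishing for dimensional reasons), and the universal coefficient theorem yields $H^1(X,\ZZ)=0$. For (4), the vanishing of $H_1(X,\ZZ)$ combined with UCT forces $H^2(X,\ZZ)$ to be free of rank $\mathrm{b}_2(X)=1$, hence $\cong\ZZ$; its generator is Poincar\'e dual to the image of the fundamental class of $A$ under the inclusion-induced isomorphism $H_{2n-2}(A,\ZZ)\cong H_{2n-2}(X,\ZZ)$, and the restriction of this generator to $A$ generates $H^2(A,\ZZ)\cong\ZZ$ via the pair isomorphism.

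Finally for (5), I would invoke that a Moishezon manifold lies in the Fujiki class $\mathcal{C}$, so Hodge symmetry $h^{p,q}=h^{q,p}$ and $\mathrm{b}_k=\sum_{p+q=k}h^{p,q}$ hold. The vanishing $\mathrm{b}_1(X)=0$ from (3) forces $h^{0,1}=0$, i.e.\ $H^1(X,\OOO_X)=0$. Next, $\mathrm{b}_2(X)=1=2h^{0,2}+h^{1,1}$ together with $h^{1,1}\geqslant 1$ (since the divisor class $[A]$ from (4) is a nonzero $(1,1)$-class in $H^2(X,\CC)$) forces $h^{0,2}=0$, that is, $H^2(X,\OOO_X)=0$. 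The exponential exact sequence then gives $\Pic(X)\cong H^2(X,\ZZ)\cong\ZZ$. The main technical subtlety of the whole proof is the Hartogs argument of part (1); the rest is essentially formal once the pair exact sequence, Poincar\'e--Lefschetz duality, and Hodge theory for class $\mathcal{C}$ manifolds are in hand.
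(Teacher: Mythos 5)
Your proof is correct. Note that the paper does not prove this proposition at all: it is quoted from the references \cite{Ven1962,Brenton1978}, so there is no internal argument to compare with, and your route (identifying $X/A$ with $S^{2n}$ and running the pair sequence, the Riemann--Hartogs extension across codimension~$\geqslant 2$ analytic sets to force purity, counting top-dimensional components of $A$ via $H^{2n-2}$, and Hodge symmetry for Moishezon/class-$\mathcal{C}$ manifolds for part (5)) is essentially the classical argument of those references. The only caveat worth recording is that the claim $H^2(A,\ZZ)\cong\ZZ$, and your use of item (2) in degree $2$, implicitly require $n\geqslant 2$ (for $n=1$ the statement as written fails trivially), and one should remark that $A\neq\emptyset$ since $\CC^n$ is not compact.
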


The following deep result is due to Kodaira \cite[Theorem~3]{Kodaira-MR0301228}:

\begin{theorem}
\label{theorem:Kodaira}
If $(X,A)$ is a~compactification of $\CC^n$, then
$$
h^0\big(X, \upomega_X^{\otimes m}\big)=0
$$
for every $m>0$, where $\upomega_X$ is the~sheaf of holomorphic $n$-forms on $X$.
\end{theorem}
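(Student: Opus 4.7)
The plan is to argue by contradiction using a Liouville-type theorem for plurisubharmonic functions on $\CC^n$. Suppose that there exists a nonzero section $s\in H^0(X,\upomega_X^{\otimes m})$ for some $m>0$. Via the given biholomorphism $X\setminus A\cong \CC^n$ with coordinates $z=(z_1,\dots,z_n)$, the restriction of $s$ reads
\[
s|_{\CC^n}=f(z)\bigl(dz_1\wedge\cdots\wedge dz_n\bigr)^{\otimes m}
\]
for some entire function $f$ on $\CC^n$, which is nonzero because $X\setminus A$ is dense in $X$.

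The central construction is the intrinsic density $|s|^{2/m}$ on $X$. In a local holomorphic chart $(w_1,\dots,w_n)$ with $s = h(w)(dw_1\wedge\cdots\wedge dw_n)^{\otimes m}$, set
\[
|s|^{2/m}:=|h(w)|^{2/m}\, i^{n^{2}}\,dw_1\wedge d\bar w_1\wedge\cdots\wedge dw_n\wedge d\bar w_n .
\]
Under a holomorphic change of coordinates $w=\phi(z)$ with Jacobian $J$, the coefficient transforms by the factor $|J|^{2m}$ under $m$-th tensor power, and extraction of the $(1/m)$-th power yields exactly the factor $|J|^{2}$ which matches the transformation law of a density. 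Hence $|s|^{2/m}$ is a well-defined continuous nonnegative density on all of $X$. Since $X$ is compact, one gets $\int_X |s|^{2/m}<\infty$.

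Now I read off what this integral looks like on $X\setminus A\cong\CC^n$. In the standard coordinates $z$ the identification gives
\[
\int_{\CC^n}|f(z)|^{2/m}\,dV_{\mathrm{Leb}}(z)\;\leqslant\;\int_X |s|^{2/m}\;<\;\infty .
\]
The function $|f|^{2/m}=\exp\!\bigl(\tfrac{2}{m}\log|f|\bigr)$ is plurisubharmonic on $\CC^n$, because $\log|f|$ is plurisubharmonic and $\exp$ is convex and nondecreasing.

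The final step is the Liouville-type statement: a nonnegative plurisubharmonic function $u$ on $\CC^n$ with $\int_{\CC^n}u\,dV<\infty$ must vanish identically. Indeed, the sub-mean-value inequality over the ball $B(z_0,R)$ gives
\[
u(z_0)\;\leqslant\;\frac{1}{\mathrm{vol}(B_R)}\int_{B(z_0,R)} u\,dV\;\leqslant\;\frac{1}{\mathrm{vol}(B_R)}\int_{\CC^n} u\,dV\;\xrightarrow[R\to\infty]{}\;0,
\]
so $u\equiv 0$. Applied to $u=|f|^{2/m}$, this forces $f\equiv 0$, hence $s$ vanishes on the dense open set $X\setminus A$, hence $s\equiv 0$ on $X$, contradicting our assumption. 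I expect the only delicate point of the argument to be the invariant definition and coordinate-change verification of the density $|s|^{2/m}$; once this is in place, the compactness of $X$ and the plurisubharmonic mean-value inequality finish the proof essentially for free.
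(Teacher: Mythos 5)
Your argument is correct, and it is worth noting that the survey itself does not prove this statement at all: it quotes it as a ``deep result'' of Kodaira, citing Theorem~3 of his paper on holomorphic mappings of polydiscs into compact complex manifolds. What you have written is essentially Kodaira's own measure-theoretic argument, specialized to the case at hand, where it becomes genuinely more elementary: since $X\setminus A\cong\CC^n$ is an open embedding rather than merely a nondegenerate holomorphic map $\CC^n\to X$, the comparison $\int_{\CC^n}|f|^{2/m}\,dV\leqslant\int_X|s|^{2/m}$ needs no counting of multiplicities or degree estimates, and the finiteness of the right-hand side follows from compactness of $X$ alone (no K\"ahler or projectivity hypothesis is used, which is as it should be, since the theorem is stated for arbitrary complex-analytic compactifications). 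The well-definedness of the density $|s|^{2/m}$, the plurisubharmonicity of $|f|^{2/m}=\exp\bigl(\tfrac{2}{m}\log|f|\bigr)$, and the Liouville step via the sub-mean-value inequality over balls of radius $R\to\infty$ are all carried out correctly. Two cosmetic remarks: the normalizing factor in your local formula should be adapted to the ordering of the differentials (with the interleaved ordering $dw_1\wedge d\bar w_1\wedge\cdots$ the positive volume form carries $i^n$ up to a constant, not $i^{n^2}$ — harmless, since only positivity and finiteness of the measure matter); and the assertion that $f\not\equiv 0$ uses that $X\setminus A$ is dense in the connected manifold $X$, which is immediate here because $A$ is a proper closed analytic subset (indeed of pure codimension one by Proposition~\ref{proposition:compactificationCn}), so a word to that effect would close the last small gap.
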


Thus, if $(X,A)$ is a~compactification of $\CC^n$ and $X$ is projective, then $X$ is a~smooth Fano variety,
and $A$ is an ample divisor on $X$ that generates $\mathrm{Pic}(X)$.

\begin{example}
\label{example:compactifications}
Let $(X,A)$ be one of the~following polarized smooth Fano varieties:
\begin{enumerate}
\item $X=\PP^n$ and $A$ is a hyperplane;
\item $X$ is a~smooth quadric in $\mathbb{P}^{n+1}$ and $A$ is its~singular hyperplane section;
\item $X=\Gr(m,k)$ and $A$ is its~Schubert subvariety of codimension $1$, where $n=m(k-m)$;
\item $X=G/P$ and $A$ is its open cell isomorphic to $\CC^n$ (such a cell does exist by \cite{Kollar-1996-RC,Borel1954}), where $G$ is a~semisimple connected complex linear algebraic group, and $P$ is its maximal parabolic subgroup.
\end{enumerate}
Then $(X,A)$ is a compactification of $\CC^n$.
\end{example}

In two-dimensional case, Problem~\ref{problem:Hirzebruch} has an~easy solution:
if $(X,A)$ is a compactification of~$\CC^2$, then $X=\PP^2$ and $A$ is a line in $X$.
In the~three-dimensional case,  Problem~\ref{problem:Hirzebruch} has been solved in the~series of papers \cite{Furushima1986,Furushima1990,Furushima1992,Furushima1993,Fur93,Furushima1989,Peternell1989,Peternell-Schneider-1988,Pr91}.
In particular, we have the~following result:

\begin{theorem}
\label{theorem:compactifications-threefolds}
Let $(X,A)$ be a~compactification of $\CC^3$. Suppose that $X$ is a projective threefold.
Then this compactification is algebraic and $(X,A)$ can be described as follows:
\begin{enumerate}
\item $X=\PP^3$ and $A$ is a plane;
\item $X$ is a~smooth quadric in $\mathbb{P}^{4}$ and $A$ is its~singular hyperplane section;
\item $X$ is the~quintic del Pezzo threefold in $\mathbb{P}^5$ described in Example~\ref{example:V5} and $A$ is its singular
hyperplane section that can be described as follows:
\begin{enumerate}
\item a surface whose singular locus is a~line $L$ with normal bundle $\NNN_{L/X}=\OOO_{L}(1)\oplus \OOO_{L}(-1)$;
\item a normal del Pezzo surface that has a~unique singular point of type~$\mathrm{A_4}$;
\end{enumerate}
\item $X$ is a~smooth Fano threefold of index $1$ and genus $12$ in $\PP^{13}$ and $A$ is its certain hyperplane
section whose singular locus is a~line $\ell$ with normal bundle $\NNN_{\ell/X}=\OOO_{\ell}(1)\oplus \OOO_{\ell}(-2)$.
\end{enumerate}
\end{theorem}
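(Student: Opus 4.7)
The plan is to reduce to the Iskovskikh--Mori--Mukai classification of smooth Fano threefolds with $\uprho=1$, narrow the list to four families via topological and rationality constraints, and then identify the admissible boundary divisors $A$ in each family.  First, Proposition~\ref{proposition:compactificationCn} together with Theorem~\ref{theorem:Kodaira} yields that $X$ is a smooth Fano threefold with $\uprho(X)=1$ and that $A$ is an irreducible ample divisor generating $\Pic(X)$.  Since $U=X\setminus A\cong\CC^3$ is rational, $X$ is rational.  The compactification is automatically algebraic:  by Chow's theorem the analytic subset $A\subset X$ is algebraic, so $U$ is Zariski open, and a GAGA/Hartogs-type argument applied to the coordinate functions of $\CC^3\cong U$ upgrades the biholomorphism to an algebraic isomorphism.

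Next, I would exploit topology.  The compactly supported cohomology long exact sequence of $(X,U)$, together with $H^{i}_{c}(\CC^3,\ZZ)=0$ for $i<6$, gives $H^{i}(X,\ZZ)\cong H^{i}(A,\ZZ)$ for $i\leqslant 4$.  Since $A$ is ample on the simply connected threefold $X$, the Lefschetz hyperplane theorem (in its version for possibly singular ample divisors) yields $\pi_{1}(A)=0$, and Poincar\'e--Lefschetz duality on a resolution of $A$ then forces $H^{3}(A,\ZZ)=0$, whence $H^{3}(X,\ZZ)=0$, i.e.\ $h^{1,2}(X)=0$.  Scanning the Iskovskikh--Mori--Mukai list of smooth Fano threefolds of Picard rank one --- using the $h^{1,2}$-table in Remark~\ref{remark:del-Pezzo-n-folds} for index-two del Pezzo threefolds and the analogous classical table for prime Fano threefolds of index one --- leaves only four candidates: $\PP^3$, the smooth quadric $Q_3\subset\PP^4$, the quintic del Pezzo threefold $V_5\subset\PP^6$, and the smooth Fano threefold $V_{22}\subset\PP^{13}$ of index~$1$ and genus~$12$.

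For each surviving family I would determine the admissible $A$.  The case $X=\PP^3$ is immediate.  For $X=Q_3$, computing Euler characteristics shows that a smooth hyperplane section yields $\chi(Q_3\setminus A)=0\ne 1=\chi(\CC^3)$, so $A$ must be singular; the only singular hyperplane section is the quadric cone over a smooth conic, and the linear projection from its vertex exhibits the complement as $\A^3$.  For $X=V_5$ the two singular hyperplane sections listed in (3a) and (3b) are precisely those produced by the two constructions in the proof of Theorem~\ref{theorem:V5-A3}:  blowing up a line of type $(1,-1)$ in the first case, and blowing up a point on the closed $\Aut(V_5)$-orbit $\mathcal{C}$ in the second; an analysis by the normal bundle of lines and by the $\Aut(V_5)$-orbit stratification shows these are the only possibilities.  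Finally, for $X=V_{22}$, the Sarkisov link~\eqref{eq:slV22} together with Remark~\ref{remark:g-12} pins down $A$ as the hyperplane section singular along a line $\ell$ of type $(1,-2)$ whose transform on $V_5$ is a line $L$ with $\NNN_{L/V_5}\cong\OOO_{L}(1)\oplus\OOO_{L}(-1)$.

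The main obstacle will be the genus-twelve case:  showing that \emph{every} compactification structure on $V_{22}$ arises from such a double projection to $V_5$ requires the double-projection machinery of \cite{Iskovskikh1990,Pr91,Fur93} together with a delicate exclusion of smooth, non-normal-of-other-type, and isolated-singularity anticanonical boundaries.  A secondary technical point is justifying that the analytic compactification is actually algebraic, which uses Hartogs-type extension across the ample divisor $A$ applied to the coordinate functions of $U\cong\CC^3$.
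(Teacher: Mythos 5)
The decisive step in your reduction---that $H^{3}(A,\ZZ)=0$, hence $h^{1,2}(X)=0$, for an arbitrary boundary divisor $A$---is not justified, and this is where the real difficulty of the theorem sits. In the hardest cases of the statement (items (3a) and (4)) the boundary $A$ is a \emph{non-normal} surface, so Poincar\'e--Lefschetz duality is unavailable on $A$ itself, and passing to a resolution $\widetilde A\to A$ does not help: the map $H^{3}(A,\ZZ)\to H^{3}(\widetilde A,\ZZ)$ need not be injective, because the conductor-square (Mayer--Vietoris) sequence shows that $H^{3}(A,\ZZ)$ can acquire classes supported on the non-normal locus even when $\widetilde A$ is simply connected (identifying two disjoint curves on a smooth rational surface already produces $H^{3}\neq 0$); simple connectedness of $A$, which Lefschetz does give, does not control $H^{3}$ of a singular surface. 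Consequently your scan of the Iskovskikh--Mori--Mukai list by $h^{1,2}$ does not get off the ground. The paper's proof deliberately avoids any a priori vanishing of $b_{3}(X)$: for $\iota(X)=2$ it combines the identity $4+2h^{1,2}(X)=\chit(A)+1$ from Proposition~\ref{proposition:compactificationCn} with adjunction (a normal boundary is a Gorenstein del Pezzo surface, and the cone over an elliptic curve is excluded because it would force $\chit(A)=1$), while in the non-normal case it excludes $V_{1},\dots,V_{4}$ because hyperplane sections of their (weighted) complete-intersection models have only isolated singularities; the case $\iota(X)=1$ is settled by the delicate analysis of the non-normal K3 boundary from the literature, which is also exactly what your genus-$12$ step would have to invoke. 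The statement $H^{k}(X,\ZZ)\cong H^{k}(\PP^{3},\ZZ)$ is a \emph{corollary} of the classification (Corollary~\ref{corollary:compactifications-threefolds}), not an input to it.

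Your algebraicity argument is also flawed. Chow's theorem does make $A$ algebraic, but no ``GAGA/Hartogs'' extension upgrades the biholomorphism: holomorphic functions on the complement of an ample divisor need not extend meromorphically (already $e^{z}$ on $\CC=\PP^{1}\setminus\{\infty\}$), and the given biholomorphism $\CC^{3}\cong X\setminus A$ is essentially never algebraic, since one may precompose it with a non-algebraic holomorphic automorphism of $\CC^{3}$. What must be proved is that $X\setminus A$, with its induced algebraic structure, is algebraically isomorphic to $\A^{3}$, and in the paper this comes out of the explicit case-by-case constructions (the Sarkisov links in the proofs of Theorems~\ref{theorem:V5-A3} and~\ref{theorem:g-12}, Remark~\ref{remark:g-12}), not from an extension theorem. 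Likewise, for $X=V_{5}$ your appeal to the orbit stratification ``showing these are the only possibilities'' is precisely the content requiring proof; the paper's route is to split according to normality of $A$: a normal boundary is a degree-$5$ del Pezzo surface with Du Val singularities and $\uprho(A)=1$, hence has a unique $\mathrm{A}_{4}$ point, while a non-normal boundary has a line $L$ as its non-normal locus, and the condition $X\setminus A\cong\CC^{3}$ then forces $\NNN_{L/X}\cong\OOO_{L}(1)\oplus\OOO_{L}(-1)$ via the identification $X\setminus A\cong Q\setminus H$ from the proof of Theorem~\ref{theorem:V5-A3}.
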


\begin{proof}
We know that $X$ is a smooth Fano threefold, and the~surface $A$ generates $\mathrm{Pic}(X)$,
so that
$$
-K_X\sim\iota(X)A,
$$
where $\iota(X)$ is the~Fano index of the~threefold $X$.
If $\iota(X)=4$, then  $X=\mathbb{P}^3$ and $A$ is a plane.
Similarly, if $\iota(X)=3$, then $X$ is a smooth quadric threefold in $\PP^4$, and $A$ is its hyperplane section.
In this case, the~surface $A$ must be singular, since  $H^2(A,Z)=\ZZ$ by Proposition~\ref{proposition:compactificationCn}.

If $\iota(X)=1$, then the~surface $A$ must be a~non-normal K3 surface,
and the~proof uses a~delicate analysis of its singularities.
As a result, one can show that $X$ is a~Fano threefold of genus $12$ in~$\PP^{13}$,
and $A$ is its hyperplane section that is singular along a line of type $(1,-2)$.
One~construction~of~such compactification is described in Remark~\ref{remark:g-12}.
We will not dwell into further details in this case.

Suppose that $\iota(X)=2$.
Let us show that $X$ is the~quintic del Pezzo threefold in~$\mathbb{P}^5$,
and~$A$~is its singular hyperplane section described above.
Note that in this case $(X,A)$ is indeed a compactification of $\CC^3$,
which follows from the~proof of Theorem~\ref{theorem:V5-A3}.

By Proposition~\ref{proposition:compactificationCn}, we have $H^2(A,Z)=\ZZ$ and
\begin{equation}
\label{eq:comp:iota=2}
4+2h^{1,2}(X)=\chit(X)=\chit(A)+1.
\end{equation}

First, we suppose that the~surface $A$ is normal.
Then $-K_A$ is ample by the~adjunction formula, so that $A$ is a~del Pezzo surface with isolated Gorenstein singularities.
If its~singularities are worse than Du Val, then $A$ must be a~(generalized) cone over an~elliptic curve \cite{Hidaka-Watanabe-1981}, so that $\chit(A)=1$.
The latter contradicts \eqref{eq:comp:iota=2}.
Thus, we see that $A$ is a~del Pezzo surface with Du Val singularities.
Then $\uprho(A)=1$, because $H^2(A,Z)=\ZZ$.
Then $\chit(A)=3$, so that we have $h^{1,2}(X)=0$ by \eqref{eq:comp:iota=2}.
Now, using Remark~\ref{remark:del-Pezzo-n-folds}, we conclude that $X$ is the~quintic del Pezzo threefold in $\mathbb{P}^5$ as required.
Moreover, we have $K_A^2=5$, so that $A$ is a quintic del Pezzo surface that has Du Val singularities.
Since $\uprho(A)=1$, it follows from \cite{Furushima1986,Miyanishi-Zhang-1988} that $A$ has a~unique singular point of type $\mathrm{A}_4$.

Now, we suppose that $A$ is non-normal, so that it has a~singular locus of positive dimension.
It~is easy to show that any hyperplane section of a~smooth complete intersection has only isolated singularities,
and the~same result holds for hyperplane sections of weighed smooth hypersurfaces.
Therefore, using Remark~\ref{remark:del-Pezzo-n-folds}, we conclude again that $X$ is the~quintic del Pezzo threefold in~$\mathbb{P}^5$,
and~$A$ is its hyperplane section.
Using the~adjunction formula, we see that a~general hyperplane section of the~surface $A$
is an~irreducible singular curve of arithmetic genus $1$, so that it has one singular point.
Thus, the~non-normal locus of the~surface $A$ is some line $L$.
Hence, it follows from the~proof of Theorem~\ref{theorem:V5-A3} that $\mathrm{Sing}(A)=L$  and
$$
X\setminus A\cong Q\setminus H
$$
where $Q$ is a smooth quadric threefold in $\mathbb{P}^4$, and $H$ is its hyperplane section.
Since $X\setminus A\cong\mathbb{C}^3$, we conclude that the~surface $H$ is singular.
As we already mentioned in the~proof of Theorem~\ref{theorem:V5-A3},
this implies that $\NNN_{L/X}=\OOO_{L}(1)\oplus \OOO_{L}(-1)$ as required.
\end{proof}

\begin{corollary}
\label{corollary:compactifications-threefolds}
Let $(X,A)$ be a~compactification of $\CC^3$. Suppose that $X$ is a projective threefold.
Then $H^k(X,\ZZ)\cong H^k(\PP^3,\ZZ)$ for all $k$.
\end{corollary}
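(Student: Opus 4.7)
The approach is to combine Proposition~\ref{proposition:compactificationCn} with Poincaré--Lefschetz duality for the pair $(X,A)$ and the explicit classification supplied by Theorem~\ref{theorem:compactifications-threefolds}.

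Proposition~\ref{proposition:compactificationCn} already yields $H^0(X,\ZZ)=\ZZ$, $H^1(X,\ZZ)=0$ and $H^2(X,\ZZ)=\ZZ$, generated by the class of $A$. For the top degrees, I would view $X$ as a smooth compact oriented $6$-manifold with open subset $X\setminus A$ homeomorphic to $\RR^6$, so that Poincaré--Lefschetz duality gives
\[
H^k(X,A;\ZZ)\cong H^k_c(X\setminus A;\ZZ)\cong H^k_c(\RR^6;\ZZ),
\]
which equals $\ZZ$ for $k=6$ and vanishes otherwise. Feeding this into the long exact sequence of the pair $(X,A)$, together with the vanishing $H^k(A,\ZZ)=0$ for $k>4$ (since $A$ is a compact complex analytic surface by Proposition~\ref{proposition:compactificationCn}(1)), produces $H^5(X,\ZZ)=0$ and $H^6(X,\ZZ)\cong H^6(X,A;\ZZ)=\ZZ$, and reconfirms $H^k(X,\ZZ)\cong H^k(A,\ZZ)$ for $k\leqslant 4$.

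Only $H^3$ and $H^4$ now remain. Combining Theorem~\ref{theorem:Kodaira}, Proposition~\ref{proposition:compactificationCn}(5) and Serre duality, one has $H^i(X,\OOO_X)=0$ for all $i\geqslant 1$, so the Hodge decomposition collapses to $H^3(X,\CC)=H^{2,1}(X)\oplus H^{1,2}(X)$ and hence $b_3(X)=2h^{1,2}(X)$. To conclude $b_3(X)=0$ I would invoke Theorem~\ref{theorem:compactifications-threefolds}, which restricts $X$ to one of $\PP^3$, a smooth quadric in $\PP^4$, the quintic del Pezzo threefold $V_5\subset\PP^6$, or a smooth Fano threefold of index $1$ and genus $12$ in $\PP^{13}$; in each case $h^{1,2}(X)=0$ (trivially for the first two, from the table in Remark~\ref{remark:del-Pezzo-n-folds} for $V_5$, and from the Iskovskikh--Mukai classification \cite{IP99} for the genus $12$ case).

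It only remains to exclude torsion from $H^3(X,\ZZ)$; by Poincaré duality and the universal coefficient theorem this is equivalent to torsion-freeness of $H_2(X,\ZZ)$, and once established it forces $H^4(X,\ZZ)\cong H_2(X,\ZZ)=\ZZ$. For $\PP^3$, $Q^3$ and $V_5$ the torsion-freeness follows from cellular decompositions by cells of even real dimension (for $V_5$, such a decomposition can be extracted from the Sarkisov link of Theorem~\ref{theorem:V5-A3}). For the genus $12$ threefold $X_{22}$, I would exploit the Sarkisov link \eqref{eq:slV22}, which realises $X_{22}$ from $V_5$ via the blow-up of a smooth rational quintic and flops along smooth rational curves, both operations preserving torsion-freeness of integral cohomology. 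This last step in the genus $12$ case is the main obstacle, as it is the only point at which the argument departs from general homological formalism and requires explicit geometric input from the classification.
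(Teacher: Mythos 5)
Your argument is correct and is essentially the route the paper intends: the corollary is read off from the classification in Theorem~\ref{theorem:compactifications-threefolds}, since each of the four threefolds on that list is a rational Fano threefold with $\uprho(X)=1$, $h^{1,2}(X)=0$ and torsion-free integral cohomology, hence has the cohomology of $\PP^3$; the general cohomological bookkeeping you add (Lefschetz-type duality, the long exact sequence of the pair) only re-derives parts of Proposition~\ref{proposition:compactificationCn}. The one non-formal input you use without reference in the genus-$12$ case -- that threefold flops preserve integral cohomology -- is indeed a theorem of Koll\'ar, but it can be bypassed entirely by observing that the torsion subgroup of $H^3(X,\ZZ)$ is a birational invariant (Artin--Mumford), so it vanishes for every $X$ on the list because all of them are rational.
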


It would be interesting to find an alternative proof of Theorem~\ref{theorem:compactifications-threefolds}
that does not heavily rely on the~classification of smooth Fano threefolds.

\begin{remark}
\label{remark:V22-A3}
Let $X$ be a~smooth Fano threefold such that $\uprho(X)=1$, $\iota(X)=1$ and $\g(X)=12$.
If~$X$ is a compactification of $\CC^4$, then $X$ contains a line $\ell$ such that $\NNN_{\ell/X}=\OOO_{\ell}(1)\oplus \OOO_{\ell}(-2)$.
However, this condition does not always guarantee that $X$ is a compactification of $\CC^4$ (see \cite{Pr91}).
\end{remark}

\begin{remark}
\label{remark:the-list}
The~list  in Theorem~\ref{theorem:compactifications-threefolds} is similar to the~list in Theorem~\ref{theorem:Fano-threefolds-Aut-infinite}.
\end{remark}

In higher dimensions, we know very few results on Problem~\ref{problem:Hirzebruch}.
Let us present one of them, which follows from Theorem~\ref{theorem:V5-A4} and its proof.
We use here the~notation introduced in Section~\ref{subsection:Fano-4-folds}.

\begin{theorem}[{\cite{Pr94}}]
\label{thm:comp:C4:i=3}
Let $(X,A)$ be a~compactification of $\CC^4$, where $X$ is a~smooth Fano fourfold.
Suppose that $\iota(X)=3$. Then $X$ is the~quintic del Pezzo fourfold in $\PP^7$ and
\begin{enumerate}
\item either $A=H_\ell$, where $\ell$ is a line in $X$ that is not a line of type \textup{\ref{line:type:b}};
\item or $A$ is a~singular hyperplane section of the~del Pezzo fourfold $X$ such that its singular locus consists of a single ordinary double point that is not contained in the~divisor $\mathcal{H}$.
\end{enumerate}
Each of these compactifications is algebraic and unique up to isomorphism.
\end{theorem}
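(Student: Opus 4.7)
The approach is to combine general structure theorems for compactifications with the explicit geometry of the quintic del Pezzo fourfold assembled in Subsection~\ref{subsection:Fano-4-folds}. First, apply Proposition~\ref{proposition:compactificationCn} and Kodaira's Theorem~\ref{theorem:Kodaira}, as in the proof of Theorem~\ref{theorem:compactifications-threefolds}, to conclude that $X$ is a smooth projective Fano fourfold with $\Pic(X)=\ZZ[A]$, $-K_X\sim 3A$, and $\uprho(X)=1$; by Remark~\ref{remark:del-Pezzo-n-folds}, $X$ must belong to $\{V_6,V_4,V_3,V_{2\cdot 2},V_5\}$. To isolate $V_5$, use the long exact sequence in compactly supported cohomology for the decomposition $X=(X\setminus A)\sqcup A$: since $H^i_c(\CC^4,\QQ)=0$ for $i<8$, one obtains $H^i(X,\QQ)\cong H^i(A,\QQ)$ for $0\leq i\leq 7$, so in particular $\mathrm{b}_4(X)=\mathrm{b}_4(A)$. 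Combined with the bound on $\mathrm{b}_4(A)$ available for any Cartier divisor $A\sim H$ in each of the four excluded ambient varieties (coming from the total Milnor number of $A$ together with Lefschetz applied to a resolution of $A$), this rules out each of $V_6,V_4,V_3,V_{2\cdot 2}$ and leaves $X=V_5$.

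Next, analyze $A\subset V_5\subset\PP^7$: as an ample generator of $\Pic(V_5)$, it is a hyperplane section. Two cases arise. If $A$ is non-normal, its non-normal locus is one-dimensional; an argument parallel to that in the proof of Theorem~\ref{theorem:compactifications-threefolds} shows it must be a single line $\ell\subset V_5$, so $A=H_\ell$. Theorem~\ref{theorem:V5-A4} then delivers $V_5\setminus H_\ell\cong\A^4$ whenever $\ell$ is not of type~\ref{line:type:b}; conversely, for $\ell$ of type~\ref{line:type:b} the target quadric of the Sarkisov link~\eqref{equation:V5-Sarkisov-link-2} acquires non-isolated singularities, and a topological comparison shows $V_5\setminus H_\ell$ carries non-vanishing higher cohomology. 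This establishes case~(1). If instead $A$ is normal, it has only isolated singularities and is a normal del Pezzo threefold of degree $5$; the cohomological constraint from the first paragraph combined with the degree bound forces $\Sing(A)$ to consist of a single ordinary double point~$p$, and a local orbit analysis of the four $\Aut(V_5)$-orbits in $V_5$ excludes $p\in\mathcal{H}$. Conversely, for $p\in V_5\setminus\mathcal{H}$ the tangent hyperplane section $A=T_pV_5\cap V_5$ acquires a unique ODP at $p$, and an explicit Sarkisov link starting from the blowup of $p$ identifies $V_5\setminus A$ with $\A^4$, yielding case~(2).

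Finally, uniqueness follows from transitivity of $\Aut^0(V_5)$ on each relevant orbit---the four $\Aut(V_5)$-orbits of lines not of type~\ref{line:type:b} for case~(1), and the open point orbit $V_5\setminus\mathcal{H}$ for case~(2)---so the pair $(V_5,A)$ is determined up to an automorphism of $V_5$. Algebraicity of the compactification is automatic, since any biholomorphism between two normal affine algebraic varieties is biregular by GAGA. The main obstacle will be the converse direction in case~(2): producing the explicit biregular isomorphism $V_5\setminus A\xrightarrow{\sim}\A^4$ for the tangent hyperplane section at a point $p\notin\mathcal{H}$. This requires setting up a Sarkisov link centered on the blowup of $p$ (a smooth point of $V_5$), identifying the target of that link, and tracking the images of all exceptional divisors under the subsequent birational modifications, essentially along the lines of Prokhorov's original construction in \cite{Pr94}.
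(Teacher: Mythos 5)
Your outline and the paper's proof are nearly complementary, and the step you postpone is exactly the step the paper actually proves. The survey establishes only the existence part of Theorem~\ref{thm:comp:C4:i=3} (classification, uniqueness and algebraicity are delegated to \cite{Pr94}): case (1) is read off from Theorem~\ref{theorem:V5-A4}, and for case (2) the paper does \emph{not} blow up the double point. Instead it reuses the link \eqref{equation:V5-Sarkisov-link-2} attached to a line $\ell$ of type~\ref{line:type:a}, writes the smooth quadric $Q$, the divisor $\eta(F)$ and the cubic scroll $\eta(\widehat H_\ell)$ in explicit coordinates, takes the singular hyperplane section $D=\{x_3=0\}\cap Q$, and checks by a direct coordinate computation that $\widehat V_5\setminus(\widehat D\cup\widehat H_\ell)\cong\A^4$, with $\pi(\widehat D)$ a hyperplane section of $V_5$ whose singular locus is a single ordinary double point off $\mathcal H$. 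Your proposed route for this step (a Sarkisov link centered at the blowup of the node $p$) is left unexecuted --- you flag it yourself as ``the main obstacle'' --- and as phrased it is not even well posed: since $V_5\subset\PP^7$ is four-dimensional, the hyperplanes of $\PP^7$ containing the embedded tangent space $T_pV_5$ form a $\PP^2$, so ``the tangent hyperplane section $T_pV_5\cap V_5$'' does not single out a divisor. For the same reason your uniqueness argument in case (2) is incomplete: transitivity of $\Aut(V_5)$ on $V_5\setminus\mathcal H$ only normalizes $p$; you still need the stabilizer of $p$ to identify the various hyperplane sections singular at $p$ that satisfy the hypotheses.

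Two further problems. The assertion that ``any biholomorphism between two normal affine algebraic varieties is biregular by GAGA'' is false: GAGA concerns proper varieties, and already $(x,y)\mapsto(x,y+e^{x})$ is a non-algebraic biholomorphism of $\A^2$; the paper itself stresses, right after Question~\ref{question:Cn-rational}, that the biholomorphism $X\setminus A\cong\CC^n$ in the definition of a compactification need not be algebraic. Algebraicity in the theorem is obtained the other way round, from the explicit \emph{algebraic} isomorphisms $X\setminus A\cong\A^4$ furnished by Theorem~\ref{theorem:V5-A4} and by the coordinate computation above --- precisely the constructions you deferred. Finally, the reduction to $X=V_5$ by bounding $\mathrm{b}_4(A)$ via ``total Milnor numbers and Lefschetz for a resolution'' is only a slogan: the divisor $A$ may be non-normal with non-isolated singularities, so isolated-singularity invariants do not apply as stated, and this classification step appears nowhere in the survey (it is part of what is cited to \cite{Pr94}); it would need a genuine argument, in the spirit of the Euler-characteristic and adjunction analysis carried out for threefolds in the proof of Theorem~\ref{theorem:compactifications-threefolds}, rather than a gesture.
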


\begin{proof}
We prove the~existence part only.
In the~first case, the~existence follows from Theorem~\ref{theorem:V5-A4}.
To deal with the~second case, let us use the~notation introduced in the~proof of Theorem~\ref{theorem:V5-A4}.
Consider the~Sarkisov link \eqref{equation:V5-Sarkisov-link-2} with $\ell$ being a~line of type \ref{line:type:a}.
We~already know that $Q$ is smooth, and so we may assume that it is given in $\PP^4$ by
$$
x_2x_3+x_1x_4+x_{0}x_5=0.
$$
Similarly, we may assume that $\eta(F)$ is cut out by $x_0=0$.
Moreover, the~surface $\eta(\widehat{H}_\ell)$ is a smooth cubic scroll in this case.
Hence, we may assume that it is cut out on $Q$ by
$$
\left\{\aligned%
&x_0=0,\\
&x_2x_4+x_1x_5=0,\\
&x_4^2-x_3x_5=0.\\
\endaligned
\right.
$$
Let $D$ be the~hyperplane section of the~quadric $Q$ that is cut out by $x_3=0$, and let $\widehat{D}$ be its proper transform on $\widehat{V}_5$.
Then $D$ is singular. We claim that $\widehat{V}_5\setminus(\widehat{D}\cup\widehat{H}_\ell)\cong\A^4$.
Indeed, let $U=Q\setminus D$. Then $U\cong\A^4$ with coordinates $y_0=\frac{x_0}{x_3}$, $y_1=\frac{x_1}{x_3}$, $y_4=\frac{x_4}{x_3}$, $y_5=\frac{x_5}{x_4}$,
so that $\widehat{V}_5\setminus\widehat{D}$ is given  by
$$
y_0z_0=(y_5-y_4^2)z_1
$$
in $\A^4\times \PP^1$, where $z_0$ and $z_1$ are coordinates on $\PP^1$. Then  $\widehat{V}_5\setminus(\widehat{D}\cup\widehat{H}_\ell)$ is given in $\A^4\times \mathbb{A}^1$ by
$$
y_0z=y_5-y_4^2,
$$
where $z=\frac{z_0}{z_1}$.
This implies that $\widehat{V}_5\setminus(\widehat{D}\cup\widehat{H}_\ell)\cong\A^4$.
Now, observe that $\pi(\widehat{D})$ is a hyperplane section of $V_5$
whose singular locus consists of a single ordinary double point not contained in $\mathcal{H}$.
\end{proof}

In dimension $4$, we know very few compactifications $(X,A)$  of $\CC^4$.
They can be listed as follows:
\begin{itemize}
\item $X=\mathbb{P}^4$ and $A$ is a hyperplane;
\item $X$ is a smooth quadric and $A$ is its singular hyperplane section;
\item $X$ is the~del Pezzo quintic fourfold and $A$ is described in Theorem~\ref{thm:comp:C4:i=3};
\item $X$ is a smooth Fano--Mukai fourfold of genus $10$ and $A$ is described in Theorem~\ref{theorem:FM-g-10}.
\end{itemize}
In particular, in every known example of a compactification $(X,A)$ of $\CC^4$ with $X\not\cong\PP^4$, one has
$$
H^k\big(X,\ZZ\big)\cong H^k\big(Q,\ZZ\big)
$$
for all $k$, where $Q$ is a~smooth quadric in $\PP^5$. We wonder whether this is just a coincidence.

\begin{question}
\label{question:Fano-C4-index-1}
Does there exist a smooth Fano fourfold of index $1$ that is a compactification~of~$\CC^4$?
\end{question}

Before we conclude this survey, let us set the~following question:

\begin{question}
\label{question:Cn-rational}
Is it true that any compactification of $\CC^n$ is rational?
\end{question}

Note that the~answer to this question is not obvious,
since the~isomorphism $X\setminus A\cong \CC^n$ in the~definition of a compactification of $\CC^n$
is a~biholomorphism, which is not necessarily algebraic.

\appendix

\section{Singularities of pairs}
\label{section:log-pairs}
Let $S$ be a~surface with at most quotient singularities, let $D$ be an~effective non-zero
$\mathbb{Q}$-divisor on $S$, let $P$ be a~point of $S$, and let
$$
D=\sum_{i=1}^{r}a_iC_i,
$$
where $C_1,\ldots,C_r$ are distinct irreducible curves on $S$, and each $a_i$ is a~non-negative
rational number.
We call $(S,D)$ a~log pair.

Let $\pi\colon\widetilde{S}\to S$ be a~birational morphism such that $\widetilde{S}$ is smooth.
For each $C_i$, denote by $\widetilde{C}_i$ its proper transform on the~surface $\widetilde{S}$.
Let $F_1,\ldots, F_n$ be $\pi$-exceptional curves.
Then
$$
K_{\widetilde{S}}+\sum_{i=1}^{r}a_i\widetilde{C}_i+\sum_{j=1}^{n}b_jF_j\sim_{\mathbb{Q}}\pi^{*}\left(K_{S}+D\right)
$$
for some rational numbers $b_1,\ldots,b_n$. Suppose that $\widetilde{C}_1+\cdots+\widetilde{C}_2+F_1+\cdots+F_n$ is a divisor with simple normal crossings.
Then we say that $\pi\colon\widetilde{S}\to S$ is a log resolution of the~log pair $(S,D)$.

\begin{definition}
\label{definition:lct}
The log pair $(S,D)$ is said to be log canonical at the~point $P$ if the~following two conditions are satisfied:
\begin{itemize}
\item $a_i\leqslant 1$ for every $C_i$ such that $P\in C_i$;
\item $b_j\leqslant 1$ for every $F_j$ such that $\pi(F_j)=P$.
\end{itemize}
The log pair $(S,D)$ is called log canonical if it is log canonical at every point of $S$.
\end{definition}

This definition does not depend on the~choice of the~log resolution $\pi\colon\widetilde{S}\to S$.

\begin{remark}
\label{remark:convexity}
Let $R$ be an~effective $\mathbb{Q}$-divisor on $S$ such that $R\sim_{\mathbb{Q}} D$.
For a~rational number $\epsilon$, let
$$
D_{\epsilon}=(1+\epsilon) D-\epsilon R.
$$
Then $D_{\epsilon}\sim_{\mathbb{Q}} D$.
Suppose that $R\ne D$. Then there exists the~greatest rational number $\epsilon_0\geqslant 0$ such that the~divisor $D_{\epsilon_0}$ is effective.
By construction, the~support of the~divisor $D_{\epsilon_0}$ does not contain at least one curve contained in the~support of the~divisor $R$.
Moreover, if $(S,D)$ is not log canonical at $P$, but $(S,R)$ is log canonical at~$P$,
then $(S,D_{\epsilon_0})$ is not log canonical at $P$.
\end{remark}

Now, we suppose that the~surface $S$ is smooth at $P$.

\begin{lemma}
\label{lemma:Skoda}
Suppose that $(S,D)$ is not log canonical at $P$. Then $\mathrm{mult}_{P}(D)>1$.
\end{lemma}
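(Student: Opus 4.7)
The plan is to blow up the point $P$ and argue by induction on the number of blowups in a minimal log resolution of $(S,D)$ over $P$. Let $\sigma\colon \widetilde{S}\to S$ be the blowup of $P$ with exceptional curve $E\cong \mathbb{P}^1$, set $m:=\mathrm{mult}_P(D)$, and let $\widetilde{D}$ denote the proper transform of $D$. The standard discrepancy computation gives
$$
K_{\widetilde{S}}+\widetilde{D}+(m-1)E \sim_{\mathbb{Q}} \sigma^*(K_S+D),
$$
so $(S,D)$ is log canonical at $P$ if and only if $(\widetilde{S},\widetilde{D}+(m-1)E)$ is log canonical at every point of $E$.

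For the base case of the induction, suppose $D$ is already a~simple normal crossings divisor at $P$. Then failure of log canonicity at $P$ forces some $a_i>1$ with $P\in C_i$, and hence $\mathrm{mult}_P(D)\geqslant a_i\cdot \mathrm{mult}_P(C_i)\geqslant a_i>1$, as required. For the inductive step, suppose $(S,D)$ is not log canonical at $P$, and assume for contradiction that $m\leqslant 1$. Then there exists $Q\in E$ such that $(\widetilde{S},\widetilde{D}+(m-1)E)$ fails to be log canonical at $Q$. Since $m-1\leqslant 0$, one has the componentwise inequality $\widetilde{D}+(m-1)E\leqslant\widetilde{D}$ of $\mathbb{Q}$-divisors, and log canonicity is monotone in the boundary: if $D_1\leqslant D_2$ and the pair with $D_1$ fails to be lc at a point, then so does the pair with the smaller boundary we obtain by subtraction; equivalently, on any common log resolution the discrepancy-complement coefficients can only decrease when we pass from $\widetilde{D}$ to $\widetilde{D}+(m-1)E$. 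Hence $(\widetilde{S},\widetilde{D})$ is also not log canonical at~$Q$.

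Applying the inductive hypothesis to the effective $\mathbb{Q}$-divisor $\widetilde{D}$ at the smooth point $Q\in\widetilde{S}$---whose log resolution uses one fewer blowup than that of $(S,D)$ at $P$---we obtain $\mathrm{mult}_Q(\widetilde{D})>1$. On the other hand, because $\widetilde{D}$ does not contain $E$ as a component and $E$ is smooth at $Q$, one has
$$
\mathrm{mult}_Q(\widetilde{D})\;\leqslant\;\big(\widetilde{D}\cdot E\big)_Q\;\leqslant\;\widetilde{D}\cdot E=m,
$$
giving $\mathrm{mult}_Q(\widetilde{D})\leqslant m\leqslant 1$, a~contradiction. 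Therefore $m=\mathrm{mult}_P(D)>1$.

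The main technical point that needs care is the monotonicity assertion used to pass from the non-lc-ness of $(\widetilde{S},\widetilde{D}+(m-1)E)$ to that of $(\widetilde{S},\widetilde{D})$ in the presence of a~non-effective correction $(m-1)E$; this is immediate from the definition of log canonicity via a~common log resolution, since adding an~effective $\mathbb{Q}$-divisor to the boundary can only increase the coefficients that must be bounded by $1$. The other routine ingredient is the inequality $\mathrm{mult}_Q(\widetilde{D})\leqslant \mathrm{mult}_P(D)$ on the exceptional curve $E$, which follows from the projection formula $\sigma^*D=\widetilde{D}+mE$ together with $E^2=-1$.
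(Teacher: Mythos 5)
Your proof is correct. The paper offers no argument to compare with here: its proof of Lemma~\ref{lemma:Skoda} is literally ``left to the reader'', and what you wrote is the standard way to fill that gap (it is the classical estimate that the log canonical threshold at a smooth point is at least $1/\mathrm{mult}_{P}(D)$, specialised to threshold $1$). All the essential points are in place: the crepant pullback relation $K_{\widetilde{S}}+\widetilde{D}+(m-1)E\sim_{\QQ}\sigma^{*}(K_{S}+D)$; the existence, under the assumption $m\leqslant 1$, of a point $Q\in E$ at which $(\widetilde{S},\widetilde{D}+(m-1)E)$ is not log canonical; the chain $\mathrm{mult}_{Q}(\widetilde{D})\leqslant(\widetilde{D}\cdot E)_{Q}\leqslant\widetilde{D}\cdot E=m$; and the fact that a minimal log resolution of $(S,D)$ over $P$ factors through the blowup of $P$, so that $(\widetilde{S},\widetilde{D})$ needs strictly fewer blowups over $Q$ and the induction terminates. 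It is also good that you derived the existence of $Q$ directly from the definition of log canonicity rather than citing Lemma~\ref{lemma:blow-up}, whose proof in the paper invokes Lemma~\ref{lemma:Skoda} and would make the argument circular. One sentence should be repaired: in the monotonicity discussion you write that if the pair with $D_{1}$ fails to be log canonical ``then so does the pair with the smaller boundary we obtain by subtraction'', which, read literally, asserts the false downward direction; what is true, and what you actually use (and restate correctly in your closing paragraph), is that non-log canonicity is inherited by the \emph{larger} boundary, here $\widetilde{D}\geqslant\widetilde{D}+(m-1)E$, so only the wording needs fixing, not the argument.
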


\begin{proof}
Left to the~reader.
\end{proof}

Let $f\colon\overline{S}\to S$ be a~blowup of the~point~$P$, and let
$E$ be the~$f$-exceptional curve. Denote by $\overline{D}$ the~proper
transform of the~$\mathbb{Q}$-divisor $D$ on the~surface $\overline{S}$ via $f$.
Then the~log pair
\begin{equation}
\label{equation:log-pull-back}
\Big(\overline{S}, \overline{D}+\big(\mathrm{mult}_{P}(D)-1\big)E\Big)
\end{equation}
is called the~log pull back of the~log pair $(S,D)$ on the~surface $\overline{S}$.

\begin{lemma}
\label{lemma:blow-up}
Suppose that the~log pair $(S,D)$ is not log canonical at $P$. Then
\begin{itemize}
\item[(i)] the~$\mathbb{Q}$-divisor $\overline{D}+(\mathrm{mult}_{P}(D)-1)E$ is effective;

\item[(ii)] the~log pair \eqref{equation:log-pull-back} is not log canonical at some point $Q\in E$.
\end{itemize}
\end{lemma}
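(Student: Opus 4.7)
The plan is to deduce both statements from Lemma~\ref{lemma:Skoda} together with the standard discrepancy formula for a blowup of a smooth point. Throughout, write $m=\mathrm{mult}_P(D)$ and recall the well-known identity
$$
K_{\overline S}+\overline D+(m-1)E\sim_{\mathbb{Q}} f^*(K_S+D),
$$
which follows from $K_{\overline S}\sim f^*K_S+E$ and $f^*C_i=\overline C_i+\mathrm{mult}_P(C_i)\,E$.

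For part (i), since $D$ is effective its proper transform $\overline D$ has the same (non-negative) coefficients on the $\overline C_i$. By Lemma~\ref{lemma:Skoda}, the hypothesis that $(S,D)$ fails to be log canonical at $P$ forces $m>1$, hence the coefficient $m-1$ of the exceptional curve $E$ is strictly positive. Both summands are therefore effective, proving (i).

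For part (ii), I would argue that log canonicity passes to log pull-backs and back. Choose a log resolution $\pi\colon\widetilde S\to S$ of $(S,D)$ that factors through $f$, say $\pi=f\circ\widetilde f$ with $\widetilde f\colon\widetilde S\to\overline S$; such a resolution exists because one may first perform the blowup $f$ and then further blow up to reach a log resolution. The displayed formula above, pulled back further by $\widetilde f$, shows that the exceptional coefficients $b_j$ appearing in
$$
K_{\widetilde S}+\sum_i a_i\widetilde C_i+\sum_j b_jF_j\sim_{\mathbb{Q}}\pi^*(K_S+D)
$$
coincide with those obtained by expanding $\widetilde f^*\!\big(K_{\overline S}+\overline D+(m-1)E\big)$. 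In other words, the discrepancies of $\pi$ relative to $(S,D)$ are exactly the discrepancies of $\widetilde f$ relative to $(\overline S,\overline D+(m-1)E)$.

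By hypothesis $(S,D)$ is not log canonical at $P$, so Definition~\ref{definition:lct} produces either some $a_i>1$ with $P\in C_i$ or some $b_j>1$ with $\pi(F_j)=P$. In the first case $\overline C_i$ passes through $\overline C_i\cap E$, which is a point $Q\in E$ at which the log pull-back has a component of coefficient $a_i>1$, violating log canonicity at $Q$. In the second case the center $\pi(F_j)=P$ lifts, via $\widetilde f$, to a center $\widetilde f(F_j)\subset f^{-1}(P)=E$; choosing $Q\in\widetilde f(F_j)\subset E$, the same exceptional divisor $F_j$ over $\overline S$ has discrepancy coefficient $b_j>1$ with $\widetilde f(F_j)\ni Q$, so the log pull-back is not log canonical at $Q$. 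In either case we obtain the required point $Q\in E$, proving (ii). The only subtlety — and essentially the only step requiring care — is the bookkeeping that shows the coefficients $a_i$ and $b_j$ of a log resolution factoring through $f$ are the same whether computed for $(S,D)$ or for the log pull-back; this is a direct consequence of the projection formula and the displayed discrepancy identity.
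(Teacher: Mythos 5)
Your argument is correct and is exactly the route the paper intends: its proof is the one-line remark that the lemma ``follows from Definition~\ref{definition:lct} and Lemma~\ref{lemma:Skoda}'', i.e.\ part (i) from $\mathrm{mult}_P(D)>1$ and part (ii) by comparing discrepancies through a log resolution factoring through the blowup, which is precisely what you spell out. The only slight imprecision is in your second case, where $F_j$ may be the strict transform of $E$ (hence not $\widetilde f$-exceptional); but then its coefficient $m-1>1$ makes $E$ itself a boundary component of the log pull-back with coefficient exceeding $1$, so the conclusion still holds at every point of $E$.
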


\begin{proof}
The required assertion follows from Definition~\ref{definition:lct} and Lemma~\ref{lemma:Skoda}.
\end{proof}

The following handy statement is a~very special case of a~much more general
result, which is known as \emph{Inversion of Adjunction} (see, for example, \cite[Theorem~6.29]{CoKoSm}).

\begin{lemma}[{\cite[Exercise~6.31]{CoKoSm}}]
\label{lemma:adjunction}
Suppose that $C_1$ is smooth at $P$, the~log pair $(S,D)$ is not log canonical at $P$, and $a_1\leqslant 1$.
Let $\Delta=a_2C_2+\cdots+a_rC_r$. Then $(C_1\cdot\Delta)_P>1$.
\end{lemma}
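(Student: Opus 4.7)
The plan is to reduce to the case $a_1 = 1$ and then run a direct induction on blowups---essentially the classical proof of inversion of adjunction for a smooth curve on a smooth surface.

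For the reduction, since $a_1 \leq 1$, increasing the coefficient of $C_1$ from $a_1$ up to $1$ produces an effective divisor $C_1 + \Delta$ that dominates $D$ coefficient by coefficient. By Definition~\ref{definition:lct}, the pair $(S, C_1 + \Delta)$ is therefore also not log canonical at $P$. Replacing $D$ by $C_1 + \Delta$, I may assume $a_1 = 1$; the hypothesis moreover forces $P \in C_1 \cap \Supp(\Delta)$, else $(C_1 \cdot \Delta)_P = 0$.

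Next, I would induct on the minimal number $N$ of successive point blowups required to log-resolve $(S, C_1 + \Delta)$ at $P$. For the inductive step, let $f \colon \overline{S} \to S$ be the blowup of $P$ with exceptional curve $E$, set $m := \mult_P(\Delta)$, and let $P' := \overline{C_1} \cap E$; this is a single point, as $C_1$ is smooth at $P$ and hence $\overline{C_1}$ meets $E$ transversally. By \eqref{equation:log-pull-back}, the log pull-back is
\[
\overline{C_1} + \overline{\Delta} + m E,
\]
which by Lemma~\ref{lemma:blow-up} is not log canonical at some point $Q \in E$. If $m > 1$, then $(C_1 \cdot \Delta)_P \geq \mult_P(C_1) \cdot \mult_P(\Delta) = m > 1$ and we are done. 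Assume $m \leq 1$. If $Q \neq P'$, then $Q \notin \overline{C_1}$, and since $m \leq 1$ the pair $(\overline{S}, E + \overline{\Delta})$ is also not log canonical at $Q$; applying the inductive hypothesis with $E$ in place of the smooth curve $C_1$ and $\overline{\Delta}$ in place of $\Delta$ gives $(E \cdot \overline{\Delta})_Q > 1$. However, $\overline{\Delta} = f^* \Delta - m E$ yields $E \cdot \overline{\Delta} = -m\, E^2 = m \leq 1$, contradicting $(E \cdot \overline{\Delta})_Q \leq E \cdot \overline{\Delta}$. Therefore $Q = P'$, and the inductive hypothesis applied to $(\overline{S}, \overline{C_1} + (\overline{\Delta} + m E))$ at $P'$ delivers $(\overline{C_1} \cdot (\overline{\Delta} + m E))_{P'} > 1$; the projection formula then gives
\[
(C_1 \cdot \Delta)_P = (\overline{C_1} \cdot f^* \Delta)_{P'} = (\overline{C_1} \cdot (\overline{\Delta} + m E))_{P'} > 1.
\]

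The base case $N = 0$ is when $(S, C_1 + \Delta)$ is already simple normal crossing at $P$: the non-lc condition, combined with the coefficient of $C_1$ being exactly $1$, forces some $a_j$ with $j \geq 2$ and $P \in C_j$ to satisfy $a_j > 1$, whence $(C_1 \cdot \Delta)_P \geq a_j (C_1 \cdot C_j)_P = a_j > 1$ by transversality at $P$. The main technical obstacle is the bookkeeping that guarantees the induction on $N$ is well-founded---each blowup of a non-snc point strictly reduces $N$---and that the hypotheses needed to invoke the inductive step are preserved, in particular $\overline{C_1} \not\subset \Supp(\overline{\Delta} + m E)$; the latter holds because $C_1$ shares no component with $\Delta$ and $\overline{C_1} \neq E$.
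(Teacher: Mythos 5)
Your proof is correct and follows essentially the same route as the paper's: an induction on the number of blowups over $P$, using Lemma~\ref{lemma:blow-up} to locate a non-log-canonical point $Q\in E$, ruling out $Q\notin\overline{C}_1$ via the computation $E\cdot\overline{\Delta}=m\leqslant 1$, and concluding at $Q=\overline{C}_1\cap E$ by the inductive hypothesis together with the local projection formula. The only cosmetic differences are your initial normalization $a_1=1$ (by monotonicity of non-log-canonicity in the coefficients) and your choice of induction parameter --- the number of blowups needed to log-resolve the pair at $P$, with the simple normal crossing case as base case --- instead of the paper's minimal number of blowups needed to extract a non-log-canonical place.
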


\begin{proof}
Let $m=\mathrm{mult}_P(\Delta)$. If $m>1$, then we are done, since
$$
\big(C_1\cdot\Delta\big)_{P}\geqslant m.
$$
Therefore, we may assume that $m\leqslant 1$.
This implies that the~log pair $(S,D)$ is log
canonical in a~punctured neighborhood of the~point $P\in S$.
Since the~log pair $(S,D)$ is not log canonical at~$P$,
there exists a~birational morphism $h\colon\widehat{S}\to S$ that is a~composition of $s\geqslant 1$ blowups of points dominating $P$
such that $e_s>1$, where $e_s$ is a~rational number determined by
$$
K_{\widehat{S}}+a_1\widehat{C}_1+\widehat{\Delta}+\sum_{i=1}^{s}e_iE_i\sim_{\mathbb{Q}}h^{*}\big(K_{S}+D\big),%
$$
where each $e_i$ is a~rational number, each $E_i$ is an~$h$-exceptional divisor,  $\widehat{\Delta}$ is a~proper transform on
the~surface $\widehat{S}$ of the~divisor $\Delta$, and $\widehat{C}_1$ is a~proper
transform on  $\widehat{S}$ of the~curve $C_1$.

Let $\overline{\Delta}$ and $\overline{C}_1$ be the~proper transforms on $\overline{S}$ of the~divisor $\Delta$ and the~curve $C_1$, respectively.
Then $(\overline{S}, a_1\overline{C}_1+(a_1+m-1)E+\overline{\Delta})$ is not log
canonical at some point $Q\in E$ by Lemma~\ref{lemma:blow-up}.

Let us prove the~inequality $(C_1\cdot\Delta)_{P}>1$ by induction on $s$. If $s=1$, then
$$
a_1+m-1>1,
$$
which implies that $m>2-a_1\geqslant 1$, so that $(C_1\cdot\Delta)_{P}\geqslant m>1$ as required.
Thus, we may assume that $s\geqslant 2$ and $a_1+m-1\leqslant 2$. Since
$$
\big(C_1\cdot\Delta\big)_{P}\geqslant m+\big(\overline{C}_1\cdot\overline{\Delta}\big)_{Q},
$$
it is enough to show that
$m+(\overline{C}_1\cdot\overline{\Delta})_{Q}>1$.
We may also assume that $m\leqslant 1$, since $(C_1\cdot\Delta)_{P}\geqslant m$.

If $Q\not\in\overline{C}_1$, then $(\overline{S},(a_1+m-1)E+\overline{\Delta})$ is not log canonical at
the~point $Q$, which gives
$$
m=\overline{\Delta}\cdot E\geqslant\big(\overline{\Delta}\cdot E\big)_{Q}>1
$$
by induction. The latter implies that $Q=\overline{C}_1\cap E$, since
$m\leqslant 1$. Then
$$
a_1+m-1+\big(\overline{C}_1\cdot\overline{\Delta}\big)_{Q}=\Big(\big((a_1+m-1)E+\overline{\Delta}\big)\cdot\overline{C}_1\Big)_{Q}>1%
$$
by induction. This gives $(\overline{C}\cdot\overline{\Delta})_{Q}>2-a_1-m$. Then
$$
m+\big(\overline{C}_1\cdot\overline{\Delta}\big)_{Q}>2-a_1\geqslant 1
$$
as required.
\end{proof}

\begin{corollary}
\label{corollary:log-pull-back}
In the~notation and assumptions of Lemma~\ref{lemma:blow-up},
suppose that $\mathrm{mult}_{P}(D)\leqslant 2$.
Then there exists a~unique point $Q\in E$ such that \eqref{equation:log-pull-back} is not log canonical at $Q$.
\end{corollary}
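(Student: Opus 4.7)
The plan is to bootstrap the existence result of Lemma~\ref{lemma:blow-up} to uniqueness by playing Lemma~\ref{lemma:adjunction} against the numerical bound on $E\cdot\overline{D}$. Existence of at least one point $Q\in E$ at which the log pull back fails to be log canonical is immediate from Lemma~\ref{lemma:blow-up}(ii), so the entire content of the corollary is uniqueness.

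For uniqueness, I would argue by contradiction: suppose there exist two distinct points $Q_1,Q_2\in E$ at which the log pair $(\overline{S},\overline{D}+(\mathrm{mult}_{P}(D)-1)E)$ fails to be log canonical. I want to apply Lemma~\ref{lemma:adjunction} at each $Q_i$ with the distinguished curve taken to be $E$ itself and with $\Delta=\overline{D}$. The hypotheses of that lemma need to be verified: $E\cong\PP^{1}$ is smooth at every point, so it is smooth at each $Q_i$; the curve $E$ is not a component of $\overline{D}$ because $\overline{D}$ is the proper transform of $D$; and the coefficient of $E$ in the log pull back equals $\mathrm{mult}_{P}(D)-1$, which by the assumption $\mathrm{mult}_{P}(D)\leqslant 2$ is $\leqslant 1$. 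Thus Lemma~\ref{lemma:adjunction} applies at each $Q_i$ and yields
\[
\bigl(E\cdot\overline{D}\bigr)_{Q_{i}}>1 \qquad (i=1,2).
\]

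Now I would combine these two local inequalities with the global intersection number. Since $\overline{D}$ is effective and does not contain $E$, the local intersection numbers at distinct points sum to at most the global one:
\[
2<\bigl(E\cdot\overline{D}\bigr)_{Q_{1}}+\bigl(E\cdot\overline{D}\bigr)_{Q_{2}}\leqslant E\cdot\overline{D}=\mathrm{mult}_{P}(D)\leqslant 2,
\]
using that $E\cdot\overline{D}=\mathrm{mult}_{P}(D)$ by the standard projection formula for a blowup of a smooth surface point. This contradicts the hypothesis, so $Q$ must be unique.

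The proof is essentially a one-line numerical contradiction once the correct lemma is invoked, so there is no serious obstacle. The only thing to be careful about is verifying all hypotheses of Lemma~\ref{lemma:adjunction} at each candidate point $Q_i$ (smoothness of $E$, coefficient bound coming precisely from $\mathrm{mult}_{P}(D)\leqslant 2$, and $E\not\subset\mathrm{Supp}(\overline{D})$), and making sure that Lemma~\ref{lemma:Skoda} guarantees $\mathrm{mult}_{P}(D)>1$ so that the coefficient of $E$ in the log pull back is genuinely positive, hence that the log pull back divisor is effective, in line with Lemma~\ref{lemma:blow-up}(i).
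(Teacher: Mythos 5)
Your proof is correct and follows essentially the same route as the paper: assume two distinct non-log-canonical points on $E$, apply Lemma~\ref{lemma:adjunction} with the curve $E$ (coefficient $\mathrm{mult}_P(D)-1\leqslant 1$) at each point, and derive the contradiction $2<\overline{D}\cdot E=\mathrm{mult}_P(D)\leqslant 2$, with existence coming from Lemma~\ref{lemma:blow-up}. Nothing further is needed.
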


\begin{proof}
If \eqref{equation:log-pull-back} is not log canonical at two distinct
points $P_1$ and $P_2$, then
$$
2\geqslant\mathrm{mult}_{P}\left(D\right)=\overline{D}\cdot E\geqslant\big(\overline{D}\cdot E\big)_{P_1}+\big(\overline{D}\cdot E\big)_{P_2}>2
$$
by Lemma~\ref{lemma:adjunction}. Now use Lemma~\ref{lemma:blow-up}.
\end{proof}

The following result plays an essential role in the~proof of Theorem~\ref{theorem:del-Pezzo-degree-1-2-3} given in Section~\ref{subsection:del-Pezzo-no-cylinders}.
In~fact, this theorem has been discovered \cite{Ch14} in an~attempt to give a simple proof of Theorem~\ref{theorem:del-Pezzo-degree-1-2-3},
since its original proof in \cite{CPW16a} is very technical.
For other applications of Theorem~\ref{theorem:del-Pezzo-degree-1-2-3}, see \cite{CheltsovAhmadinezhadSchicho,Viswanathan}.

\begin{theorem}[{\cite{Ch14}}]
\label{theorem:Vanya}
Suppose that $(C_1\cdot C_2)_P=1$, and the~log pair $(S,D)$ is not log canonical~at~$P$.
Let $\Delta=a_3C_3+\cdots+a_rC_r$ and $m=\mathrm{mult}_P(\Delta)$.
Suppose also that $m\leqslant 1$.
Then
$$
\Big(C_1\cdot\Delta\Big)_{P}>2(1-a_{2})
$$
or
$$
\Big(C_2\cdot\Delta\Big)_{P}>2(1-a_{1}).
$$
\end{theorem}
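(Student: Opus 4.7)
My plan is to argue by induction on the minimal number $s\geqslant 1$ of successive blow-ups of points above $P$ needed to render the log pair $(S,D)$ log canonical above $P$. As an initial reduction, I will assume $a_1,a_2\leqslant 1$; otherwise one of the quantities $2(1-a_1)$, $2(1-a_2)$ is non-positive and the corresponding conclusion is automatic from $(C_i\cdot\Delta)_P\geqslant 0$. Writing $p:=(C_1\cdot\Delta)_P$ and $q:=(C_2\cdot\Delta)_P$, I will repeatedly use the standard inequalities $p\geqslant m$ and $q\geqslant m$, which hold since $C_1$ and $C_2$ are smooth at $P$.

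For the base case $s=1$, the unique non-log-canonical place above $P$ is the exceptional divisor $E$ of the blow-up $f\colon\widetilde{S}\to S$ at $P$, whose coefficient in the log pull-back is $\alpha:=a_1+a_2+m-1$; so $\alpha>1$. Assuming for contradiction that both $p\leqslant 2(1-a_2)$ and $q\leqslant 2(1-a_1)$ hold, together with $m\leqslant p$ and $m\leqslant q$, I get $m+2a_2\leqslant 2$ and $m+2a_1\leqslant 2$; adding these yields $a_1+a_2+m\leqslant 2$, i.e.\ $\alpha\leqslant 1$, contradicting $\alpha>1$.

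For the inductive step $s>1$, I have $\alpha\leqslant 1$. By Lemma~\ref{lemma:blow-up} the log pull-back $(\widetilde{S},\,a_1\widetilde{C}_1+a_2\widetilde{C}_2+\alpha E+\widetilde{\Delta})$ fails to be log canonical at some $Q\in E$. The strict transforms $\widetilde{C}_1$ and $\widetilde{C}_2$ meet $E$ at distinct points $Q_1$ and $Q_2$ because $(C_1\cdot C_2)_P=1$. If $Q\notin\{Q_1,Q_2\}$, only $E$ (with coefficient $\alpha\leqslant 1$) and $\widetilde{\Delta}$ pass through $Q$, so Lemma~\ref{lemma:adjunction} applied to $E$ forces $(E\cdot\widetilde{\Delta})_Q>1$; but $E\cdot\widetilde{\Delta}=m\leqslant 1$, a contradiction. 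Hence $Q\in\{Q_1,Q_2\}$, and by symmetry I may take $Q=Q_1$. At $Q_1$ the curves $\widetilde{C}_1$ and $E$ form a transverse pair with coefficients $a_1,\alpha\in(0,1]$, while the summing-of-multiplicities bound coming from $\widetilde{\Delta}\cdot E=m$ gives $\mathrm{mult}_{Q_1}(\widetilde{\Delta})\leqslant m\leqslant 1$. Since the resolution depth at $Q_1$ is at most $s-1$, the inductive hypothesis applies and yields either (a) $(\widetilde{C}_1\cdot\widetilde{\Delta})_{Q_1}>2(1-\alpha)$, or (b) $(E\cdot\widetilde{\Delta})_{Q_1}>2(1-a_1)$.

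It remains to transport each alternative back to $P$. The projection formula gives $p=m+(\widetilde{C}_1\cdot\widetilde{\Delta})_{Q_1}$, so in case (a) I find $p>4-2a_1-2a_2-m$; this yields $p>2(1-a_2)$ when $m+2a_1\leqslant 2$, and otherwise $m>2(1-a_1)$, so $q\geqslant m>2(1-a_1)$. In case (b) the obvious bound $(E\cdot\widetilde{\Delta})_{Q_1}\leqslant E\cdot\widetilde{\Delta}=m$ immediately gives $m>2(1-a_1)$ and hence $q>2(1-a_1)$. The main obstacle I anticipate is the book-keeping in the inductive step: verifying that all hypotheses of the theorem genuinely survive the blow-up at $Q_1$ (transversality, both snc coefficients staying in $(0,1]$, and the multiplicity $\leqslant 1$ bound for the residual part), and making the internal dichotomy $m+2a_1\leqslant 2$ versus $m+2a_1>2$ in case (a) close the argument without gaps.
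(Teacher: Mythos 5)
Your argument is correct and is essentially the paper's own proof: induct on the depth of the tower of blow-ups over $P$ witnessing non-log-canonicity, use Lemma~\ref{lemma:adjunction} together with $m\leqslant 1$ to force the non-log-canonical point of the log pull-back to be $E\cap\widetilde{C}_1$ or $E\cap\widetilde{C}_2$, apply the inductive hypothesis at that point to the pair whose two distinguished curves are $\widetilde{C}_1$ and $E$ with coefficients $a_1$ and $a_1+a_2+m-1$, and transport back via $(C_1\cdot\Delta)_P\geqslant m+(\widetilde{C}_1\cdot\widetilde{\Delta})_{Q_1}$, with the same closing arithmetic. The only cosmetic differences are that the paper phrases your dichotomy $m+2a_1\leqslant 2$ versus $m+2a_1>2$ as "assume $(\Delta\cdot C_2)_P\leqslant 2(1-a_1)$, hence $m\leqslant 2(1-a_1)$", and that your induction parameter should be stated, as in the paper, as the minimal length $s$ of a tower of blow-ups over $P$ whose last exceptional coefficient exceeds $1$ (blowing up never "renders" a pair log canonical, but this minimal $s$ does give $a_1+a_2+m-1\leqslant 1$ whenever $s\geqslant 2$, which is all you use).
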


\begin{proof}
We may assume that $a_1\leqslant 1$ and $a_2\leqslant 1$.
There is a~morphism
$h\colon\widehat{S}\to S$ that is a~composition of $s\geqslant 1$ blowups of points dominating $P$ such that $e_s>1$
for $e_s\in\mathbb{Q}$ that is determined~by
$$
K_{\widehat{S}}+a_1\widehat{C}_1+a_2\widehat{C}_2+\widehat{\Delta}+\sum_{i=1}^{r}e_iE_i=h^{*}\left(K_{S}+a_{1}C_{1}+a_{2}C_{2}+\Delta\right),%
$$
where each $e_i$ is a~rational number, each $E_i$ is an~$h$-exceptional divisor,
$\widehat{C}_1$ and $\widehat{C}_2$, are proper transforms on $\widehat{S}$ of the~curves $C_1$ and
$C_2$, respectively, and $\widehat{\Delta}$ is a~proper transform of the~divisor~$\Delta$.

Let $\overline{\Delta}$, $\overline{C}_1$, $\overline{C}_2$ be the~proper transforms on $\overline{S}$ of the~divisors $\Delta$, $C_1$ and $C_2$, respectively. Then
$$
\Big(\overline{S}, a_1\overline{C}_1+a_2\overline{C}_2+\big(a_1+a_2+m-1\big)E+\overline{\Delta}\Big)
$$
is not log canonical at some point $Q\in E$ by Lemma~\ref{lemma:blow-up}.

If $s=1$, then $a_1+a_2+m-1>1$.
If  $m>2-a_1-a_2$, then $m>2(1-a_1)$ or $m>2(1-a_2)$, because otherwise we would have
$$
2m\leqslant 4-2(a_1+a_2),
$$
which contradicts to $m>2-a_1-a_2$. Then $(\Delta\cdot C_{1})_P>2(1-a_{2})$ or $(\Delta\cdot C_{2})_P>2(1-a_{1})$ if $s=1$.

Let us  prove the~required assertion by induction on $s$.
The case $s=1$ is already done, so that we may assume that $s\geqslant 2$ and $a_1+a_2+m\leqslant 2$.
If $Q\ne E\cap\overline{C}_1$ and $Q\ne E\cap\overline{C}_2$, then
$$
m=\overline{\Delta}\cdot E>1
$$
by Lemma~\ref{lemma:adjunction}, which is impossible by assumption.
Thus, either $Q=E\cap\overline{C}_1$ or $Q=E\cap\overline{C}_2$. Without
loss of generality, we may assume that $Q=E\cap\overline{C}_1$.

By induction, we can apply the~lemma to
$(\overline{S}, a_1\overline{C}_1+\left(a_1+a_2+m-1\right)E+\overline{\Delta})$ at the~point $Q$. This implies that either
$$
\big(\overline{\Delta}\cdot\overline{C}_1\big)_{Q}>2\left(1-(a_1+a_2+m-1)\right)=4-2a_1-2a_2-2m
$$
or $\big(\overline{\Delta}\cdot E\big)_{Q}>2(1-a_1)$. In the~latter case, we have
$$
\big(\Delta\cdot C_{2}\big)_{P}\geqslant m=\overline{\Delta}\cdot E\geqslant\big(\overline{\Delta}\cdot E\big)_{Q}>2(1-a_1),
$$
which is exactly what we want.
Therefore, we may assume that $(\overline{\Delta}\cdot \overline{C}_1)_{Q}>4-2a_1-2a_2-2m$.
If $(\Delta\cdot C_{2})_{P}>2(1-a_1)$, then we are done.
Hence, we may assume $(\Delta\cdot C_{2})_{P}\leqslant 2(1-a_1)$.
Then
$$
m\leqslant\big(\Delta\cdot C_{2}\big)_P\leqslant 2(1-a_1).
$$
This gives
$$
\big(\Delta\cdot C_{1}\big)_{P}\geqslant m+\big(\overline{\Delta}\cdot \overline{C}_1\big)_{Q}>m+4-2a_1-2a_2-2m>2(1-a_2),%
$$
because $m\leqslant 2(1-a_1)$.
\end{proof}

Almost all results we have considered so far in this subsection are local (except for Remark~\ref{remark:convexity}).
Let us conclude this subsection by two global statements.
The first of them is due to Puhklikov:

\begin{lemma}[{\cite[Lemma~5.36]{CoKoSm}}]
\label{lemma:Pukhlikov}
Suppose that $S$ is a~smooth surface in $\mathbb{P}^3$, and $D$ is $\mathbb{Q}$-linearly equivalent to its hyperplane section.
Then each $a_i$ does not exceed~$1$.
\end{lemma}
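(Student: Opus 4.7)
My approach is by contradiction: assume $a_k > 1$ for some irreducible curve $C := C_k$, and write $D = aC + R$ with $a := a_k$ and $R := \sum_{i \neq k} a_i C_i$ effective with $C \not\subset \Supp(R)$. Denoting the hyperplane section of $S$ by $H$ (so $H^2 = d$, the degree of $S$ in $\PP^3$), the hypothesis $D \sim_{\QQ} H$ exhibits $R$ as an effective $\QQ$-divisor representing the class $H - aC$.

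The first bound comes from a Bertini-general hyperplane section $Z \sim H$, which is irreducible and distinct from every $C_i$. Since $Z \cdot R \geq 0$,
\[
d = Z^2 = Z \cdot D = \sum_i a_i \deg(C_i) \geq a\deg(C),
\]
yielding $a \leq d/\deg(C)$. This settles the statement whenever $\deg(C) \geq d$; in the setting actually used by the paper (cubic surfaces, $d = 3$), it already disposes of every non-degenerate curve $C \subset \PP^3$, since such curves have degree at least~$3$.

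For the remaining low-degree $C$, contained in some hyperplane $H_0 \subset \PP^3$, I~would use the residual $B$ in $H_0 \cap S = C + B$, which is effective of class $H - C$, with $\deg(B) = d - \deg(C)$ and $B \cdot C = \deg(C) - C^2$. Whenever $C$ moves in a pencil of hyperplanes (automatic when $C$ is a line), we can vary $H_0$ so that $B$ has no component in common with $R$; then $B \cdot R \geq 0$ and
\[
\deg(B) = B \cdot D = a(B \cdot C) + B \cdot R \geq a(B \cdot C),
\]
so $a \leq (d - \deg C)/(\deg C - C^2)$. Adjunction on $S$ gives $C^2 = 2g(C) - 2 - (d-4)\deg(C)$, and the desired $a \leq 1$ reduces to the numerical inequality $(d-2)\deg(C) \geq 2g(C) + d - 2$, which is easily verified for lines and conics on a cubic (with equality for lines).

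The main obstacle is the subcase in which $C$ lies in a \emph{unique} hyperplane of $\PP^3$ and the forced residual $B$ is itself a component of $R$ (the model example being a conic on a cubic whose coplanar line also appears in $R$). Here one writes $D = aC + bB + R'$ with $R'$ effective containing neither $C$ nor $B$, uses $C + B \sim H$ to recast the equivalence as $(a-1)C + (b-1)B + R' \sim_{\QQ} 0$, and intersects successively with $C$ and with $B$. Inserting the known intersection numbers $C^2$, $B^2$, $C \cdot B$ on $S$, the two resulting linear inequalities in $a$ and $b$ force $a \leq 1$: on a cubic one gets $b \leq 1$ from intersection with $C$ and $2a \leq 1 + b$ from intersection with $B$. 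This finite bookkeeping finishes the proof.
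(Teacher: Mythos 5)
Your route is genuinely different from the paper's, and the part you carry out for cubic surfaces is correct: the general-hyperplane bound $a\leqslant d/\deg(C)$, the moving-plane residual bound for lines, and the final bookkeeping for a conic plus its coplanar line all check out (on a cubic one indeed gets $b\leqslant 1$ from intersecting with $C$ and $2a\leqslant 1+b$ from intersecting with $B$). The problem is that the lemma is stated for an arbitrary smooth surface $S\subset\mathbb{P}^3$, of any degree $d$, and there your case analysis has a genuine gap. The first bound only disposes of curves with $\deg(C)\geqslant d$, and the second step only applies to curves lying in a hyperplane; but for $d\geqslant 4$ a curve of degree $<d$ need not be planar (e.g.\ a twisted cubic on a smooth quartic surface), so such a $C$ is covered by neither step. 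Moreover, the moving-plane trick works only for lines, since a plane curve of degree $\geqslant 2$ spans a unique hyperplane; hence every planar $C$ of degree $\geqslant 2$ lands in your ``unique hyperplane'' subcase, where the residual $B$ can have several components in common with $R$, and the ``finite bookkeeping'' is only exhibited for the conic-plus-line configuration on a cubic, with no argument that it closes for general $d$. As written, the proposal proves the statement for $d\leqslant 3$ (which is all the paper actually uses), but not the lemma as stated.

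For comparison, the paper's proof (Pukhlikov's trick) is uniform in $d$ and in $C_i$ and avoids all case distinctions: intersect $S$ with the cone $X$ over $C_i$ from a general point of $\mathbb{P}^3$, so that $X\cap S=C_i+\widehat{C}_i$ with $\widehat{C}_i$ irreducible of degree $(d-1)\deg(C_i)$, not contained in $\Supp(D)$, and meeting $C_i$ in $\deg(\widehat{C}_i)$ points; then $\deg(\widehat{C}_i)=D\cdot\widehat{C}_i\geqslant a_i\,C_i\cdot\widehat{C}_i\geqslant a_i\deg(\widehat{C}_i)$ gives $a_i\leqslant 1$ in one line. In effect, the cone construction supplies exactly what your argument is missing in general: an auxiliary curve through many points of $C_i$ that can always be chosen outside $\Supp(D)$, independently of whether $C_i$ is planar and of what the residual components are. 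If you replace your hyperplane residuals by this construction, your contradiction scheme goes through for every $d$.
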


\begin{proof}
Let $X$ be a~cone over the~curve $C_i$ whose vertex is a~general enough point in $\mathbb{P}^3$.
Then
$$
X\cap S=C_i+\widehat{C}_i,
$$
where $\widehat{C}_i$ is an~irreducible curve of degree $(\mathrm{deg}(S)-1)\mathrm{deg}(C_i)$.
Moreover, $\widehat{C}_i$ is not contained in the~support of the~divisor $D$,
and the~intersection
$C_i\cap\widehat{C}_i$
consists of exactly $\mathrm{deg}(\widehat{C}_i)$ points.
Then
$$
\mathrm{deg}\big(\widehat{C}_i\big)=D\cdot \widehat{C}_i\geqslant a_iC_i\cdot \widehat{C}_i\geqslant a_i\mathrm{deg}\big(\widehat{C}_i\big),
$$
which implies that $a_i\leqslant 1$.
\end{proof}

The second global result we want to mention is the~following lemma about del Pezzo surfaces of degree $2$ that have at most two ordinary double points.

\begin{lemma}
\label{lemma:double-plane}
Suppose that there is a~double cover $\tau\colon S\to\mathbb{P}^2$ branched over an~irreducible quartic curve $B$
that has at most two ordinary double points, and
$$
D\sim_{\mathbb{Q}} -K_S.
$$
Then each $a_i$ does not exceed~$1$.
Moreover, if $(S,D)$ is not log canonical at $P$, then $\tau(P)\in B$.
\end{lemma}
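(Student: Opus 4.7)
For \textbf{Part~1}, the plan is to classify each irreducible component $C_i \subset \mathrm{Supp}(D)$ by its interaction with the deck involution $\iota$ of the double cover $\tau$; set $d_i := H \cdot C_i$ where $H := -K_S$. When $\iota(C_i) = C_i$ the map $\tau|_{C_i}$ has degree $2$, forcing $d_i \geqslant 2$; similarly for $\iota(C_i) \ne C_i$ with $d_i \geqslant 2$, I would apply a pencil argument: a generic member $M \in |H|$ through a general point of $C_i$ is not in $\mathrm{Supp}(D)$, so $a_i d_i \leqslant M \cdot D = H^2 = 2$, giving $a_i \leqslant 1$. The delicate case is $\iota(C_i) \ne C_i$ with $d_i = 1$: here $C_i + \iota(C_i) \sim H$ is the preimage of a line, and both curves are smooth rational $(-1)$-curves with $C_i \cdot \iota(C_i) = 2$. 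If $\iota(C_i) \not\subset \mathrm{Supp}(D)$, intersecting yields $2 a_i \leqslant D \cdot \iota(C_i) = 1$. Otherwise, writing $D = a_i C_i + a_j C_j + D'$ with $C_j := \iota(C_i)$ and $D'$ avoiding $\{C_i, C_j\}$, the non-negativities $D' \cdot C_i, D' \cdot C_j \geqslant 0$ give the symmetric pair $2 a_j \leqslant 1 + a_i$ and $2 a_i \leqslant 1 + a_j$, which iterate to $a_i \leqslant 1$.

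For \textbf{Part~2}, I would argue by contradiction: suppose $(S,D)$ fails log canonicity at $P$ and $Q := \tau(P) \notin B$. Since $\iota^* D$ is effective, the pair $(S, D + \iota^* D)$ is also not log canonical at $P$. Set $\bar{D} := \tau_* D$, an effective $\mathbb{Q}$-divisor of degree $2$ on $\mathbb{P}^2$. The projection formula gives $\tau^* \bar{D} = D + \iota^* D$, and Riemann--Hurwitz yields the log pullback identity $\tau^*(K_{\mathbb{P}^2} + \tfrac{1}{2} B + \bar{D}) = K_S + \tau^* \bar{D}$. The pullback equivalence of log canonicity under the finite cover $\tau$ then shows that $(\mathbb{P}^2, \tfrac{1}{2} B + \bar{D})$ is not log canonical at $Q$; since $Q \notin B$ the boundary $\tfrac{1}{2} B$ is locally trivial at $Q$, so $(\mathbb{P}^2, \bar{D})$ itself fails log canonicity at $Q$.

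A short check shows that any degree-$2$ effective $\mathbb{Q}$-divisor on $\mathbb{P}^2$ failing log canonicity at a smooth point must have a line component $L$ through that point with coefficient strictly larger than $1$ (components of degree $\geqslant 2$ cannot carry coefficient exceeding $1$ by the degree constraint, and the remaining configurations with all coefficients $\leqslant 1$ resolve to SNC after at most two blowups). I would then case-split on whether $\tau^* L$ is irreducible or reducible. If irreducible, set $M := \tau^* L$; then $M$ is $\iota$-invariant with $\tau_* M = 2 L$, so its coefficient $a_M$ in $D$ satisfies $2 a_M > 1$, i.e., $a_M > 1/2$. Since $\tau$ is \'etale at $P$ and $L$ is smooth at $Q$, the curve $M$ is smooth at $P$, and $a_M \leqslant 1$ by Part~1. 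Lemma~\ref{lemma:adjunction} applied to $M$ yields $(M \cdot (D - a_M M))_P > 1$, whereas the global intersection is $M \cdot (D - a_M M) = M \cdot D - a_M M^2 = 2 - 2 a_M < 1$, the desired contradiction.

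In the reducible case $\tau^* L = C + C'$ with $\iota(C) = C'$, the coefficient of $L$ in $\bar{D}$ equals $a_C + a_{C'} > 1$. Since $C \cap C' \subset \tau^{-1}(B)$ but $P \notin \tau^{-1}(B)$, the point $P$ lies in exactly one of $C, C'$; say $P \in C$. Writing $D = a_C C + a_{C'} C' + D''$ with $D''$ avoiding $\{C, C'\}$, Lemma~\ref{lemma:adjunction} applied to $C$ (smooth at $P$, coefficient $a_C \leqslant 1$) combined with $(C \cdot C')_P = 0$ reduces to $(C \cdot D'')_P > 1$, and bounding by the global intersection $C \cdot D'' = 1 + a_C - 2 a_{C'}$ forces $a_C > 2 a_{C'}$. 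Meanwhile, effectivity gives $D'' \cdot C' = 1 + a_{C'} - 2 a_C \geqslant 0$, so $a_C \leqslant (1 + a_{C'})/2$. Substituting $a_{C'} < a_C/2$ into this bound yields $a_C < 2/3$, whence $a_C + a_{C'} < 1$, contradicting $a_C + a_{C'} > 1$. The main obstacle will be this last three-way balancing of inequalities coming from local inversion of adjunction on $S$, the global intersection numbers of $(-1)$-curves, and the descent to $\mathbb{P}^2$ via Riemann--Hurwitz.
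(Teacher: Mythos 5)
Your Part~1 follows essentially the paper's own route (reduce to components of anticanonical degree one, i.e.\ to the two halves of the split preimage of a line, and play the two symmetric intersection inequalities against each other), but you assert that these halves are $(-1)$-curves with $C_i\cdot\iota(C_i)=2$. That is only true when the line $\tau(C_i)$ avoids the nodes of $B$. The lemma explicitly allows $B$ to have one or two ordinary double points, so $S$ may have $\mathrm{A}_1$ singularities lying on $C_i$, and then $C_i^2=-1+\tfrac{k}{2}$ and $C_i\cdot\iota(C_i)=2-\tfrac{k}{2}$, where $k\in\{0,1,2\}$ is the number of singular points of $S$ on $C_i$; the paper tracks exactly this quantity. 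The same symmetric inequalities still give $a_i\leqslant 1$ after the correction, and the three-way balancing in the split case of your Part~2 also still closes (in fact more easily), so this is a repairable oversight rather than a collapse --- but as written your intersection numbers are wrong precisely in the case the paper needs, since inside the proof of Theorem~\ref{theorem:del-Pezzo-degree-1-2-3} for cubic surfaces the lemma is applied to a degree-two del Pezzo surface with one or two $\mathrm{A}_1$ points.

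Your Part~2 is a genuinely different route: the paper never leaves $S$ (it bounds $\mathrm{mult}_P(D)\leqslant 2$, blows up $P$, uses that $\tau(P)\notin B$ forces a unique anticanonical curve $R$ through $P$ in the bad direction, invokes Remark~\ref{remark:convexity}, and finishes by a case analysis on $R$), whereas you descend to $\mathbb{P}^2$ via $\iota$, $\tau_*$ and Riemann--Hurwitz. The descent itself is legitimate (the crepant-pullback equivalence of log canonicity under a finite cover is standard, though not among the appendix tools), and the transfer back upstairs via Lemma~\ref{lemma:adjunction} in the irreducible and split cases is sound modulo the singular-surface correction above. The genuine gap is the pivotal claim that an effective $\mathbb{Q}$-divisor of degree $2$ on $\mathbb{P}^2$ which is not log canonical at $Q$ must contain a line through $Q$ with coefficient strictly greater than $1$. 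Your justification (``resolves to SNC after at most two blowups'') is false: a line with coefficient $1$ plus $\tfrac13$ of a cuspidal cubic whose cusp sits at $Q$ with the line as cuspidal tangent, or $\tfrac12$ of each of two conics meeting with multiplicity $4$ at $Q$, require more blowups and sit exactly on the log canonical boundary. The claim does appear to be true, but it needs a genuine argument of the same nature as Lemma~\ref{lemma:adjunction} and Theorem~\ref{theorem:Vanya}: one blowup followed by one application of Theorem~\ref{theorem:Vanya} only yields $\mathrm{mult}_Q(\tau_*D)+a_L>2$, not $a_L>1$, so an induction or a classification of degree-two non-lc configurations is required. As it stands this unproved statement carries the whole of your Part~2.
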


\begin{proof}
Write  $D=a_1C_1+\Delta$, where $\Delta=a_2C_2+\cdots+a_rC_r$.
Suppose that $a_1>1$. Let us seek for a~contradiction.
Since
$$
2=-K_{S}\cdot D=-K_{S}\cdot\left(a_1C_1+\Delta\right)=-a_1K_{S}\cdot C_1-K_{S}\cdot\Delta \geqslant -a_1K_{S}\cdot C_1>-K_{S}\cdot C_1,%
$$
we have $-K_{S}\cdot C_1=1$. Then $\tau(C_1)$ is a~line.
Hence, the~surface $S$ contains an~irreducible curve~$Z_1$ such that $C_1+Z_1\sim -K_{S}$ and $\tau(C_1)=\tau(Z_1)$.
Note that the~curves~$C_1$ and $Z_1$ are interchanged by the~biregular involution of the~surface $S$ induced by the~double cover $\tau$.
Then
$$
2=(-K_S)^2=\big(C_1+Z_1\big)^2=2C_1^2+2C_1\cdot Z_1,
$$
which implies that $C_1\cdot Z_1=1-C_1^2$. Since $C_1$ and $Z_1$ are smooth rational curves, we have
$$
C_1^{2}=Z_1^2=-1+\frac{k}{2},
$$
where $k$ is the~number of singular points of $S$ that lie on  $C_1$.
Now we write $D=a_1C_1+b_1 Z_1+\Theta$, where $b_1$ is a~non-negative rational number,
and $\Theta$ is an~effective $\mathbb{Q}$-divisor whose support does not contains the~curves $C_1$ and  $Z_1$. Then
$$
1=C_1\cdot\big(a_1C_1+b_1 Z_1+\Theta\big)\geqslant  a_1C_1^2+b_1 C_1\cdot Z_1=a_1C_1^2+b_1\big(1-C_1^2\big),
$$
and hence $1\geqslant a_1C_1^2+b_1(1-C_1^2)$. Similarly, from $Z_1\cdot D=1$, we obtain
$$
1\geqslant b_1  C_1^2+a_1\big(1-C_1^2\big).
$$
The obtained two inequalities imply that $a_1\leqslant 1$ and $b_1\leqslant 1$, because $C_1^2=-1+\frac{k}{2}$ and $k\leqslant 2$.
Since $a_1>1$ by our assumption, this is a~contradiction.

We see that $a_1\leqslant 1$. Similarly, we see that $a_i\leqslant 1$ for every $i$.

Now we suppose that the~log pair  $(S,D)$ is not log canonical at $P$. Let us show that $\tau(P)\in B$.
Suppose that $\tau(P)\not\in B$. Then $S$ is smooth at $P$. Let us seek for a contradiction.

Let $H$ be a~general curve in $|-K_{S}|$ that passes through the~point $P$. Then
$$
2=H\cdot D\geqslant\mathrm{mult}_{P}(H)\mathrm{mult}_{P}(D)\geqslant \mathrm{mult}_{P}(D),
$$
so that $\mathrm{mult}_{P}(D)\leqslant 2$.
But the~pair \eqref{equation:log-pull-back} is not log canonical at some point $Q\in E$ by Lemma~\ref{lemma:blow-up}.
Applying Lemma~\ref{lemma:Skoda} to \eqref{equation:log-pull-back}, we get $\mathrm{mult}_{P}(D)+\mathrm{mult}_{Q}(\overline{D})>2$.

Since $\tau(P)\not\in B$, there exists a~unique (possibly reducible) curve $R\in|-K_{S}|$
such that its proper transform on $\overline{S}$ passes through the~point $Q$.
Note that  $R$ is smooth at  $P$.
This enables us to assume that the~support of $D$ does not contain at least one irreducible component of  $R$ by Remark~\ref{remark:convexity}.
Denote by $\overline{R}$ the~proper transform of $R$ on the~surface $\overline{R}$.
If the~curve $R$ is irreducible, then
$$
2-\mathrm{mult}_{P}(D)=2-\mathrm{mult}_{P}(C)\mathrm{mult}_{P}(D)=\overline{R}\cdot\overline{D}\geqslant\mathrm{mult}_{Q}(\overline{R})\mathrm{mult}_{Q}(\overline{D})=\mathrm{mult}_{Q}(\overline{D}),
$$
which is impossible, since  $\mathrm{mult}_{P}(D)+\mathrm{mult}_{Q}(\overline{D})>2$.
Thus, the~curve $R$ must be reducible.

Write $R=R_1+R_2$, where $R_1$ and $R_2$ are irreducible smooth curves.
Without loss of generality we may assume that the~curve $R_1$ is not contained in $\mathrm{Supp}(D)$.
Then $P\in R_2$, because otherwise we would have
$$
1=D\cdot R_1\geqslant\mathrm{mult}_{P}(D)>1,
$$
since $\mathrm{mult}_{P}(D)>1$ by Lemma~\ref{lemma:Skoda}.
Thus, we put $D=aR_2+\Omega$, where $a$ is a~non-negative rational number and $\Omega$ is an~effective $\mathbb{Q}$-divisor whose
support does not contain the~curve $R_2$. Then
$$
1=R_1\cdot D=\left(2-\frac{1}{2}l\right)a+R_1\cdot\Omega\geqslant \left(2-\frac{1}{2}l\right)a,
$$
where $l$ is the~number of singular points of the~surface $S$ contained in $R_1$.
Denote by $\overline{R}_2$ the~proper transform on $\overline{S}$ of the~curve $R_2$ ,
and denote by $\overline{\Omega}$ the~proper transform on $\overline{S}$ of the~divisor $\Omega$.
Then the~log pair
$$
\Big(\overline{S}, a\overline{R}_2+\overline{\Omega}+\big(\mathrm{mult}_P(D)-1\big)E\Big)
$$
is not log canonical at $Q$. Note that we already proved that $a\leqslant 1$.
Thus, using Lemma~\ref{lemma:adjunction}, we~get
$$
\left(2-\frac{1}{2}l\right)a=\overline{R}_2\cdot \Big(\overline{\Omega}+\big(\mathrm{mult}_P(D)-1\big)E\Big)>1.
$$
This is a~contradiction.
\end{proof}

\newcommand{\etalchar}[1]{$^{#1}$}
\def\cprime{$'$}

%\bibliographystyle{siam}
%\bibliography{cylinders}

\end{document}